\documentclass{amsart}

\usepackage{graphicx}
\usepackage[centertags]{amsmath}
\usepackage{amsfonts}
\usepackage{amssymb}
\usepackage{amsthm}
\usepackage{newlfont}
\usepackage{hyperref}
\usepackage{tikz}
\usepackage{tikz-cd}
\usepackage{subcaption}
\usepackage{enumitem}
\usepackage{multirow}
\usepackage{MnSymbol}
\usepackage{rotating}

\newcommand{\movapp}[6]{
\fill[gray!10,opacity=.5] (#4,#5,#6)-- (#1+#4,#5,#6) -- (#1+#4,#3+#5,#6)  -- (#4,#3+#5,#6) --cycle;
\fill[gray!10,opacity=.5] (#4,#5,#2+#6)-- (#1+#4,#5,#2+#6) -- (#1+#4,#3+#5,#2+#6)  -- (#4,#3+#5,#2+#6) --cycle;
\fill[gray!10,opacity=.5] (#4,#3+#5,#6)-- (#4,#3+#5,#2+#6) -- (#1+#4,#3+#5,#2+#6) -- (#1+#4,#3+#5,#6)--cycle;
\fill[gray!10,opacity=.5] (#4,#5,#6)-- (#4,#5,#2+#6) -- (#1+#4,#5,#2+#6) -- (#1+#4,#5,#6)--cycle; 
\draw[] (#4,#5,#2+#6) -- (#1+#4,#5,#2+#6) -- (#1+#4,#3+#5,#2+#6) --(#4,#3+#5,#2+#6) --(#4,#5,#2+#6)
        (#1+#4,#5,#2+#6) -- (#1+#4,#5,#6)  -- (#1+#4,#3+#5,#6) --(#4,#3+#5,#6) -- (#4,#3+#5,#2+#6)    
        (#1+#4,#3+#5,#2+#6) -- (#1+#4,#3+#5,#6);
\draw[dashed] (#4,#5,#6) -- (#4,#5,#2+#6) (#4,#5,#6)-- (#1+#4,#5,#6) (#4,#5,#6)-- (#4,#3+#5,#6);

}

\newcommand{\oppapp}[6]{
\fill[white,opacity=0] (#4,#5,#6)-- (#1+#4,#5,#6) -- (#1+#4,#3+#5,#6)  -- (#4,#3+#5,#6) --cycle;
\fill[white,opacity=0] (#4,#5,#2+#6)-- (#1+#4,#5,#2+#6) -- (#1+#4,#3+#5,#2+#6)  -- (#4,#3+#5,#2+#6) --cycle;
\fill[white,opacity=0] (#4,#3+#5,#6)-- (#4,#3+#5,#2+#6) -- (#1+#4,#3+#5,#2+#6) -- (#1+#4,#3+#5,#6)--cycle;
\fill[white,opacity=0] (#4,#5,#6)-- (#4,#5,#2+#6) -- (#1+#4,#5,#2+#6) -- (#1+#4,#5,#6)--cycle; 
\draw[] (#4,#5,#2+#6) -- (#1+#4,#5,#2+#6) -- (#1+#4,#3+#5,#2+#6) --(#4,#3+#5,#2+#6) --(#4,#5,#2+#6)
        (#1+#4,#5,#2+#6) -- (#1+#4,#5,#6)  -- (#1+#4,#3+#5,#6) --(#4,#3+#5,#6) -- (#4,#3+#5,#2+#6)    
        (#1+#4,#3+#5,#2+#6) -- (#1+#4,#3+#5,#6);

}    

\newcommand{\bakapp}[6]{
\draw[dashed] (#4,#5,#6) -- (#4,#5,#2) (#4,#5,#6)-- (#1+#4,#5,#6) (#4,#5,#6)-- (#4,#3+#5,#6);

}

\usepackage{mathtools} 

\newtheorem{thm}{Theorem}[section]
\newtheorem{cor}[thm]{Corollary}
\newtheorem{lem}[thm]{Lemma}

\theoremstyle{definition}
\newtheorem{defn}[thm]{Definition}

\theoremstyle{remark}

\newtheorem{example}[thm]{Example}
\numberwithin{equation}{section}

\newtheorem{ques}[thm]{Question}

\newcommand{\R}{\mathbb{R}}

\newcommand{\Z}{\mathbb{Z}}

\newcommand{\bP}{\mathbb{P}}
\newcommand{\dom}{\mathrm{dom}}

\newcommand{\fzn}{F(2^{\Z^n})}

\newcommand{\sQ}{\mathcal{Q}}
\newcommand{\sA}{\mathcal{A}}
\newcommand{\sB}{\mathcal{B}}
\newcommand{\sC}{\mathcal{C}}
\newcommand{\sP}{\mathcal{P}}

\newcommand{\sT}{\mathcal{T}}

\newcommand{\sm}{\setminus}

\DeclareMathOperator{\Int}{int}

\makeatletter
\def\Ddots{\mathinner{\mkern1mu\raise\p@
\vbox{\kern7\p@\hbox{.}}\mkern2mu
\raise4\p@\hbox{.}\mkern2mu\raise7\p@\hbox{.}\mkern1mu}}
\makeatother

\begin{document}

\title{On regulated partitions}
\author{Su Gao}
\address{School of Mathematical Sciences and LPMC, Nankai University, Tianjin 300071, P.R. China}
\email{sgao@nankai.edu.cn}
\thanks{The first author acknowledges the partial support of his research by the National Natural Science Foundation of China (NSFC) grants 12271263 and 12250710128. Research of the second author was partially supported by the U.S. NSF grants DMS-1800323 and DMS-2452099}

\author{Steve Jackson}
\address{Department of Mathematics, University of North Texas, 1155 Union Circle \#311430, Denton, TX 76203, U.S.A.}
\email{jackson@unt.edu}

\begin{abstract}   This paper considers the combinatorics of continuous and Borel rectangular
partitions of free actions of $\mathbb{Z}^n$ on $0$-dimensional Polish spaces, specifically the free
part $F(2^{\mathbb{Z}^n})$ of the shift action of $\mathbb{Z}^n$ on the space $2^{\mathbb{Z}^n}$. This is done through the study of a corresponding notion of regulated partitions of $\mathbb{R}^n$. The
main concepts studied are the 
continuous and Borel {\em regulation} numbers of the partition. This is defined as the maximum number of
rectangles in the corresponding regulated partition that can intersect in a point. The continuous and Borel regulation numbers
$\gamma_c$, $\gamma_B$ are the minimum possible values of these numbers as we range over continuous 
(respectively Borel) rectangular partitions of $F(2^{\mathbb{Z}^n})$. It is shown that 
for $n=2$ that $\gamma_c=\gamma_B=3$, and for $n \geq 3$ that 
$n+2\leq \gamma_B \leq \gamma_c \leq 3\cdot 2^{n-2}$. For $n=3$ we improve this to $\gamma_c=\gamma_B=5$. This shows a striking difference between the Borel combinatorics of dimension $n=2$ and dimensions $n>2$. 
\end{abstract}

\maketitle


\section{Introduction}

This paper can be regarded as a contribution to the study of Borel combinatorics of countable abelian group actions. The study of Borel combinatorics in general began with the seminal paper of Kechris, Solecki and Todorcevic \cite{KST}. Since its inception, the study has repeatedly encountered the phenomenon that there is often a difference between the answers to questions in Borel combinatorics and those in classical combinatorics with a countable domain. For instance, in the context of a marked group $G$ (a group with a specific finite set of generators), combinatorial questions about the Cayley graph of $G$ often have different answers than Borel combinatorial questions about the Schreier graph of a free Borel action of $G$ on a Polish space $X$. In this paper we give one more instance of this kind of difference. 

The specific combinatorial question we consider in this paper comes from another thread of research, namely the study of Borel marker constructions for countable Borel equivalence relations. This is a technique originally designed for hyperfiniteness proofs (see, e.g.\ \cite{JKL} and \cite{GJ2015}). More recently, it has played a crucial role in answering some Borel combinatorial questions about abelian group actions (see \cite{GJKSBorel}).

More specifically, we consider the so-called {\em Borel rectangular partitions} of a Polish space $X$ where the additive group $\mathbb{Z}^n$ freely acts on $X$ in a Borel manner. These are Borel partitions $\mathcal{P}$ of $X$ where each set $P$ in the partition $\mathcal{P}$ is of the form $R\cdot x$ for some $x\in X$ and some $R$ which is a rectangle in $\mathbb{Z}^n$. Since the action is free, the Schreier graph on $X$, when restricted to each equivalence class of the action, is essentially a copy of the Cayley graph on $\mathbb{Z}^n$, and a Borel rectangular partition $\mathcal{P}$ on $X$, when restricted to each equivalence class, gives rise to a rectangular partition $\mathcal{Q}$ on $\mathbb{Z}^n$. 

To study a rectangular partition $\mathcal{Q}$ on $\mathbb{Z}^n$, we replace each element of $\mathbb{Z}^n$ by an $n$-dimensional cube of side length $1$ centered about the element, and thus obtain a division of $\mathbb{R}^n$ into $n$-dimensional rectangles with disjoint interiors, which we call {\em regulated partitions}. Technically, we require each region in a regulated partition to be finite (i.e., of finite side lengths). In dimension $2$, these have been called {\em rectangulations} in the literature and have been an object of research for combinatorial geometry (see, e.g.\ \cite{Re} and \cite{Kl}). For regulated partitions of $\mathbb{R}^n$, we are particularly interested in counting how many different regions in the division can come together to a point (the {\em regulation number} of a point with respect to a regulated partition). We will show in Lemma~\ref{lem:minimalregulated} that for any regulated partition $\mathcal{P}$ of $\mathbb{R}^n$ there exists some point whose regulation number with respect to $\mathcal{P}$ is at least $n+1$. Thus, when every point has a regulation number at most $n+1$, we will call the regulated partition $\mathcal{P}$ {\em minimal}. 

Minimal regulated partitions of $\mathbb{R}^n$ have a remarkable property (Theorem~\ref{union}) that around any point $x$ there is a sufficiently small neighborhood $U$ such that any subcollection of the induced regulated partition within $U$ unions up to a set which is homeomorphic to an $n$-dimensional rectangle. This is a property that would simplify some Borel marker constructions (as in \cite{GJKSBorel}) and yield marker sets with stronger topological regularity properties. In dimension $2$, minimal regulated partitions have been called {\em generic rectangulations} (see \cite{Re}), while the term of {\em minimal rectangulation} has been used (e.g.\ in \cite{Kl}) to refer to a rectangulation of a finite polygonal region with a minimal number of rectangles. 
The above topological regularity property for generic rectangulations in dimension $2$ 
has been noted in \cite{Ri}. The interest in minimal rectangulations was primarily motivated by the observation that finding minimal rectangulations can be performed in polynomial time (see e.g.\ \cite{Kl}, \cite{Epp}, \cite{LLLMP}), which explains why the literature about them is mostly in computer science. 

We thus call a rectangular partition on $\mathbb{Z}^n$ {\em minimal} if its induced regulated partition of $\mathbb{R}^n$ is minimal. Moreover, we also refer to a rectangular partition on $\mathbb{Z}^n$ as a {\em regulated partition}. This terminology is similarly transferred to the context of rectangular partitions of a free Borel action of $\mathbb{Z}^n$. Our main result of this paper, which is somewhat surprising, is the following.

\begin{thm}\label{thm:main1} For any free Borel action of $\mathbb{Z}^2$ on a Polish space $X$, there exists a Borel minimal regulated partition of $X$. In contrast, for any $n\geq 3$, and for any free Borel action of $\mathbb{Z}^n$ on a Polish space $X$, there does not exist a Borel, nondegenerate, minimal regulated partition of $X$.
\end{thm}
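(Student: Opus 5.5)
The plan has two independent halves: a construction for $n=2$ and an obstruction for $n\ge 3$.

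\emph{Positive part ($n=2$).} I would first build a Borel rectangular partition of $X$ with bounded rectangles, using the standard Borel marker machinery of \cite{GJ2015} and \cite{GJKSBorel}. Concretely, I would take a Borel marker set $M\subseteq X$ whose points are $d$-separated and $d$-dense for large $d$. From it I would form the associated Voronoi-type regions and then regularize them into rectangles, as in the clopen or Borel rectangular tilings constructed in \cite{GJKSBorel}. This yields a Borel regulated partition in which every rectangle has side lengths in $[d,2d]$, say.

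The only points with regulation number $4$ are "crosses," where two boundary segments cross transversally. The next step is to remove these in a Borel way. Each cross is a point of a finite local configuration. I would resolve it by perturbing one of the two crossing segments by one unit: slide the horizontal edge of one of the four rectangles up or down, so the cross becomes two T-junctions.

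To make the choices consistent and Borel, I would use a Borel proper colouring of the crosses with finitely many colours, which exists by \cite{KST} since crosses are uniformly separated. I would then process the colour classes one at a time. At each stage the moves are local and separated, so they do not interfere, and side lengths stay bounded below, so no new crosses or degenerate rectangles appear. After finitely many stages every point has regulation number at most $3$, and this is minimal by Lemma~\ref{lem:minimalregulated}.

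\emph{Negative part ($n\ge 3$).} Suppose $\mathcal P$ is a Borel, nondegenerate, minimal regulated partition of $X$. Restricted to each orbit, this gives a minimal regulated partition $\mathcal Q$ of $\mathbb R^n$ with rectangles of bounded size. I would exploit Theorem~\ref{union}: locally, unions of regions are homeomorphic to rectangles.

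The key combinatorial claim I would aim for is that, for $n\ge3$, a nondegenerate minimal regulated partition of $\mathbb R^n$ is forced to have a global "direction" structure. Look at the codimension-one faces. At a point of regulation number exactly $n+1$, the local picture is a generic corner, which singles out one coordinate direction in which a face is "through" (not interrupted). I would show, by a propagation argument along $2$-dimensional slices, that these choices must be coherent. In other words, there is a coordinate direction $e_i$ and a family of hyperplane-like walls, unbounded in a transverse direction, that are never crossed. Equivalently, some rectangle face extends to an infinite slab boundary. This contradicts finiteness of the regions, or it produces a Borel choice of a direction/orientation or an end in each orbit.

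In the Borel setting the contradiction comes from such a canonical infinite structure. The unbounded wall would yield a Borel selection of a half-space or a $\mathbb Z^{n-1}$-coset structure in each class. This is impossible for free Borel $\mathbb Z^n$-actions, for example by a measure/amenability argument on $F(2^{\mathbb Z^n})$, or by the known fact that there is no Borel way to choose an end or line in each orbit.

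\emph{Main obstacle.} The hard step is the rigidity claim for $n\ge3$: deducing, from minimality plus nondegeneracy of the regions, a coherent infinite wall. Here I would first try to analyze the $2$-dimensional cross-sections of $\mathcal Q$ by hyperplanes $x_n=c$. Minimality in $\mathbb R^n$ should force each cross-section to be a generic rectangulation whose T-junction orientations are constrained as $c$ varies. Then bounded side lengths would force these orientations to align across all slices, which is where the dimension $\ge3$ is used.
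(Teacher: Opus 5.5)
\textbf{The gap is in your negative part ($n\ge3$).} It lies in the mechanism of the contradiction, not only in the unproved rigidity claim.

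No purely combinatorial property of one orbit can give the contradiction by itself. Nondegenerate minimal regulated partitions of $\mathbb{R}^n$ with bounded regions exist for every $n$ (Lemma~\ref{lem:orthogonal}). In that example every hyperplane $\{x_n=2k\}$ lies entirely in the boundary, so an infinite wall does not contradict finiteness of the regions.

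The Borel obstructions you invoke do not apply either:
\begin{itemize}
\item a direction or orientation is trivially Borel for a free $\mathbb{Z}^n$-action, since the action supplies coordinates;
\item $\mathbb{Z}^n$ is one-ended for $n\ge 2$, so ``choosing an end'' is vacuous;
\item a Borel $\mathbb{Z}^{n-1}$-coset structure is just the orbit relation of the subaction.
\end{itemize}
A measure argument does forbid selecting, say, a single half-space per orbit. But the walls that minimality actually produces are the staircases $\partial S_k\times[\delta_k,\delta_{k+1}]$ of growing virtual maximal surfaces, one above every maximal surface. These occur infinitely often in every orbit and yield no canonical selection.

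The missing idea is that the contradiction comes from \emph{genericity} of the orbit. The paper works in a Cohen-generic orbit and proceeds as follows.
\begin{itemize}
\item Lemmas~\ref{lem:maxsurface0}, \ref{lem:vms0} and \ref{lem:maxsurface1}, together with density arguments, show that maximal surfaces are finite and that the iterated virtual maximal surfaces eventually have arbitrarily long sides.
\item Nondegeneracy (Lemma~\ref{lem:nondeg}) yields a condition $p_0$ forcing, for every admissible triple $(u,v,w)$, a point of $\partial_{u\setminus v}\sP$ on the segment $[-s,s]w$.
\item Copies of $p_0$ are laid out by the tiling $\sT_3$ on the faces of a large cube around a condition forcing a large virtual maximal surface. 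This guarantees that a long straight wall segment produced by the growth passes through the centre of some copy and is therefore interrupted, a contradiction.
\end{itemize}
For $n>3$ the paper restricts $\sP$ to the $\mathbb{Z}^3$-subaction. Alternatively, Lemma~\ref{lem:segrespects} gives straight wall segments of length about $\sqrt{b}$ in boxes of side $b$, which a condition carrying copies of $p_0$ along diagonal hyperplanes forbids.

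\textbf{Your positive part ($n=2$)} takes a different route from the paper and is probably repairable, but the local move is not well defined as stated. Suppose at a cross you slide the bottom edge of the upper-right rectangle $A$ down by $t$. Then the lower-right rectangle $D$ must shrink. Unless $A$ and $D$ have the same horizontal extent, $D$ becomes an L-shape that must be split. Moreover, the far end of the shifted segment lands on a vertical edge, and a horizontal edge at the new level on the other side would create a new cross there.

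So you must split $D$ explicitly and choose $t$ to avoid the finitely many nearby horizontal levels. You must also check that this remains possible after earlier colour classes have inserted thin rectangles. ``Side lengths stay bounded below'' does not by itself prevent new crosses.

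The paper sidesteps this repair entirely via the $n=2$ case of Theorem~\ref{thm:gamma}. It takes marker rectangles $R_z$ whose parallel faces are pairwise at distance at least $\ell$, and sets $H_z=R_z\setminus\bigcup\{R_u\colon u\in T_j,\ j<i\}$. It then cuts each $H_z$ only by extending its horizontal edges. Crosses therefore never arise, and the partition is $3$-regulated from the start.
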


Here a regulated partition is {\em nondegenerate} if it does not consist of rectangular regions of lower dimensions all of which are perpendicular to a single standard unit vector.

In view of Theorem~\ref{thm:main1}, it is natural to introduce the Borel and continuous 
{\em regulation numbers} $\gamma_B(n)$, $\gamma_c(n)$ for $\Z^n$. For a given Borel or continuous 
rectangular partition $\sP$ of the free part of the shift action of $\Z^n$ on $X=2^{\Z^n}$, for each $x \in X$
the partiton $\sP$ induces a partition $\sP_x$ of the equivalence class $[x]$ 
which can be viewed as a partition of $\Z^n$. This also gives a
corresponding regulated partition $\sP'_x$ of $\R^n$ as mentioned above. Let $\gamma(\sP)$ be the least
$k$ so that for all $x \in X$, every point of $\R^n$ lies in at most $k$ of the rectangles of $\sP'_x$. 
Then $\gamma_B(n)$ is the minimum value of $\gamma(\sP)$  amongst Borel nondegenerate 
rectangular partitions of $X$. Likewise, $\gamma_c(n)$ is defined in the same way but using 
continuous partitions $\sP$ of the shift space. We clearly have $\gamma_B(n)\leq \gamma_c(n)$
for all $n$. For $n=2$ we actually show that $\gamma_B(2)=\gamma_c(2)=3$, and for $n>2$ Theorem~\ref{thm:main1}
says $\gamma_c(n)\geq \gamma_B(n)>n+1$. We will show an upper-bound for $\gamma_c(n)$ which with 
Theorem~\ref{thm:main1} gives the following results about the continuous and Borel regulation numbers.

\begin{thm} \label{thm:main2}
For $n=2$ we have $\gamma_B(2)=\gamma_c(2)=3$. For $n>2$ we have 
$n+2\leq \gamma_B(n)\leq \gamma_c(n) \leq 3\cdot 2^{n-2}$. 
For $n=3$ we have the improvement $\gamma_B(3)=\gamma_c(3)=5$.
\end{thm}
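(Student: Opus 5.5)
The plan is to combine lower bounds taken from Theorem~\ref{thm:main1} and Lemma~\ref{lem:minimalregulated} with explicit continuous constructions for the upper bounds.

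\emph{Lower bounds.} By Lemma~\ref{lem:minimalregulated}, every regulated partition of $\R^n$ has a point of regulation number at least $n+1$. Hence $\gamma_B(n)\ge n+1$ for all $n$, and in particular $\gamma_B(2)\ge 3$. For $n\ge 3$, Theorem~\ref{thm:main1} says no Borel nondegenerate minimal partition exists on $F(2^{\Z^n})$. Therefore $\gamma(\sP)\ge n+2$ for every Borel nondegenerate $\sP$, which gives $\gamma_B(n)\ge n+2$; for $n=3$ this is $\gamma_B(3)\ge 5$. Since $\gamma_B\le\gamma_c$ trivially, everything left is an upper bound on $\gamma_c$: namely $\gamma_c(2)\le 3$, $\gamma_c(3)\le 5$, and $\gamma_c(n)\le 3\cdot 2^{n-2}$. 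Note that $3\cdot 2^{1}=6$, so $n=3$ needs a separate, sharper construction.

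\emph{Upper bounds: general scheme.} Start from the standard continuous clopen marker machinery on $F(2^{\Z^n})$ (as in \cite{GJKSBorel}). This gives, for a large $d$, a continuous rectangular partition of each class into rectangles of side lengths between $d$ and $2d$ (or similar), with relatively clopen boundaries. Then modify this partition continuously, locally around each lower-dimensional face, so that the regulation number drops.

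For $n=2$, take a rectangular tiling and, at every point where four rectangles meet, shift one of the two crossing edges by a unit. The choice of which edge to shift is made by a clopen rule, for example a fixed convention based on the lexicographically least marker. Shifting turns each 4-corner into two T-junctions. Because corners are far apart relative to $d$, these local moves do not interact, and the result is a continuous partition with every point lying in at most $3$ rectangles.

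For general $n$, argue by induction on dimension with a product-type construction. Given a good partition of $\R^{n-1}$ with regulation number $k$, stack slabs in the $n$-th direction, using different $(n-1)$-dimensional partitions in alternate layers. These should be offset so that no two layer boundaries meet in a bad way. A point on the interface between layers then lies in at most $2k$ rectangles. Starting from $k=3$ at $n=2$ gives $3\cdot 2^{n-2}$. To make this continuous, the layers must themselves be chosen by clopen markers. The natural approach is to first build the continuous rectangular tiling and then perform the refinement inside each tile and along its faces, with boundary coordination handled by a clopen ordering of adjacent tiles.

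\emph{Main obstacle.} The hardest step is $\gamma_c(3)\le 5$, which needs an explicit $3$-dimensional continuous construction beating the doubling bound of $6$. For this I would take the continuous $3$-dimensional tiling and analyze the local configurations at edges and vertices where many tiles meet. I would perturb faces with unit shifts so that along each edge at most $4$ regions meet, and at vertices at most $5$. The difficulty is that, by Theorem~\ref{thm:main1}, $4$ cannot be achieved everywhere. So the modifications must be arranged so that the unavoidable defects occur only at isolated points of regulation number exactly $5$. I expect this to require a careful case analysis of the possible corner configurations of the marker tiling, with a clopen tie-breaking rule ensuring the local moves are consistent across neighbouring tiles.
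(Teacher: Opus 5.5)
Your lower bounds are correct and are exactly the paper's. Lemma~\ref{lem:minimalregulated} gives $\gamma\geq n+1$, and Theorem~\ref{thm:hrp} rules out minimal partitions for $n\geq 3$, so $\gamma_B(n)\geq n+2$.

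The gap is that none of the upper bounds on $\gamma_c$ is proved, and these are the constructive half of the theorem (Theorems~\ref{thm:gamma} and \ref{thm:3dim5reg}).

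\textbf{The general-$n$ step.} Stacking alternating $(n-1)$-dimensional partitions in slabs $\R^{n-1}\times[a_m,a_{m+1}]$ is the argument of Lemma~\ref{lem:orthogonal}. It produces a good partition of a single copy of $\R^n$. But the slab boundaries are infinite hyperplanes, and these cannot be chosen continuously, or even in a Borel way, in every orbit of $F(2^{\Z^n})$. The points lying on them would form a set invariant under the $\Z^{n-1}$-subaction, which is ergodic, so the set is null or conull, forcing either no slabs or degeneracy in direction $e_n$.

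Once you replace slabs by finite marker tiles, the estimate $\nu\leq 2k$ no longer holds at the faces, edges and corners where tiles meet, and that is where all the difficulty lies. Saying ``boundary coordination handled by a clopen ordering of adjacent tiles'' does not explain why the count stays at $3\cdot 2^{n-2}$. In the paper:
\begin{itemize}
\item the ordering is the clopen splitting $S_1=T_0\cup\cdots\cup T_k$;
\item the pieces are $H_z=R_z\setminus\bigcup_{j<i}\bigcup_{u\in T_j}R_u$, with parallel faces of nearby $R_u$ kept at distance at least $\ell$;
\item each $H_z$ is cut by extending its faces with normal $e_n$, then $e_{n-1}$, and so on;
\item the bound comes from the count $(\alpha(x)+1)\cdot 2^{\pi(x)}$ over active and passive boundary vectors;
\item Lemma~\ref{lem:crossing} handles the extremal case $\alpha(x)=1$, $\pi(x)=n-1$ by producing two directions that do not cross.
\end{itemize}
Nothing in your outline plays this role.

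\textbf{The case $n=3$.} Here you only restate the goal. The paper isolates the only configurations of the previous construction where six pieces meet (the two kinds of troublesome points). It repairs each one by inserting a small box $L_x$ and re-cutting the two affected pieces in different orders.

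\textbf{The case $n=2$.} Your step fails as stated, because moving an edge moves both of its endpoints; the moves interact however far apart the corners are.
\begin{itemize}
\item If a vertical line carries a chain of $4$-corners and your fixed convention shifts the vertical edge above each of them, every edge in the chain is shifted. The crosses (all but possibly the lowest) then reappear one unit to the side.
\item Shifting a segment that is only part of a side of some rectangle destroys rectangularity.
\item Shifting two perpendicular half-edges at the same corner can make two pieces overlap.
\end{itemize}
A correct rule would need genuine coordination along such chains. The paper avoids this entirely: $\gamma_c(2)\leq 3$ is simply the case $n=2$ of Theorem~\ref{thm:gamma}, where $3\cdot 2^{0}=3$.
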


We do not know if there are any values of $n$ for which $\gamma_B(n)\neq \gamma_c(n)$. We also do not know
the asymptotic growth rates of $\gamma_B(n)$ and $\gamma_c(n)$, in particular whether they 
grow linearly or exponentially.

Our proof of the main theorem uses forcing. Forcing is known to be a useful tool for proving negative results about Borel combinatorics (see \cite{GJKS2022}). In fact we give two forcing proofs of the main theorem. In the first, we prove the theorem in dimension $3$ and then lift up the proof to arbitrary dimension $n\geq 3$. In the second, we give a different forcing construction by working directly in dimension $n\geq 3$. However, both these proofs are based on some observations of minimal regulated partitions of $\mathbb{R}^n$. In particular, we prove the following finitary version of the main theorem for minimal locally regulated partition of a rectangle in $\mathbb{R}^n$.

\begin{thm}\label{thm:main2} For any rectangle $R$ in $\mathbb{R}^2$ and a finite collection $\mathcal{S}$ of pairwise disjoint subrectangles of $R$, there is a finite minimal locally regulated partition $\mathcal{P}$ of $R$ with $\mathcal{S}\subseteq \mathcal{P}$. In contrast, for any $n\geq 3$ and for any $n$-dimensional rectangle $R$ in $\mathbb{R}^n$, there exists a finite collection $\mathcal{S}$ of pairwise disjoint $n$-dimensional subrectangles of $R$ such that there does not exist any finite minimal locally regulated partition $\mathcal{P}$ of $R$ with $\mathcal{S}\subseteq \mathcal{P}$. 
\end{thm}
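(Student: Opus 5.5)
The proof splits into the planar existence statement and the higher-dimensional counterexample, and in the latter the dimension-$3$ case carries all the difficulty.

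\emph{The case $n=2$.} Throughout I use that, in dimension $2$, four regions can meet at a point only at a ``$+$''-crossing. Such a crossing is a point lying in the relative interior of two perpendicular boundary segments. Corners where two or three regions meet are allowed.

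I would first reduce to the case where the closed rectangles of $\mathcal{S}$ are pairwise disjoint. If two of them touch, I would merge the touching ones into the rectilinear boundary below. The only care needed is that no two of them meet corner-to-corner in the forbidden way, which is excluded since they are disjoint regions of a partition.

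Let $\Omega$ be the closure of $R\setminus\bigcup\mathcal{S}$. This is a rectilinear polygonal region, possibly with holes, with finitely many reflex vertices. I would rectangulate $\Omega$ by the standard sequential procedure. Enumerate the reflex vertices. At each reflex vertex still reflex in the current picture, extend one of its two incident edges into the interior of $\Omega$. Stop the extension at the \emph{first} point where it meets the boundary of $\Omega$ or a previously drawn segment.

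Because every new segment stops at its first contact, it never passes through the relative interior of a perpendicular segment. Every new meeting point is therefore a $T$-junction or a corner, so at most $3$ regions meet there. Each step strictly decreases the number of reflex vertices. When none remain, every face is a rectilinear polygon without reflex vertices, hence a rectangle. The result is a finite minimal locally regulated partition of $R$ containing $\mathcal{S}$.

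The only subtle point is an extension that ends exactly at another vertex $w$. In that case the segment meets $w$ collinearly with an edge at $w$. This produces at most three regions at $w$ and removes $w$'s reflexivity as well, so it is harmless.

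\emph{Reduction of $n\ge3$ to $n=3$.} Suppose $\mathcal{S}$ is a bad configuration in a $3$-dimensional rectangle $R_0$. I would take $R=R_0\times[0,1]^{n-3}$ and $\mathcal{S}'=\{S\times[0,1]^{n-3}:S\in\mathcal{S}\}$.

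Assume $\mathcal{P}$ were a finite minimal locally regulated partition of $R$ containing $\mathcal{S}'$. Choose a generic $t\in(0,1)^{n-3}$, avoiding the finitely many coordinate values of faces of $\mathcal{P}$. The slice $R_0\times\{t\}$ is then partitioned into the $3$-dimensional boxes $P\cap(R_0\times\{t\})$.

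Now take a point $p$ of the slice. Any box of the slice partition containing $p$ comes from a distinct box of $\mathcal{P}$ containing $(p,t)$. Since $\mathcal{P}$ is minimal in $\mathbb{R}^n$, at most $n+1$ of its boxes meet at $(p,t)$. The problem is that $n+1>4$, so this bound is too weak.

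To fix it, I would use the genericity of $t$ more strongly. Near $(p,t)$, every box of $\mathcal{P}$ is locally a product with $\mathbb{R}^{n-3}$. The local structure at $(p,t)$ is then the product of the $3$-dimensional slice structure at $p$ with $\mathbb{R}^{n-3}$. The count of $n+1$ regions in dimension $n$ becomes at most $3+1=4$ regions in the slice. This is the first lemma I would verify carefully. With it, the slice is a minimal partition of $R_0$ containing $\mathcal{S}$, a contradiction.

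\emph{The case $n=3$, the main obstacle.} I expect this to be the hardest step. My plan is a local analysis of box corners in a minimal partition of a region in $\mathbb{R}^3$.

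At a corner $c$ of a box $B$, the seven remaining octants at $c$ must be covered by at most $3$ further boxes. I would classify the few possible local pictures at $c$, up to symmetry, where the remaining boxes respectively fill a half-space, a quarter-space and an octant, in a forced nested pattern.

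Next I would design $\mathcal{S}$ as a $3$-dimensional ``pinwheel'' of boxes surrounding a small central cavity. The boxes would be arranged with a cyclic symmetry of order $3$ about the main diagonal. Each box's face would block the cavity in a way that rotates the preferred splitting direction. Any box of $\mathcal{P}$ inside the cavity, at its extreme corner, would then have to be compatible with all three rotated constraints simultaneously.

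Using the corner classification, I would propagate the forced orientations around the cycle. This would produce a point where five regions meet, a contradiction. The risk is that my first choice of pinwheel still admits an unexpected filling. If so, I would enlarge $\mathcal{S}$ by adding more blocking boxes until the case analysis closes.
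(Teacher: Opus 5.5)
Your reduction from $n\ge 3$ to $n=3$ is the paper's argument: slice at a non-level. What transfers to the slice is the identity $\nu=\beta+1$, because at a generic slice both $\nu$ and $\beta$ are unchanged. The bound "at most $n+1$" would not transfer. The other two parts of your proposal have genuine gaps.

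\textbf{The case $n=2$.} Your claim that every new contact point is a $T$-junction or a corner is false. An extension can stop at a point of an earlier drawn segment where another drawn segment already abuts from the opposite side. Take $R=[0,10]^2$ and $\mathcal{S}=\{[1,2]^2,[7,8]\times[1,2],[4,5]\times[5,6]\}$. First extend from $(4,5)$ down to $(4,0)$. Then extend from $(7,2)$ leftward and from $(2,2)$ rightward. Both horizontal extensions stop at $(4,2)$, which becomes a $+$-crossing where four rectangles meet.

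The free choice of which edge to extend is what breaks your argument. The paper extends only vertical sides. Then every horizontal boundary segment lies on a side of $R$ or of a box of $\mathcal{S}$, and that box occupies one side of it. So a $+$ at an interior point would require two boxes of $\mathcal{S}$ sharing a corner. This is excluded because the boxes are disjoint as closed sets. That is also the correct justification for your preliminary remark. Interior-disjoint squares such as $[0,1]^2$ and $[1,2]^2$ do meet corner-to-corner and force four regions there.

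\textbf{The case $n=3$.} Here there is no proof: no configuration is fixed, and the case analysis is only promised. What is missing is an object that carries constraints from one box of $\mathcal{S}$ to a distant one. Classifying local pictures at single corners does not supply this. The paper uses horizontal maximal surfaces and three facts about them.
\begin{enumerate}
\item[(i)] In a minimal partition each maximal surface is a rectangle. At a reflex corner there would be at least two boxes above, two below, and a fifth crossing the level, while $\beta=3$.
\item[(ii)] Each such rectangle $M$ on level $d$ satisfies $\partial M\times[d_1,d_2]\subseteq\bigcup_{P\in\mathcal{P}}\partial P$ for some $d_1<d<d_2$.
\item[(iii)] At the top $d_2$ of the highest such wall, $M$ is strictly enlarged. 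Either a maximal surface on level $d_2$ strictly contains it, or there is a new wall from $d_2$ upward. That wall sits over a rectangle built from, and containing, all maximal surfaces on level $d_2$ whose interiors meet $\Int(M)$.
\end{enumerate}
The configuration is $P_0=[0,3]\times[0,2]\times[0,1]$, $P_1=[2,4]\times[3,4]\times[0,3]$ and $P_2=[0,1]\times[1,4]\times[2,4]$ in $[0,4]^3$. The surfaces grown from the top of $P_0$ stay inside $[0,4]\times[0,3]$ because of $P_1$, until height $2$. There they must absorb the maximal surface containing the bottom face of $P_2$, which reaches $y=4$, a contradiction.

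Your pinwheel intuition fits this configuration. But without something like (i)--(iii), there is nothing for your "propagation around the cycle" to run on.
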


The rest of the paper is organized as follows. In Section~\ref{sec:prelim} we define the basic notions and prove a general property of all regulated partitions of $\mathbb{R}^n$ which allows us to define minimality for regulated partitions and locally regulated partitions. In Section~\ref{sec:min} we give a canonical description of minimal locally regulated partitions (Theorem~\ref{thm:minimalrep}) and use it to prove the topological regularity property of minimal locally regulated partitions (Theorem~\ref{union}). In Section~\ref{sec:minR} we define and characterize minimal regulated partitions of $\mathbb{R}^n$, and show their existence. In Section~\ref{sec:4} we analyze minimal (locally) regulated partitions and prove the second part of Theorem~\ref{thm:main2}. In Section~\ref{sec:6} we show that for any $n\geq 2$, considering the continuous action of $\mathbb{Z}^n$ on the free part of the Bernoulli shift space $2^{\mathbb{Z}^n}$, there is a clopen regulated partition where the regulation number of all points are bounded by $3\cdot 2^{n-2}$ (Theorem~\ref{thm:gamma}). In Section~\ref{sec:7} we improve this bound to $5$ for dimension $3$. Section~\ref{sec:8} is the main section in which we prove the second part of Theorem~\ref{thm:main1}. Finally, in Section~\ref{sec:9} we make some more observations about minimal regulated partitions of $\mathbb{R}^n$ in general, and sketch an alternative forcing proof of the main result. 

{\it Acknowledgments.} The authors thank Andrew Marks for helpful discussions on the topic of this paper.

\section{Regulated Partitions of $\mathbb{R}^n$\label{sec:prelim}}

Throught the paper we fix $n\geq 1$, let $\vec{0}$ be the origin of $\R^n$, and let $e_1,\dots, e_n$ denote the standard unit vectors in $\R^n$:
$$ e_i=(0,\dots, 0, 1, 0,\dots, 0), $$
where the only $1$ appears in the $i$-th coordinate. For any $x\in\R^n$ and $1\leq i\leq n$, let $\pi_i(x)=\pi_{e_i}(x)=\langle e_i, x\rangle$ be the $i$-th coordinate of $x$. For any subset $A\subseteq \{1,\dots, n\}$, let $V_A=\{e_i\colon i\in A\}$ and let 
$$\pi_A=\pi_{V_A}\colon \R^n\to\R^{A}\cong \R^{|A|}$$ be the projection map given by
$$ \pi_A(x)=\pi_{V_A}(x)=(\pi_i(x))_{i\in A}=(\pi_v(x))_{v\in V_A}. $$

For $0\leq k\leq n$, a {\em $k$-dimensional rectangle} in $\R^n$ is a set of the form
$$ R=I_1\times \cdots \times I_n, $$
where
\begin{enumerate}
\item[(i)] each $I_i$ is of the form $[a_i,b_i]$ for real numbers $a_i\leq b_i$;
\item[(ii)] for exactly $k$ many $i$, $I_i$ is not a singleton.
\end{enumerate}
Thus in the degenerate case, a $0$-dimensional rectangle is just a point.

If $R$ is an $n$-dimensional rectangle in $\R^n$, we denote its interior as $\Int(R)$. Thus for
$$ R=[a_1,b_1]\times \cdots \times[a_n,b_n],$$
where $a_i<b_i$ for all $i$,
$$ \Int(R)=(a_1,b_1)\times\cdots \times(a_n,b_n). $$
We also let $\partial R$ denote the boundary of $R$, i.e., $\partial R=R\setminus \Int(R)$. $\partial R$ can be written in a unique way as a union of $2n$ many distinct $(n-1)$-dimensional rectangles in $\R^n$, which we call {\em faces} of $R$. For each face $F$ of $R$, the {\em normal vector} of $F$ is the unique unit vector $e_i$ perpendicular to all vectors in $F$; the {\em direction} of $F$ is the unique vector of the form $+e_i$ or $-e_i$, whichever is pointing to the interior of $R$ from $F$, where $e_i$ is the normal vector of $F$. If two distinct faces of $R$ have the same normal vector, then they are  parallel and they have the opposite directions; otherwise they are perpendicular. Any set of pairwise perpendicular faces of $R$ has a nonempty intersection. If $\mathcal{F}$ is a nonempty set of pairwise perpendicular faces of $R$ and $|\mathcal{F}|=k$, then $1\leq k\leq n$ and the intersection $\bigcap \mathcal{F}$ is an $(n-k)$-dimensional rectangle in $\R^n$. For each $x\in R$, we define the {\em boundary rank} of $x$ with respect to $R$, denoted $\beta_R(x)$, to be the largest $0\leq k\leq n$ such that there is a set $\mathcal{F}$ of pairwise perpendicular faces of $R$ with $|\mathcal{F}|=k$ and $x\in\bigcap\mathcal{F}$. Thus $\beta_R(x)=0$ iff $x\in\Int(R)$, and if $x\in\partial R$, then $1\leq \beta_R(x)\leq n$. For $0\leq m\leq n-1$, the {\em $m$-boundary} of $R$ is the set of all $x\in R$ with $\beta_R(x)=n-m$. These geometric concepts can be easily described algebraically as follows. If
$$ R=[a_1,b_1]\times\cdots \times [a_n,b_n]$$
where $a_i<b_i$ for all $1\leq i\leq n$, then $R$ is an $n$-dimensional rectangle in $\R^n$. A face of $R$ is of the form
$$ \{x\in R\colon \pi_i(x)=a_i\} \mbox{ or } \{x\in R\colon \pi_i(x)=b_i\} $$
for some $1\leq i\leq n$; the direction of the former face is $+e_i$ and that of the latter face is $-e_i$. For each $x\in R$, define the {\em boundary incidence sequence} of $x$ with respect to $R$ to be
$$ B_R(x)=(\beta_1,\dots, \beta_n) $$
where for each $1\leq i\leq n$,
$$ \beta_i=\left\{\begin{array}{ll} +1, &\mbox{ if $\pi_i(x)=a_i$,} \\ -1, & \mbox{ if $\pi_i(x)=b_i$,}\\
0, &\mbox{ otherwise.}\end{array}\right.
$$
Then $\beta_R(x)$ is the number of nonzero entries of $B_R(x)$.

\begin{defn}\label{def:regulatedpartition} Let $n\geq 1$. A {\em regulated cover} of $\R^n$ is a set $\sP$ of $n$-dimensional rectangles in $\R^n$ such that
\begin{enumerate}
\item there is a discrete subset $D\subseteq \R$ and a constant $d>0$ (called a {\em discreteness constant} for $\sP$) such that
\begin{itemize}
\item for any distinct $r_1,r_2\in D$, $|r_1-r_2|>d$, and
\item each $R\in \sP$ is of the form
$$ [a_1,b_1]\times\cdots\times[a_n,b_n] $$
where $a_i,b_i\in D$ and $a_i<b_i$ for all $1\leq i\leq n$;
\end{itemize}
\item for every $x\in \R^n$ there is $R\in\sP$ such that $x\in R$.
\end{enumerate}
A regulated cover $\sP$ is a {\em regulated partition} if it satisfies additionally
\begin{enumerate}
\item[(3)] if $R_1, R_2\in\sP$ are distinct then $\Int(R_1)\cap \Int(R_2)=\varnothing$.
\end{enumerate}
\end{defn}

A collection which satisfies (2) and (3) is called a {\em rectangulation} of $\mathbb{R}^n$. Clause (3) in the definition of regulated partition is equivalent to
\begin{enumerate}
\item[(3')] if $R_1, R_2\in\sP$ then either $R_1\cap R_2=\varnothing$ or $R_1\cap R_2$ is a $k$-dimensional rectangle in $\R^n$ for some $k<n$.
\end{enumerate}

\begin{defn} Let $n\geq 1$ and let $R$ be an $n$-dimensional rectangle in $\R^n$. A {\em locally regulated cover} of $R$ is a finite set $\sP$ of $n$-dimensional rectangles in $\R^n$ such that $\bigcup \sP=R$. A locally regulated cover $\sP$ of $R$ is a {\em locally regulated partition} of $R$ if for any distinct $R_1,R_2\in\sP$, $\Int(R_1)\cap\Int(R_2)=\varnothing$.
\end{defn}

The following lemma is immediate from the definitions.

\begin{lem}\label{lem:regulatedvslocal} If $\sP$ is a regulated cover (partition) of $\R^n$ and $R$ is an $n$-dimensional rectangle, then
$$ \sQ=\{P\cap R\colon P\in\sP \mbox{ and $P\cap R$ is an $n$-dimensional rectangle}\} $$
is a locally regulated cover (partition) of $R$.
\end{lem}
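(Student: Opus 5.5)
The plan is to verify the two defining conditions of a locally regulated cover directly, and then the extra disjointness condition for partitions. First, $\sQ$ is finite. Since $R$ is compact, choose a large cube $C$ containing $R$. By clause (1) of Definition~\ref{def:regulatedpartition}, $D$ is $d$-separated, so only finitely many points of $D$ lie in any bounded interval. Any $P\in\sP$ that meets $R$ has each side $[a_i,b_i]$ meeting the $i$-th side interval of $R$. Since $a_i<b_i$ are consecutive-or-further points of $D$, this does not immediately bound $P$: a priori a rectangle could be very long.

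Here I would use finiteness of the rectangles differently. Each $P$ meeting $R$ contains a point of $R$. The key issue is that infinitely many long, thin rectangles might all meet $R$. In the partition case this is excluded. Each $P$ with $P\cap R$ $n$-dimensional contains, in $\Int(P)\cap\Int(R)$, an open box with sides of length at least $d$ in every coordinate where $R$'s side permits. Concretely, $P\cap R$ is a rectangle whose endpoints lie in $D\cup\{\text{endpoints of }R\}$, a finite set when restricted to the bounded region $R$. Hence there are only finitely many possible sets $P\cap R$. In the partition case, distinct $P$ give interior-disjoint pieces, so the pieces are distinct and finitely many.

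In the cover case I would instead read $\sQ$ as a set of rectangles $P\cap R$ (not indexed by $P$). Its elements are rectangles with vertices among finitely many coordinates in $(D\cap[\text{sides}])\cup\{\text{endpoints}\}$, so $\sQ$ is finite. Each $P\cap R$ in $\sQ$ is by definition an $n$-dimensional rectangle contained in $R$.

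Next comes the union. Clearly $\bigcup\sQ\subseteq R$. For the reverse inclusion, let $x\in\Int(R)$. The finitely many hyperplanes $\{\pi_i=r\}$ with $r\in D$ meeting $R$ cut $R$ into finitely many closed cells. Pick $y$ in the interior of a cell whose closure contains $x$ and which is near $x$. By clause (2), some $P$ contains $y$. Since $y$ avoids all the hyperplanes through $D$, $y\in\Int(P)$, so $P$ contains the entire cell. Hence $P\cap R$ has nonempty interior and is $n$-dimensional, and it contains $x$. Since every cell is covered this way, $R=\bigcup\sQ$; points of $\partial R$ are handled identically, using that they lie in the closure of some cell.

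Finally, for partitions, distinct elements of $\sQ$ are of the form $P_1\cap R$ and $P_2\cap R$ with $P_1\neq P_2$. Their interiors lie in $\Int(P_1)\cap\Int(P_2)=\varnothing$. The only point needing care, and the main obstacle, is the finiteness argument. This rests on the $d$-separation of $D$ giving only finitely many coordinates inside the bounded box $R$, together with the cell decomposition used for covering.
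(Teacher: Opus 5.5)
Your proof is correct. It is a careful write-up of the direct verification that the paper dismisses as immediate from the definitions: finiteness of $\sQ$ comes from the finitely many admissible endpoints (points of $D$ lying in the side intervals of $R$, together with the endpoints of $R$), and disjointness of interiors is inherited from $\sP$. The one step that is not literally trivial is that every $x\in R$, including $x\in\partial R$, lies in some $P$ with $P\cap R$ genuinely $n$-dimensional, and your cell-decomposition argument handles it correctly. The abandoned false starts in your first two paragraphs can simply be deleted.
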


\begin{defn} Let $R$ be an $n$-dimensional rectangle in $\R^n$ and let $x\in \Int(R)$.
A locally regulated cover (partition) $\sP$ of $R$ is said to be {\em about $x$} if for all $P\in\sP$ and for all faces $F$ of $P$, either $x\in F$ or $F$ is contained in a face of $R$.
\end{defn}

It is easy to see that if $\sP$ is a locally regulated cover of $R$ about $x$ then $x\in \bigcap\sP$. Moreover, if $\sP$ is a locally regulated partition of $R$, then it is about $x$ iff $x\in \bigcap\sP$. 

\begin{lem} \label{lem:localvsabout} If $R$ is an $n$-dimensional rectangle in $\R^n$, $x\in \Int(R)$, and $\sP$ is a locally regulated cover (partition) of $R$, then there is an $n$-dimensional rectangle $R_x$ such that $R_x\subseteq R$ and
$$ \sQ=\{P\cap R_x\colon P\in\sP\} $$
is a locally regulated cover (partion) of $R_x$ about $x$.
\end{lem}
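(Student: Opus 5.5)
We shrink $R$ to a small box around $x$ that avoids every hyperplane spanned by a face of $\sP$ unless that hyperplane passes through $x$. Since $\sP$ is finite, only finitely many real numbers occur as coordinates of faces of members of $\sP$. For each $1\leq i\leq n$, let $C_i$ be the finite set of values $c$ such that some $P\in\sP$ has a face contained in the hyperplane $\{y\colon \pi_i(y)=c\}$. Let $\epsilon>0$ be smaller than $|c-\pi_i(x)|$ for every $i$ and every $c\in C_i$ with $c\neq\pi_i(x)$. Since $x\in\Int(R)$, we can also make $\epsilon$ smaller than the distance from $x$ to $\partial R$. Put
$$ R_x=[\pi_1(x)-\epsilon,\pi_1(x)+\epsilon]\times\cdots\times[\pi_n(x)-\epsilon,\pi_n(x)+\epsilon]\subseteq R. $$

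Next we check that $\sQ$ is a locally regulated cover (partition) of $R_x$. Since $\bigcup\sP=R\supseteq R_x$, we get $\bigcup\sQ=R_x$, and $\sQ$ is finite. The delicate point is that each $P\cap R_x$ should be an $n$-dimensional rectangle; a priori it could be empty or lower dimensional. To avoid this, we interpret $\sQ$ as consisting of the nonempty $n$-dimensional intersections, as in Lemma~\ref{lem:regulatedvslocal}.

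In fact, every nonempty intersection $P\cap R_x$ is $n$-dimensional. Write $P=\prod_i[a_i,b_i]$ with $a_i,b_i\in C_i$. By the choice of $\epsilon$, in each coordinate either $a_i=\pi_i(x)$ or $|a_i-\pi_i(x)|>\epsilon$, and likewise for $b_i$. Hence $[a_i,b_i]\cap[\pi_i(x)-\epsilon,\pi_i(x)+\epsilon]$ is one of the following: empty; $\{\pi_i(x)\}$, which would force $a_i=b_i$ and so cannot happen; $[\pi_i(x),\pi_i(x)+\epsilon]$; $[\pi_i(x)-\epsilon,\pi_i(x)]$; or the whole interval. Therefore each nonempty $P\cap R_x$ is $n$-dimensional. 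Moreover, each of its faces either lies in a hyperplane $\pi_i=\pi_i(x)$, so it contains $x$ because every coordinate interval of $P\cap R_x$ contains $\pi_i(x)$, or lies in $\pi_i=\pi_i(x)\pm\epsilon$ and is therefore contained in a face of $R_x$.

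For partitions, interiors of distinct $P\cap R_x$ are contained in the interiors of distinct members of $\sP$, so they remain disjoint. Distinct members of $\sP$ can have the same intersection only if that intersection has empty interior, which the argument above rules out.

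The main obstacle is the bookkeeping showing that no face of $P\cap R_x$ lands strictly inside $R_x$ without passing through $x$. The choice of $\epsilon$ against the finite coordinate sets $C_i$ handles exactly this, and the remaining steps are routine.
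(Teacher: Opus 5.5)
Your proof is correct, but it follows a different route from the paper's.

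\textbf{The paper's route.} The paper argues through neighborhoods. It notes that $U=\bigcup\{P\in\sP\colon x\in P\}$ and $V=\bigcap\{P\in\sP\colon x\in\Int(P)\}$ (with $V=\R^n$ if no member has $x$ in its interior) are neighborhoods of $x$. It then takes any box $R_x$ with $x\in\Int(R_x)\subseteq R_x\subseteq U\cap V$ and leaves the ``about $x$'' verification to the reader.

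\textbf{Your route.} You instead choose $\epsilon$ smaller than every gap $|c-\pi_i(x)|$ with $c\in C_i\setminus\{\pi_i(x)\}$. Then each member's coordinate interval, cut down to $R_x$, is empty, a half-interval with endpoint $\pi_i(x)$, or the whole interval. The conclusion becomes a finite coordinatewise case check. You also make explicit a convention that either approach needs: $\sQ$ consists of the nonempty, hence $n$-dimensional, intersections, as in Lemma~\ref{lem:regulatedvslocal}.

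\textbf{Comparison.} The paper's choice is shorter and coordinate-free. For partitions it does work, but only after an argument it omits:
\begin{itemize}
\item a box inside $U$ cannot cross a face of some $P\ni x$ that misses $x$ without forcing $\Int(P)$ to meet the interior of another member;
\item members not containing $x$ meet $R_x$ only in $\partial R_x$.
\end{itemize}
For covers, containment in $U\cap V$ is not enough. Take $\sP=\{[-10,10],[0,1]\}$, $x=0$ and $R_x=[-2,2]$. The piece $[0,1]\cap R_x=[0,1]$ has the face $\{1\}$, which neither contains $x$ nor lies in a face of $R_x$. Your $\epsilon$-choice treats covers and partitions uniformly, so it is the more robust argument of the two.
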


\begin{proof} Note that
$$ U=\bigcup\{P\in\sP\colon x\in P\} $$
is a neighborhood of $x$. If $\{P\in\sP\colon x\in\Int(P)\}$ is nonempty, then 
$$ V=\bigcap\{P\in\sP\colon x\in \Int(P)\} $$
is also a neighborhood of $x$; otherwise we can let $V=\R^n$. 
Then we can let $R_x$ be an $n$-dimensional rectangle such that
$$ x\in\Int(R_x)\subseteq R_x\subseteq U\cap V. $$
\end{proof}

\begin{example} Let $R_0=[-1,1]^n$ and $x_0=\vec{0}$. Let $\mathcal{P}_0$ be the set of all $n$-dimensional rectangles of the form
$$ I_1\times\cdots \times I_n $$
where each $I_i$ is either $[-1,0]$ or $[0,1]$. Then $|\mathcal{P}_0|=2^n$ and $\mathcal{P}_0$ is a locally regulated partition of $R_0$ about $x_0$. On the other hand, if $\mathcal{P}$ is any locally regulated cover of $R_0$ about $x_0$, then any element of $\mathcal{P}$ has the form
$$ J_1\times\cdots\times J_n$$
where each $J_i$ is either $[-1,0]$, $[0,1]$ or $[-1,1]$. More generally, for any $n$-dimensional rectangle $R$ and $x\in\Int(R)$, there is an obvious one-one correspondence between locally regulated covers of $R$ about $x$ and those of $R_0$ about $x_0$ via a translation (from $x$ to $x_0$) and rescaling (from the side lengths $R$ to those of $R_0$).
\end{example}

\begin{defn} Let $R$ be an $n$-dimensional rectangle in $\R^n$, let $\mathcal{P}$ be a locally regulated cover (partition) of $R$, and let $x\in \Int(R)$.
\begin{enumerate}
\item[(i)] A standard unit vector $e_i$ is called a {\em boundary vector} of $x$ with respect to $\sP$ if there is $P\in\mathcal{P}$ such that $x$ is contained in a face of $P$ whose normal vector is $e_i$, or equivalently, $x\in P$ and $\pi_i(B_P(x))\neq 0$.
\item[(ii)] The {\em boundary rank} of $x$ with respect to $\sP$, denoted $\beta_{\sP}(x)$, is defined to be the number of distinct boundary vectors of $x$ with respect to $\sP$.
\item[(iii)] The {\em partition number} of $x$ with respect to $\sP$, denoted $\nu_{\sP}(x)$, is the number of distinct elements of $\sP$ containing $x$.
\end{enumerate}
Similarly define these notions for regulated covers (partitions) $\sP$ of $\R^n$ and $x\in \R^n$.
\end{defn}

For a locally regulated cover (partition) about a point, we define the following algebraic representation for the information of interest.

\begin{defn} Let $R$ be an $n$-dimensional rectangle in $\R^n$, $x\in \Int(R)$, and $\sP$ be a locally regulated cover of $R$ about $x$. Suppose $|\sP|=m$. Let $P_1,\dots, P_m$ be an enumeration of $\sP$. The $m\times n$ matrix $M=(a_{i,j})_{1\leq i\leq m, 1\leq j\leq n}$ where
$$ a_{i,j}=\pi_{e_j}(B_{P_i}(x))$$
is called a {\em principal matrix} of $x$ with respect to $\sP$. For a principal matrix of $x$ with respect to $\sP$, we view the columns as indexed by the vectors $e_1,\dots, e_n$ and the rows as indexed by the elements of $\mathcal{P}$.
\end{defn}

The following observation is immediate.

\begin{lem}\label{lem:boundaryvector} Let $R$ be an $n$-dimensional rectangle in $\R^n$, $x\in \Int(R)$, and $\mathcal{P}$ be a locally regulated cover of $R$ about $x$. Let $M=(a_{i,j})_{m\times n}$ be a principal matrix of $x$ with respect to $\sP$. Then the following hold:
\begin{enumerate}
\item[(i)] For any $1\leq j\leq n$, $e_j$ is a boundary vector of $x$ iff there is $1\leq i\leq m$ such that $a_{i,j}\neq 0$.
\item[(ii)] The boundary rank of $x$ with respect to $\sP$, $\beta_{\sP}(x)$, is the number of columns in $M$ that are not constantly zero.
\end{enumerate}
\end{lem}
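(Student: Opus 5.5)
Both parts of the lemma come straight from unwinding the definitions of boundary vector and of the principal matrix, so the plan is just to line up the two descriptions of the same data. Fix the enumeration $P_1,\dots,P_m$ of $\sP$ that gives $M$. Then $a_{i,j}=\pi_{e_j}(B_{P_i}(x))$ is the $j$-th entry of the boundary incidence sequence of $x$ with respect to $P_i$. Since $\sP$ is about $x$, we have $x\in\bigcap\sP$, so $x\in P_i$ for every $i$. Hence $B_{P_i}(x)$ is defined for every row, and every element of $\sP$ is listed as some row.

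For (i), I would invoke the definition of boundary vector in its second, equivalent form. The vector $e_j$ is a boundary vector of $x$ with respect to $\sP$ iff there is $P\in\sP$ with $x\in P$ and $\pi_j(B_P(x))\neq 0$. Every $P\in\sP$ is some $P_i$, and $x\in P_i$ holds automatically. So this condition says exactly that $a_{i,j}\neq 0$ for some $i$.

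I would also briefly justify the stated equivalence with the face formulation, to be safe. Write $P=\prod[a_k,b_k]$. By the algebraic description of faces, $x$ lies in a face of $P$ with normal vector $e_j$ iff $\pi_j(x)\in\{a_j,b_j\}$. By the definition of $B_P(x)$, this holds iff its $j$-th entry is $\pm1$, that is, nonzero.

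For (ii), $\beta_{\sP}(x)$ is by definition the number of distinct boundary vectors of $x$. The columns of $M$ are indexed bijectively by $e_1,\dots,e_n$. By (i), $e_j$ is a boundary vector iff column $j$ is not constantly zero, so counting gives (ii). There is no real obstacle here. The only point needing care is that the "about $x$" hypothesis guarantees $x$ lies in every $P_i$, so that the matrix entries are meaningful and every rectangle in $\sP$ appears as a row.
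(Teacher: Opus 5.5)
Your proof is correct and takes the same approach as the paper, which states the lemma without proof as an immediate observation from the definitions. You single out the one point that needs checking: being about $x$ forces $x\in P_i$ for every row, so $a_{i,j}$ is just the $j$-th entry of $B_{P_i}(x)$, (i) is the second form of the definition of boundary vector, and (ii) follows by counting columns.
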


\begin{defn} Let $M=(a_{i,j})_{m\times k}$ be a matrix whose entries are elements of $\{0,\pm1\}$. For each $1\leq j\leq k$, let $M_{j,+}$ and $M_{j,-}$ be submatrices of $M$ defined as follows. $M_{j,+}$ is obtained from $M$ by removing the $j$-th column as well as all the $i$-th rows where $a_{i,j}=-1$. Thus $M_{j,+}$ is an $m'\times (k-1)$ matrix with $m'\leq m$. Similarly, $M_{j,-}$ is obtained from $M$ by removing the $j$-th column as well as all the $i$-th rows where $a_{i,j}=+1$.
\end{defn}

\begin{defn}  Let $R$ be an $n$-dimensional rectangle in $\R^n$, $x\in \Int(R)$, and $\mathcal{P}$ be a locally regulated cover of $R$ about $x$.
For each $1\leq j\leq n$, let $\sP_{j,+}=\{P\in\sP\colon \pi_j(B_P(x))\neq -1\}$ and $\sP_{j,-}=\{P\in\sP\colon \pi_j(B_P(x))\neq +1\}$.
\end{defn}

\begin{lem}\label{lem:principal0}  Let $R$ be an $n$-dimensional rectangle in $\R^n$, $x\in \Int(R)$, and $\mathcal{P}$ be a locally regulated cover (partition) of $R$ about $x$. Let $M=(a_{i,j})_{m\times n}$ be a principal matrix of $x$ with respect to $\sP$. Then for any $1\leq j\leq n$, letting $A=\{1,\dots, n\}\setminus \{j\}$, we have the following:
\begin{enumerate}
\item[(i)] Both $\pi_A(\sP_{j,+})$ and $\pi_A(\sP_{j,-})$ are locally regulated covers (partitions) of $\pi_A(R)$ about $\pi_A(x)$;
\item[(ii)] $M_{j,+}$ and $M_{j,-}$ are principal matrices of $\pi_A(x)$ with respect to $\pi_A(\sP_{j,+})$ and $\pi_A(\sP_{j,-})$ respectively.
\end{enumerate}
\end{lem}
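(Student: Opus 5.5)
By symmetry (reflecting the $j$-th coordinate about $\pi_j(x)$ interchanges the roles of $+$ and $-$), it suffices to treat $\sP_{j,+}$ and $M_{j,+}$. Write $R=[a_1,b_1]\times\cdots\times[a_n,b_n]$ and $x=(x_1,\dots,x_n)$ with $a_i<x_i<b_i$. Because $\sP$ is about $x$, each $P\in\sP$ has the form $J_1\times\cdots\times J_n$ with each $J_i\in\{[a_i,x_i],[x_i,b_i],[a_i,b_i]\}$. Moreover $\pi_i(B_P(x))$ equals $+1$, $-1$ or $0$ according as $J_i=[x_i,b_i]$, $[a_i,x_i]$ or $[a_i,b_i]$. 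So $\sP_{j,+}$ consists exactly of those $P$ with $J_j\ni$ points on the $+$ side, i.e.\ $J_j\in\{[x_j,b_j],[a_j,b_j]\}$. The proof is then a direct verification using this coordinate description.

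For (i), $\pi_A(P)=\prod_{i\in A}J_i$ is an $(n-1)$-dimensional rectangle in $\R^A$ contained in $\pi_A(R)$. Each of its faces either contains $\pi_A(x)$ or lies in a face of $\pi_A(R)$, since every endpoint of $J_i$ is $a_i$, $x_i$ or $b_i$. For the covering property, take $y\in\pi_A(R)$ and consider the point $z\in R$ with $\pi_A(z)=y$ and $z_j=x_j+\varepsilon$ for small $\varepsilon>0$. Some $P\in\sP$ contains $z$, and its $J_j$ must contain $x_j+\varepsilon$, so $P\in\sP_{j,+}$ and $y\in\pi_A(P)$. Finiteness is inherited from $\sP$.

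For the partition case, suppose $P\neq P'$ in $\sP_{j,+}$ had $\pi_A(P)$ and $\pi_A(P')$ with intersecting interiors. Both $J_j$ and $J'_j$ contain $[x_j,b_j]$, so the open set $\Int(\pi_A(P))\cap\Int(\pi_A(P'))\times(x_j,b_j)$ lies in $\Int(P)\cap\Int(P')$, which is a contradiction. One subtlety is that distinct $P$ may have equal projections, but the same argument shows they cannot. Hence the projection is injective on $\sP_{j,+}$ and gives a partition. In the cover case, if duplicates occur we simply read $\pi_A(\sP_{j,+})$ as the set of images.

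For (ii), the entry of $M_{j,+}$ in row $P$ and column $e_i$ ($i\neq j$) is $\pi_i(B_P(x))$. This depends only on $J_i$ and $x_i$, and it coincides with the $i$-th entry of $B_{\pi_A(P)}(\pi_A(x))$. The rows retained in $M_{j,+}$ are exactly the rows for $P\in\sP_{j,+}$. Since projection is injective in the partition case, $M_{j,+}$ is a principal matrix for the enumeration induced from that of $\sP$.

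The only step needing care is the cover case in (ii), where two rows could project to the same rectangle. I would handle it by noting that the definitions allow us to interpret the enumeration with multiplicity, or by deleting duplicate rows, which does not affect anything used later. Everything else is routine coordinate bookkeeping.
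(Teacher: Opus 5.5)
Your proposal is correct and takes the paper's approach: the paper just says the lemma is immediate from the definitions, and your description of each $P\in\sP$ as a product of intervals $J_i\in\{[a_i,x_i],[x_i,b_i],[a_i,b_i]\}$ is exactly the unpacking that justifies this. Your caveat about the cover case is a real issue, not pedantry. For example, for the cover $\{[0,1]\times[-1,1],[-1,1]^2\}$ of $[-1,1]^2$ about $\vec{0}$ with $j=1$, both rectangles project to $[-1,1]$, so reading covers as indexed families with multiplicity, as the paper implicitly does (compare Lemma~\ref{lem:separativecover}), is the right fix.
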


\begin{proof} Immediate from the definitions.
\end{proof}

For a principal matrix of a point $x$ with respect to a locally regulated partition about $x$, we have the following easy observations without proof.

\begin{lem}\label{lem:principal1} Let $R$ be an $n$-dimensional rectangle in $\R^n$, $x\in \Int(R)$, and $\mathcal{P}$ be a locally regulated partition of $R$ about $x$. Let $M=(a_{i,j})_{m\times n}$ be a principal matrix of $x$ with respect to $\sP$. Then the following hold:
\begin{enumerate}
\item[(i)] For any $1\leq i\leq m$ and $1\leq j\leq k$, $a_{i,j}\in\{0,\pm 1\}$;
\item[(ii)] For any $1\leq j\leq k$, if there is $1\leq i\leq m$ such that $a_{i,j}\neq 0$, then there is $1\leq i'\leq m$ such that $a_{i',j}=-a_{i,j}$.
\end{enumerate}
\end{lem}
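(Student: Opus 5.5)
Both parts follow directly from the definitions, once we recall that $\sP$ being a locally regulated partition of $R$ about $x$ means $x\in\bigcap\sP$, that every face of every $P\in\sP$ either contains $x$ or lies in a face of $R$, and that $x\in\Int(R)$.

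For (i), the index $k$ in the statement should be read as $n$. Fix a row $i$ and a column $j$, and write $P_i=[a_1,b_1]\times\cdots\times[a_n,b_n]$ with $a_j<b_j$. By the definition of $B_{P_i}(x)$, the entry $a_{i,j}=\pi_{e_j}(B_{P_i}(x))$ is $+1$ if $\pi_j(x)=a_j$, is $-1$ if $\pi_j(x)=b_j$, and is $0$ otherwise. These cases are mutually exclusive since $a_j<b_j$, so the entry lies in $\{0,\pm1\}$. Nothing beyond the definition is needed here.

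For (ii), suppose without loss of generality that $a_{i,j}=+1$, so $P_i$ has $\pi_j(x)$ as the left endpoint of its $j$-th interval; the case $-1$ is symmetric. The plan is to push slightly from $x$ in the direction $-e_j$ and find a rectangle lying on the other side of the hyperplane $\pi_j=\pi_j(x)$. Concretely:
\begin{enumerate}
\item Choose a point $y$ with $\pi_l(y)$ strictly inside the $l$-th interval of $P_i$ for every $l\neq j$, chosen close to $\pi_l(x)$, and with $\pi_j(y)=\pi_j(x)-\varepsilon$ for small $\varepsilon>0$.
\item Because $x\in\Int(R)$, we have $y\in R$ for small enough choices, so $y\in P_{i'}$ for some $i'$, since $\sP$ covers $R$.
\item Since $x\in P_{i'}$ as well and $P_{i'}$ is convex, the $j$-th interval of $P_{i'}$ contains both $\pi_j(x)-\varepsilon$ and $\pi_j(x)$. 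So $a_{i',j}\in\{0,-1\}$.
\item Rule out $a_{i',j}=0$. In that case $\pi_j(x)$ would be interior to the $j$-th interval of $P_{i'}$. Near $x$ in the other coordinates, the rectangles $P_i$ and $P_{i'}$ would then share an open set: take points just on the $+e_j$ side of $x$ whose other coordinates lie in the intersection of the interiors of the $l$-th intervals of $P_i$ and $P_{i'}$. This contradicts disjointness of interiors.
\end{enumerate}

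The main obstacle is step (4): the nonempty open overlap. For each $l\neq j$, the $l$-th intervals of $P_i$ and $P_{i'}$ both contain $\pi_l(x)$ and are nondegenerate. They could in principle meet only at the point $\pi_l(x)$, so the interiors might fail to overlap.

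The fix is the choice of $y$ in step (1). Since $\pi_l(y)$ lies in the interior of the $l$-th interval of $P_i$, the points $\pi_l(x)$ and $\pi_l(y)$ both lie in both intervals, so the open interval between them is contained in both interiors. If $\pi_l(y)=\pi_l(x)$, which happens when $\pi_l(x)$ is already interior to $P_i$'s interval, we instead use an open neighbourhood of $\pi_l(x)$ inside $P_i$'s interval, intersected with the interior of $P_{i'}$'s interval. The latter still contains $\pi_l(x)$ or has it as an endpoint, so this intersection is still a nondegenerate open interval.

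Taking the product of these intervals over $l\neq j$, together with a small interval $(\pi_j(x),\pi_j(x)+\delta)$ in coordinate $j$, gives an open box inside $\Int(P_i)\cap\Int(P_{i'})$. This is a contradiction, so $a_{i',j}=-1=-a_{i,j}$.
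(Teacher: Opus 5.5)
Your proof is correct. Part (i) is just the definition of $B_{P_i}(x)$. For part (ii), you move to a point $y$ just across the hyperplane $\pi_j=\pi_j(x)$, with its other coordinates in the interior of $P_i$'s intervals. You then rule out a $0$ entry for a rectangle $P_{i'}\ni y$ by exhibiting an open box in $\Int(P_i)\cap\Int(P_{i'})$; note that $i'\neq i$ follows from your step (3).

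The paper states this lemma without proof as an easy observation, and your argument is the natural verification behind it. Step (4) becomes immediate if you first note that, since $\sP$ is \emph{about} $x$, every coordinate interval of every $P\in\sP$ is one of $[\alpha_l,\pi_l(x)]$, $[\pi_l(x),\beta_l]$, $[\alpha_l,\beta_l]$, where $[\alpha_l,\beta_l]=\pi_l(R)$. This is the normal form in the paper's Example with $R_0,x_0$, and it makes interiors unions of open orthant cells at $x$.
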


Lemmas~\ref{lem:principal0} and \ref{lem:principal1} motivate the following definition.

\begin{defn} Let $M=(a_{i,j})_{m\times k}$ be a matrix whose entries are elements of $\{0,\pm1\}$. Recursively define $M$ to be {\em separative} as follows. If $k=1$, then $M$ is separative iff $M$ is constantly zero or there are $1\leq i,i'\leq m$ such that $a_{i,1}a_{i',1}=-1$. If $k>1$, then $M$ is separative iff for every $1\leq j\leq k$, both $M_{j,+}$ and $M_{j,-}$ are separative.
\end{defn}

\begin{lem}\label{lem:separative} Let $R$ be an $n$-dimensional rectangle in $\R^n$, $x\in \Int(R)$, and $\mathcal{P}$ be a locally regulated partition of $R$ about $x$. Let $M$ be a principal matrix of $x$ with respect to $\sP$. Then $M$ is separative.
\end{lem}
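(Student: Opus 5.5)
We argue by induction on $n$, the dimension of the ambient space, following the recursive definition of separativity and transporting the geometric setting down one dimension at each step with Lemma~\ref{lem:principal0}.

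\emph{Base case $n=1$.} Here $R=[a,b]$ with $a<x<b$, and every $P\in\sP$ is an interval containing $x$ whose endpoints are either $x$ or endpoints of $R$. Suppose the single column of $M$ is not constantly zero, say some $P$ has $B_P(x)=+1$, so $P=[x,b']$ with $b'=b$. Since $\bigcup\sP=R$ and $a<x$, the points of $[a,x)$ must be covered. Some $P'\in\sP$ therefore contains points to the left of $x$, and it also contains $x$. If $P'$ were $[a,b]$, then $x\in\Int(P')\cap\Int(P)$, which contradicts the partition property. So $P'=[a,x]$ and its entry is $-1$. The case of an entry $-1$ is symmetric. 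This is exactly the content of Lemma~\ref{lem:principal1}(ii) in dimension one, and it shows $M$ is separative.

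\emph{Inductive step $n>1$.} Fix $1\leq j\leq n$ and let $A=\{1,\dots,n\}\setminus\{j\}$. By Lemma~\ref{lem:principal0}(i), in its partition version, $\pi_A(\sP_{j,+})$ and $\pi_A(\sP_{j,-})$ are locally regulated partitions of $\pi_A(R)$ about $\pi_A(x)$, and $\pi_A(x)\in\Int(\pi_A(R))$. By Lemma~\ref{lem:principal0}(ii), $M_{j,+}$ and $M_{j,-}$ are principal matrices of $\pi_A(x)$ with respect to these partitions. The induction hypothesis in dimension $n-1$ then makes both $M_{j,+}$ and $M_{j,-}$ separative. Since $j$ was arbitrary, $M$ is separative by definition.

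\emph{Main obstacle.} The only delicate point is the partition clause of Lemma~\ref{lem:principal0}(i), which the paper calls immediate. To project, we can take the slice $\{y\in R\colon \pi_j(y)=\pi_j(x)+\epsilon\}$ for small $\epsilon>0$. The members of $\sP$ meeting this slice are exactly those of $\sP_{j,+}$, because all faces pass through $x$ or lie in faces of $R$. Their intersections with the slice tile it with disjoint interiors. If needed, I would verify this briefly to make sure the projections of distinct rectangles do not have overlapping interiors and that distinct rectangles do not have equal projections.

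Equal projections are impossible: two members of $\sP_{j,+}$ with the same $A$-projection both meet the slice in the same $(n-1)$-dimensional set. Their interiors would then overlap near the slice, contradicting the partition property. This guarantees that the rows of $M_{j,+}$ correspond bijectively to the members of $\pi_A(\sP_{j,+})$, so $M_{j,+}$ is a principal matrix for that partition with no duplicated rectangles. The same argument applies to $\sP_{j,-}$ using the slice at $\pi_j(x)-\epsilon$.
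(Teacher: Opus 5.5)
Your proof is correct and is the paper's own argument: induction on $n$, with the base case supplied by Lemma~\ref{lem:principal1}(ii) and the inductive step by Lemma~\ref{lem:principal0}, whose partition clause you additionally verify via the slice and equal-projection argument. One small slip in the base case: $x\notin\Int(P)$ for $P=[x,b]$, so if $P'=[a,b]$ the interiors overlap on $(x,b)$ rather than at $x$, and this still gives the contradiction.
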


\begin{proof} By induction on $n$. For $n=1$ the lemma holds by Lemma~\ref{lem:principal1}. The general inductive case follows from Lemma~\ref{lem:principal0}.
\end{proof}

\begin{lem}\label{lem:separativecover} Let $M=(a_{i,j})_{m\times n}$ be a separative matrix whose entries are elements of $\{0,\pm1\}$. For each $1\leq i\leq m$, let
$$ P_i=I_1\times\cdots\times I_n $$
where 
$$ I_j=\left\{\begin{array}{ll} [0,1], & \mbox{ if $a_{i,j}=+1$,} \\ \mbox{$[-1,0]$}, & \mbox{ if $a_{i,j}=-1$,} \\ \mbox{$[-1,1]$}, & \mbox{ if $a_{i,j}=0$.}
\end{array}\right. $$
Then $\sP=\{P_i\colon 1\leq i\leq m\}$ is a locally regulated cover of $R_0=[-1,1]^n$ about $x_0=\vec{0}$, and $M$ is a principal matrix of $x$ with respect to $\sP$.
\end{lem}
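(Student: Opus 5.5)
We need to show that $\bigcup\sP=R_0$, that each $P_i$ is an $n$-dimensional rectangle inside $R_0$, that $\sP$ is about $x_0$, and that $M$ is a principal matrix. The last three are direct. Each $I_j$ is a nondegenerate closed interval contained in $[-1,1]$, so $P_i\subseteq R_0$ is $n$-dimensional. A face of $P_i$ with normal $e_j$ lies in $\{\pi_j=0\}$ when $a_{i,j}\neq 0$, and hence contains $x_0$; otherwise it lies in $\{\pi_j=\pm1\}$, which is contained in a face of $R_0$. So $\sP$ is about $x_0$. Finally, $\pi_j(B_{P_i}(\vec 0))$ equals $+1$ if $I_j=[0,1]$, equals $-1$ if $I_j=[-1,0]$, and equals $0$ if $I_j=[-1,1]$. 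This is exactly $a_{i,j}$, so $M$ is a principal matrix once we know $\sP$ is a cover.

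The real content is covering, which we prove by induction on $n$ using the recursive definition of separativity.

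\emph{Base case $n=1$.} If $M$ is constantly zero, every $P_i=[-1,1]$, and we need $m\geq 1$. The degenerate case $m=0$ (an empty matrix) should be excluded or treated as a convention, since the paper tacitly assumes $m\geq1$. Otherwise some rows have entries $+1$ and $-1$, so $[0,1]\cup[-1,0]$ is covered.

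\emph{Inductive step.} Take $n>1$ and a point $y\in R_0$. Pick any $j$, say $j=n$, and let $A=\{1,\dots,n-1\}$.

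If $\pi_n(y)\geq 0$, consider $M_{n,+}$. It is separative by definition, and it is the matrix built from the rows $i$ with $a_{i,n}\neq -1$, with column $n$ deleted. By the induction hypothesis, the rectangles $\pi_A(P_i)$ for these rows cover $[-1,1]^{n-1}$. So some such $i$ has $\pi_A(y)\in\pi_A(P_i)$. Since $a_{i,n}\in\{0,+1\}$, the $n$-th interval of $P_i$ contains $[0,1]\ni\pi_n(y)$, and therefore $y\in P_i$.

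If $\pi_n(y)\leq 0$, the same argument works with $M_{n,-}$.

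The only subtle point, and what I expect to be the main obstacle, is making sure $M_{n,\pm}$ has at least one row, so that the induction hypothesis is meaningful. I will handle this by requiring $m\geq1$ in the statement, or equivalently by noting that the base-case definition of separativity must fail for an empty column. Concretely, I will read the $k=1$ condition as requiring a row to exist: an empty matrix is not separative, since "constantly zero" should entail $m\geq 1$. Under that reading, separativity of $M_{n,\pm}$ forces it to be nonempty by a trivial induction, because any separative matrix with $k>1$ has separative submatrices and so is nonempty. With this convention fixed, the induction closes and $\bigcup\sP=R_0$.
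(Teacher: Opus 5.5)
Your proof is correct and follows the paper's route: induction on $n$, applying the inductive hypothesis to the separative matrices $M_{j,+}$ and $M_{j,-}$ (you take $j=n$, the paper takes $j=1$), lifting the resulting covers of $[-1,1]^{n-1}$ back along the $j$-th coordinate, and spelling out the aboutness and principal-matrix checks that the paper leaves implicit. Your concern about empty submatrices is justified, since the paper glosses over it and under a vacuous reading the statement fails for $m=0$; your convention is harmless, because for $m\geq 1$ separativity already rules out any column of $M$ being constantly $+1$ or constantly $-1$, so every $M_{j,\pm}$ is automatically nonempty.
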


\begin{proof} By induction on $n$. If $n=1$ the lemma is immediate from the definition of separativeness. Suppose $n>1$. Then $M_{1,+}$ and $M_{1,-}$ are both separative. Let $A=\{2,\dots, n\}$. Let $\sP_{1,+}=\{P\in\sP\colon a_{i,j}\neq -1\}$ and $\sP_{1,-}=\{P\in\sP\colon a_{i,j}\neq +1\}$. It is straightforward to check that $\pi_A(\sP_{1,+})$ is the collection of $(n-1)$-dimensional rectangles in $\mathbb{R}^A$ defined from $M_{1,+}$ as in the assumption of the lemma. By the inductive hypothesis, $\pi_A(\sP_{1,+})$ is a locally regulated cover of $\pi_A(R_0)$ about $\pi_A(x_0)$, and $M_{1,+}$ is a principal matrix of $\pi_A(x_0)$ with respect to $\pi_A(\sP_{1,+})$. Similarly, $\pi_A(\sP_{1,-})$ is a locally regulated cover of $\pi_A(R_0)$ about $\pi_A(x_0)$, and $M_{1,-}$ is a principal matrix of $\pi_A(x_0)$ with respect to $\pi_A(\sP_{1,-})$ . It follows that $\sP$ is a locally regulated cover of $R_0$ about $x_0$, and $M$ is a principal matrix of $x_0$ with respect to $\sP$.
\end{proof}

\begin{defn} Let $R$ be an $n$-dimensional rectangle in $\R^n$, $x\in \Int(R)$, and $\mathcal{P}$ be a locally regulated cover of $R$ about $x$. We say that $\sP$ is {\em separative} if a principal matrix of $x$ with respect to $\sP$ is separative.
\end{defn}

By Lemma~\ref{lem:separative}, any locally regulated partition about a point $x$ is a separative locally regulated cover about $x$.

\begin{thm}\label{thm:separative} Let $R$ be an $n$-dimensional rectangle in $\R^n$, let $x\in \Int(R)$, and let $\sP$ be a separative locally regulated cover of $R$ about $x$. Then $\nu_{\sP}(x)\geq \beta_{\sP}(x)+1$.
\end{thm}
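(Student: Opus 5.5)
The plan is to forget the geometry and prove a purely combinatorial statement about matrices. By definition, a principal matrix $M=(a_{i,j})_{m\times n}$ of $x$ with respect to $\sP$ has one row per element of $\sP$, so $m=\nu_{\sP}(x)$. By Lemma~\ref{lem:boundaryvector}(ii), $\beta_{\sP}(x)$ is the number $k$ of columns of $M$ that are not constantly zero. So it suffices to show: \emph{every separative $\{0,\pm1\}$-matrix with $m\geq 1$ rows and $k$ nonzero columns satisfies $m\geq k+1$.} Here $m\geq 1$ holds because $\sP$ covers $R\neq\varnothing$. I would prove this by induction on the number $n$ of columns, using only the recursive definition of separativeness. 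This is essentially the same pattern as the proof of Lemma~\ref{lem:separative}.

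For the base case $n=1$: if the column is zero, then $k=0$ and $m\geq1$. Otherwise separativeness gives two rows with entries $+1$ and $-1$, so $m\geq 2=k+1$.

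For the inductive step, if $k=0$ there is nothing to prove. Otherwise fix a nonzero column $j$ and split the rows into three sets: $P$ (entry $+1$ in column $j$), $N$ (entry $-1$) and $Z$ (entry $0$). Both $P$ and $N$ are nonempty. The matrix $M_{j,+}$ has rows $Z\cup P$, and $M_{j,-}$ has rows $Z\cup N$; both are separative with $n-1$ columns. Let $k_+$ and $k_-$ be their numbers of nonzero columns. Every nonzero column $l\neq j$ of $M$ is nonzero in $M_{j,+}$ or in $M_{j,-}$, because a row where it is nonzero lies in $Z\cup P$ or in $Z\cup N$. Let $c$ be the number of columns $l\neq j$ that vanish on $Z\cup P$, and hence are nonzero only on rows of $N$. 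Then $k=1+k_++c$. The induction hypothesis applied to $M_{j,+}$ gives $|Z|+|P|\geq k_++1$. It therefore suffices to prove $|N|\geq c+1$, since then $m=|Z|+|P|+|N|\geq k_++c+2=k+1$.

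The main obstacle is the inequality $|N|\geq c+1$, which says that the $N$-rows, restricted to the $c$ columns living only on $N$, behave like a separative matrix. My first attempt would be to apply separativeness of $M_{j,-}$ and then iterate the operations $M\mapsto M_{l,\pm}$ over columns $l$ outside these $c$ columns. The aim is to peel off all columns except these $c$ while keeping only rows from $N$; those columns vanish on $Z$, so the $Z$-rows contribute nothing to them. I would then show by a secondary induction that the resulting submatrix is separative with all $c$ columns still nonzero, and apply the induction hypothesis to it. If this bookkeeping breaks down, the fallback is to choose $j$ cleverly, for instance so that $Z$ is minimal. Another fallback is to strengthen the induction statement to a relative one: for any set $S$ of rows of a separative matrix, the number of rows in $S$ is at least one more than the number of nonzero columns supported entirely on $S$, whenever that number is positive. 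The strengthened form should pass through both $M_{j,+}$ and $M_{j,-}$ directly and yields $|N|\geq c+1$ immediately.
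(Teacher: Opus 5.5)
Your reduction to separative $\{0,\pm1\}$-matrices matches the skeleton of the paper's proof exactly. So does your decomposition $k=1+k_++c$, with $|Z|+|P|\geq k_++1$ coming from the induction hypothesis on $M_{j,+}$; your $N$-rows restricted to the $c$ columns form the block the paper calls $N$.

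\textbf{The gap.} The decisive inequality $|N|\geq c+1$ is never proved, and the route you put first fails. Reducing along columns outside the $c$ columns discards $N$-rows and can make some of the $c$ columns vanish. For example, cut $[-1,1]^4$ along $x_j=0$, then both halves along $x_l=0$, then the quarter $\{x_j\leq 0\leq x_l\}$ along $x_{c_1}=0$ and the quarter $\{x_j\leq 0,\ x_l\leq 0\}$ along $x_{c_2}=0$. With columns ordered $(j,l,c_1,c_2)$, the principal matrix has rows $(+1,+1,0,0)$, $(+1,-1,0,0)$, $(-1,+1,+1,0)$, $(-1,+1,-1,0)$, $(-1,-1,0,+1)$, $(-1,-1,0,-1)$. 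Here $Z=\varnothing$ and $c=2$, so you need $|N|\geq 3$. But $(M_{j,-})_{l,+}$ and $(M_{j,-})_{l,-}$ each have two rows and only one of $c_1,c_2$ nonzero, so the induction hypothesis gives just $2\geq 2$.

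\textbf{A smaller omission.} You assert $P,N\neq\varnothing$ without proof. The recursive definition does not say directly that a nonzero column of a separative matrix contains both $+1$ and $-1$. To prove it, fix a row $i$ with $a_{i,j}\neq 0$ and reduce along every other column, using the sign of row $i$ there (either sign where its entry is $0$). Row $i$ survives, so the final one-column separative matrix is nonzero and therefore contains the opposite entry.

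\textbf{How to close the gap.} The same witness trick works. Reductions along different columns commute, so a matrix is separative iff the following holds: for every column $c_0$ and every choice of signs on the other columns, the surviving entries of column $c_0$ are all $0$ or include both signs. Let $c_0$ be one of your $c$ columns, $\sigma$ a choice of signs on the other $c-1$ of them, and $i$ an $N$-row compatible with $\sigma$ with $a_{i,c_0}\neq 0$. Reduce $M$ along all columns except $c_0$, using $\sigma$ on the $c$-columns, the sign $-$ on column $j$, and the signs of row $i$ elsewhere. Row $i$ survives, so separativeness of $M$ gives a surviving row $i'$ with $a_{i',c_0}=-a_{i,c_0}$. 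Column $c_0$ vanishes on $Z\cup P$, so $i'$ is an $N$-row, and it is compatible with $\sigma$. Hence the $|N|\times c$ block is separative, all $c$ of its columns are nonzero, and it has fewer than $n$ columns, so your outer induction yields $|N|\geq c+1$. The paper proves the same separativeness by a secondary induction over the columns \emph{inside} the block, where the block shape is preserved under $M\mapsto M_{l,\pm}$, not over the columns outside it.

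\textbf{Your alternative also works.} Your relative strengthening, with $C_S$ the set of nonzero columns all of whose nonzero entries lie in $S$, is true and goes through directly. Pick $j\in C_S$ and split $S$ by column $j$ into $S_+,S_-,S_0$; both $S_\pm$ are nonempty by the both-signs fact. Apply the induction to $M_{j,+}$ with row set $S_0\cup S_+$, and to $M_{j,-}$ with row set $S_-$. The second application works because $S_-$ supports every column of $C_S\setminus\{j\}$ that vanishes on the rows where column $j$ is $0$ or $+1$.

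Either argument has to be supplied; as it stands the proposal leaves the key step open.
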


\begin{proof} By induction on $|\sP|=m$ and $\beta_{\sP}(x)=k$. If $\beta_{\sP}(x)=0$ then the theorem holds since $|\sP|\geq 1$. If $\beta_{\sP}(x)=1$ then the theorem holds by Lemma~\ref{lem:boundaryvector} and the definition of separativeness. Suppose $\beta_{\sP}(x)=k>1$. Without loss of generality assume that $e_1$ is a boundary vector of $x$ with respect to $\sP$. By Lemma~\ref{lem:principal0}, $M_{1,+}$ is a principal matrix of $\pi_A(x)$ with respect to  a locally regulated cover $\sP_{1,+}$ about $\pi_A(x)$, where $A=\{2,\dots, n\}$. Let $m_+$ be the number of rows of $M_{1,+}$ and $k_+$ be the number of nonzero columns of $M_{1,+}$. Then $m_+=|\sP_{1,+}|$ and $k_+=\beta_{\sP_{1,+}}(\pi_A(x))$. By Lemma~\ref{lem:principal1} (ii), $m_+<m$. By the inductive hypothesis, we have $m_+\geq k_++1$. 

Let $m_-=m-m_+$ and $k_-=k-1-k_+$. Let $B$ be the set of all $1<j\leq n$ such that for all $1\leq \iota\leq m$ with $\pi_1(P_{\iota})=0$ or $+1$, $a_{\iota,j}=0$. Consider the $m_-\times k_-$ matrix $N$ which is obtained from $M=(a_{i,j})_{m\times n}$ by retaining all entries $a_{i,j}$ where $a_{i,1}=-1$ and $j\in B$. Figure~\ref{fig:1} illustrates the position of $N$ relative to $M_{1,+}$. In the figure, $M_{1,+}^*$ denotes the matrix $M_{1,+}$ without the all-zero columns.

\begin{figure}[h]
\begin{tikzpicture}[scale=0.1]

\draw (5,0) to (5, 20);
\draw (19,0) to (19,20);
\draw (-2,10) to (30,10);
\node[left] at (5, 23) {$e_1$};
\node[left] at (18,23) {$A-B$};
\node[left] at (27, 23) {$B$};
\node[left] at (5, 13) {$0$};
\node[left] at (5, 17) {$+1$};
\node[left] at (5, 4) {$-1$};
\node[left] at (-2,4) {$m_-$};
\node[left] at (-2,15) {$m_+$};
\node[left] at (17,15) {$M_{1,+}^*$};
\node[left] at (27,4) {$N$};
\node[left] at (27,15) {$0$};
\node[left] at (16,-4) {$k_+$};
\node[left] at (28,-4) {$k_{-}$};

\end{tikzpicture}
\caption{A decomposition of matrix $M$. \label{fig:1}}
\end{figure}

We summarize the information contained in Figure~\ref{fig:1} by writing
$$ M=\left(\begin{array}{ccc} +1/0 & M^*_{1,+} & 0 \\ -1& * & N\end{array}\right). $$

If $N$ is empty then $k_-=0$ and $m_-\geq 1$, and 
$$ m=m_++m_-\geq m_++1\geq k_++2=k+1 $$
as desired. In the rest of the proof, assume $N$ is nonempty, i.e., $k_->0$.

We claim that $N$ is separative. We prove this by induction on $k_-$. If $k_-=1$, then $N$ is separative because $M$ is. Suppose $k_->1$. Let $j\in B$. Then in the same sense of Figure~\ref{fig:1}, we have
$$ M_{j,+}=\left(\begin{array}{ccc} +1/0 & M_{1,+}^* & 0 \\ -1 & * & N_{j,+}\end{array}\right). $$
By the inductive hypothesis, $N_{j,+}$ is separative. Similarly, $N_{j,-}$ is separative. This shows that $N$ is separative.

Let $\sQ=\{P\in\sP\colon \pi_1(B_P(x))=-1\}$. Then $\pi_B(\sQ)$ is a collection of rectangles in $\R^B$ which correspond to $N$ in the sense of Lemma~\ref{lem:separativecover}. By Lemma~\ref{lem:separativecover}, $\pi_B(\sQ)$ is a locally regulated cover of $\pi_B(R)$ about $\pi_B(x)$, and $N$ is a principal matrix of $\pi_B(x)$ with respect to $\pi_B(\sQ)$. By the inductive hypothesis, we have $m_-\geq k_-+1$.

Now 
$$m=m_++m_-\geq (k_++1)+(k_-+1)=k_++k_-+1+1=k+1$$ 
as desired. This finishes the proof of the inductive case. The proof of the theorem is complete.
\end{proof}

\begin{cor}\label{cor:regulated} The following hold:
\begin{enumerate}
\item[(1)] Let $R$ be an $n$-dimensional rectangle in $\R^n$, $x\in \Int(R)$ and let $\sP$ be a locally regulated partition of $R$ about $x$. Then $\nu_{\sP}(x)\geq\beta_{\sP}(x)+1$.
\item[(2)] Let $\sP$ be a regulated partition of $\R^n$ and let $x\in \R^n$. Then $\nu_{\sP}(x)\geq \beta_{\sP}(x)+1$.
\end{enumerate}
\end{cor}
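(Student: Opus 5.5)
Part (1) follows by combining Lemma~\ref{lem:separative} with Theorem~\ref{thm:separative}. Since $\sP$ is a locally regulated partition of $R$ about $x$, Lemma~\ref{lem:separative} tells us that any principal matrix of $x$ with respect to $\sP$ is separative. Hence $\sP$ is a separative locally regulated cover of $R$ about $x$, and Theorem~\ref{thm:separative} gives $\nu_{\sP}(x)\geq\beta_{\sP}(x)+1$ directly.

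For (2), the plan is to localize and then apply (1). Let $\sP$ be a regulated partition of $\R^n$ and fix $x\in\R^n$.
\begin{enumerate}
\item[(a)] By the discreteness condition in Definition~\ref{def:regulatedpartition}, only finitely many elements of $\sP$ meet any bounded set. So we choose an $n$-dimensional rectangle $R$ with $x\in\Int(R)$ that is small enough to meet only those $P\in\sP$ with $x\in P$.
\item[(b)] By Lemma~\ref{lem:regulatedvslocal}, the collection $\sQ=\{P\cap R\colon P\in\sP,\ P\cap R \mbox{ $n$-dimensional}\}$ is a locally regulated partition of $R$.
\item[(c)] Apply Lemma~\ref{lem:localvsabout} to get $R_x\subseteq R$ with $x\in\Int(R_x)$ such that $\sQ'=\{Q\cap R_x\colon Q\in\sQ\}$ is a locally regulated partition of $R_x$ about $x$.
\item[(d)] Part (1) then yields $\nu_{\sQ'}(x)\geq\beta_{\sQ'}(x)+1$.
\end{enumerate}

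It remains to check that $\nu_{\sQ'}(x)=\nu_{\sP}(x)$ and $\beta_{\sQ'}(x)=\beta_{\sP}(x)$; this bookkeeping is the only point needing care.

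For the partition numbers, each $P\ni x$ contains a set of the form $P\cap U$ with nonempty interior for every neighborhood $U$ of $x$. Since $x\in\Int(R_x)$, the set $P\cap R_x$ is $n$-dimensional. Distinct $P$ give distinct pieces because their interiors are disjoint. By the choice of $R$ in (a), no other $P$ contributes a piece. So the map $P\mapsto P\cap R_x$ is a bijection from $\{P\in\sP\colon x\in P\}$ onto $\sQ'$, and every element of $\sQ'$ contains $x$.

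For the boundary ranks, recall that the faces of $P\cap R_x$ through $x$ are exactly the faces of $P$ through $x$, intersected with $R_x$. This holds because $x$ lies in the interior of $R_x$, so faces coming from $R_x$ do not contain $x$. Equivalently, $B_{P\cap R_x}(x)=B_P(x)$. Hence the boundary vectors of $x$ with respect to $\sQ'$ and with respect to $\sP$ coincide.

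Combining these equalities with (d) gives $\nu_{\sP}(x)\geq\beta_{\sP}(x)+1$, which proves (2).
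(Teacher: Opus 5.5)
Your proposal is correct and follows the paper's own proof. Part (1) comes from Lemma~\ref{lem:separative} plus Theorem~\ref{thm:separative}, and part (2) localizes via Lemmas~\ref{lem:regulatedvslocal} and \ref{lem:localvsabout} and then applies (1). Your explicit bookkeeping supplies the details the paper leaves as ``immediate'': you use local finiteness from the discreteness constant, the bijection $P\mapsto P\cap R_x$ on elements containing $x$, and $B_{P\cap R_x}(x)=B_P(x)$ to show that $\nu$ and $\beta$ are preserved.
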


\begin{proof} (1) follows from Theorem~\ref{thm:separative} and Lemma~\ref{lem:separative}, i.e., every locally regulated partition about a point is separative. (2) follows immediately from (1) and Lemmas~\ref{lem:regulatedvslocal} and \ref{lem:localvsabout}.
\end{proof}

Before closing this section we prove a strengthening of Corollary~\ref{cor:regulated} (i) by a direct induction.

\begin{thm} Let $R$ be an $n$-dimensional rectangle in $\R^n$, let $x\in\Int(R)$, and let $\sP$ be a locally regulated partition of $R$ about $x$. For any nonempty set $V$ of boundary vectors of $x$ with respect to $\sP$, let $\sP_V$ be the set of all $P\in \sP$ such that some $v\in V$ is a boundary vector of $x$ with respect to $P$. Define $\sP_\varnothing=\sP$. Then $|\sP_V|\geq |V|+1$.
\end{thm}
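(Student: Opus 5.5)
We argue by induction on $n$, and within fixed $n$ by induction on $|V|$. For $V=\varnothing$ the claim is $|\sP|\geq 1$, which is trivial. For $|V|=1$, say $V=\{e_j\}$: by Lemma~\ref{lem:principal1}(ii) some $P\in\sP$ has $\pi_j(B_P(x))=+1$ and some $P'$ has $\pi_j(B_{P'}(x))=-1$. These are distinct elements of $\sP_V$, so $|\sP_V|\geq 2$.

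For the inductive step, let $|V|=k\geq 2$. Fix $e_1\in V$ (after relabeling), set $A=\{2,\dots,n\}$ and $V'=V\setminus\{e_1\}$. We split
$$\sP_V=\{P\in\sP_V\colon \pi_1(B_P(x))=-1\}\ \cup\ \{P\in\sP_V\colon \pi_1(B_P(x))\neq -1\}.$$
The second part is contained in $\sP_{1,+}$. By Lemma~\ref{lem:principal0}, $\pi_A(\sP_{1,+})$ is a locally regulated partition of $\pi_A(R)$ about $\pi_A(x)$, and the projection is injective on $\sP_{1,+}$ since its members are distinguished by their $A$-coordinates. Let $V'_+\subseteq V'$ be the vectors in $V'$ that are boundary vectors of $\pi_A(x)$ with respect to $\pi_A(\sP_{1,+})$. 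Define $V'_-$ analogously with $\sP_{1,-}$. Every $v\in V'$ lies in $V'_+\cup V'_-$, because a rectangle witnessing $v$ lies in $\sP_{1,+}$ or in $\sP_{1,-}$.

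We count as follows. Rectangles with $\pi_1=+1$ together with rectangles in $\sP_{1,+}$ witnessing some $v\in V'_+$ form a set $\sC_+\subseteq\sP_V$ that avoids every rectangle with $\pi_1=-1$. If $V'_+\neq\varnothing$, apply the induction hypothesis in dimension $n-1$ to $\pi_A(\sP_{1,+})$ and $V'_+$. This gives at least $|V'_+|+1$ rectangles of $\sP_{1,+}$ witnessing $V'_+$, and we try to add one more with $\pi_1=+1$ that is not among them. On the $-1$ side we use the symmetric count with $V'_-\setminus V'_+$. The difficulty is the overlap of the $\pi_1=0$ rectangles between the two sides. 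To handle it, I would mimic the decomposition in the proof of Theorem~\ref{thm:separative}. Put $B=V'\setminus V'_+$, the vectors witnessed only by rectangles with $\pi_1=-1$. Use the $+$ side to bound $|\sP_V\cap\sP_{1,+}|\geq |V'_+|+1$. Then show that the rectangles with $\pi_1=-1$ number at least $|B|+1$, counting those witnessing $B$ together with at least one more. This gives $|\sP_V|\geq |V'_+|+|B|+2=|V|+1$.

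Two estimates need justification. The first is $|\sP_V\cap\sP_{1,+}|\geq|V'_+|+1$, which holds even when $V'_+=\varnothing$: there is a rectangle with $\pi_1=+1$, and it lies in $\sP_V$ because $e_1\in V$. When $V'_+\neq\varnothing$, the inductive hypothesis gives the bound for witnesses of $V'_+$ inside $\sP_{1,+}$, and all of these lie in $\sP_V$. The second estimate is the main obstacle. The rectangles $\sQ$ with $\pi_1=-1$ that witness $B$ must number at least $|B|+1$ when $B\neq\varnothing$, and at least $1$ when $B=\varnothing$ (via Lemma~\ref{lem:principal1}). For $B\neq\varnothing$ I would argue as in the claim that $N$ is separative in Theorem~\ref{thm:separative}. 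The partition $\pi_A(\sP_{1,-})$ is a locally regulated partition about $\pi_A(x)$, and for $j\in B$ every witness of $e_j$ in it has $\pi_1=-1$. Applying the induction hypothesis in dimension $n-1$ to $\pi_A(\sP_{1,-})$ with the set $B$ then yields at least $|B|+1$ rectangles witnessing $B$, all with $\pi_1=-1$. These are disjoint from the $+$ count, which completes the induction.
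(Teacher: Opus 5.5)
Your proof is correct and is essentially the paper's argument. Your $V'_+$ and $B$ are the paper's $U$ and $W$. You apply the inductive hypothesis to $\pi_A(\sP_{1,+})$ with $V'_+$ and to $\pi_A(\sP_{1,-})$ with $B$, and you get disjointness because every witness of a vector in $B$ has $\pi_1=-1$. Your explicit handling of $V'_+=\varnothing$, using a rectangle with $\pi_1=+1$, is slightly cleaner than the paper's appeal to $(\sP_{1,+})_\varnothing=\sP_{1,+}$, which need not lie inside $\sP_V$.
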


\begin{proof} By induction on $|V|$. If $|V|=1$ then the theorem holds by Lemma~\ref{lem:principal1} (ii). Suppose $|V|>1$. Without loss of generality assume $e_1\in V$. Then $(\sP_{1,-})_{e_1}$ is nonempty, and disjoint from $\sP_{1,+}$. Let $A=\{2,\dots, n\}$. Then by Lemma~\ref{lem:principal0} (i), $\sQ=\pi_A(\sP_{1,+})$ and $\sQ'=\pi_A(\sP_{1,-})$ are both locally regulated partitions of $\pi_A(R)$ about $\pi_A(x)$.  Let $V'=V\setminus\{e_1\}$. Then $V'$ is nonempty. Let $U\subseteq V'$ be the set of all boundary vectors $v\in V'$ of $\pi_A(x)$ with respect to $\sQ$. Then by the inductive hypothesis, $|\sQ_U|\geq |U|+1$. It follows  that $|(\sP_{1,+})_U|=|\sQ_U|\geq |U|+1$. Let $W=V'\setminus U$. If $W=\varnothing$, then
$$ |\sP_V|\geq |(\sP_{1,+})_{U}|+|(\sP_{1,-})_{e_1}|\geq |\sQ_{U}|+1\geq |V|+1. $$
Assume $W\neq \varnothing$.  Then every $v\in W$ is a boundary vector of $\pi_A(x)$ with respect to $\sQ'$. Hence by the inductive hypothesis, $|\sQ'_W|\geq |W|+1$. This implies that $|\sP_W|\geq |\sQ'_W|\geq |W|+1$. Since every $v\in W$ is not a boundary vector of $x$ with respect to $\sP_{1,+}$, we have that $\sP_W\subseteq \sP\setminus\sP_{1,+}$. Thus
$$ |\sP_V|\geq |\sP_W|+|(\sP_{1,+})_U|\geq |W|+1+|U|+1=|V|+1, $$
as desired.
\end{proof}

\section{Minimal Locally Regulated Partitions\label{sec:min}}

\begin{defn}  Let $R$ be an $n$-dimensional rectangle in $\R^n$, $x\in \Int(R)$, and $\sP$ be a locally regulated partition of $R$ about $x$. We say that $\sP$ is {\em minimal} if $\nu_{\sP}(x)=\beta_{\sP}(x)+1$.
\end{defn}

\begin{thm}\label{thm:minimal}  Let $R$ be an $n$-dimensional rectangle in $\R^n$, $x\in \Int(R)$, and $\sP$ be a locally regulated partition of $R$ about $x$. Let $1\leq j\leq n$ and $A=\{1,\dots, n\}\setminus\{j\}$. Assume $\sP$ is minimal. Then both $\pi_A(\sP_{j,+})$ and $\pi_A(\sP_{j,-})$ are minimal locally regulated partitions of $\pi_A(R)$ about $\pi_A(x)$.
\end{thm}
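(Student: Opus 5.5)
The plan is to combine Lemma~\ref{lem:principal0} with the counting argument from the proof of Theorem~\ref{thm:separative}, and to observe that minimality of $\sP$ forces every inequality in that argument to be an equality. By symmetry (replace the coordinate $\pi_j$ by $-\pi_j$, which swaps $\sP_{j,+}$ and $\sP_{j,-}$), it suffices to treat $\sP_{j,+}$. Without loss of generality $j=1$ and $A=\{2,\dots,n\}$. By Lemma~\ref{lem:principal0}, $\pi_A(\sP_{1,+})$ is a locally regulated partition of $\pi_A(R)$ about $\pi_A(x)$, and $M_{1,+}$ is a principal matrix for it. So only the equality $\nu=\beta+1$ remains to be shown.

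First I check that $\pi_A$ is injective on $\sP_{1,+}$, so that $\nu_{\pi_A(\sP_{1,+})}(\pi_A(x))=|\sP_{1,+}|=m_+$. If $P\ne P'$ in $\sP_{1,+}$ had the same projection, their first-coordinate intervals would be $[0,1]$ or $[-1,1]$ (rescaled), and these always share interior. Hence $\Int(P)\cap\Int(P')\neq\varnothing$, contradicting that $\sP$ is a partition. By Lemma~\ref{lem:boundaryvector}, the boundary rank of $\pi_A(x)$ with respect to $\pi_A(\sP_{1,+})$ is $k_+$, the number of nonzero columns of $M_{1,+}$.

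If $e_1$ is not a boundary vector, then $\sP_{1,+}=\sP$ and $k_+=k$, so $m_+=m=k+1=k_++1$ and we are done. Otherwise I rerun the decomposition of Figure~\ref{fig:1}. Theorem~\ref{thm:separative} applied to $\pi_A(\sP_{1,+})$ gives $m_+\ge k_++1$. The matrix $N$ (the rows with $a_{i,1}=-1$, restricted to the columns in $B$) is separative, as shown in that proof, so Lemma~\ref{lem:separativecover} and Theorem~\ref{thm:separative} give $m_-\ge k_-+1$ when $k_->0$. When $k_-=0$ we still have $m_-\ge1$, because $e_1$ is a boundary vector and Lemma~\ref{lem:principal1}(ii) supplies a row with $a_{i,1}=-1$. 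Since $k_++k_-=k-1$, we get
$$k+1=m=m_++m_-\ge (k_++1)+(k_-+1)=k+1.$$
Hence both inequalities are equalities, and in particular $m_+=k_++1$, which is exactly minimality of $\pi_A(\sP_{1,+})$.

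The main point needing care is the injectivity of the projection, which is what turns $|\sP_{1,+}|$ into the partition number of the projected point. The rest is simply reusing the accounting already done in Theorem~\ref{thm:separative}, including the $k_-=0$ case.
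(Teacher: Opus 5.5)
Your proposal is correct and takes essentially the same route as the paper. Both rerun the $M_{1,+}$/$N$ decomposition from the proof of Theorem~\ref{thm:separative}, obtain $m_+\ge k_++1$ and $m_-\ge k_-+1$ (with $m_-\ge 1$ when $k_-=0$), and let minimality force equality, giving $m_+=k_++1$. Your explicit check that $\pi_A$ is injective on $\sP_{1,+}$, so that the partition number of $\pi_A(x)$ is $m_+$, is a detail the paper uses without comment; the extra equalities the paper records, such as $|C_0|=|A_3|$, are only needed for the corollary that follows.
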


\begin{proof} Without loss of generality assume $j=1$. Then $A=\{2, \dots, n\}$. We show that both $\pi_A(\sP_{1,+})$ and $\pi_A(\sP_{1,-})$ are minimal locally regulated partitions of $\pi_A(R)$ about $\pi_A(x)$. Let $M=(a_{i,j})_{m\times n}$ be a principal matrix of $x$ with respect to $\sP$. The lemma is trivial if the first column of $M$ is all-zero, or equivalently, $e_1$ is not a boundary vector of $x$. So we assume the first column of $M$ is nonzero. Let $\{A_0, A_1, A_2, A_3\}$ be a partition of $A$ defined as follows.
$$ \begin{array}{l}
A_0=\{1<j\leq n\colon \mbox{ the $j$-th column of $M$ is all-zero}\}, \\
A_1=\{1<j\leq n\colon j\notin A_0 \mbox{ and the $j$-th column of $M_{1,+}$ is all-zero}\}, \\
A_2=\{1<j\leq n\colon j\notin A_0 \mbox{ and the $j$-th column of $M_{1,-}$ is all-zero}\}, \\
A_3=A\setminus (A_0\cup A_1\cup A_2).
\end{array}
$$
Note that $A_1\cap A_2=\varnothing$. 

Let 
$$\begin{array}{l} C_+=\{1\leq i\leq m\colon a_{i,j}=+1\}, \\
C_-=\{1\leq i\leq m\colon a_{i,j}=-1\}, \\
C_0=\{1\leq i\leq m\colon a_{i,j}=0\}. 
\end{array}
$$
Then $\{C_+,C_-,C_0\}$ is a partition of $\{1,\dots, m\}$. The submatrix of $M$ corresponding to rows indexed by $C_-$ and columns indexed by $A_0\cup A_1$ is the matrix $N$ considered in the proof of Theorem~\ref{thm:separative}, and by that proof, we have 
$$ |C_{-}|\geq |A_1|+1. $$
The submatrix of $M$ corresponding to rows indexed by $C_{+}\cup C_0$ and columns indexed by $A_2\cup A_3$ is the matrix $M_{1,+}^*$ considered in the proof of Theorem~\ref{thm:separative}, and by that proof, we have
$$ |C_+|+|C_0|= |C_+\cup C_0|\geq |A_2\cup A_3|+1=|A_2|+|A_3|+1. $$
By symmetry, we also have 
$$ |C_+|\geq |A_2|+1. $$
It follows that
$$ \nu_{\sP}(x)=|C_+\cup C_0\cup C_-|\geq |C_+\cup C_0|+|C_-|\geq |A_2|+|A_3|+1+|A_1|+1= \beta_{\sP}(x)+1. $$
By minimality, we have that the equality holds, and it follows that $|C_-|=|A_1|+1$ and $|C_+|+|C_0|=|A_2|+|A_3|+1$. By symmetry, we also have $|C_+|=|A_2|+1$. It follows that $|C_0|=|A_3|$. 

Now it is easy to see that $\pi_A(\sP_{1,+})$ is a locally regulated partition of $\pi_A(R)$ about $\pi_A(x)$. Thus 
$$ \nu_{\pi_A(\sP_{1,+})}(\pi_A(x))= |\pi_A(\sP_{1,+})|=|C_+\cup C_0|=|A_2|+|A_3|+1=\beta_{\pi_A(\sP_{1,+})}(x)+1, $$
and hence $\pi_A(\sP_{1,+})$ is minimal. Similarly, $\pi_A(\sP_{1,-})$ is also a minimal locally regulated partition of $\pi_A(R)$ about $\pi_A(x)$.
\end{proof}

\begin{defn}  Let $R$ be an $n$-dimensional rectangle in $\R^n$, $x\in \Int(R)$, and $\sP, \sQ$ be locally regulated partitions of $R$ about $x$. We say that $\sP$ and $\sQ$ are {\em orthogonal} if the set of all boundary vectors of $x$ with respect to $\sP$ is disjoint from the set of all boundary vectors of $x$ with respect to $\sQ$.
\end{defn}

The following is a corollary of the proof of Theorem~\ref{thm:minimal}.

\begin{cor}  Let $R$ be an $n$-dimensional rectangle in $\R^n$, $x\in \Int(R)$, and $\mathcal{P}$ be a minimal locally regulated partition of $R$ about $x$. Let $1\leq j\leq n$ and $A=\{1,\dots, n\}\setminus\{j\}$. Suppose for all $P\in \mathcal{P}$, $\pi_j(B_P(x))\neq 0$. Then $\pi_A(\sP_{j,+})$ and $\pi_A(\sP_{j,-})$ are orthogonal.
\end{cor}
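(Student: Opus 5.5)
The plan is to reuse the counting from the proof of Theorem~\ref{thm:minimal}, with $j$ in place of the index $1$ (by symmetry we may take $j=1$). The hypothesis that $\pi_1(B_P(x))\neq 0$ for every $P\in\sP$ says that $C_0=\varnothing$. In that proof, minimality forced the equalities
\[
|C_-|=|A_1|+1,\qquad |C_+|=|A_2|+1,\qquad |C_0|=|A_3|.
\]
With $C_0=\varnothing$ the last equality gives $A_3=\varnothing$.

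Next I will identify the boundary vectors of the two projected partitions. Since $C_0=\varnothing$, we have $\sP_{1,+}=\{P_i\colon i\in C_+\}$ and $\sP_{1,-}=\{P_i\colon i\in C_-\}$. Hence the principal matrix of $\pi_A(x)$ with respect to $\pi_A(\sP_{1,+})$ is the submatrix of $M$ with rows indexed by $C_+$ and columns indexed by $A$; this is $M_{1,+}$ by Lemma~\ref{lem:principal0}. By Lemma~\ref{lem:boundaryvector}(i), the boundary vectors of $\pi_A(x)$ with respect to $\pi_A(\sP_{1,+})$ are the $e_k$ for which column $k$ of $M_{1,+}$ is not all zero. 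By the definitions of $A_0,\dots,A_3$, these are exactly the $e_k$ with $k\in A_2\cup A_3$. Likewise, the boundary vectors with respect to $\pi_A(\sP_{1,-})$ are the $e_k$ with $k\in A_1\cup A_3$.

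Since $A_3=\varnothing$ and $A_1\cap A_2=\varnothing$, the two sets of boundary vectors are disjoint, and so the two partitions are orthogonal. The definition of orthogonality only concerns the boundary vectors at $\pi_A(x)$. That both projections are locally regulated partitions of $\pi_A(R)$ about $\pi_A(x)$ is given by Lemma~\ref{lem:principal0}(i), and they are in fact minimal by Theorem~\ref{thm:minimal}.

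The only step needing care is the equality $|C_0|=|A_3|$, since everything rests on it. It comes from $|C_+|+|C_0|=|A_2|+|A_3|+1$ together with $|C_+|=|A_2|+1$. The second of these is the symmetric counterpart of the bound $|C_-|\geq |A_1|+1$. That bound becomes an equality because $|C_+\cup C_0|+|C_-|$ is squeezed to exactly $\beta_{\sP}(x)+1$. To close the gap I would rerun the argument with the roles of $+$ and $-$ swapped, obtaining $|C_-|+|C_0|=|A_1|+|A_3|+1$ and $|C_+|=|A_2|+1$. With $C_0=\varnothing$, the direct alternative is to add $|C_+|\geq|A_2|+1$ and $|C_-|\geq|A_1|+1$, which gives
\[
m\geq |A_1|+|A_2|+2.
\]
Minimality gives $m=|A_1|+|A_2|+|A_3|+2$, but a lower bound on $m$ cannot by itself force $|A_3|=0$. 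So the argument genuinely needs $|C_+|+|C_0|\geq |A_2|+|A_3|+1$, which with $C_0=\varnothing$ becomes $|C_+|\geq |A_2|+|A_3|+1$. Combined with $|C_-|\geq |A_1|+1$, this yields $m\geq \beta_{\sP}(x)+1+|A_3|$, and minimality then forces $A_3=\varnothing$.
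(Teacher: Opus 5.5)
Your argument is correct and is essentially the paper's proof. The paper also takes $j=1$, reads off $|A_3|=|C_0|=0$ from the counting in Theorem~\ref{thm:minimal}, and concludes that the boundary vectors of $\pi_A(\sP_{1,+})$ and $\pi_A(\sP_{1,-})$ are indexed by the disjoint sets $A_2$ and $A_1$.

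There is one slip, in your last sentence only. Combining $|C_+|\geq|A_2|+|A_3|+1$ with $|C_-|\geq|A_1|+1$ gives just $m\geq\beta_{\sP}(x)+1$, not $m\geq\beta_{\sP}(x)+1+|A_3|$. To get the extra $|A_3|$ you must pair it with the mirror bound $|C_-|\geq|A_1|+|A_3|+1$ from $\pi_A(\sP_{1,-})$, which is exactly the ``swap $+$ and $-$'' step you had already described. Your main line of argument does not depend on that sentence, so the proof stands.
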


\begin{proof} Without loss of generality assume $j=1$. In the notation of the proof of Theorem~\ref{thm:minimal}, our assumption is $C_0=\varnothing$. Thus $|A_3|=|C_0|=0$. It follows that $M_{1,+}^*$ is contained exactly in columns of $M$ indexed by $A_2$, and $M_{1,-}^*$ (consist of nonzero columns of $M_{1,-}$) is contained exactly in columns of $M$ indexed by $A_1$. Note that $\{e_i\colon i\in A_2\}$ is the set of all boundary vectors of $\pi_A(x)$ with respect to $\pi_A(\sP_{1,+})$, and $\{e_i\colon i\in A_1\}$ is the set of all boundary vectors of $\pi_A(x)$ with respect to $\pi_A(\sP_{1,-})$. Hence they are disjoint.
\end{proof}

In the rest of this section we give a complete description of all minimal locally regulated partitions about a point.

\begin{defn} Let $R$ be an $n$-dimensional rectangle in $\R^n$, $x\in \Int(R)$, let $\sP$ be a locally regulated partition of $R$ about $x$, let $e_j$ be a boundary vector of $x$ with respect to $\sP$, and let $P, Q\in \sP$. We call $P$ and $Q$ an {\em $e_j$-pair} if $\pi_{j'}(B_P(x))=\pi_{j'}(B_Q(x))$ for all $j'\neq j$ and $\pi_j(B_P(x))\pi_j(B_Q(x))=-1$.
\end{defn}

Equivalently,  if $M=(a_{i,j})_{m\times n}$ is a principal matrix of $x$ with respect to $\sP$, then $(P_i, P_{i'})$ is an $e_j$-pair iff the $i$-th and $i'$-th rows of $M$ are identical except $a_{i,j}$ and $a_{i',j}$ take opposite nonzero values. In this case we also say that $(i,i')$ is an $e_j$-pair in $M$.

\begin{lem} Let $R$ be an $n$-dimensional rectangle in $\R^n$, $x\in \Int(R)$, let $\sP$ be a locally regulated partition of $R$ about $x$, let $e_j$ be a boundary vector of $x$ with respect to $\sP$, and let $P, Q\in \sP$ be an $e_j$-pair. Then
$$ \{P\cup Q\}\cup\sP\setminus \{P, Q\} $$
is still a locally regulated partition of $R$ about $x$.
\end{lem}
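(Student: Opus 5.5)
Since $\sP$ is about $x$, the example following Lemma~\ref{lem:localvsabout} lets us rescale and translate so that $R=R_0=[-1,1]^n$ and $x=x_0=\vec{0}$. Every element of $\sP$ then has the form $J_1\times\cdots\times J_n$ with each $J_{j'}\in\{[-1,0],[0,1],[-1,1]\}$. The factor $J_{j'}$ is $[0,1]$, $[-1,0]$ or $[-1,1]$ exactly when $\pi_{j'}(B_P(x))$ is $+1$, $-1$ or $0$.

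\textbf{Step 1: $P\cup Q$ is a rectangle.} By the $e_j$-pair hypothesis, $P$ and $Q$ have the same factor in every coordinate $j'\neq j$. In coordinate $j$, one has $[0,1]$ and the other has $[-1,0]$. Hence
$$P\cup Q=J_1\times\cdots\times J_{j-1}\times[-1,1]\times J_{j+1}\times\cdots\times J_n ,$$
which is an $n$-dimensional rectangle. Its boundary incidence sequence at $x$ agrees with that of $P$ except that the $j$-th entry is $0$.

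\textbf{Step 2: the new family is a locally regulated partition of $R$.} Put $\sP'=\{P\cup Q\}\cup\sP\setminus\{P,Q\}$. Then $\sP'$ is finite and $\bigcup\sP'=\bigcup\sP=R$. For the interior condition, it suffices to show $\Int(P\cup Q)\cap\Int(S)=\varnothing$ for every $S\in\sP\setminus\{P,Q\}$.

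We have
$$\Int(P\cup Q)=\Int(P)\cup\Int(Q)\cup\bigl(\Int(P\cup Q)\cap\{\pi_j=0\}\bigr).$$
Suppose $\Int(S)$ meets $\Int(P\cup Q)$. Since $\Int(S)$ is open, it then also meets the open set $\Int(P\cup Q)\setminus\{\pi_j=0\}\subseteq\Int(P)\cup\Int(Q)$. This contradicts condition (3) for $\sP$. This openness argument is the only point requiring care; the rest is bookkeeping.

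\textbf{Step 3: the new partition is about $x$.} We use the remark that a locally regulated partition is about $x$ iff $x\in\bigcap$ of its members. Every member of $\sP'$ contains $x$: the old members do since $\sP$ is about $x$, and $x\in P\subseteq P\cup Q$.

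Alternatively, one can check directly that each face of $P\cup Q$ either contains $x$ or lies in a face of $R$. The two $e_j$-faces of $P\cup Q$ lie in $\{\pi_j=\pm1\}$, which are faces of $R$. The other faces are unions of the corresponding faces of $P$ and $Q$. If those faces of $P$ and $Q$ contain $x$, so does their union; if they lie in a face of $R$, so does their union.
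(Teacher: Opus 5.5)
Your proof is correct and follows the same approach as the paper. The paper's entire proof is the observation that $P\cup Q$ is a rectangle. You supply the routine checks it leaves implicit: interior-disjointness, using that a nonempty open set cannot lie inside the hyperplane $\{\pi_j=0\}$, and the verification that the new partition is about $x$.
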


\begin{proof} Only note that if $P, Q$ form an $e_j$-pair, then $P\cup Q$ is a rectangle.
\end{proof}

\begin{example} The following example of a locally regulated partition $\sP$  in $\mathbb{R}^3$ does not contain an $e_j$-pair for any $1\leq j\leq 3$. First partition $\mathbb{R}^3$ into eight octants, and denote them as octants I through VIII in the usual order. Then 
$$\sP=\{\mbox{I}+\mbox{IV}, \mbox{II}+\mbox{VI}, \mbox{III}, \mbox{V}, \mbox{VII}+\mbox{VIII}\} $$
is such a partition. Its principal matrix is
$$ \left[\begin{array}{ccc} +1 & 0 & +1 \\ -1 & +1 & 0 \\ -1 & -1 & +1 \\ +1 & +1 & -1 \\ 0 & -1 & -1\end{array}\right]. $$
In this matrix every pair of distinct rows differ in at least two entries.
\end{example}

\begin{lem}\label{lem:ejpair} Let $R$ be an $n$-dimensional rectangle in $\R^n$, $x\in \Int(R)$, let $\sP$ be a minimal locally regulated partition of $R$ about $x$. Then there exist a $1\leq j\leq n$ and an $e_j$-pair $P, Q\in\sP$.
\end{lem}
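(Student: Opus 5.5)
The plan is to argue by induction on the boundary rank $k=\beta_{\sP}(x)$, working throughout with a principal matrix $M=(a_{i,j})_{m\times n}$; by minimality $m=k+1$. We tacitly assume $k\geq 1$: if $k=0$ then $|\sP|=1$ and there is nothing to pair. In the base case $k=1$ we have $m=2$ and exactly one nonzero column $j$. By Lemma~\ref{lem:principal1}(ii) the two rows take opposite nonzero values in column $j$ and agree (both $0$) elsewhere, so they form an $e_j$-pair.

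For the inductive step, fix a boundary vector $e_1$ and use the bookkeeping from the proof of Theorem~\ref{thm:minimal}. We have $|C_-|=|A_1|+1$, $|C_+|=|A_2|+1$ and $|C_0|=|A_3|$. By that theorem, $\pi_A(\sP_{1,-})$ (rows $C_-\cup C_0$) and $\pi_A(\sP_{1,+})$ (rows $C_+\cup C_0$) are minimal, with boundary ranks $|A_1|+|A_3|$ and $|A_2|+|A_3|$, both less than $k$. The key lifting observation is this. Suppose $(Q,Q')$ is an $e_j$-pair in $\pi_A(\sP_{1,-})$ and both rows lie in $C_-$, or both lie in $C_0$. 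Then the two rows of $M$ also agree in column $1$, so $(Q,Q')$ is an $e_j$-pair in $\sP$. The same holds on the $+$ side.

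First treat the favorable case where some boundary column, say column $1$, has no zero entry, so $C_0=\varnothing$ and $A_3=\varnothing$. If $A_1\neq\varnothing$, the induction hypothesis applied to $\pi_A(\sP_{1,-})$ gives a pair with both rows in $C_-$, and this pair lifts. If $A_1=\varnothing$, then $C_-$ is a single row $r$; since $\pi_A(\sP_{1,-})$ has rank $0$, the row $r$ is zero off column $1$. Symmetrically, either $A_2\neq\varnothing$ and we lift a pair from the $+$ side, or $C_+$ is a single row $r'$ that is zero off column $1$. In the latter case $r,r'$ form an $e_1$-pair; indeed then $m=2$ and $k=1$.

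The main obstacle is the remaining case, in which every boundary column contains a zero. Here the pair produced by induction in $\pi_A(\sP_{1,-})$ might be mixed, with $Q\in C_-$ and $Q'\in C_0$. I would first try to exclude this case outright by counting. The matrix has $k+1$ rows and $k$ nonzero columns, each nonzero column contains both a $+1$ and a $-1$, and $|C_0|=|A_3|$ holds for every choice of the distinguished column. Summing these identities over the boundary columns should force some column to have $C_0=\varnothing$. If the counting does not close, the fallback is to exploit the freedom in the choice of column. Choose the boundary column minimizing $|C_0|$. Then, given a mixed pair $Q\in C_-$, $Q'\in C_0$ differing only in columns $1$ and $j$, apply the decomposition in column $j$ instead. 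Show that $Q$ and $Q'$ then lie in the same class for column $j$, so that the roles are swapped and an unmixed pair appears, or that this contradicts minimality of $|C_0|$.
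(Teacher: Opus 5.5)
\textbf{The favorable case is fine; the case with a zero in every boundary column is not proved.} Your base case and your favorable case are correct: when a boundary column has $C_0=\varnothing$, you lift an unmixed pair from $\pi_A(\sP_{1,\pm})$, which is exactly the first case of the paper's proof. The gap is the remaining case, where the pair supplied by induction is mixed. That case is the whole difficulty of the lemma, and neither of your two routes handles it.

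The counting route cannot succeed with the facts you list. Take the matrix with rows $(+1,+1)$, $(-1,-1)$, $(0,0)$. It has $k+1=3$ rows and $k=2$ nonzero columns, each containing both signs. Writing superscripts for the distinguished column, for both $j$ it satisfies $|C^{(j)}_-|=|A^{(j)}_1|+1$, $|C^{(j)}_+|=|A^{(j)}_2|+1$ and $|C^{(j)}_0|=|A^{(j)}_3|=1$. Yet every column contains a zero. So no summation of these identities forces a zero-free column. You must use that the pieces cover $R$ without overlapping; this matrix is not even separative.

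The fallback fails at its key step. If $Q\in C_-$ and $Q'\in C_0$ project to an $e_j$-pair, then $a_{Q,j}=-a_{Q',j}\neq 0$. So with respect to column $j$ the two rows always lie in the opposite classes $C^{(j)}_+$ and $C^{(j)}_-$, never in the same one. Nothing in the situation relates $|C^{(j)}_0|$ to $|C_0|$.

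\textbf{The missing idea: the paper changes the partition.} It performs surgery, with an inner induction on $|C_0|$.
\begin{enumerate}
\item Write the mixed pair (on the $+$ side) as rows $(+1,+1,\vec k)\in C_+$ and $(0,-1,\vec k)\in C_0$.
\item Cut the second piece along the $e_1$-hyperplane through $x$, and merge its half on the $+e_1$ side with the first piece. This gives rows $(+1,0,\vec k)$ and $(-1,-1,\vec k)$.
\item The result is again a locally regulated partition about $x$ with the same boundary vectors and the same number of pieces, hence minimal, but $|C_0|$ has dropped by one.
\item The inner induction yields a pair in the new matrix, and a short case analysis pulls it back to $M$. Pairs avoiding the two modified rows are untouched. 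An $e_2$-pair $(i',\iota)$ gives the $e_1$-pair $(i,\iota)$ in $M$. Every other possibility either permits a merge violating $\nu\geq\beta+1$ or produces nested pieces.
\end{enumerate}

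\textbf{An alternative repair of your own idea.} Your first instinct can be rescued, but only by a geometric argument: prove by induction on $k$ that some boundary column is zero-free. Induction gives a column $c$ with no zeros on $C_+\cup C_0$ and a column $c'$ with no zeros on $C_-\cup C_0$. If $c=c'$, that column is zero-free. If $c\neq c'$, use Lemma~\ref{lem:principal0} and Theorem~\ref{thm:minimal} to restrict to the orthant of the remaining coordinates containing a $C_0$-piece. This gives a minimal partition in the coordinates $1,c,c'$ in which each piece covers at most two of the eight octants. That forces exactly four pieces, each covering two octants, and chasing octants then forces all four to have $0$ in column $1$, contradicting boundary rank $3$.
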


\begin{proof} We prove this by induction on $|\sP|$. Let $P_1,\dots, P_m$ enumerate $\sP$. Let $M=(a_{i,j})_{m\times n}$ be a principal matrix of $x$ with respect to $\sP$. Consider $j=1$ and $A=\{2,\dots, n\}$. Using the notation in the proof of Theorem~\ref{thm:minimal}, $A=A_0\cup A_1\cup A_2\cup A_3$. Consider the rows of $M$ correspondent to $\sP_{1,+}$. These are the rows indexed by $P_i\in\sP$ where $a_{i,1}=0$ or $+1$. In the notation of the proof of Theorem~\ref{thm:minimal}, the indices of these rows are from $C_+\cup C_0$.

In the rest of the proof we use induction on $|C_0|=|\{P\in\sP\colon \pi_1(B_P(x))=0\}|$.

First consider the case $C_0=\varnothing$. Then by Theorem~\ref{thm:minimal}, $\pi_A(\sP_{1,+})$ is a minimal locally regulated partition of $\pi_A(R)$ about $\pi_A(x)$, and by the inductive hypothesis there is an $\eta>1$ and an $e_\eta$-pair $(\pi_A(P_i),\pi_A(P_{i'}))$ in $\pi_A(\sP_{1,+})$. This means that the $i$-th and $i'$-th rows of $M_{1,+}$ are identical except that $a_{i,\eta}a_{i',\eta}=-1$. It follows that the $i$-th and $i'$-th rows of $M$ are identical except that $a_{i,\eta}a_{i',\eta}=-1$. Thus $(P_i,P_{i'})$ is an $e_\eta$-pair in $\sP$.

Next consider the case $C_0\neq\varnothing$. Again by Theorem~\ref{thm:minimal}, $\pi_A(\sP_{1,+})$ is a minimal locally regulated partition of $\pi_A(R)$ about $\pi_A(x)$. By the indutive hypothesis, $\pi_A(\sP_{1,+})$ contains an $e_\eta$-pair for some $\eta\geq 2$. By Lemma~\ref{lem:principal0}, $M_{1,+}$ is a principal matrix of $\pi_A(x)$ with respect to $\pi_A(\sP_{1,+})$, and hence there are distinct $i,i'\in C_+\cup C_0$ such that the $i$-th and $i'$-th rows of $M_{1,+}$ are identical except that $a_{i,\eta}a_{i',\eta}=-1$. If $i,i'\in C_+$ or $i,i'\in C_0$, then the $i$-th and $i'$-th rows of $M$ are identical except that $a_{i,\eta}a_{i',\eta}=-1$, and hence $(P_i, P_{i'})$ is an $e_\eta$-pair in $\sP$. For the rest of the proof, assume $i\in C_+$ and $i'\in C_0$. To simplify notation, assume without loss of generality that $\eta=2$, and assume the $i$-th row of $M$ is $(+1,+1, \vec{k})$ and the $i'$-th row of $M$ is $(0,-1,\vec{k})$, where $\vec{k}$ is some vector of length $n-2$ with values in $\{0,\pm 1\}$. 

We are now going to define a new locally regulated partition $\tilde{\sP}$ about $x$. $\tilde{\sP}$ is defined in two steps. In the first step, we take $P_{i'}\in\sP$ and divide it by the hyperplane with normal vector $e_1$ into two parts $\tilde{P}_+$ and $\tilde{P}_-$, with $\pi_1(B_{\tilde{P}_+}(x))=+1$ and $\pi_1(B_{\tilde{P}_-}(x))=-1$. In the second step, we combine $P_i$ and $\tilde{P}_+$ into a new rectangle $\tilde{Q}$. Then let $\tilde{\sP}=\{\tilde{Q}, \tilde{P}_-\}\cup\sP\setminus \{P_i, P_{i'}\}$. In the enumeration of $\tilde{\sP}$, give $\tilde{Q}$ index $i$ and $\tilde{P}_-$ index $i'$. Then in the principal matrix $\tilde{M}$ of $\tilde{\sP}$, all rows except the $i$-th and $i'$-th rows are identical to the corresponding rows in $M$, and the new $i$-th row is $(+1,0,\vec{k})$ and the $i'$-th row is $(-1,-1,\vec{k})$. Since $\tilde{\sP}$ has the same boundary vectors as $\sP$ and $|\sP|=|\tilde{\sP}|$, $\tilde{\sP}$ is minimal. 

\begin{center}
\begin{tikzcd}[row sep=tiny]
(+1,+1, \vec{k}) \arrow[r] & (+1,+1,\vec{k})\arrow[r] & (+1,0,\vec{k}) \\
 & (+1,-1,\vec{k})\arrow[ur] &  \\
(0,-1,\vec{k})\arrow[r]\arrow[ur] & (-1,-1,\vec{k})\arrow[r] & (-1,-1,\vec{k}) \\
M & & \tilde{M}
\end{tikzcd}
\end{center}

Now note that $|\{P\in\tilde{\sP}\colon \pi_1(B_P(x))=0\}|<|\{P\in \sP\colon \pi_1(B_P(x))=0\}|$. Thus by the inductive hypothesis, there is an $e_\eta$-pair $(\iota, \iota')$ in $\tilde{M}$. If $\{\iota,\iota'\}\cap \{i,i'\}=\varnothing$, then $(\iota,\iota')$ is also an $e_\eta$-pair in $M$, and we are done. 

We claim that in $\tilde{M}$ there is no $e_\eta$-pair $(i',\iota)$ for $\iota\in C_-$ and $2<\eta\leq n$. Assume the claim fails, that $(i',\iota)$ is an $e_\eta$-pair for $\iota\in C_-$ and $2<\eta\leq n$. Then we define a new locally regulated partition $\hat{\sP}$ from $\tilde{\sP}$ by combining $\tilde{P}_{i'}$ and $\tilde{P}_{\iota}$ into $\hat{P}_{i'}$. We have $\hat{\sP}$ is a locally regulated partition of $x$ and $|\hat{\sP}|=|\tilde{\sP}|-1$. Note that $\hat{\sP}$ has the same boundary vectors as $\tilde{\sP}$ (as well as $\sP$). This is a contradiction. 

Similarly, we also have that in $\tilde{M}$ there is no $e_\eta$-pair $(i, \iota)$ for $\iota\in C_+$ and $2<\eta\leq n$. 

Suppose in $\tilde{M}$ that $(i',\iota)$ is an $e_2$-pair for some $\iota\in C_-$. Then note that in $M$ we have that $(i,\iota)$ is indeed an $e_1$-pair, and we are done. Suppose in $\tilde{M}$ that $(i',\iota)$ is an $e_1$-pair for some $\iota\in C_+$. Then the $\iota$-th row of $\tilde{M}$ is $(+1, -1, \vec{k})$. We then have $\tilde{P}_{\iota}\subseteq \tilde{P}_{i}$, contradicting that $\tilde{P}$ is a locally regulated partition.

Finally, suppose $(i,\iota)$ is an $e_1$-pair in $\tilde{M}$ for some $\iota\in C_-$. Then the $\iota$-th row of $\tilde{M}$ is $(-1,0,\vec{k})$. We then have $\tilde{P}_{i'}\subseteq \tilde{P}_{\iota}$, contradicting that $\tilde{P}$ is a locally regulated partition. 

Thus we have gone through all cases, and in each possible case we have found an $e_\eta$-pair in $M$.
\end{proof}

In the following we describe a canonical way to construct minimal locally regulated partitions about a point. We first define a notion of a labelled simple tree as follows. Let $k\geq 0$ be an integer. A {\em simple tree} of $k+1$ levels is a rooted binary tree (i.e. every node has at most two children) such that the root is on level $0$, all terminal nodes are on level $k$, and at each level $\ell<k$, at most one node on level $\ell$ has two children. A simple tree is {\em full} if at each level above the terminal level, exactly one node on that level has two children. Note that in a full simple tree $T$ of more than one level the root always has two children.

We assign labels $+1, -1$ and $0$ to the nodes of a simple tree as follows. If a node does not have a sibling, then it is labelled $0$; otherwise, we label any pair of sibling nodes by $+1$ for one and $-1$ for the other. If $T$ is a labelled simple tree and $t\in T$, then define a labelled simple tree $T_t$ to be the substree of $T$ with $t$ as the root and containing all descendants of $t$ in $T$ as nodes; the label of $t$ in $T_t$ is (re)assigned as $0$ (if necessary), and the labels of all other nodes are the same as their labels in $T$. Given a simple tree $T$ of $k+1$ levels, let $T'$ be the full simple tree obtained from $T$ by removing all levels $\ell>0$ of $T$ on which every node has no siblings; we call $T'$ the {\em derived simple tree} of $T$. Let $J(T)$ denote the set of all indices of removed levels $0<\ell\leq k$ in the construction of $T'$ from $T$. If $T$ is a labelled simple tree, then in the construction of $T'$ from $T$ only nodes with label $0$ are removed, and the rest of the nodes retain their labels in $T$. If $T$ is a labelled full simple tree, then let $T_{-}$ be $(T_t)'$ where $t$ is the child of the root of $T$ with label $-1$, and let $T_{+}$ be $(T_t)'$ where $t$ is the child of the root of $T$ with label $+1$.

Without loss of generality, let $x=x_0=\vec{0}\in\R^n$ and let $R=R_0=[-1,1]^n$. Let $i_1,\dots, i_k\in\{1,\dots, n\}$ be distinct indices, and let $T$ be a labelled full simple tree of $k+1$ levels. We define a partition $\sP(i_1,\dots, i_k;T)$ by induction on $k$ as follows. If $k=0$, then $T$ is a trivial tree with only the root, and we let $\sP(\varnothing;T)=\{R\}$. Suppose $k>0$. Let $S=\{1,\dots, n\}\setminus\{i_1\}$. Let $t_+$ be the child of the root of $T$ with label $+1$, and let $t_-$ be the child of the root of $T$ with label $-1$. Let $I_{+}\subseteq \{2,\dots, k\}$ be the set $\{j+1\colon j\in J(T_{t_{-}})\}$ and $I_{-}\subseteq\{2,\dots, k\}$ be the set $\{j+1\colon j\in J(T_{t_{+}})\}$. Then by the fullness of $T$, we have $I_{-}\cap I_{+}=\varnothing$ and $I_{-}\cup I_{+}=\{2,\dots, k\}$. By the inductive hypothesis, we have already defined $\sP((i_j\colon j\in I_{-});T_{-})=\sP_{-}$ and $\mathcal{P}((i_j\colon j\in I_{+}); T_{+})=\sP_{+}$, which are locally regulated partitions of $\pi_S(R)$. Let
$$ R_{-}=\{y\in R\colon \pi_{i_1}(y)\in [-1,0]\} $$
and
$$ R_{+}=\{y\in R\colon \pi_{i_1}(y)\in [0,1]\}. $$
Define
$$ \sP(i_1,\dots, i_k; T)=\{ \pi_S^{-1}(P)\cap R_{-}\colon P\in \sP_{-}\}\cup\{\pi_S^{-1}(P)\cap R_{+}\colon P\in \sP_{+}\}. $$
This finishes the inductive definition of $\sP(i_1,\dots, i_k; T)$.

The construction of $\sP(i_1,\dots, i_k;T)$ can be described geometrically as follows. We successively divide $R$ using faces with normal vectors $e_{i_1},\dots, e_{i_k}$. In the first step, divide $R$ into two parts $R_{-}$ and $R_{+}$ with a hyperplane containing $x$ and with normal vector $e_{i_1}$ so that $\pi_{i_1}(B_{R_{-}}(x))=-1$ and $\pi_{i_1}(B_{R_{+}}(x))=+1$. Let $t_{-}\in T$ be the child of the root of $T$ with label $-1$ and $t_{+}\in T$ be the child of the root of $T$ with label $+1$. Then exactly one of $t_{-}$ and $t_{+}$ has two children. For definiteness, assume $t_{-}$ has two children. Then in the second step, further divide $R_{-}$ with a hyperplane containing $x$ and with normal vector $e_{i_2}$. Repeat this until all levels of $T$ have been considered. The result is the locally regulated partition of $R$ about $x$.

Note that $\sP(i_1,\dots, i_k;T)$ is minimal because at each step $0<\ell\leq k$ of the above construction, we divide exactly one of the intermediate $n$-dimensional rectangles into two, and thus have exactly $\ell+1$ many rectangles.

In the following we show that all minimal locally regulated partitions about a point arise in this way.

\begin{thm}\label{thm:minimalrep} Let $R$ be an $n$-dimensional rectangle in $\R^n$, $x\in \Int(R)$, and let $\sP$ be a minimal locally regulated partition of $R$ about $x$. Let $k=\beta_{\sP}(x)$. Then there exist distinct $i_1,\dots, i_k\in\{1,\dots, n\}$ and a labelled full simple tree $T$ of $k+1$ levels such that $\sP=\sP(i_1,\dots, i_k;T)$.
\end{thm}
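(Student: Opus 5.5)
We would prove Theorem~\ref{thm:minimalrep} by induction on $k=\beta_{\sP}(x)$, normalizing to $x=x_0$ and $R=R_0$. If $k=0$, every $P\in\sP$ has all faces on $\partial R$, so $\sP=\{R\}=\sP(\varnothing;T)$ with $T$ the one-node tree. For $k>0$ the key step is to find a \emph{first cut}: an index $i_1$ with $\pi_{i_1}(B_P(x))\neq 0$ for every $P\in\sP$. Such a column of the principal matrix $M$ has no zeros, so the hyperplane $\{\pi_{i_1}=0\}$ separates $\sP$ into $\sP_{i_1,+}$ and $\sP_{i_1,-}$, with no rectangle crossing it.

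Given such an $i_1$, let $S=\{1,\dots,n\}\setminus\{i_1\}$. By Theorem~\ref{thm:minimal}, $\pi_S(\sP_{i_1,+})$ and $\pi_S(\sP_{i_1,-})$ are minimal locally regulated partitions of $\pi_S(R)$ about $\pi_S(x)$. By the corollary on orthogonality, their boundary vector sets $\{e_i\colon i\in A_2\}$ and $\{e_i\colon i\in A_1\}$ are disjoint. Since $C_0=\varnothing$ forces $A_3=\varnothing$, these two sets together make up all boundary vectors other than $e_{i_1}$, and $k-1=|A_1|+|A_2|$. By induction, $\pi_S(\sP_{i_1,\pm})=\sP(\vec{i}^{\pm};T^{\pm})$ for suitable index sequences and full simple trees.

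We then build $T$ by interleaving. Choose an ordering $i_2,\dots,i_k$ of the remaining indices that merges the sequences $\vec i^{+}$ and $\vec i^{-}$, for instance concatenating them. Put $T^{+}$ under a child $t_+$ and $T^-$ under $t_-$, padding each side with label-$0$ unary levels at the levels belonging to the other side. The result is a simple tree with exactly one branching node per level, hence full, and with $(T_{t_\pm})'=T^{\pm}$ and the sets $I_\pm$ as in the definition. Unwinding the inductive definition of $\sP(i_1,\dots,i_k;T)$ then gives exactly $\sP$, because each $P\in\sP_{i_1,\pm}$ equals $\pi_S^{-1}(\pi_S P)\cap R_\pm$. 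This uses that $P$ lies on one side with a face on $\{\pi_{i_1}=0\}$, and all its other $i_1$-extent reaches $\partial R$.

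The main obstacle is the existence of the first cut. Our plan is induction on $|\sP|$ using Lemma~\ref{lem:ejpair}. Take an $e_j$-pair $P,Q$ and merge them into $P\cup Q$, obtaining $\sP^*$ with one fewer element.

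\emph{Case 1: $e_j$ is still a boundary vector of $\sP^*$.} Then $\nu=\beta$ for $\sP^*$, contradicting Corollary~\ref{cor:regulated}. So this case cannot occur.

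\emph{Case 2: $e_j$ is no longer a boundary vector.} Then $\beta_{\sP^*}=k-1$ and $\sP^*$ is minimal. By induction $\sP^*$ has a full cut $i_1\neq j$, or else $k-1=0$ and $\sP^*=\{R\}$, in which case $\sP=\{P,Q\}$ and $i_1=j$ works. In the first situation the column $i_1$ of $M$ has no zeros: the rows of $P$ and $Q$ agree off column $j$, and the merged row is nonzero in column $i_1$. So $i_1$ is a full cut for $\sP$.

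Case 2 alone also supplies a cleaner overall induction: $\sP$ is obtained from the minimal $\sP^*=\sP(\dots;T^*)$ by splitting the single rectangle $P\cup Q$ by $e_j$. Adding a branching level for $j$ at the bottom of the leaf corresponding to $P\cup Q$ would directly yield the required tree $T$. The only check is that the leaf-to-rectangle correspondence works, which is routine from the recursive definition. We would use this splitting formulation as the main line, since it makes the tree bookkeeping mechanical.
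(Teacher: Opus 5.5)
Your main line (merge the $e_j$-pair from Lemma~\ref{lem:ejpair}, check that every boundary vector other than $e_j$ survives because the rows of $P$ and $Q$ agree off column $j$, use Corollary~\ref{cor:regulated} to conclude that $e_j$ disappears and the merged partition is minimal with $\beta=k-1$, then apply induction and add a bottom branching level at the leaf for $P\cup Q$) is exactly the paper's proof. The top-down ``first cut'' construction you sketch first, via Theorem~\ref{thm:minimal} and the orthogonality corollary, is also sound but not needed.
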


\begin{proof} By induction on $k$. If $k=0$ then we must have $\sP=\{R\}$ and the theorem holds. Suppose $k>0$. By minimality, we have $|\sP|=k+1$. Let $J=\{j_1,\dots, j_k\}$ be the set of all $j\in\{1,\dots, n\}$ such that $e_j$ is a boundary vector of $x$ with respect to $\sP$. By Lemma~\ref{lem:ejpair} there exist $j\in J$ and an $e_j$-pair $P, Q\in \sP$. Suppose $\pi_j(B_P(x))=+1$ and $\pi_j(B_Q(x))=-1$. Without loss of generality, assume $j=j_k$. Consider
$$ \sQ=\{P\cup Q\}\cup \sP\setminus\{P, Q\}. $$
Then $\sQ$ is a locally regulated partition of $R$ about $x$. Note that each $e_j$ for $j\in J'=\{j_1,\dots, j_{k-1}\}$ is still a boundary vector of $x$ with respect to $\sQ$, thus $\beta_{\sQ}(x)\geq k-1$. Since $\nu_{\sQ}(x)=|\sQ|=k$, it follows from Theorem~\ref{thm:separative} that $\beta_{\sQ}=k-1$. This means that $e_{j_k}$ is no longer a boundary vector of $x$ with respect to $\sQ$. In addition, $\sQ$ is minimal.

By the inductive hypothesis, there is a labelled full simple tree $T'$ of $k$ levels such that $\sQ=\sP(i_1,\dots, i_{k-1}; T')$. One of the terminal nodes of $T'$ corresponds to $P\cup Q\in \sQ$. Denote this node as $t$. Now we let $T$ be an extension of $T'$ by extending all terminal nodes of $T'$ to one or two nodes on the level $k+1$. If terminal node of $T'$ is not $t$, then it has one extension. The node $t$ has two extensions corresponding to $P$ and $Q$ respectively. In $T$, the label of $t$ is $0$, the label of the node corresponding to $P$ is $+1$, and the label of the node corresponding to $Q$ is $-1$. It is easily seen that $\sP=\sP(i_1,\dots, i_k; T)$.
\end{proof}

We note the following strong topological property of a minimal locally regulated partition about a point. This property was the original motivation of our study in this paper.

\begin{thm}  \label{union}
Let $R$ be an $n$-dimensional rectangle in $\R^n$, $x\in \Int(R)$, and $\sP$ be a minimal locally regulated partition of $R$ about $x$. Then for any nonempty subset $\sA\subseteq \sP$, $\bigcup\sA$ is homeomorphic to an $n$-dimensional rectangle in $\R^n$.
\end{thm}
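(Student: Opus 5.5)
The plan is to combine Theorem~\ref{thm:minimalrep} with an induction on $k=\beta_{\sP}(x)$. Without loss of generality $x=x_0=\vec{0}$ and $R=R_0=[-1,1]^n$. By Theorem~\ref{thm:minimalrep}, $\sP=\sP(i_1,\dots,i_k;T)$ for some full simple tree $T$. The inductive structure of that construction is the key. We have $\sP=\sP^-\cup\sP^+$, where
\[
\sP^{\pm}=\{\pi_S^{-1}(P)\cap R_{\pm}\colon P\in\sP_{\pm}\},
\]
and $\sP_-,\sP_+$ are minimal partitions of $\pi_S(R)$ about $\pi_S(x)$ with fewer boundary vectors. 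Since $\pi_S^{-1}(\cdot)\cap R_\pm$ is a product with an interval, the induction hypothesis applies directly to $\sA\cap\sP^-$ and to $\sA\cap\sP^+$ separately. Concretely, if $\sA\subseteq\sP^-$, then $\bigcup\sA=(\bigcup\sA')\times[-1,0]$ in suitable coordinates, with $\sA'\subseteq\sP_-$. By induction this is homeomorphic to a product of an $(n-1)$-rectangle with an interval, so we are done. The same holds if $\sA\subseteq\sP^+$.

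The main case is when $\sA$ meets both halves. Then $\bigcup\sA=X_-\cup X_+$, where $X_-\subseteq R_-$ and $X_+\subseteq R_+$ are each homeomorphic to $n$-cells and lie on opposite sides of the hyperplane $H=\{\pi_{i_1}=0\}$. Fullness of $T$ means only one of $t_\pm$ splits further, say $t_-$. Hence $\sP_+$ uses only the directions $i_j$ with $j\in I_+$ and $\sP_-$ uses only those with $j\in I_-$, and these index sets are disjoint.

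I would like to prove a stronger inductive statement to handle gluing: for any nonempty $\sA\subseteq\sP$, there is a homeomorphism of $R$ onto itself carrying $\bigcup\sA$ to a rectangle. A more usable form is that $\bigcup\sA$ is star-shaped with respect to some point, or more precisely that it is a product-like "staircase". Even better, I would aim to show that $\bigcup\sA$ is a union $\bigcup\sA=\{y\in R\colon f(\pi_{\text{rest}}(y))\le \pi_{c}(y)\le g(\pi_{\text{rest}}(y))\}$ for some coordinate $c$ and lower/upper semicontinuous step functions. Such a region is homeomorphic to a rectangle provided $f<g$ on an appropriate domain that is itself a cell.

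The gluing step is the main obstacle. Write $D_\pm=X_\pm\cap H$. Viewed inside $\pi_S(R)$, $D_+=\bigcup\sA'_+$ is a union of a subfamily of $\sP_+$, hence a cell by induction, and similarly $D_-$. I would then use the orthogonality of $\sP_+$ and $\sP_-$ (they split along disjoint coordinate sets) to show that $D_+\cap D_-$ is either empty or a product-type cell. The reason is that $D_+$ is a product of a region in the $I_+$ coordinates with the full range in the others, and $D_-$ is the analogous product in the $I_-$ coordinates. Their intersection is therefore a product of two cells, which is itself a cell.

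Once $D_+\cap D_-$ is a cell, the union of two $n$-cells meeting along a common boundary $(n-1)$-cell is an $n$-cell. I would prove this concretely rather than invoke generalized Schoenflies. The union is a product over the cell $D_+\cap D_-$ near $H$, which gives a collar, so we can straighten each half to a rectangle with that face and glue. If $D_+\cap D_-$ is empty, the union is disconnected, and I must rule this out. A more careful choice is needed here: $\bigcup\sA$ always contains $x$, since every member of $\sP$ contains $x$. So $x\in D_+\cap D_-$, the intersection is nonempty, and it is a cell containing $x$.
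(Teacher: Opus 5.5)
Your argument is correct at the paper's level of rigor, but it cuts the tree at the opposite end.

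\textbf{The paper's route.} The paper removes the \emph{deepest} split. With $P,Q$ the $e_j$-pair below the splitting node on level $k$, the cases $|\sA\cap\{P,Q\}|\neq 1$ follow from the induction hypothesis for the merged partition. In the remaining case
$\bigcup\sA=(X\times[-1,0])\cup(Y\times[0,1])$, where $X=\bigcup\pi_S(\sB)\subseteq Y=X\cup\pi_S(P)$. Both $X$ and $Y$ are unions of subfamilies of the single minimal partition $\pi_S(\sP_{j,+})$, so the interface $X\times\{0\}$ is a cell by induction with no further work.

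\textbf{Your route.} You remove the \emph{root} split, so your halves $D_-\times[-1,0]$ and $D_+\times[0,1]$ are not nested. You therefore need an extra input: fullness makes the direction sets of $\sP_-$ and $\sP_+$ disjoint, which gives $D_-\cap D_+=E_-\times E_+\times[-1,1]^{r}$, a cell containing $\pi_S(x)$. Two small corrections:
\begin{itemize}
\item Your phrase ``only one of $t_\pm$ splits further'' is false as stated, since both subtrees may split at later levels. You only use the disjointness of $I_+$ and $I_-$, which is correct.
\item You should get $E_\pm$ to be cells by applying the induction directly in the coordinates $\{i_j\colon j\in I_\pm\}$, not by cancelling a cube factor from $D_\pm$.
\end{itemize}

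\textbf{The shared gluing step.} Both proofs rest on the same gluing step, which the paper also asserts without proof. Your ``straighten each half to a rectangle with that face'' requires the pairs $\bigl(X_\pm,(D_-\cap D_+)\times\{0\}\bigr)$ to be standard. This does not follow merely from $X_\pm$ and $D_-\cap D_+$ being cells, since an $(n-1)$-cell in the boundary of an $n$-cell can be wildly embedded once $n\geq 4$.

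\textbf{Where your decomposition helps.} Strengthen the statement to: $(R,\bigcup\sA)$ is homeomorphic to $(R,R)$ or to $(R,H)$ with $H$ a half of $R$. This is close to the stronger statement you considered and dropped. Applying the inductive homeomorphisms to the independent factors $E_-$ and $E_+$ then reduces $\bigcup\sA$ to one of a few explicit unions of two boxes, so the gluing becomes a concrete check. The paper's nested pair $X\subseteq Y$, lying inside one partition, would instead need simultaneous control of both sets.
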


\begin{proof} Without loss of generality, assume $x=x_0=\vec{0}$ and $R=R_0=[-1,1]^n$. By Theorem~\ref{thm:minimalrep} there exist distinct $i_1,\dots, i_k\in\{1,\dots, n\}$ and a labelled full simple tree $T$ of $k+1$ levels such that $\sP=\sP(i_1,\dots, i_k;T)$. Let $t$ be the unique splitting node on level $k$, $j=i_k$, and let $P, Q\in \sP$ be the $e_j$-pair corresponding to the nodes extending $t$. We note that $t$ corresponds to $P\cup Q$.

We prove the theorem by induction on $k$. If $k=0$ or $1$, the theorem is obvious. Suppose $k>1$. Let $\sA\subseteq \sP$ be an arbitrary subset. If $\sA$ contains both $P$ and $Q$ or none of $P$ and $Q$, the theorem holds by the inductive hypothesis. Without loss of generality, assume $P\in \sA$, $Q\not\in \sA$, and $\pi_j(B_P(x))=+1$. Let $\sB=\sA\setminus\{P\}$. If $\sB=\varnothing$, then $\bigcup \sA=P$ and the theorem holds trivially. We assume $\sB\neq\varnothing$.

Let $S=\{1,\dots, n\}\setminus\{j\}$. Then $\pi_S(\sP_{j,+})$ is a locally regulated partition of $\pi_S(R)$ about $\pi_S(x)$. Note that since $e_j$ is only a boundary vector of $x$ with respect to $P$ and $Q$, we have that 
$$\pi_S(\sA)=\pi_S(\sB)\cup \pi_S(P) $$
and
$$ \left(\bigcup\pi_S(\sB)\right)\cap \pi_S(P)=\varnothing.$$

\begin{figure}[h]
\begin{tikzpicture}[scale=0.2]

\draw (0,0) to (40,0) to (40,5) to (0,5) to (0,0);
\draw (0,0) to (0,-5) to (30,-5) to (30,5);
\node[left] at (-1,5) {$+1$};
\node[left] at (-1,0) {$0$};
\node[left] at (-1,-5) {$-1$};
\node[left] at (44,5) {$+1$};
\node[left] at (44,0) {$0$};
\node[left] at (19,-2) {$\bigcup\pi_S(\sB)$};
\node[left] at (38.2,-2) {$\pi_S(P)$};
\node[left] at (36.5,2) {$P$};

\end{tikzpicture}
\caption{A decomposition of $\bigcup\sA$. \label{fig:2}}
\end{figure}

By the inductive hypothesis, the sets $\pi_S(P)$, $\bigcup\pi_S(\sB)$ and $\bigcup\pi_S(\sA)=\bigcup\pi_S(\sB)\cup\pi_S(P)$ are  all homeomorphic to $(n-1)$-dimensional rectangles in $\R^S$. Note that
$$\begin{array}{rcl}
 \bigcup \sA&=& \bigcup\sB\cup P
=\Big(\bigcup\pi_S(\sB)\times [-1,1]\Big)\cup \Big(\pi_S(P)\times [0,1]\Big) \\ \\
&=&\Big(\,\bigcup\pi_S(\sB)\times[-1,0]\,\Big)\cup \Big(\,(\bigcup\pi_S(\sB)\cup  \pi_S(P))\times [0,1]\,\Big). 
\end{array}$$
Thus $\bigcup\sA$ is homeomorphic to the union of two $n$-dimensional rectangles with their intersection being an $(n-1)$-dimensional rectangle. It follows that $\bigcup\sA$ is homeomorphic to an $n$-dimensional rectangle in $\R^n$. Figure~\ref{fig:2} illustrates the sets in consideration.
\end{proof}

\section{Minimal Regulated Partitions of $\R^n$\label{sec:minR}}

\begin{defn} \label{gammareg}
Let $n\geq 1$ and $\gamma\geq 1$. Let $\sP$ be a regulated partition of $\R^n$.
\begin{enumerate}
\item[(1)]  We call $\sP$ a {\em $\gamma$-regulated partition} if each $x\in \R^n$ is contained in at most $\gamma$ many elements of $\mathcal{P}$.
\item[(2)] We call $\sP$ {\em minimal} if $\sP$ is an $(n+1)$-regulated partition.
\end{enumerate}
\end{defn}

The following lemmas justify the terminology of minimality.

\begin{lem}\label{lem:minimalregulated} If $\sP$ is a $\gamma$-regulated partition of $\R^n$, then $n+1\leq \gamma\leq 2^n$.
\end{lem}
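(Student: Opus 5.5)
The statement has two halves, and I will treat them separately. Interpret $\gamma$ as the least integer such that $\sP$ is $\gamma$-regulated; strictly speaking, the lower bound only makes sense in that form, since any $\gamma$-regulated partition is trivially $\gamma'$-regulated for every $\gamma'\geq\gamma$.

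For the upper bound $\gamma\leq 2^n$, fix $x\in\R^n$ and use Lemmas~\ref{lem:regulatedvslocal} and \ref{lem:localvsabout} to pass to a locally regulated partition $\sQ$ of a small rectangle $R_x$ about $x$. The discreteness condition (1) guarantees that only finitely many rectangles of $\sP$ meet a small neighbourhood of $x$, so the intersections with $R_x$ give a faithful count: $\nu_{\sP}(x)=|\sQ|$. After translating and rescaling, every element of $\sQ$ has the form $J_1\times\cdots\times J_n$ with each $J_i\in\{[-1,0],[0,1],[-1,1]\}$. Each such rectangle therefore contains at least one of the $2^n$ closed orthants $I_1\times\cdots\times I_n$ (with $I_i\in\{[-1,0],[0,1]\}$) of $[-1,1]^n$. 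Distinct elements of $\sQ$ have disjoint interiors, so they cannot contain a common orthant. This gives an injection from $\sQ$ into the set of orthants, and hence $\nu_{\sP}(x)\leq 2^n$.

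For the lower bound, I need a point $x$ with $\nu_{\sP}(x)\geq n+1$. By Corollary~\ref{cor:regulated}(2), $\nu_{\sP}(x)\geq\beta_{\sP}(x)+1$, so it suffices to find $x$ with $\beta_{\sP}(x)=n$, i.e.\ every $e_i$ is a boundary vector of $x$. A vertex of a rectangle would satisfy this, but vertices of a rectangle are not automatically points where all $n$ directions are boundary directions for the partition in the required sense, so I would argue slightly more carefully. Take any $P=[a_1,b_1]\times\cdots\times[a_n,b_n]\in\sP$ and let $x=(a_1,\dots,a_n)$ be its lower corner. Then $x\in P$ and $B_P(x)=(+1,\dots,+1)$, so $\pi_i(B_P(x))\neq0$ for every $i$. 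By definition this means every $e_i$ is a boundary vector of $x$ with respect to $\sP$, so $\beta_{\sP}(x)=n$ and $\nu_{\sP}(x)\geq n+1$.

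The main point to check is that the corollary applies at such a corner. This holds because Corollary~\ref{cor:regulated}(2) is stated for arbitrary $x\in\R^n$, and the reduction to a partition about $x$ given by Lemma~\ref{lem:localvsabout} preserves both the boundary vectors and the partition number. The remaining work is routine bookkeeping.
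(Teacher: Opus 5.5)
Your proof is correct, but for the lower bound it takes a different route from the paper.

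\textbf{The paper's route.} The paper argues by a self-contained induction on $n$. It chooses a level $0\in D$ with a tile having $\pi_n$-projection $[0,a]$, and projects the tiles crossing the slab $\R^{n-1}\times[0,d]$ to a regulated partition of $\R^{n-1}$. It takes a point $y_0$ lying in $n$ of the projected tiles and lifts it to height $b=\min_i b_i$. There a further tile must sit on top of the lowest of the $n$ lifted tiles.

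\textbf{Your route.} You observe that at any vertex $x$ of any $P\in\sP$ every $e_i$ is a boundary vector, so $\beta_{\sP}(x)=n$. You then invoke Corollary~\ref{cor:regulated}(2). That corollary is proved earlier in the paper independently of this lemma, so there is no circularity.

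\textbf{Comparison.} Your argument is shorter, purely local, and gives a stronger conclusion: every vertex of every tile lies in at least $n+1$ tiles, not merely some point. The cost is that it rests on the separativity machinery behind Theorem~\ref{thm:separative}. The paper's proof uses only the partition property and the discreteness of $D$. Its slab-and-lift pattern is also reused in the proof of Lemma~\ref{lem:localminimal}.

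At a vertex you could even bypass the corollary. In the local picture about $x$, the other tiles through $x$ must tile the remaining $2^n-1$ closed orthants, and each covers a power of $2$ many of them. Since $2^n-1$ has binary weight $n$, at least $n$ further tiles are needed.

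\textbf{Two small slips.} First, it is the upper bound $\gamma\leq 2^n$, not the lower bound, that only makes sense when $\gamma$ is read as the least admissible value. The lower bound holds for every $\gamma$ for which $\sP$ is $\gamma$-regulated. Second, your caveat that vertices ``are not automatically'' suitable points is contradicted by your own argument, since the lower corner you then use is a vertex.
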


\begin{proof} It is clear that $\gamma\leq 2^n$. To see $\gamma\geq n+1$, we show that there is $x\in\R^n$ which is contained in at least $n+1$ many elements of $\sP$. We prove this by induction on $n$. When $n=1$ this is obvious. Assume $n>1$. Let $D\subseteq\R$ be a discrete subset of $\R$ satisfying (1) of Definition~\ref{def:regulatedpartition}. Without loss of generality assume that $0\in D$ and that there is $R\in\sP$ such that $\pi_n(R)=[0, a]$ for some $a>0$. Let $d>0$ be a discreteness constant for $\sP$. Let $S=\{1,\dots, n-1\}$. Define
$$ \sQ=\{\pi_S(R)\colon R\in\sP, [0,d]\subseteq \pi_n(R)\}. $$
Then $\sQ$ is a regulated partition of $\R^{n-1}$. By the inductive hypothesis, there is $y_0\in\R^{n-1}$ such that $y_0$ is contained in at least $n$ many elements of $\sQ$. Let $S_1,\dots, S_n$ enumerate $n$ distinct elements of $\sQ$ containing $y_0$. Let $R_1,\dots, R_n\in\sP$ be such that $S_i=\pi_S(R_i)$ for $1\leq i\leq n$. Then $R_1,\dots, R_n$ are distinct, and for each $1\leq i\leq n$, $R_i=S_i\times [a_i, b_i]$ for some $a_i\leq 0<d \leq b_i$. Let $b=\min\{b_i\colon 1\le i\leq n\}$. Without loss of generality, assume $b=b_1$. Let $x_0=(y_0,b)\in \R^n$. We claim that $x_0$ is contained in at least $n+1$ many elements of $\mathcal{P}$. First note that $x_0$ is contained in each of $R_1,\dots, R_n$. Also, note that $x_0\in S_1\times [b,+\infty)$ but
$$\Int(S_1\times [b,+\infty))\cap \bigcup_{i=1}^n R_n=\varnothing. $$
Hence there is some $R_{n+1}\in\sP$, distinct from $R_1,\dots, R_n$, with $\Int(R_{n+1})\cap \Int(S_1\times [b,+\infty))\neq\varnothing$, such that $x_0\in R_{n+1}$. We thus have at least $n+1$ many elements of $\mathcal{P}$ containing $x_0$.
\end{proof}

\begin{lem}\label{lem:localminimal} 
Let $\sP$ be a minimal regulated partition of $\R^n$. Then for each $x\in\R^n$, $\nu_{\sP}(x)=\beta_{\sP}(x)+1$.
\end{lem}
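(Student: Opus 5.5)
The plan is a downward induction on $n-\beta_{\sP}(x)$, moving $x$ along a non-boundary direction until it meets a face, as in the proof of Lemma~\ref{lem:minimalregulated}. Corollary~\ref{cor:regulated}(2) already gives $\nu_{\sP}(x)\geq \beta_{\sP}(x)+1$, so only the reverse inequality $\nu_{\sP}(x)\leq\beta_{\sP}(x)+1$ is needed. The key claim is:
\begin{quote}
\emph{If $\beta_{\sP}(x)<n$, then there is $y\in\R^n$ with $\nu_{\sP}(y)\geq \nu_{\sP}(x)+1$ and $\beta_{\sP}(y)\geq\beta_{\sP}(x)+1$.}
\end{quote}
Iterating the claim $n-\beta_{\sP}(x)$ times produces $z$ with $\beta_{\sP}(z)=n$ and $\nu_{\sP}(z)\geq \nu_{\sP}(x)+n-\beta_{\sP}(x)$. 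Minimality gives $\nu_{\sP}(z)\leq n+1$, and hence $\nu_{\sP}(x)\leq \beta_{\sP}(x)+1$.

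To prove the claim, let $P_1,\dots,P_m$ be the elements of $\sP$ containing $x$, so $m=\nu_{\sP}(x)$ is finite by discreteness. Pick a coordinate $j$ such that $e_j$ is not a boundary vector of $x$. Then for every $i$ we have $\pi_j(P_i)=[a_i,b_i]$ with $a_i<\pi_j(x)<b_i$. Let $b=\min_i b_i$, attained at $i=1$, and set $y=x+(b-\pi_j(x))e_j$.

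Since $y$ differs from $x$ only in the $j$-th coordinate and stays inside every $[a_i,b_i]$, we get $y\in P_i$ for all $i$. For $l\neq j$, any face of $P_i$ through $x$ with normal $e_l$ is defined by $\pi_l=\mathrm{const}$, so it also contains $y$. Hence every boundary vector of $x$ is a boundary vector of $y$. Moreover $y$ lies on the top $e_j$-face of $P_1$, so $e_j$ is a new boundary vector of $y$, and $\beta_{\sP}(y)\geq\beta_{\sP}(x)+1$.

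It remains to find a rectangle containing $y$ other than $P_1,\dots,P_m$; this is the main obstacle. Let $S_i=\pi_{\{1,\dots,n\}\setminus\{j\}}(P_i)$. For $i\neq 1$, the $j$-intervals of $P_1$ and $P_i$ overlap in the nondegenerate interval $[\max(a_1,a_i),b]$. So if $\Int(S_1)\cap\Int(S_i)\neq\varnothing$, then $\Int(P_1)\cap\Int(P_i)\neq\varnothing$, which is impossible. Consequently the open set $\Int(S_1)\times(b,b+\epsilon)$ is disjoint from $\Int(P_i)$ for all $i\leq m$: for $i=1$ because it lies above $P_1$, and for $i\neq1$ because its projection lies in $\Int(S_1)$. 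These points must therefore be covered by other elements of $\sP$. Only finitely many elements of $\sP$ meet a bounded neighborhood of $y$, by the discreteness constant. Taking points of this open set converging to $y$, some single $P_{m+1}\notin\{P_1,\dots,P_m\}$ contains infinitely many of them, and since $P_{m+1}$ is closed, $y\in P_{m+1}$. Thus $\nu_{\sP}(y)\geq m+1$, which proves the claim and hence the lemma.
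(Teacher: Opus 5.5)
Your strategy is the paper's: move from $x$ along a non-boundary direction $e_j$ to the lowest top face among the rectangles containing $x$, picking up $e_j$ and at least one new rectangle. The paper phrases this as a reverse induction on $\beta_{\sP}(x)$ rather than an explicit iteration. Your proof of the claim itself is fine. In its last step you should take points of $\Int(S_1)\times(b,b+\epsilon)$ lying outside $P_1\cup\dots\cup P_m$; these are dense there, since that open set meets each $P_i$ only in $\partial P_i$.

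\textbf{The gap is in the iteration.} The claim only gives $\beta_{\sP}(y)\geq\beta_{\sP}(x)+1$. But iterating it $n-\beta_{\sP}(x)$ times, and the bound $\nu_{\sP}(z)\geq\nu_{\sP}(x)+n-\beta_{\sP}(x)$, require $\beta_{\sP}$ to rise by exactly one at every step. If it jumps, you reach $\beta_{\sP}=n$ after some $t<n-\beta_{\sP}(x)$ steps. The claim then no longer applies, and you only get $\nu_{\sP}(x)\leq n+1-t$.

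Such jumps do occur. In $\R^4$, near $x=\vec{0}$, take $P_1=[-1,0]^2\times[-1,1]^2$; the three blocks $I_1\times I_2\times[-1,2]\times[-1,1]$ with $I_1,I_2\in\{[-1,0],[0,1]\}$ and $(I_1,I_2)\neq([-1,0],[-1,0])$; and the two caps $[-1,0]^2\times[1,2]\times[-1,0]$ and $[-1,0]^2\times[1,2]\times[0,1]$ on top of $P_1$. Then $\beta_{\sP}(x)=2$ and $\nu_{\sP}(x)=4$. Moving along $e_3$ gives $y=(0,0,1,0)$ with $\beta_{\sP}(y)=4$. Your claim certifies only $\nu_{\sP}(y)\geq 5=\beta_{\sP}(y)+1$, so nothing you proved contradicts minimality. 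The true value $\nu_{\sP}(y)=6$ comes from the two caps, which your count does not see.

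The easiest repair is a genericity step before moving. Let $D$ be as in Definition~\ref{def:regulatedpartition}, so a face with normal vector $e_l$ can pass through a point $w$ only if $\pi_l(w)\in D$. Replace $x$ by a point $x'$ arbitrarily close to $x$ that differs from $x$ only in the coordinates $l\neq j$ for which $e_l$ is not a boundary vector of $x$, chosen with $\pi_l(x')\notin D$ for those $l$. Then:
\begin{itemize}
\item $x'$ lies in exactly $P_1,\dots,P_m$ and has the same boundary vectors as $x$;
\item the point $y'$ you reach from $x'$ has as boundary vectors exactly those of $x$ together with $e_j$.
\end{itemize}
So $\beta_{\sP}(y')=\beta_{\sP}(x)+1$ while still $\nu_{\sP}(y')\geq\nu_{\sP}(x)+1$, and with this sharpened claim your iteration is valid.

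Alternatively, you can prove $\nu_{\sP}(y)-\beta_{\sP}(y)\geq\nu_{\sP}(x)-\beta_{\sP}(x)$ directly. Apply the strengthened counting theorem at the end of Section~\ref{sec:prelim} to the cross-section just above $y$: the rectangles containing $y$ but not $x$ outnumber the new boundary vectors other than $e_j$.

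The paper's own proof simply asserts $\beta_{\sP}(y)=\beta_{\sP}(x)+1$ at the corresponding step, so it needs the same justification.
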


\begin{proof} By the reverse induction on $\beta_{\sP}(x)\leq n$. If $\beta_{\sP}(x)=n$ then the lemma follows from Corollary~\ref{cor:regulated}. Suppose $\beta_{\sP}(x)=k<n$. Toward a contradiction, assume $\nu_{\sP}(x)>k+1$. Assume without loss of generality that $e_n$ is not a boundary vector of $x$ with respect to $\sP$. Without loss of generality assume $\pi_n(x)=0$. Let $D\subseteq \R$ be a discrete subset satisfying Definition~\ref{def:regulatedpartition} (1). Let $a>0$ be the least element of $D$ greater than $0$. Let $S=\{1,\dots, n-1\}$. Similarly to the proof of Lemma~\ref{lem:minimalregulated}, let
$$\sQ=\{\pi_S(R)\colon R\in\sP, [0,a]\subseteq \pi_n(R)\}. $$
Then $\sQ$ is a regulated partition of $\R^{n-1}$ and $\pi_S(x)\in\R^{n-1}$. Since $e_n$ is not a boundary vector of $x$ with respect to $\sP$, it follows that $\nu_{\sQ}(\pi_S(x))=\nu_{\sP}(x)>k+1$. Let $S_1,\dots, S_{k+2}$ enumerate $k+2$ many distinct elements of $\sQ$ containing $\pi_S(x)$. Let $R_1,\dots, R_{k+2}\in\mathcal{P}$ be such that $S_i=\pi_S(R_i)$ for $1\leq i\leq k+2$. Then $R_1,\dots, R_{k+2}$ are distinct, and for each $1\leq i\leq k+2$, $R_i=S_i\times [a_i, b_i]$ for some $a_i\leq 0<a \leq b_i$. Let $b=\min\{b_i\colon 1\le i\leq n\}$. Without loss of generality, assume $b=b_1$. Let $y=(\pi_S(x),b)\in \R^n$. Then by an argument similar to the proof of Lemma~\ref{lem:minimalregulated}, $\nu_{\sP}(y)\geq k+3$. However, $\beta_{\sP}(y)=\beta_{\sP}(x)+1=k+1$. This contradicts the inductive hypothesis.
\end{proof}

We have thus obtained the following nontrivial characterization of the minimality for regulated partitions.

\begin{thm}\label{thm:regpart} Let $\sP$ be a regulated partition of $\R^n$. Then $\sP$ is minimal iff $\nu_{\sP}(x)=\beta_{\sP}(x)+1$ for all $x\in \R^n$.
\end{thm}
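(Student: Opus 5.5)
Theorem~\ref{thm:regpart} follows by combining two results already established, one for each direction of the equivalence.

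For the forward direction, assume $\sP$ is minimal, i.e.\ an $(n+1)$-regulated partition in the sense of Definition~\ref{gammareg}. Lemma~\ref{lem:localminimal} then gives directly that $\nu_{\sP}(x)=\beta_{\sP}(x)+1$ for every $x\in\R^n$. So this direction is exactly that lemma and needs no new argument.

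For the converse, assume $\nu_{\sP}(x)=\beta_{\sP}(x)+1$ for all $x\in\R^n$. By definition, $\beta_{\sP}(x)$ counts distinct standard unit vectors among $e_1,\dots,e_n$, so $\beta_{\sP}(x)\leq n$. Hence $\nu_{\sP}(x)\leq n+1$ for every $x$. That is, every point lies in at most $n+1$ elements of $\sP$, so $\sP$ is an $(n+1)$-regulated partition and therefore minimal. (Corollary~\ref{cor:regulated}(2) gives the general inequality $\nu_{\sP}(x)\geq\beta_{\sP}(x)+1$, but it is not needed for this direction.)

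There is no real obstacle here, since all the substantive work was done in Lemma~\ref{lem:localminimal}. The proof should make clear that the content of the characterization lies in the forward direction, where the global bound $n+1$ is converted into the pointwise sharp equality $\nu_{\sP}(x)=\beta_{\sP}(x)+1$ even at points of boundary rank below $n$. That lemma handles this by reverse induction on the boundary rank, projecting along a non-boundary direction and moving the point to a higher-rank point with too many rectangles. The converse is just the trivial bound $\beta_{\sP}(x)\leq n$.
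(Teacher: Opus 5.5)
Your proposal is correct and matches the paper's argument: the forward direction is exactly Lemma~\ref{lem:localminimal}, and the converse is the trivial bound $\beta_{\sP}(x)\leq n$, which gives $\nu_{\sP}(x)\leq n+1$ at every point. The paper likewise presents the theorem as an immediate consequence of that lemma, with no further argument.
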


\begin{defn} Let $\mathcal{P},\mathcal{Q}$ be regulated partitions of $\R^n$. We say that $\mathcal{P}$ and $\mathcal{Q}$ are {\em orthogonal} if for any $x\in\R^n$, the set of all boundary vectors of $x$ with respect to $\mathcal{P}$ is disjoint from the set of all boundary vectors of $x$ with respect to $\mathcal{Q}$.
\end{defn}

Next we show the existence of minimal regulated partitions of $\R^n$.

\begin{lem}\label{lem:orthogonal} There exist orthogonal minimal regulated partitions of $\R^n$. In particular, there exists a minimal regulated partition of $\R^n$.
\end{lem}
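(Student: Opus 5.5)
I will prove a stronger statement by induction on $n$: for every $n\geq 1$ there is an infinite sequence $\sP^n_1,\sP^n_2,\dots$ of pairwise orthogonal minimal regulated partitions of $\R^n$. Both assertions of Lemma~\ref{lem:orthogonal} follow at once. To verify minimality I will use Theorem~\ref{thm:regpart}: it suffices to check $\nu(x)=\beta(x)+1$ at every point.

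For $n=1$, fix distinct $\alpha_1,\alpha_2,\dots\in[0,1)$. Let $\sP^1_j$ consist of the intervals $[k+\alpha_j,k+1+\alpha_j]$ for $k\in\Z$; its discreteness set is $D=\Z+\alpha_j$. A point of $\Z+\alpha_j$ lies in exactly two intervals and has boundary vector $e_1$; every other point lies in exactly one interval and has no boundary vector. So each $\sP^1_j$ is minimal. They are pairwise orthogonal because the sets $\Z+\alpha_j$ are pairwise disjoint.

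For the inductive step, take pairwise orthogonal minimal partitions $\sP_1,\sP_2,\dots$ of $\R^{n-1}$ (coordinates $1,\dots,n-1$) and distinct $\alpha_1,\alpha_2,\dots\in[0,1)$. Define $\sP'_j$ by a brick-wall layering in the $e_n$ direction. For each $k\in\Z$ and each $P$, put $P\times[k+\alpha_j,k+1+\alpha_j]$ into $\sP'_j$, where $P\in\sP_{2j}$ if $k$ is even and $P\in\sP_{2j+1}$ if $k$ is odd.
\begin{itemize}
\item \emph{Regulated partition.} Clause (3) holds layer by layer. Clause (1) holds with the discrete set $D_{2j}\cup D_{2j+1}\cup(\Z+\alpha_j)$. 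This set is still uniformly discrete, possibly after taking a smaller constant $d$, since it is a union of three uniformly discrete sets. I will check this carefully but expect no difficulty.
\item \emph{Minimality.} Take $x=(y,t)$. If $t\notin\Z+\alpha_j$, then $x$ lies in the interior of one layer, which uses some $\sP_i$. Hence $\nu_{\sP'_j}(x)=\nu_{\sP_i}(y)$ and $\beta_{\sP'_j}(x)=\beta_{\sP_i}(y)$, and minimality transfers. If $t\in\Z+\alpha_j$, then $x$ lies on the boundary between a layer using $\sP_{2j}$ and one using $\sP_{2j+1}$. So $\nu_{\sP'_j}(x)=\nu_{\sP_{2j}}(y)+\nu_{\sP_{2j+1}}(y)$. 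The boundary vectors of $x$ are $e_n$ together with those of $y$ in $\sP_{2j}$ and in $\sP_{2j+1}$, and these last two sets are disjoint by orthogonality. Hence
$$\beta_{\sP'_j}(x)=1+\beta_{\sP_{2j}}(y)+\beta_{\sP_{2j+1}}(y)=\nu_{\sP'_j}(x)-1.$$
\end{itemize}
The orthogonality of $\sP_{2j}$ and $\sP_{2j+1}$ is exactly what prevents the double count that plain stacking would cause.

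For orthogonality, let $j\neq j'$ and $x=(y,t)$. The vector $e_n$ is a boundary vector of $x$ in $\sP'_j$ only if $t\in\Z+\alpha_j$, and in $\sP'_{j'}$ only if $t\in\Z+\alpha_{j'}$. These sets are disjoint, so $e_n$ is never shared. Any other boundary vector of $x$ in $\sP'_j$ is a boundary vector of $y$ in $\sP_{2j}$ or $\sP_{2j+1}$. In $\sP'_{j'}$ it comes from $\sP_{2j'}$ or $\sP_{2j'+1}$, which are different members of a pairwise orthogonal family, so no such vector is shared either.

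The main obstacle is seeing why the induction must carry more than two partitions. An attempted step using only two orthogonal partitions fails: the shifted layering would need a base partition orthogonal to both $\sP$ and $\sQ$ at the integer levels. The countable family (or any family of size at least $4$ at each stage) avoids this. The remaining bookkeeping, namely discreteness and the computation of $\beta$ at layer boundaries, is routine.
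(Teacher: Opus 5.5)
\textbf{The discreteness step needs repair.} Your construction works, but the discreteness step rests on a false principle, so the induction as stated does not close. A union of uniformly discrete subsets of $\R$ need not be uniformly discrete: $\Z\cup\sqrt{2}\,\Z$ has gaps tending to $0$.

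Your inductive hypothesis says only that the $\sP_i$ are pairwise orthogonal minimal regulated partitions. That puts no constraint on how $D_{2j}$ and $D_{2j+1}$ sit relative to each other. For example, nothing rules out $\sP_{2j}$ and $\sP_{2j+1}$ being the interval partitions of $\R$ with endpoint sets $\Z$ and $\sqrt{2}\,\Z+1/2$. These are minimal and orthogonal. Yet stacking them as in your step puts both endpoint sets into the first coordinate, so no single $D$ as in clause (1) of Definition~\ref{def:regulatedpartition} exists.

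The fix is to carry in the induction that each $\sP_i$ has a discreteness set which is a finite union of translates of $\Z$. Your construction preserves this: it holds for $\Z+\alpha_j$, and $D_{2j}\cup D_{2j+1}\cup(\Z+\alpha_j)$ is again such a union. A finite union $\bigcup_{i\leq m}(\Z+\beta_i)$ is uniformly discrete. Any $d$ smaller than $1$ and smaller than each of the finitely many positive numbers $\min_{k\in\Z}|\beta_i-\beta_{i'}-k|$ works. With this patch the rest of your argument is correct. That includes the layer-by-layer computation of $\nu$ and $\beta$, and the use of orthogonality of $\sP_{2j}$ and $\sP_{2j+1}$ to make $\beta$ additive at layer boundaries. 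It also includes the orthogonality check via disjoint offsets and disjoint index pairs.

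\textbf{Comparison with the paper.} The paper's route is different. It inducts on a single $(n+1)$-regulated partition whose rectangles have all endpoints even integers. The orthogonal partner is then simply $\sP+\vec{1}$, whose faces lie on odd hyperplanes. In the step it sets $\sQ_0=\sP_0+\vec{1}$ and stacks $2R\times[4m,4m+2]$ for $R\in\sP_0$ and $2R\times[4m-2,4m]$ for $R\in\sQ_0$. The count at layer boundaries (sum of the two layer counts, with $\beta$ additive by orthogonality) is the same as yours.

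Your claim that two orthogonal partitions cannot sustain an induction holds only for the bare hypothesis. Under the even-coordinate invariant, $\{2R\colon R\in\sP_0\}$, $\{2R\colon R\in\sQ_0\}$ and their $\vec{1}$-translates have faces in the four residue classes modulo $4$. So in effect four pairwise orthogonal partitions of $\R^{n-1}$ are regenerated from one by scaling and translation. Discreteness is also automatic there, since all endpoints lie in $\Z$.

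Your version proves more: an infinite pairwise orthogonal family in every dimension. It pays by doubling the family at each level and by the discreteness bookkeeping above. The doubling is also why your parenthetical about a fixed family of size $4$ fails under your scheme, since $m$ outputs consume $2m$ inputs. The paper's integer-aligned version is also the one that adapts most directly to $\Z^n$ in Section~\ref{sec:6}.
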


\begin{proof} We prove by induction on $n\geq 1$ that there is an $(n+1)$-regulated partition $\mathcal{P}$ of $\R^n$ such that any $R\in\mathcal{P}$ is of the form
$$ R=[a_1, b_1]\times\cdots\times[a_n,b_n] $$
where $a_i<b_i$ are all even integers. (For simplicity we call such a regulated partition $\mathcal{P}$ {\em having even coordinates}.) Granting this, we can let $\vec{1}=(1,\dots, 1)\in\R^n$ and
$$ \mathcal{Q}=\{R+\vec{1}\colon R\in\mathcal{P}\}. $$
Then $\mathcal{Q}$ is still an $(n+1)$-regulated partition of $\R^n$, and $\mathcal{Q}$ and $\mathcal{P}$ are orthogonal.

For $n=1$ the claim is obviously witnessed by $\mathcal{P}=\{[2k,2k+2]\colon k\in\mathbb{Z}\}$. Assume $n>1$. By the inductive hypothesis, let $\mathcal{P}_0$ be an $n$-regulated partition of $\R^{n-1}$ having even coordinates. As above, let $\mathcal{Q}_0=\{R+\vec{1}\colon R\in\mathcal{P}_0\}$ where $\vec{1}=(1,\dots,1)\in\R^{n-1}$. Then $\mathcal{P}_0$ and $\mathcal{Q}_0$ are orthogonal $n$-regulated partitions of $\R^{n-1}$.

Now for an arbitrary $k$-dimensional rectangle $S$ in $\R^n$, $0\leq k\leq n$, define a rectangle $2S$ as follows: if $S$ is of the form $[a_1,b_1]\times\cdots\times[a_k,b_k]$, then $2S=[2a_1,2b_1]\times\cdots\times[2a_k,2b_k]$. Let $\mathcal{P}$ be the regulated partition of $\R^n$ whose elements are of the form
$$2R\times [4m, 4m+2] \mbox{ where $R\in \mathcal{P}_0$ and $m\in\mathbb{Z}$}$$
or
$$ 2R\times [4m-2, 4m] \mbox{ where $R\in\mathcal{Q}_0$ and $m\in\mathbb{Z}$}.$$
Then $\mathcal{P}$ clearly has even coordinates. We verify that $\mathcal{P}$ is an $(n+1)$-regulated partition of $\R^n$. Let $x\in\R^n$ be arbitrary. We show that $x$ is contained in at most $n+1$ many elements of $\mathcal{P}$. If $\pi_n(x)$ is not an even integer, then by the $n$-regulatedness of $\mathcal{P}_0$ and $\mathcal{Q}_0$, $x$ is contained in at most $n$ many elements of $\mathcal{P}$. Suppose $\pi_n(x)$ is an even integer. Let $S=\{1,\dots, n-1\}$. Suppose $\beta_{\sP_0}(\pi_S(x))=i$ and $\beta_{\sQ_0}(\pi_S(x))=j$. Then by the orthogonality of $\sP_0$ and $\sQ_0$, we have $i+j\leq n-1$. By Lemma~\ref{lem:localminimal},
$$ \nu_{\sP}(x)=\nu_{\sP_0}(\pi_S(x))+\nu_{\sQ_0}(\pi_S(x))=i+1+j+1\leq n+1. $$
Thus $\sP$ is $(n+1)$-regulated.
\end{proof}

\section{Locally Regulated Partitions in $\R^n$\label{sec:4}}

\begin{defn} Let $R$ be an $n$-dimensional rectangle in $\R^n$ and let $\sP$ be a locally regulated partition of $R$. We say that $\sP$ is {\em minimal} if for every $x\in \Int(R)$, $\nu_{\sP}(x)=\beta_{\sP}(x)+1$.
\end{defn}

In this section we consider the following extension problem about locally regulated partitions. Suppose $R$ is an $n$-dimensional rectangle in $\R^n$ and $\sP_0$ is a collection of pairwise disjoint $n$-dimensional subrectangles of $R$. Does there always exist a minimal locally regulated partition $\sP$ of $R$ with $\sP_0\subseteq \sP$? We prove in this section that the answer is positive for $n=2$ and negative for $n\geq 3$.

\begin{lem} Let $R$ be a $2$-dimensional rectangle in $\R^2$ and let $\sP_0$ be a finite collection of pairwise disjoint $2$-dimensional subrectangles of $R$. Then there exists a minimal locally regulated partition $\sP$ of $R$ with $\sP_0\subseteq \sP$.
\end{lem}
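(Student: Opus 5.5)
In dimension $2$, minimality at an interior point $x$ simply says that $x$ is not a ``cross'' point: no point lies in four rectangles. A point with $\beta_{\sP}(x)=2$ must have $\nu_{\sP}(x)=3$, i.e.\ it is a T-junction. A point with $\beta_{\sP}(x)\le 1$ automatically has $\nu_{\sP}(x)=\beta_{\sP}(x)+1$, since locally there are at most two rectangles on either side of a single segment line. So the plan is to build a partition of $R$ containing $\sP_0$ in which no interior point is a common corner of four rectangles. This is the classical fact that a rectangle with disjoint rectangular holes admits a generic rectangulation.

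\emph{Construction by ray extension.} Let $\sP_0=\{S_1,\dots,S_m\}$. Process the corners of the $S_i$ one at a time. At each corner $c$ of $S_i$ lying in $\Int(R)$, choose one of the two axis directions pointing away from $S_i$, say horizontal, and shoot a segment from $c$ in that direction. The segment stops as soon as it hits $\partial R$, the boundary of some $S_j$, or a previously drawn segment. After all corners are processed, the union of $\partial R$, the $\partial S_i$, and the drawn segments cuts $R$ into regions.

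Each complementary region is a rectangle. Any reflex corner of a region would have to come from a corner of some $S_i$ or from an endpoint of a drawn segment. But each corner of an $S_i$ has a ray leaving it, which straightens the corner. Each segment endpoint lies in the relative interior of a side, because segments stop upon hitting, producing a T. Hence every complementary region is a simple polygon without reflex angles, i.e.\ a rectangle. Together with the $S_i$ these form a locally regulated partition $\sP\supseteq\sP_0$.

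\emph{Generic position, the main obstacle.} Crosses can arise in two ways: two corners may lie on a common line so that their rays meet head-on, or a ray may end exactly at a corner of another $S_j$. To avoid the second, I would make the stopping rule precise and treat corners simultaneously by a sweep. If a ray would hit a corner $c'$ of $S_j$ exactly, then $c'$ already lies on a corner and the ray terminates at a point already of boundary rank $2$. The configuration there is $S_j$ plus at most the two regions on either side of the ray, so $3$ rectangles. I need to check this count carefully; if it fails, the ray direction at $c$ can be switched to the vertical one. Head-on collisions are avoided by ordering the corners: a later ray stops at the earlier segment, forming a T. The cleanest alternative is induction on $m$: remove one $S_i$ that is extremal (say, with maximal top edge), rectangulate the rest generically, then reinsert $S_i$ and fix the finitely many crosses locally by sliding or shortening segments. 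I expect this case check of coincidences to be the only real work. Everything else is routine verification that each interior point has $\nu_{\sP}=\beta_{\sP}+1$.
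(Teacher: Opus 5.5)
There is a genuine gap. The only nontrivial part of the lemma is excluding interior points that lie in four rectangles. You flag this as the main obstacle but do not resolve it, and your construction does produce such points when the direction at each corner is chosen freely and the processing order is unspecified. Your tentative count at a corner hit by a ray is wrong whenever that corner's own ray is perpendicular to the incoming one. For example, take $S_1=[0,1]\times[0,1]$ and $S_2=[1,2]\times[3,4]$ in a large $R$. Process the corner $(1,1)$ of $S_1$ first, shooting rightward, and then the corner $(1,3)$ of $S_2$, shooting downward. The downward ray stops at $(1,1)\in\partial S_1$, and $(1,1)$ then lies in four regions.

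Ordering alone does not prevent crosses either. Take $A=[0,1]\times[0,1]$, $B=[4,5]\times[1,2]$ and $C=[5/2,3]\times[3,4]$, and draw the downward ray from the corner $(5/2,3)$ of $C$ first. The rightward ray from $(1,1)$ and the leftward ray from $(4,1)$ then both stop at $(5/2,1)$ from opposite sides, which gives a cross, not a T. So ``a later ray stops at the earlier segment'' does not rule out four-fold points. Your proposed repairs are also not carried out. Sliding is unavailable, since every drawn segment is anchored at a fixed corner of some $S_i$. Ad hoc direction switching needs an argument that it terminates without creating new coincidences. The induction on $m$ is only named.

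The missing idea is to make the choice uniform. Shoot only vertical rays: extend every vertical side of every $S_i$ up and down until it meets $\partial R$ or a horizontal side of some $S_j$, and add no horizontal segments at all. This special case of your scheme is exactly the paper's construction, and it removes the generic-position problem. Collinear vertical rays simply merge, and a vertical ray ending at a corner of $S_j$ is collinear with the vertical side of $S_j$ there.

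Minimality is then a two-case check. Suppose $x\in\Int(R)$ lies on horizontal boundary. That boundary is part of a side $H$ of some $S_j$, and $S_j$ is unique because the $S_i$ are disjoint as closed sets. This is where genuine disjointness, not just disjointness of interiors, is used.
\begin{itemize}
\item If $x$ is interior to $H$, then $S_j$ fills one side of $H$ near $x$. Any vertical segment at $x$ therefore lies on the other side only, so $\nu_{\sP}(x)\le 3$.
\item If $x$ is an endpoint of $H$, then $x$ is a corner of $S_j$. The vertical line through $x$ is full, because the side of $S_j$ is extended past $x$, but the horizontal boundary lies on one side of it only. Again $\nu_{\sP}(x)\le 3$.
\end{itemize}
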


\begin{proof} To obtain $\sP$, extend all vertical sides of $P\in\sP_0$ ``maximally," i.e., until they reach a horizontal side of either $R$ or another $Q\in\sP_0$. This results in a locally regulated partition $\sP$ of $R$ with $\sP_0\subseteq \sP$. To see that $\sP$ is minimal, assume toward a contradiction that $x\in \Int(R)$ and $\nu_{\sP}(x)=4$. Thus $x$ is the intersection of a vertical line segment $V$ with a horizontal line segment $H$ which are on the boundary of some elements of $\sP$. Since our construction of $\sP$ from $\sP_0$ does not involve introducing horizontal lines, $H$ must be on the boundary of an element $P\in \sP_0$. Without loss of generality, assume $H$ is a side of $P$. We consider two cases. 
\begin{figure}[h]
\begin{tikzpicture}[scale=0.4]

\draw (2,5) to (2,3) to (4,3) to (4,0) to (0,0) to (0,3) to (2,3);
\node[left] at (3.8,0.5) {$P$};
\node[left] at (2,2.5) {$H$};
\node[left] at (3,3.3) {$x$};
\node[left] at (2.2,4.5) {$V$};
\filldraw[fill=black, very thick] (2,3) circle (0.05);

\draw (10,5) to (10,0) to (14,0) to (14,3) to (10,3);
\node[left] at (13.8,0.5) {$P$};
\node[left] at (13,3.4) {$H$};
\node[left] at (10,3) {$x$};
\node[left] at (10.1,4.5) {$V$};
\filldraw[fill=black, very thick] (10,3) circle (0.05);

\end{tikzpicture}
\end{figure}

Case 1: $x$ is not an endpoint of $H$. Then $P$ lies on one side of $H$ and $V$ lies on the other side. We have $\nu_{\sP}(x)=3$, contradicting our assumption. Case 2: $x$ is an endpoint of $H$. Then $x$ is a corner point of $P$. In this case, $H$ lies on one side of $V$ and does not extend to the other side of $V$. Again we have $\nu_{\sP}(x)\leq 3$, contradiction. 
\end{proof}

Next we consider the extension problem for $n=3$. 


\begin{defn}\label{def:maximalsurface} Let $R=[a_1,b_1]\times [a_2,b_2]\times[a_3,b_3]=S\times [a_3,b_3]$ be a $3$-dimensional rectangle in $\R^3$ and let $\sP$ be a locally regulated partition of $R$. 
\begin{enumerate}
\item[(1)] If $P\in \sP$ and $\pi_3(P)=[c, d]$, then we say that $P$ has {\em lower level} $c$ and {\em upper level} $d$.
\item[(2)]  A {\em level} of $\sP$ is any $d\in (a_3,b_3)$ such that $d$ is the upper level of some $P\in \sP$.
\item[(3)] If $d$ is a level of $\sP$, then define 
$$\sP_{d,-}=\{P\in \sP\colon \mbox{ $P$ has upper level $d$}\}. $$
\item[(4)] If $d$ is a level of $\sP$, then a {\em maximal surface} on level $d$ is a maximal subset $M$ of $S$ such that 
$$ M\times\{d\}\subseteq (S\times\{d\})\cap \bigcup \sP_{d,-} $$
and $\Int(M)$ is connected.
\end{enumerate}
\end{defn}

We state the following easy observations without proof.

\begin{lem} Let $R=S\times [a_3,b_3]$ be a $3$-dimensional rectangle in $\R^3$ and let $\sP$ be a locally regulated partition of $R$. Let $d\in(a_3,b_3)$. Then the following hold:
\begin{enumerate}
\item[(i)] $d$ is a level of $\sP$ iff $d$ is the lower level of some $P\in \sP$.
\item[(ii)] Suppose $d$ is a level of $\sP$. Define
$$ \sP_{d,+}=\{P\in\sP\colon \mbox{ $P$ has lower level $d$}\}. $$
Then $\bigcup \pi_{1,2}(\sP_{d,+})=\bigcup\pi_{1,2}(\sP_{d,-})$.
\end{enumerate}
\end{lem}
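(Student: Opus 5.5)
The two assertions reduce to measuring the horizontal cross-sections of the partition just below and just above the height $d$. For each $s\in S$ I will consider the vertical segment $\{s\}\times[a_3,b_3]$. Since $\sP$ is a locally regulated partition of $R$, it is a finite family of $3$-dimensional rectangles with pairwise disjoint interiors whose union is $R$. Fix $d\in(a_3,b_3)$. For $\epsilon>0$ small (smaller than any gap between distinct endpoints of the intervals $\pi_3(P)$, $P\in\sP$), consider the two horizontal slabs $S\times[d-\epsilon,d]$ and $S\times[d,d+\epsilon]$.

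\emph{Step 1 (a covering identity).} I will show that
$$ \bigcup\pi_{1,2}(\sP_{d,-}) = \bigcup\pi_{1,2}(\sP_{d,+}) = S\setminus \bigcup\{\pi_{1,2}(P)^\circ \colon P\in\sP,\ d\in\Int(\pi_3(P))\}, $$
up to closure. Take $s\in\Int(S)$ which does not lie in the interior of $\pi_{1,2}(P)$ for any $P$ with $d$ in the interior of $\pi_3(P)$. Points $(s',d-\epsilon/2)$ with $s'$ near $s$ are covered by rectangles $P$ with $[d-\epsilon,d]\subseteq\pi_3(P)$. By the choice of $\epsilon$, such a $P$ either has upper level exactly $d$ or contains $d$ in the interior of $\pi_3(P)$. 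The latter is excluded for an open dense set of $s'$ near $s$, using disjointness of interiors and the fact that the boundaries of the finitely many $\pi_{1,2}(P)$ are nowhere dense. So a dense set of $s'$ near $s$ lies in $\bigcup\pi_{1,2}(\sP_{d,-})$. This set is closed, being a finite union of closed rectangles, so $s$ lies in it as well. The symmetric argument with the slab above $d$ gives the same conclusion for $\sP_{d,+}$. For the reverse inclusion, a point in the interior of $\pi_{1,2}(P)$ for some $P$ with $d\in\Int(\pi_3(P))$ cannot lie in the interior of the projection of any element of $\sP_{d,\pm}$, by disjointness of interiors. Taking closures then yields (ii).

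\emph{Step 2 (deriving (i)).} I will show that $\bigcup\pi_{1,2}(\sP_{d,-})$ has nonempty interior if and only if $\sP_{d,-}\neq\varnothing$. Each $\pi_{1,2}(P)$ is a $2$-dimensional rectangle, so this is immediate. Hence $d$ is a level, meaning $\sP_{d,-}\neq\varnothing$, if and only if the common set in Step 1 is nonempty, if and only if $\sP_{d,+}\neq\varnothing$. This is exactly statement (i).

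The main obstacle is the density and closure bookkeeping in Step 1: handling points $s$ on boundaries of projections, where several rectangles meet. The cleanest route is to work only with interior points of $S$ lying off the finitely many boundary lines, which form an open dense set, and then take closures, since all sets involved are finite unions of closed rectangles.
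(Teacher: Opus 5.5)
The paper states this lemma without proof, so the only question is whether your argument works. The slab idea is right, but the identity you display in Step 1 is false, and the step meant to prove it fails. Write $Y=\bigcup\{\Int(\pi_{1,2}(P))\colon d\in\Int(\pi_3(P))\}$ and $B=\bigcup\{\pi_{1,2}(P)\colon d\in\Int(\pi_3(P))\}$. A point $s\notin Y$ can still lie on the boundary of one or several straddling projections, and then nearby $s'$ need not avoid $B$.

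Here is a concrete counterexample. Take $R=[0,2]\times[0,1]\times[0,1]$, $\sP=\{[0,1]\times[0,1]\times[0,1],\,[1,2]\times[0,1]\times[0,1]\}$ and $d=1/2$. Then $\sP_{d,\pm}=\varnothing$, yet $s=(1,1/2)\in\Int(S)\setminus Y$. Every $s'$ near $s$ off the line $x_1=1$ lies in the interior of a straddling projection, so your claim that ``the latter is excluded for an open dense set of $s'$ near $s$'' is false.

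In general $S\setminus Y$ is already closed and always contains $\partial S$, so ``up to closure'' does not rescue the identity. Step 2 breaks as well: your ``common set'' is never empty, so the argument would show that every $d$ is a level. There is also a smaller issue with $\epsilon$. Taking $\epsilon$ smaller than the gaps between distinct endpoints is not enough when $d$ is not itself an endpoint. You should take $\epsilon$ smaller than the distance from $d$ to every endpoint of every $\pi_3(P)$ other than $d$.

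The repair is the one your last paragraph gestures at, but you must replace $Y$ by the closed set $B$. Let $G\subseteq\Int(S)$ be the open dense set of points lying on no line that contains an edge of some $\pi_{1,2}(P)$. Any $s\in G$ lies in the interior of every projection containing it.

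For $s\in G$, the point $(s,d-\epsilon/2)$ lies in some $P$ that either belongs to $\sP_{d,-}$ or straddles $d$. Disjointness of interiors at heights just below $d$ rules out $s$ lying both in a projection of an element of $\sP_{d,-}$ and in $B$. Hence $s\in\bigcup\pi_{1,2}(\sP_{d,-})$ iff $s\notin B$. The slab above $d$ involves the same straddling rectangles, so it gives the same characterization for $\sP_{d,+}$.

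So both unions agree on $G$. Each is a finite union of nondegenerate closed rectangles, so each is the closure of its trace on $G$. This gives (ii) for every $d\in(a_3,b_3)$, not just for levels, and your Step 2 then yields (i). In other words, the correct common set is $\overline{G\setminus B}$, not $S\setminus Y$.
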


The following lemmas are key observations about minimal locally regulated partitions in $\R^3$.

\begin{lem}\label{lem:maxsurface0} 
Let $R$ be a $3$-dimensional rectangle in $\R^3$ and let $\sP$ be a minimal locally regulated partition of $R$. Then for any level $d$ of $\sP$, any maximal surface on level $d$ is a ($2$-dimensional) rectangle.
\end{lem}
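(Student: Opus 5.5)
Let $M$ be a maximal surface on level $d$, and write $U=\bigcup\pi_{1,2}(\sP_{d,-})\subseteq S$. The plan is to show that $M$ is a rectangle because its boundary has no reflex corners. First, $M$ is a finite union of closed axis-parallel rectangles $\pi_{1,2}(P)$ with $P\in\sP_{d,-}$, and $\Int(M)$ is connected. So $M$ is a closed rectilinear polygonal region, possibly with holes. Such a region is a rectangle as soon as every corner of its boundary lying in the interior of $S$ is convex, i.e.\ $M$ occupies exactly one of the four quadrants near that point. Corners on $\partial S$ need separate care, but there $M$ is locally bounded by $\partial S$, and the same argument restricted to the relative interior should handle them.

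The key step is to exclude a reflex corner. Suppose $y\in\Int(S)$ is a boundary point of $M$ near which $M$ occupies exactly three of the four quadrants around $y$. Put $x=(y,d)\in\Int(R)$ and apply Lemma~\ref{lem:localvsabout} to get a small rectangle $R_x$ on which $\sP$ restricts to a partition $\sQ$ about $x$. Since $\sP$ is minimal, $\nu_\sQ(x)=\beta_\sQ(x)+1$, so $\sQ$ is minimal. By Theorem~\ref{thm:minimal} applied with $j=3$ and $A=\{1,2\}$, the partition $\pi_A(\sQ_{3,+})$ is minimal about $y$; here $\sQ_{3,+}$ consists of the pieces above level $d$ near $x$ together with those crossing it. Likewise $\pi_A(\sQ_{3,-})$ is minimal.

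Now use the structure near $x$. Locally the lower pieces with upper level $d$ fill the three quadrants of $M$. In the fourth quadrant either a lower piece crosses level $d$ or there is a lower piece with a different top; by maximality of $M$ it cannot be a piece with top exactly $d$ that connects through the interior. By the previous lemma (ii), the pieces with lower level $d$ cover the same projected set. So in $\sQ_{3,-}$ the three quadrants of $M$ belong to pieces that meet the piece in the fourth quadrant, which crosses level $d$ or ends elsewhere. The reflex corner of $M$ forces both coordinate directions $e_1$ and $e_2$ to be boundary vectors at $y$ for the pieces terminating at $d$. Then, using Theorem~\ref{thm:minimalrep} in dimension $3$, minimality of $\sQ$ means $\sQ=\sP(i_1,i_2,i_3;T)$: a sequence of successive halvings. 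I will show that no such tree produces a configuration in which the top faces at height $d$ of the lower pieces form a reflex (L-shaped) region with the fourth quadrant's piece crossing $x$. In the tree picture, the pieces with $\pi_3(B_P(x))=-1$ are the leaves under the $-1$ child of the node splitting in direction $e_3$. Their union's top face projects to a rectangle whose corner at $y$ is convex, or which is a half-plane or the whole neighbourhood. Hence the local shape of $M$ at $y$ is convex or flat, never reflex. This is the main obstacle: the case analysis has to track whether $e_3$ is split first, second or third in the tree, since the $e_3$-split may occur inside only a sub-rectangle. In every case, though, the set of pieces lying below and touching the hyperplane $\{\pi_3=d\}$ projects onto a rectangle-like cone at $y$, namely a quadrant, a half-plane or the whole plane.

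Finally, with every interior corner convex and $\Int(M)$ connected, a standard argument finishes the proof. Take the bounding box of $M$; follow the boundary from an extreme corner. Every turn has the same orientation, so the outer boundary is a rectangle. A hole would need reflex corners as well, so there are none. Corners lying on $\partial S$ are convex automatically. Hence $M$ is a $2$-dimensional rectangle.
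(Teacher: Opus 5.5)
Your argument is correct, but at the key step it takes a different route from the paper.

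\textbf{What is shared.} Both proofs reduce to excluding a reflex corner $y$ of $M$. Both localize at $x=(y,d)$, where minimality gives $|\sQ|=\beta_{\sQ}(x)+1\leq 4$. Your closing ``no reflex corner implies rectangle'' step is the same as the paper's ``it suffices to show $M$ is convex.''

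\textbf{The paper's route.} The paper simply counts pieces at $x$. The pieces with upper level $d$ cover an L-shaped region, so $|\sQ_{d,-}|\geq 2$, and likewise $|\sQ_{d,+}|\geq 2$. The fourth quadrant is occupied by a piece crossing level $d$, by maximality of $M$. Hence $|\sQ|\geq 5$, a contradiction.

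\textbf{Your route.} You invoke the classification Theorem~\ref{thm:minimalrep}. The pieces with $\pi_3(B_P(x))=-1$ are exactly the leaves below the $-1$ child of the node split in direction $e_3$. So their union is $I_1\times I_2\times[-1,0]$, where $I_1\times I_2\times[-1,1]$ is the intermediate region being split, and their projection is the convex product cone $I_1\times I_2$.

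\textbf{Small remarks on your write-up.}
\begin{itemize}
\item The argument is uniform in the step at which $e_3$ is split, so the case analysis you anticipate is unnecessary.
\item The link to $M$ is exactly your maximality remark. A product cone containing three quadrants is the whole neighbourhood, so a piece with top at $d$ would cover the fourth quadrant and could be adjoined to $M$, contradicting maximality.
\item Your appeals to Theorem~\ref{thm:minimal} for $\sQ_{3,\pm}$, and to part (ii) of the preceding lemma, play no role in the argument.
\end{itemize}

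\textbf{Trade-off.} The paper's count is more elementary and avoids the structure theorem. However, it is special to $n=3$: for $n\geq 4$ the bound $|\sQ|\geq 5$ no longer exceeds $n+1$. Your argument is dimension-free. It is essentially the one the paper itself uses for Theorem~\ref{thm:maxsurface0} in Section~\ref{sec:9}.
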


\begin{proof} Let $M$ be a maximal surface on level $d$. By definition, $M$ is a rectangular polygon (i.e. a union of finitely many rectangles) with connected interior. To show that $M$ is itself a rectangle, it suffices to show that $M$ is convex. Toward a contradiction, assume $M$ is not convex. Then $M$ contains a concave corner point $x$. This is illustrated in Figure~\ref{fig:3}. 

\begin{figure}[h]
\begin{tikzpicture}[scale=0.1]

\draw[color=gray!20, fill=gray!20] (0,10) -- (10,10) -- (10,0) -- (20,0) -- (20,20) -- (0,20) -- (0,10);
\draw (0,10) to (10,10) to (10,0);
\filldraw[fill=black, very thick] (10,10) circle (0.2);
\node[left] at (10,8) {$x$};
\node[left] at (17,15) {$M$};
\end{tikzpicture}
\caption{A hypothetical concave maximal surface. \label{fig:3}}
\end{figure}
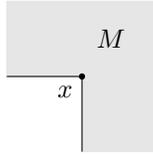

Let $R_x$ be a sufficiently small $3$-dimensional rectangle so that $R_x$ is a neighborhood of $x$ and 
$$\sQ=\{P\cap R_x\colon P\in\sP\}$$
is a locally regulated partition of $R_x$ about $x$. Since $\sP$ is minimal, $\sQ$ is minimal, and we have $|\sQ|=4$. However, it is obvious that $|\sQ_{d,+}|\geq 2$, $|\sQ_{d,-}|\geq 2$, and $\sQ_{d,+}\cup \sQ_{d,-}\neq \sQ$. Thus $|\sQ|\geq 5$, a contradiction.
\end{proof}

\begin{defn} \label{def:vms} Let $R=[a_1, b_1]\times  [a_2, b_2]\times [a_3, b_3]$ be a $3$-dimensional rectangle in $\R^3$, let $\sP$ be a locally regulated partition of $R$, and let $a_3\leq d_1<d_2\leq b_3$. 
A (2-dimensional) rectangle $M$ is a {\em virtual maximal surface} from $d_1$ to $d_2$ if
$$\partial M\times [d_1,d_2]\subseteq \bigcup\partial\sP=\bigcup_{P\in\sP} \partial P. $$

Given a two-dimensional rectangle $M$ we say the partition $\sP$ {\em respects} $M$
from $d_1$ to $d_2$ if 
$\partial M\times [d_1,d_2]\subseteq \bigcup\partial\sP=\bigcup_{P\in\sP} \partial P$. 
Thus, $M$ being a virtual maximal surface from $d_1$ to $d_2$ is just saying that 
$\sP$ respects $M$ from $d_1$ to $d_2$. 
\end{defn}

\begin{lem}\label{lem:vms0} Let $R$ be a $3$-dimensional rectangle in $\R^3$ and let $\sP$ be a minimal locally regulated partition of $R$. If $M$ is a maximal surface on level $d$, then there exist $d_1<d<d_2$ such that $M$ is a virtual maximal surface from $d_1$ to $d_2$.
\end{lem}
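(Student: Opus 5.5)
By Lemma~\ref{lem:maxsurface0}, $M=[p_1,q_1]\times[p_2,q_2]$ is a rectangle. The plan is to show that every point of $\partial M\times\{d\}$ lies on a boundary face of $\sP$ that is \emph{vertical}, i.e.\ has normal vector $e_1$ or $e_2$. Since $\sP$ is finite, this will let us thicken $\partial M\times\{d\}$ to $\partial M\times[d_1,d_2]$ for some $d_1<d<d_2$.

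Fix $y\in\partial M$, say on the side $\pi_1=p_1$, and consider the point $z=(y,d)$. We may assume $z\in\Int(R)$; otherwise $z$ already lies in a face of $R$, hence in $\bigcup\partial\sP$. Apply Lemma~\ref{lem:localvsabout} to obtain a small box $R_z$ around $z$ on which $\sQ=\{P\cap R_z\}$ is a locally regulated partition about $z$. It is minimal because $\sP$ is. By maximality of $M$, the horizontal surface through $z$ coming from $\sP_{d,-}$ fills the side $\pi_1\ge p_1$ near $z$ but not the side $\pi_1<p_1$. Hence $e_3$ is a boundary vector at $z$, and the pieces of $\sQ$ on the $\pi_1<p_1$ side have $e_3$-entry $0$ in the principal matrix, meaning they cross level $d$.

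Now apply Theorem~\ref{thm:minimalrep}: $\sQ=\sP(i_1,\dots,i_k;T)$ for a labelled full simple tree $T$. The cut with normal $e_3$ cannot be the first cut $i_1$, because then the horizontal plane would cross all of $R_z$ and every piece would have nonzero $e_3$-entry. So the $e_3$-cut is performed inside a half produced by an earlier cut. The earlier cuts that bound the region containing the $e_3$-split must include a cut with normal $e_1$ through $z$, since the $e_3$-split stops exactly at $\pi_1=p_1$. That $e_1$-cut is made before the $e_3$-cut, so in the tree construction it passes through the whole parent box. In particular it extends to both sides of level $d$ within $R_z$. Therefore a segment $\{y\}\times[d-\epsilon_y,d+\epsilon_y]$ lies in $\bigcup\partial\sP$.

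The same argument applies on the sides $\pi_1=q_1$ and $\pi_2=p_2,q_2$. At a corner of $M$ the two vertical cuts form a corner of the $e_3$-split region, and any point on a side lies on one of these cuts anyway. To obtain a uniform $\epsilon$, use finiteness. The sets $\bigcup\partial\sP$ restricted to the vertical planes $\pi_1=p_1$ and so on are finite unions of rectangles. The set of $y$ on a side of $M$ where the vertical extension fails is therefore a union of closed rectangles, and it misses $\partial M\times\{d\}$ locally everywhere. Compactness of $\partial M$ together with this finite rectangular structure then gives a single $\epsilon>0$. Setting $d_1=d-\epsilon$ and $d_2=d+\epsilon$ gives $\partial M\times[d_1,d_2]\subseteq\bigcup\partial\sP$.

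The main obstacle is the local step: showing rigorously, from the canonical form $\sP(i_1,\dots,i_k;T)$, that the $e_1$-cut bounding the horizontal split is made earlier in the tree and so crosses level $d$. This includes the corner case, where the local split region is a quadrant bounded by both an $e_1$-cut and an $e_2$-cut. I would handle it by inspecting the row of the principal matrix for the pieces just above and below $M$ near $z$, using the fact that these pieces are an $e_3$-pair up to the earlier splits. The other ingredient is ruling out that a vertical wall present only above or only below $d$ could bound $M$. This is where minimality, that is the $k+1$ count, is essential, exactly as in the proof of Lemma~\ref{lem:maxsurface0}: such a configuration would put at least $5$ pieces at a point with boundary rank $3$.
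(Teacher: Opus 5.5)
Your argument is correct, but it takes a different and considerably heavier route than the paper's.

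\textbf{The paper's argument.} It works globally and never looks at individual boundary points. Let $\sP^M_{d,-}$ (resp.\ $\sP^M_{d,+}$) be the pieces with upper (resp.\ lower) level $d$ whose projections lie in $M$. Maximality of $M$ gives $\bigcup\pi_{1,2}(\sP^M_{d,-})=\bigcup\pi_{1,2}(\sP^M_{d,+})=M$. Take $d_1$ to be the largest lower level in $\sP^M_{d,-}$ and $d_2$ the smallest upper level in $\sP^M_{d,+}$. Any $(y,t)$ with $y\in\partial M$ and $t\in[d_1,d]$ lies in some $P\in\sP^M_{d,-}$ with $y\in\pi_{1,2}(P)\subseteq M$. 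Hence $y\in\partial\pi_{1,2}(P)$ and $(y,t)\in\partial P$; the interval $[d,d_2]$ is handled symmetrically. So minimality is used only through Lemma~\ref{lem:maxsurface0}, the levels $d_1,d_2$ are explicit, and no uniformization step is needed.

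\textbf{Your local step.} The pointwise argument via Theorem~\ref{thm:minimalrep} is sound. The $e_3$-cut cannot come first, and the $e_1$/$e_2$-cuts bounding its region are made while the full $e_3$-range is still uncut, so they cross level $d$.

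\textbf{Your uniform $\epsilon$.} This is best justified by discreteness rather than compactness. Choose $\epsilon$ smaller than the gap between $d$ and every other third coordinate occurring in $\sP$; each pointwise vertical segment then automatically has height $\epsilon$.

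\textbf{Comparison.} Your route gives finer local information: the walls along $\partial M$ precede the horizontal cut in the canonical order, which is the style of reasoning used in Section~\ref{sec:9}. The cost is the whole canonical-form machinery for what is a one-line tiling observation.

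\textbf{Your closing worry is misplaced.} A wall along $\partial M$ present only above or only below level $d$ cannot occur in \emph{any} locally regulated partition. The pieces directly above and directly below a maximal surface both project onto exactly that surface. So minimality is not what rules this out, and your local step already covers it anyway.
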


\begin{proof} By Lemma~\ref{lem:maxsurface0} $M$ is a (2-dimensional) rectangle. Consider 
$$\sP_{d,+}^M=\{P\in\sP_{d,+}\,\colon\, \pi_{1,2}(P)\subseteq M\}, \mbox{ and} $$ 
$$\sP_{d,-}^M=\{P\in\sP_{d,-}\,\colon\, \pi_{1,2}(P)\subseteq M\}. $$ 
Then $\bigcup\pi_{1,2}(\sP_{d,+}^M)=\bigcup\pi_{1,2}(\sP_{d,-}^M)=M$. Let $d_1$ be the largest of the lower levels of the elements of $\sP_{d,-}^M$, and let $d_2$ be the smallest of the upper levels of the elements of $\sP_{d,+}$. Then $M$ is a virtual maximal surface from $d_1$ to $d_2$.
\end{proof}



\begin{lem}\label{lem:maxsurface1} 
Let $R=S\times[a_3,b_3]$ be a $3$-dimensional rectangle in $\R^3$ and 
let $\sP$ be a minimal locally regulated partition of $R$. Let $M_1$ be a virtual
maximal surface from $d_1$ to $d_2$ with $a_3\leq d_1<d_2<b_3$ and $d_2$ the largest such. Then at least one of the following holds:
\begin{enumerate}
\item[(i)] there is a unique maximal surface $M_2$ on level $d_2$ such that $M_1\subsetneq M_2$;
\item[(ii)] there is $d_3>d_2$ and a virtual maximal surface $M_2$ from $d_2$ to $d_3$ such that 
\begin{itemize}
\item[(iia)] $M_1\subsetneq M_2$, 
\item[(iib)] $\pi_i(M_1)=\pi_i(M_2)$ for exactly one $i\in \{1,2\}$, 
\item[(iic)] for any $(a,b)\in M_2\setminus M_1$, $(a, b, d_2)$ lies on a maximal surface $N$ on the level $d_2$ so that $\Int(N)\cap \Int(M_1)\neq\varnothing$, and
\item[(iid)] for any maximal surface $N$ on the level $d_2$ so that $\Int(N)\cap \Int(M_1)\neq\varnothing$, we have $N\subseteq M_2$.
\end{itemize}
\end{enumerate} 
\end{lem}

\begin{proof} By definition, $M_1\subseteq S$ is a rectangle. Denote $T=M_1\times [d_1,d_2]$. Then $T$ is a $3$-dimensional rectangle. Consider the collections
$$ \sQ=\{P\in\sP\,\colon\, \mbox{$P$ has lower level $d_2$,  $\pi_{1,2}(\Int(P))\cap M_1\neq\varnothing$}\} $$
and 
$$ \sQ^+=\{P\in \sQ\,\colon\, \pi_{1,2}(\Int(P))\setminus M_1\neq\varnothing\}. $$
By the maximality of $d_2$, $\sQ^+\neq\varnothing$. 

Note that, for any side $L$ of $M_1$ which does not intersect $\bigcup\Int(\pi_{1,2}(\sQ^+))$, there is $d_L>d_2$ such that $L\times[d_2,d_L]\subseteq \bigcup \partial\sP$. It follows again from the maximality of $d_2$ that $\bigcup\Int(\pi_{1,2}(\sQ^+)$ intersects at least one side of $M_1$.

\begin{figure}[h]
\begin{tikzpicture}[scale=0.15]

\draw (15,3.2) to (21,3.2) to (21,5.2) to (15,5.2) to (15,3.2);
\draw (21,3.2) to (23,6.2) to (23,8.2) to (17,8.2) to (15,5.2);
\draw (21,5.2) to (23,8.2);
\node[left] at (21,6.7) {$P$};

\draw (0,0) to (40,0) to (40,5) to (22.2,5);
\draw (0,0) to (0,5) to (15,5);
\draw (40,0) to (48,12) to (48,17) to (8,17) to (0,5);
\draw (40,5) to (48,17);
\draw (44.7, 12) to (8,12) to (3.3,5);
\draw (8,12) to (8,17);

\node[left] at (35,8) {$M_1$};
\node[left] at (52,12) {$d_1$};
\node[left] at (52,17) {$d_2$};
\filldraw[fill=black, very thick] (22.2, 5) circle (0.2);
\node[left] at (25,3.1) {$x$};

\end{tikzpicture}
\caption{In search for the next virtual maximal surface. \label{fig:4}}
\end{figure}
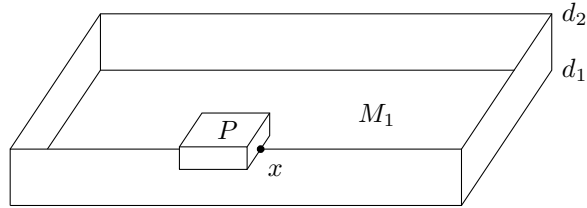

We claim that if $\bigcup\Int(\pi_{1,2}(\sQ^+))$ intersects a side $L$ of $M_1$, then $L\subseteq \bigcup \pi_{1,2}(\sQ^+)$. 
To see the claim, we consider the following situation illustrated by Figure~\ref{fig:4}. Suppose $P\in\sQ^+$, $x$ is an interior point of $L\times \{d_2\}$, and $x$ is on both a vertical face of $P$ and a vertical face of $T$. Since $\sP$ is minimal, we have that $\nu_{\sP}(x)=4$. By considering the possible ways in which $\nu_{\sP}(x)=4$ can be realized, a moment of reflection shows that there is a unique $Q\in\sQ^+$ such that $Q\neq P$ and $x\in \partial Q$. Thus $x\in \bigcup \pi_{1,2}(\sQ^+)$. By repeating this argument for points in both directions of $L$, we conclude that $L\subseteq \bigcup\pi_{1,2}(\sQ^+)$. 


Now if $\bigcup\Int(\pi_{1,2}(\sQ^+))$ intersects two adjacent sides of $M_1$, then by the claim, both these sides lie on a maximal surface $M_2$ on the level $d_2$. By Lemma~\ref{lem:maxsurface0}, $M_2$ is a rectangle and we must have $M_1\subsetneq M_2$, and thus conclusion (i) holds in this case. 

Next, let $M_1$ have sides $L_0, L_1, L_2, L_3$, where $L_0$ and $L_2$ are opposite and $L_1$ and $L_3$ are opposite. Moreover, suppose $\bigcup\Int(\pi_{1,2}(\sQ^+))$ intersects $L_0$ and does not intersect $L_1$ and $L_3$. For notational definiteness, suppose $M_1=[c_1, c_2]\times [e_1, e_2]$ and $L_0=[c_1,c_2]\times \{e_1\}$. By the above claim, $L_0$ lies on a maximal surface $N_0$ on the level $d_2$. Since $N_0$ is a rectangle by Lemma~\ref{lem:maxsurface0} , we may assume $N_0$ is of the form $[\alpha_1,\alpha_2]\times[\beta_1, \beta_2]$, where $\alpha_1\leq c_1$, $\alpha_2\geq c_2$, and $\beta_1<e_1<\beta_2$. Here we may assume $\beta_2<e_2$, since otherwise $N_0$ is a maximal surface on the level $d_2$ with $M_1\subsetneq N_0$, and hence conclusion (i) again holds with $M_2=N_0$. Now we claim that $\alpha_1=c_1$ and $\alpha_2=c_2$. Indeed, if $\alpha_1<c_1$, then consider the point $y=(c_1, \beta_2, d_2)$, which is in the interior of either $L_1\times\{d_2\}$ or $L_3\times\{d_2\}$; a similar argument as in the above claim gives a contradiction to our assumption that $\bigcup\Int(\pi_{1,2}(\sQ^+))$ does not intersect either $L_1$ or $L_3$. Thus $\alpha_1=c_1$. Similarly $\alpha_2=c_2$. This proves the claim.

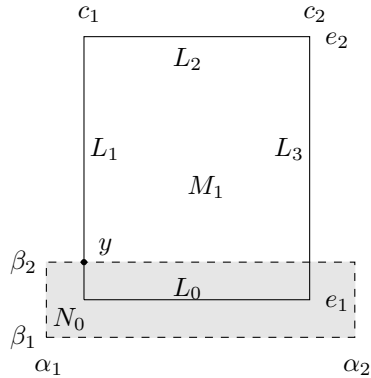
\begin{figure}[h]
\begin{tikzpicture}[scale=0.1]

\draw[color=gray!20, fill=gray!20] (0,0) -- (41,0) -- (41,10) -- (0,10) -- (0,0);
\draw[dashed] (0,0) -- (41,0) -- (41,10) -- (0,10) -- (0,0);
\draw (5,5) -- (35,5) -- (35,40) -- (5,40) -- (5,5);
\node[left] at (3.5,-4) {$\alpha_1$};
\node[left] at (44.5,-4) {$\alpha_2$};
\node[left] at (0, 0) {$\beta_1$};
\node[left] at (0,10) {$\beta_2$};
\node[left] at (8.5, 43) {$c_1$};
\node[left] at (38.5, 43) {$c_2$};
\node[left] at (41.5,4.5) {$e_1$};
\node[left] at (41.5, 39.5) {$e_2$};
\node[left] at (25,20) {$M_1$};
\node[left] at (6.5,2.5) {$N_0$};
\node[left] at (22, 6.5) {$L_0$};
\node[left] at (11, 25) {$L_1$};
\node[left] at (22, 37) {$L_2$};
\node[left] at (35.5, 25) {$L_3$};
\filldraw[fill=black, very thick] (5,10) circle (0.2);
\node[left] at (10,12) {$y$};
\end{tikzpicture}
\caption{The existence of the virtual maximal surface $M_2$. \label{fig:4half}}
\end{figure}

Now if $\bigcup\Int(\pi_{1,2}(\sQ^+))$ does not intersect $L_2$, then by the second claim and Lemma~\ref{lem:vms0} applied to $N_0$, we obtain a virtual maximal surface $M_2=[c_1, c_2]\times[\beta_1, e_2]$ from $d_2$ to some $d_3>d_2$ where $d_3\leq d_{L_1}, d_{L_2}, d_{L_3}$. In this case, $\pi_1(M_2)=\pi_1(M_1)$ and $\pi_2(M_2)\neq \pi_2(M_1)$. Thus conclusions (iia) and (iib) hold. Conclusion (iic) is also clear from the construction of $M_2$. To verify (iid), consider any maximal surface $N$ such that $\Int(N)\cap \Int(M_1)\neq\varnothing$. If $N\cap N_0\neq\varnothing$ then $N=N_0$ and $N\subseteq M_2$ by our construction. If $N\cap N_0=\emptyset$ then in fact we have that $N\subseteq M_1$, and thus $N\subseteq M_2$ as required. Otherwise, $\Int(N)\setminus M_1\neq\varnothing$, and we obtain a contradiction by an argument similar to the first claim of this proof. 

Finally, if $\bigcup\Int(\pi_{1,2}(\sQ^+))$ intersects $L_2$, then a similar claim holds for $L_2$ as for $L_0$, that is, there is a maximal surface $N_2$ on the level $d_2$ on which $L_2$ lies, which is of the form $[c_1,c_2]\times [\gamma_1, \gamma_2]$, where $\gamma_1<e_2<\gamma_2$. In this case we obtain a virtual maximal surface $M_2=[c_1,c_2]\times [\beta_1,\gamma_2]$ from $d_2$ to some $d_3>d_2$ with $d_3\leq d_{L_1}, d_{L_3}$. Again conclusion (ii) holds in this case. 
\end{proof}

\begin{thm} There exist a $3$-dimensional rectangle $R$ and a finite collection $\sP_0$ of pairwise disjoint $3$-dimensional subrectangles of $R$ such that there is no minimal locally regulated partition $\sP$ of $R$ with $\sP_0\subseteq \sP$.
\end{thm}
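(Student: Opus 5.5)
The plan is to build a ``pinwheel'' configuration of boxes in which the virtual maximal surfaces of any minimal partition are forced to keep growing without end, so that Lemma~\ref{lem:maxsurface1} eventually produces a contradiction.

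\textbf{Step 1: the configuration.} Take $R=[0,N]^3$ for a suitable $N$. Let $\sP_0$ consist of a central box $C=[1,N-1]^2\times[c,c']$ in the middle of the height range, together with a cyclic arrangement of thin ``blocker'' boxes placed just above and just below $C$. The blockers are arranged in a pinwheel pattern around the four vertical sides of $\pi_{1,2}(C)$. Each blocker straddles one side $L$ of $C$: part of it lies over $C$ and part lies outside it. This makes it impossible for $C$'s top face to extend as a flat level in either the $e_1$ or $e_2$ direction without running into a vertical face of some blocker interior to the would-be surface.

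\textbf{Step 2: starting the chain.} Suppose $\sP\supseteq\sP_0$ is a minimal locally regulated partition. By Lemma~\ref{lem:maxsurface0}, the maximal surface on level $c'$ containing the top of $C$ is a rectangle $M$. By Lemma~\ref{lem:vms0}, $M$ is a virtual maximal surface from some $d_1$ to some $d_2$; fix $d_2$ maximal.

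\textbf{Step 3: iterating Lemma~\ref{lem:maxsurface1}.} Each application yields either alternative (i), a strictly larger maximal surface on level $d_2$, or alternative (ii), a strictly larger virtual maximal surface from $d_2$ to some $d_3>d_2$ that agrees with $M_1$ in exactly one coordinate projection. Either way we get a strictly increasing chain of rectangles $M_1\subsetneq M_2\subsetneq\cdots$ at strictly increasing heights. The boundary $\partial M_k\times[d_k,d_{k+1}]$ is contained in $\bigcup\partial\sP$. Since $\sP$ is finite, the chain must terminate, which can only happen when the virtual surface reaches the top face of $R$ or $M_k$ becomes all of $\pi_{1,2}(R)$.

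\textbf{Step 4: the contradiction.} Arrange the blockers in $\sP_0$ so that termination is impossible: no vertical wall $\partial M_k\times[d_k,d_{k+1}]$ can cut through the interior of a box of $\sP_0$. Stack several pinwheel layers above $C$, with the blockers in each layer overhanging the previous one in a rotated direction. Clause (iib), that exactly one coordinate is preserved, forces the growth to alternate among directions. Clause (iid) forces each new $M_k$ to contain every level-$d_k$ surface that meets it, including one that crosses a blocker's vertical face. That face then sits inside $\Int(M_{k})\times[d_k,d_{k+1}]$, yet the rectangle $M_{k+1}$ must avoid cutting blockers. Arrange the top layer so that any rectangle containing the accumulated surfaces would have a side slicing through a blocker's interior. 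This contradicts $\partial M\times[d,d']\subseteq\bigcup\partial\sP$, since boxes of $\sP_0$ are elements of $\sP$ and their interiors meet no boundaries.

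\textbf{Main obstacle.} The hard part is the precise design of the pinwheel so that \emph{every} branch of Lemma~\ref{lem:maxsurface1} (alternative (i), or alternative (ii) with either coordinate fixed) leads to a blocker being sliced. I would first try four blockers per layer in a rotationally symmetric pinwheel, so that the case analysis reduces by symmetry to a single direction. I would then verify directly from minimality ($\nu=\beta+1=4$ at points on edges), as in the claim inside the proof of Lemma~\ref{lem:maxsurface1}, that the overhanging blocker forces a fifth rectangle at some point.
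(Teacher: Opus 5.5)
\textbf{Genuine gap: the obstruction itself is never constructed.} Your overall framework is the paper's. You start from a maximal surface, use Lemmas~\ref{lem:maxsurface0}, \ref{lem:vms0} and \ref{lem:maxsurface1} to build a strictly increasing chain of virtual maximal surfaces at strictly increasing heights, and let the boxes of $\sP_0$ obstruct it. But the theorem is an existence statement, and its whole content is an explicit $\sP_0$ together with a verification. Your proposal supplies neither. The ``pinwheel'' is never specified, and, as you say yourself, the case analysis showing that every branch of Lemma~\ref{lem:maxsurface1} ends in a contradiction is left open.

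Several of the mechanisms you plan to use also fail as stated:
\begin{itemize}
\item Reaching the top face of $R$ is not a contradiction. Lemma~\ref{lem:maxsurface1} requires $d_2<b_3$ and simply stops applying once $d_2=b_3$, so you must force a contradiction strictly below the top.
\item Nothing in your description stops the level-$c'$ maximal surface over $C=[1,N-1]^2\times[c,c']$ from being all of $\pi_{1,2}(R)$. Your blockers lie above or below $C$, and a blocker whose bottom sits at level $c'$ pushes that surface outward rather than confining it. Such a full cross-section is respected all the way to the top, so the chain ends immediately with no contradiction.
\item In Step~4, a blocker face lying in $\Int(M_k)\times[d_k,d_{k+1}]$ contradicts nothing. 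Being a virtual maximal surface only constrains $\partial M_k\times[d_k,d_{k+1}]$.
\item Clause (iib) only says that a single step enlarges one coordinate. It does not force the growth to alternate between steps.
\end{itemize}

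\textbf{The missing idea: a two-part trap.} The paper builds it from just three boxes in $R=[0,4]^3$:
\[
P_0=[0,3]\times[0,2]\times[0,1],\quad P_1=[2,4]\times[3,4]\times[0,3],\quad P_2=[0,1]\times[1,4]\times[2,4].
\]
\begin{itemize}
\item The top of $P_0$ starts the chain at level $1$.
\item The tall pillar $P_1$ confines every surface of the chain at heights below $3$ to $[0,4]\times[0,3]$. A rectangle containing $[0,3]\times[0,2]$ that reached beyond $e_2$-coordinate $3$ would cover interior points of $\pi_{1,2}(P_1)$. No surface at a level in $(0,3)$ can do that, and no wall can pass through $\Int(P_1)$.
\item The cap $P_2$ does two things. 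First, a chain surface has its upper $e_2$-side in $(1,4)$, so above height $2$ its wall would pass through $\Int(P_2)$; hence all chain heights are at most $2$. Second, at height $2$, by Lemma~\ref{lem:maxsurface1} (i) or (iid), the chain must absorb the level-$2$ maximal surface containing $[0,1]\times[1,4]$. That surface meets the interior of the current surface but sticks out of $[0,4]\times[0,3]$, which is the contradiction.
\end{itemize}
Without an explicit confining pillar plus an overlapping cap at an intermediate height, or some concrete substitute, your Steps~3--4 do not produce a contradiction.
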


\begin{proof} Let $R=[0,4]^3$ and let $\sP_0=\{P_0, P_1, P_2\}$ where
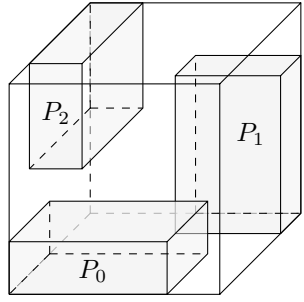
\begin{figure}[h]
\begin{tikzpicture}[scale=0.7]
\bakapp{4}{4}{4}{0}{0}{0};
\movapp{2}{1}{3}{2}{0}{0};
\movapp{3}{2}{1}{0}{0}{2};
\movapp{1}{3}{2}{0}{2}{0};
\oppapp{4}{4}{4}{0}{0}{0};
\node[left] at (0.5,-1.1) {$P_0$};
\node[left] at (3.5,1.5) {$P_1$};
\node[left] at (-0.2, 1.9) {$P_2$};
\end{tikzpicture}
\caption{A $3$-dimensional obstacle to minimal locally regulated partitions. \label{fig:5}}
\end{figure}

$$ \begin{array}{l}
P_0=[0,3]\times [0,2]\times [0,1], \\
P_1=[2,4]\times [3,4]\times [0,3], \\
P_2=[0,1]\times [1,4]\times [2,4].
\end{array}
$$
Figure~\ref{fig:5} illustrates $R$ and $\sP_0$.

Assume toward a contradiction that $\sP$ is a minimal locally regulated partition of $R$ with $\sP_0\subseteq \sP$. Let $d_1=1$ and let $M_1$ be the maximal surface at level $d_1$ containing $\pi_{1,2}(P_0)=[0,3]\times [0,2]$. By Lemma~\ref{lem:maxsurface0}, $M_1$ is a rectangle. However, because of the existence of $P_1$, we have that $M_1\subseteq [0,4]\times [0,3]$. 

By Lemma~\ref{lem:vms0} $M_1$ is a virtual maximal surface from $d_1$ to some $d_2>d_1$. Let $d_2$ be the largest such. By our construction, in particular because of the existence of $P_2$, we have $d_2\leq 2$. Then by Lemma~\ref{lem:maxsurface1}, either there is a maximal surface $M_2$ on the level $d_2$ or there is $d_3>d_2$ and a virtual maximal surface $M_2$ from $d_2$ to $d_3$ with $\pi_i(M_1)=\pi_i(M_2)$ for exactly one $i\in\{1,2\}$. In both cases, we have $M_1\subsetneq M_2\subseteq [0,4]\times [0,3]$. In fact, in the former case, $M_2$ is a maximal surface and therefore a rectangle, and $M_2\subseteq [0,4]\times [0,3]$ because of the existence of $P_1$. In the latter case, by Lemma~\ref{lem:maxsurface1} (iic), $M_2$ is a virtual maximal surface which extends $M_1$ only by maximal surfaces. Because of the spatial relationship between $P_0$ and $P_1$, and because $d_2\leq 2$, we also have $M_2\subseteq [0,4]\times[0,3]$.

Now this construction and argument can be repeated to obtain an increasing sequence 
$$ d_2=\delta_0<\delta_1<\dots<\delta_k\leq 2 $$
and virtual maximal surfaces
$$ M_2=N_0\subsetneq N_1\subsetneq \dots \subsetneq N_k $$
so that for each $0\leq i<k$, $N_{i+1}$ is a virtual maximal surface from $\delta_i$ to $\delta_{i+1}$. Inductively, we have that $N_i\subseteq[0,4]\times[0,3]$ for all $0\leq i\leq k$ by the same argument as above for $M_2$. Now let $k$ be the largest such. Since $\sP$ is finite, we must have $\delta_k=2$ and $N_k\subseteq [0,4]\times[0,3]$. On the other hand, by Lemma~\ref{lem:maxsurface1} (iid), $N_k$ must extend the maximal surface containing $\pi_{1,2}(P_2)=[0,1]\times[0,4]$, which is a contradiction.
\end{proof}

\begin{cor} For each $n\geq 3$ there exist an $n$-dimensional rectangle $R$ and a finite collection $\sP_0$ of pairwise disjoint $n$-dimensional subrectangles of $R$ such that there is no minimal locally regulated partition $\sP$ of $R$ with $\sP_0\subseteq\sP$.
\end{cor}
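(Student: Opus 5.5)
The plan is to lift the $3$-dimensional obstruction of the preceding theorem by taking products with a fixed interval. Let $R^3=[0,4]^3$ and $\sP_0^3=\{P_0,P_1,P_2\}$ be as in that proof. For $n\geq 3$ set $R=R^3\times[0,1]^{n-3}$ and $\sP_0=\{P_i\times[0,1]^{n-3}\colon i=0,1,2\}$. These are pairwise disjoint $n$-dimensional subrectangles of $R$.

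Suppose toward a contradiction that $\sP$ is a minimal locally regulated partition of $R$ with $\sP_0\subseteq\sP$. The idea is to produce from $\sP$ a minimal locally regulated partition of $R^3$ that contains $\sP_0^3$, which contradicts the theorem. I would take a slice. Choose $t\in(0,1)^{n-3}$ generic, meaning that for every $P\in\sP$ and every coordinate $j>3$, $t_j$ is not an endpoint of $\pi_j(P)$. This is possible because $\sP$ is finite. Then set
$$\sP^t=\{\pi_{\{1,2,3\}}(P)\colon P\in\sP,\ t\in\pi_{\{4,\dots,n\}}(P)\}.$$
This is a locally regulated partition of $R^3$. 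Since each rectangle of $\sP_0$ projects onto $[0,1]^{n-3}$ in the last coordinates, $\sP_0^3\subseteq\sP^t$.

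The key step is to check that $\sP^t$ is minimal. Fix $y\in\Int(R^3)$ and let $x=(y,t)\in\Int(R)$. By genericity of $t$, the rectangles of $\sP$ containing $x$ are exactly those whose slices contain $y$, and distinct ones give distinct slices because their interiors are disjoint and $t$ is interior in the last coordinates. So $\nu_{\sP^t}(y)=\nu_{\sP}(x)$. Again by genericity, no $e_j$ with $j>3$ is a boundary vector of $x$. Also, for $j\leq 3$, $e_j$ is a boundary vector of $x$ with respect to $\sP$ exactly when it is one of $y$ with respect to $\sP^t$. Hence $\beta_{\sP^t}(y)=\beta_{\sP}(x)$. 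Minimality of $\sP$ then gives $\nu_{\sP^t}(y)=\beta_{\sP^t}(y)+1$ for every $y\in\Int(R^3)$. This contradicts the theorem for $n=3$.

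The only point needing care is the genericity bookkeeping: that slicing at an interior point avoiding all endpoints preserves both the partition property and the two counts $\nu$ and $\beta$. A simpler alternative would be to slice along a product with the hyperplane-free direction, using Lemma~\ref{lem:principal0}. However, the direct slice argument above already suffices.
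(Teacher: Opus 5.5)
Your proposal is correct and is essentially the paper's argument. The paper also takes products with $[0,1]$ and slices at a coordinate value that is not a level of $\sP$, to obtain a minimal locally regulated partition of the lower-dimensional rectangle containing the original obstruction. The paper adds one dimension at a time by induction, whereas you slice all $n-3$ extra coordinates at once at a generic $t$. Your explicit check that $\nu$ and $\beta$ are preserved under the slice supplies the verification the paper leaves implicit.
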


\begin{proof} By induction on $n\geq 3$. Suppose $n\geq 3$, $R_n$ is an $n$-dimensional rectangle and $\sP^n_0$ is a finite collection of pairwise disjoint $n$-dimensional subrectangles of $R_n$ such that there is no minimal locally regulated partition $\sP_n$ of $R$ with $\sP^n_0\subseteq \sP_n$. Let $R_{n+1}=R_n\times [0,1]$ and 
$$ \sP^{n+1}_0=\{P\times [0,1]\colon P\in \sP^n_0\}. $$
We claim that there is no minimal locally regulated partition $\sP$ of $R_{n+1}$ with $\sP^{n+1}_0\subseteq \sP$.

Assume toward a contradiction that $\sP$ is a minimal locally regulated partition of $R_{n+1}$ with $\sP^{n+1}_0\subseteq \sP$. Let $0<c<1$ that is not a level of $\sP$, i.e., there is no $P\in\sP$ with $\sup\pi_n(P)=c$. Then 
$$\sQ=\{P\cap (R_n\times\{c\})\colon P\in \sP\} $$
is a minimal locally regulated partition of $R_n\times\{c\}$. Let $A=\{1,\dots, n\}$. Then $\pi_A(\sQ)$ is a minimal locally regulated partition of $R_n$ with $\sP^n_0\subseteq \pi_A(\sQ)$, a contradiction.
\end{proof}

\section{Clopen Regulated Partitions of $F(2^{\Z^n})$\label{sec:6}}

\begin{defn} \label{znrn1}
Let $n\geq 1$.
\begin{enumerate}
\item[(1)] For each $x=(a_1,\dots, a_n)\in \Z^n$, let $C_x\subseteq \R^n$ be defined by
$$ C_x=\left[a_1-\frac{1}{2}, a_1+\frac{1}{2}\right]\times\cdots\times \left[a_n-\frac{1}{2}, a_n+\frac{1}{2}\right]. $$
\item[(2)] For each $P\subseteq \Z^n$, let
$$ C_P=\bigcup\{C_x\colon x\in P\}. $$
\end{enumerate}
\end{defn}

We naturally speak of {\em $n$-dimensional rectangles} in $\Z^n$. Specifically, they are sets of the form $R\cap \Z^n$ where $R$ is an $n$-dimensional rectangle in $\R^n$. If $P\subseteq \Z^n$ is an $n$-dimensional rectangle in $\Z^n$, then $C_P$ is an $n$-dimensional rectangle in $\R^n$ such that $C_P\cap \Z^n=P$.

For $a, b\in \Z$ with $a<b$, we denote the set $[a, b]\cap \Z$, called an {\em interval in $\Z$}, simply by $[a, b]$ if there is no danger of confusion. If $P$ is an $n$-dimensional rectangle in $\Z^n$, we may write 
$$ P=[a_1,b_1]\times \dots \times [a_n,b_n] $$
where $[a_1, b_1], \dots, [a_n, b_n]$ are intervals in $\Z$. The {\em side lengths} of $P$ are defined as $b_i-a_i$, $i=1,\dots, n$. Note that the side lengths of the corresponding $C_P$ are in fact $b_i-a_i+1$, $i=1,\dots, n$.

\begin{defn} \label{znrn2}
Let $n\geq 1$.
\begin{enumerate}
\item[(1)] A {\em rectangular partition} of $\Z^n$ is a partition of $\Z^n$ into pairwise disjoint $n$-dimensional rectangles in $\Z^n$.
\item[(2)] For a rectangular partition $\sP$ of $\Z^n$, let
$$ \sC_{\sP}=\{ C_P\colon P\in \sP\}. $$
\end{enumerate}
\end{defn}

If $\sP$ is a rectangular partition of $\Z^n$, then $\sC_{\sP}$ is a regulated partition of $\R^n$ such that 
$$ \sP=\{ Q\cap \Z^n\colon Q\in \sC_{\sP}\}. $$
Thus we also call a rectangular partition of $\Z^n$ a {\em regulated partition} of $\Z^n$.

\begin{defn} \label{gammaregfzn}
Let $n\geq 1$, let $n+1\leq \gamma\leq 2^n$, and let $\sP$ be a rectangular partition of $\Z^n$. We say that $\sP$ is a {\em $\gamma$-regulated partition} of $\Z^n$ if $\sC_{\sP}$ is a $\gamma$-regulated partition of $\R^n$. We say that $\sP$ is {\em minimal} if $\sC_{\sP}$ is minimal.
\end{defn}

\begin{defn} Let $\sP$ and $\sQ$ be regulated partitions of $\Z^n$. We say that $\sP$ and $\sQ$ are {\em orthogonal} if $\sC_{\sP}$ and $\sC_{\sQ}$ are orthogonal regulated partition of $\R^n$.
\end{defn}

\begin{lem} For each $n\geq 1$, there exist orthogonal minimal regulated partitions of $\Z^n$. In particular, there exists a minimal regulated partition of $\Z^n$.
\end{lem}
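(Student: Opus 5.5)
The plan is to transfer Lemma~\ref{lem:orthogonal} from $\R^n$ to $\Z^n$. Note first that a regulated partition $\sC$ of $\R^n$ is of the form $\sC_{\sP}$ for a rectangular partition $\sP$ of $\Z^n$ exactly when every $Q\in\sC$ has all endpoints $a_i,b_i$ in $\Z+\frac12$. In that case $\sP=\{Q\cap\Z^n\colon Q\in\sC\}$ is a partition of $\Z^n$ into rectangles. Indeed, each lattice point lies in the interior of some unit cube $C_x$, and that cube lies in exactly one $Q$, since no boundary of any $Q$ passes through $\Int(C_x)$. So it suffices to produce orthogonal minimal regulated partitions of $\R^n$ all of whose rectangles have half-integer endpoints.

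The proof of Lemma~\ref{lem:orthogonal} gives an $(n+1)$-regulated partition $\sP'$ of $\R^n$ with even integer coordinates, together with its translate $\sP'+\vec 1$, and these are orthogonal. Translate both by $-\frac12\vec1$. Set $\sC_1=\{R-\frac12\vec 1\colon R\in\sP'\}$ and $\sC_2=\{R+\frac12\vec1\colon R\in\sP'\}$. All endpoints of rectangles in $\sC_1$ lie in $2\Z-\frac12$, and all endpoints in $\sC_2$ lie in $2\Z+\frac12$; both sets are contained in $\Z+\frac12$.

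Translation preserves $(n+1)$-regulatedness, since it preserves partition numbers pointwise. It also preserves orthogonality: shifting both partitions by the same vector does not change whether their boundary vectors at corresponding points coincide. Alternatively, orthogonality follows directly. If $e_i$ were a boundary vector of a point $y$ for both $\sC_1$ and $\sC_2$, then $\pi_i(y)$ would lie in both $2\Z-\frac12$ and $2\Z+\frac12$, which are disjoint.

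Finally, define $\sP_j=\{Q\cap\Z^n\colon Q\in\sC_j\}$ for $j=1,2$. These are rectangular partitions of $\Z^n$ with $\sC_{\sP_j}=\sC_j$, because each $Q$ with half-integer endpoints equals $C_{Q\cap\Z^n}$. By Definition~\ref{gammaregfzn} they are minimal, and by the definition of orthogonality for partitions of $\Z^n$ they are orthogonal.

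The only step that needs care is the identity $Q=C_{Q\cap\Z^n}$, which holds because $Q$ has half-integer endpoints and nonempty integer intersection along each axis, since $b_i-a_i\ge 2$. This is routine, and I expect no real obstacle.
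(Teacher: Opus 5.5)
Your proposal is correct and is essentially the adaptation the paper intends: the paper's proof just says to adapt the proof of Lemma~\ref{lem:orthogonal}. Shifting the even-coordinate $(n+1)$-regulated partition and its $\vec 1$-translate by $-\frac12\vec 1$ puts all endpoints in $2\Z-\frac12$ and $2\Z+\frac12$ respectively. Pulling back to $\Z^n$ via $Q=C_{Q\cap\Z^n}$ then gives the orthogonal minimal partitions, exactly as you argue.
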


\begin{proof} By an adaptation of the proof of Lemma~\ref{lem:orthogonal}.
\end{proof}

Next we consider continuous and Borel constructions in the space $F(2^{\Z^n})$. Consider the {\em Bernoulli shift} action of $\Z^n$ on $2^{\Z^n}$, which is defined by
$$ (g\cdot z)(h)=z(h+g) $$
for $g, h\in \Z^n$ and $x\in 2^{\Z^n}$. $F(2^{\Z^n})$ is the {\em free part} of $2^{\Z^n}$, i.e., $F(2^{\Z^n})$ is the set of $z\in 2^{\Z^n}$ such that $g\cdot z\neq z$ for all nonidentity $g\in \Z^n$. Let $E_{\Z^n}$ be the {\em orbit equivalence relation} on $F(2^{\Z^n})$, i.e., 
$$ (u,v)\in E_{\Z^n} \iff \exists g\in \Z^n\ (g\cdot u=v). $$
Then $E_{\Z^n}$ is an $F_\sigma$ subset of $F(2^{\Z^n})\times F(2^{\Z^n})$. For each $z\in F(2^{\Z^n})$, the {\em orbit} of $z$ is the $E_{\Z^n}$-class containing $z$, and we denote it as $[z]_{E_{\Z^n}}$ and abbreviate it as $[z]$.

Suppose $P\subseteq [z]$ for some $z\in F(2^{\Z^n})$. Then $P=S\cdot z$ for some $S\subseteq \Z^n$. We say that $P$ is an {\em $n$-dimensional rectangle} if $S$ is an $n$-dimensional rectangle in $\Z^n$. Note that this does not depend on the choice of $z$. 

\begin{defn}\label{def:fzn} A {\em regulated partition} of $F(2^{\Z^n})$ is a partition $\sP$ where each $P\in \sP$ is an $n$-dimensional rectangle. If $\sP$ is a regulated partition of $F(2^{\Z^n})$, then we say that $\sP$ is {\em clopen} ({\em Borel}) if for every $g\in \Z^n$, the set
$$ \{u\colon \exists P\in \sP\ (u, g\cdot u)\in P\} $$
is a relatively clopen (Borel) subset of $F(2^{\Z^n})$. 
\end{defn}

\begin{thm}[{\cite[Theorem 3.1]{GJ2015}}] \label{crp}
For any integers $n, \ell\geq 1$ there exists a clopen regulated partition $\sP$ of $F(2^{\Z^n})$ such that each $P\in \sP$ is an $n$-dimensional rectangle with side lengths either $\ell$ or $\ell+1$.
\end{thm}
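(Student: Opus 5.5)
The plan is the standard marker-region argument, carried out clopenly in $F(2^{\Z^n})$ in the style of \cite{GJ2015}, with the rectangles produced by a Voronoi-type tiling around markers and then trimmed to side lengths $\ell$ or $\ell+1$.

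\textbf{Step 1 (markers).} Fix a large $d$, much bigger than $\ell$. I will use the known fact that $F(2^{\Z^n})$ admits a relatively clopen maximal $d$-discrete marker set $M$. Here $d$-discrete means distinct points of $M$ in the same orbit are at $\ell^\infty$-distance greater than $d$. Maximal means every point is within distance $d$ of $M$. The set $M$ is obtained by the usual finite-stage greedy construction: finitely many clopen sets, each defined from a bounded window of $z$, are successively thinned.

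\textbf{Step 2 (coarse regions).} Around each marker I build a region that is a finite union of unit cubes, depending clopenly on $z$. These regions are the cells of a rectangular Voronoi-like decomposition. To make them rectangles, I intend to use the orthogonal marker-region technique of \cite{GJ2015}. One first constructs, coordinate by coordinate, clopen partitions of each orbit into rectangles with side lengths in $[d,\,Kd]$ for some $K$ depending only on $n$. This uses the ``adjusting faces'' procedure: faces of neighbouring marker cubes are moved so that nearby faces in the same direction become either aligned or separated by at least $\ell(\ell+1)$. Each face adjustment depends only on markers within a bounded distance, hence is clopen. The main obstacle is the following. Coordinate hyperplanes from different markers must end up either identical or at distance at least roughly $\ell^2$. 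This has to be achieved by a bounded-radius rule, even though the adjustments performed in direction $e_1$ affect where faces in other directions meet. I would handle this by treating the directions $e_1,\dots,e_n$ one at a time. At stage $i$, all $e_i$-faces are moved, using only the already-finalised markers, and the minimal distance scale is chosen to shrink by a constant factor at each stage, so that $d$ large enough absorbs all $n$ stages.

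\textbf{Step 3 (subdivision).} Each rectangle from Step 2 has every side of integer length $L\ge \ell(\ell+1)$. Any such integer can be written as $L=a\ell+b(\ell+1)$ with $a,b\ge 0$: since $L\ge \ell^2-\ell$, this is Frobenius/Chicken McNugget for the coprime pair $\ell$, $\ell+1$. I will do this canonically, for instance by taking the largest possible $b$ and placing the $(\ell+1)$-blocks first. Subdividing each side accordingly and taking products of the resulting intervals splits each rectangle into a grid of rectangles with side lengths $\ell$ or $\ell+1$. The subdivision depends only on the rectangle itself, so the resulting partition $\sP$ is still clopen in the sense of Definition~\ref{def:fzn}. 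Finally, I will check clopenness: for fixed $g$, membership of $(u,g\cdot u)$ in a common piece is determined by $u$ restricted to a ball whose radius is bounded by a function of $n,\ell,d$.

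Since this is cited from \cite[Theorem 3.1]{GJ2015}, in the paper it suffices to refer there. The sketch above is how I would reprove it. Step 2, obtaining genuinely rectangular clopen regions, is where essentially all the work lies.
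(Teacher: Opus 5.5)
Your outer architecture is the standard one, and Step~3 is fine: a clopen partition into rectangles with every side at least $\ell^2-\ell$, followed by a canonical splitting of each side as $a\ell+b(\ell+1)$, gives the theorem. (The paper only cites \cite{GJ2015} here, but the needed machinery reappears in the proof of Theorem~\ref{thm:gamma}.) The genuine gap is Step~2, which as written never produces a partition into rectangles.

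Moving faces of the marker cubes so that parallel faces are aligned or far apart only yields an overlapping cover by rectangles with an orthogonality property. It does not make Voronoi-type cells rectangular, and you never say how the cover becomes a partition. Three ingredients are missing:
\begin{enumerate}
\item[(i)] A splitting of the markers into finitely many clopen classes $T_0,\dots,T_k$ in which distinct markers of the same class are very far apart. These come from $S_2\subseteq S_1$ as in Lemma~\ref{lem:basicclopen2} and the translates $g_i\cdot S_2$. With this ordering, for $z\in T_i$ each face of $B_\rho(z,r_1)$ can be pushed outward by at most $r_1/10$ to avoid the boundedly many forbidden zones around parallel faces of nearby, already built $R_u$ with $u\in T_j$, $j<i$.
\item[(ii)] The disjointification $H_z=R_z\setminus\bigcup_{j<i}\bigcup_{u\in T_j}R_u$, which gives a clopen partition into rectangular polyhedra.
\item[(iii)] The recursive slicing of each $H_z$: cut along the hyperplanes through its $e_n$-faces into slabs $L\times[a,b]$, then recurse on $L$.
\end{enumerate}
Every side of a final piece joins two distinct parallel hyperplanes among the faces involved. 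This is exactly how orthogonality with separation $\ell(\ell+1)$ becomes the lower bound on side lengths that Step~3 needs. Such pieces can be as thin as the separation constant, so the range $[d,Kd]$ you claim is not what comes out; only the lower bound matters, though.

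The obstacle you single out is also not the real one. Moving a face with normal $e_i$ changes only the $i$-th coordinate interval of a rectangle, and the orthogonality requirement in direction $e_i$ involves only those intervals. So the $n$ directions never interfere, and no shrinking of scales from stage to stage is needed. The real difficulty for a bounded-radius rule is symmetry breaking. An orbit has no canonical origin, so there is no global grid to snap faces to, and nearby markers adjusting simultaneously can conflict. Sequential processing along the classes $T_i$ resolves exactly this, and your sketch supplies no such ordering beyond the phrase \emph{already-finalised markers}.
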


\begin{defn} 
Let $n\geq 1$, let $n+1\leq \gamma\leq 2^n$, and let $\sP$ be a regulated partition of $F(2^{\Z^n})$. We say that $\sP$ is a {\em $\gamma$-regulated partition} of $F(2^{\Z^n})$ if for each $z\in F(2^{\Z^n})$,
$$ \{S\subseteq \Z^n\colon S\cdot z\in \sP\} $$
is a $\gamma$-regulated partition of $\Z^n$. We say that $\sP$ is {\em minimal} if it is an $(n+1)$-regulated partition of $F(2^{\Z^n})$.
\end{defn}

The main result of this section is the following theorem on clopen regulated partitions.

\begin{thm}\label{thm:gamma} For each $n\geq 2$ there is a clopen $3\cdot 2^{n-2}$-regulated partition of $F(2^{\Z^n})$.
\end{thm}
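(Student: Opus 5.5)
We will prove Theorem~\ref{thm:gamma} by induction on $n\geq 2$, using Theorem~\ref{crp} to supply clopen "cells" in the last coordinate direction. The plan mirrors the doubling construction in Lemma~\ref{lem:orthogonal}, where $\sP$ was built from two orthogonal partitions $\sP_0,\sQ_0$ of $\R^{n-1}$ stacked in alternating slabs. Alternating slabs bound the count at a point of a slab boundary by the sum of the two partition numbers. We do not have orthogonality in the Borel setting, so the aim is only the cruder bound $\gamma(n)\le 2\gamma(n-1)$. Starting from $\gamma(2)=3$, this gives $3\cdot 2^{n-2}$.

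\emph{Base case $n=2$.} Start from a clopen rectangular partition of $F(2^{\Z^2})$ by Theorem~\ref{crp} with large side lengths $\ell,\ell+1$; its regulated partition may have $4$-fold points. We remove these clopenly. Group the rectangles into horizontal "rows" of consistent height by a clopen marker construction. Alternatively, use the two-dimensional extension idea from Section~\ref{sec:4}: extend vertical sides maximally until they hit a horizontal side. This is a local operation, and it is clopen provided the extension length is bounded. The cleaner way is to run Theorem~\ref{crp} in dimension $1$ on each horizontal line of blocks and offset it. Concretely, take a clopen partition of each orbit into horizontal strips $\Z\times[a,b]$, with heights $\ell$ or $\ell+1$, coming from a clopen partition into large rectangles whose horizontal boundaries are then "straightened". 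Then cut each strip by vertical segments whose positions in adjacent strips are never aligned. This is arranged clopenly by shifting the cut points by $1$ whenever they would coincide with a cut in the strip below. Any point then meets at most $3$ rectangles.

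\emph{Inductive step.} Suppose $\sP$ is a clopen $\gamma$-regulated partition of $F(2^{\Z^{n-1}})$. Then:
\begin{enumerate}
\item Apply Theorem~\ref{crp} in dimension $n$ with a huge $\ell$. This gives clopen blocks, which we reorganize into clopen "slabs": regions of the form (region)$\times[a,b]$ in the $e_n$ direction with $b-a\approx\ell$.
\item Inside each slab, restrict to a copy of the $(n-1)$-dimensional orbit structure and lay down the cross-section partition induced from $\sP$, transported via a clopen coding of the slab's $(n-1)$-dimensional cross section. A point interior to a slab then meets at most $\gamma$ rectangles.
\item At a slab boundary a point sees at most $\gamma$ rectangles from below and $\gamma$ from above, hence at most $2\gamma$.
\end{enumerate}

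\emph{Main obstacle.} The crux is step (1): the slabs have to be genuine $(n-1)$-dimensional cylinders of the orbit. Otherwise slab boundaries themselves create extra corners, and the sideways faces of the slabs would add to the count. I would handle this with a tiling of the form (clopen rectangle partition of $\Z^{n-1}$)$\times$(clopen interval partition of $\Z$), which is varied across cells. In that case the faces perpendicular to $e_n$ and the lateral faces must be counted together. The resulting $2\gamma$ count has to be verified by the local analysis of Lemma~\ref{lem:localminimal} / Corollary~\ref{cor:regulated}, checking the worst case of a corner of the $(n-1)$-dimensional pattern lying on a slab face. I expect this counting, and ensuring the lateral boundaries between neighbouring columns do not push the count past $2\gamma$, to be the delicate part.
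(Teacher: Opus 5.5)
There is a genuine gap, and it sits exactly where you flag the crux. Your base case (horizontal strips $\Z\times[a,b]$) and your inductive step (slabs $\Z^{n-1}\times[a,b]$ carrying a copy of the $(n-1)$-dimensional partition) both require cutting every orbit into infinite slabs of bounded height. No clopen partition of $F(2^{\Z^n})$ does this, and indeed no Borel one does. To see why, let $B$ be the set of points lying in the bottom layer of their slab. Then $B$ is invariant under the subaction of $\Z^{n-1}\times\{0\}$. That subaction is generically ergodic, because any two Cohen conditions become compatible after a horizontal translation, so $B$ is meager or comeager. If $h$ bounds the slab heights, then $F(2^{\Z^n})=\bigcup_{k=0}^{h}(ke_n)\cdot B$, so $B$ is comeager. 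But slabs with at least two layers give $B\cap(-e_n)\cdot B=\varnothing$, which would be an empty intersection of two comeager sets. This is the same density phenomenon the paper uses in Section~\ref{sec:8} to show that maximal surfaces are finite. So the ``straightening'' of the blocks from Theorem~\ref{crp} cannot be carried out, and the recursion $\gamma(n)\le 2\gamma(n-1)$ has no construction behind it.

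Your fallback, finite columns of the form (rectangle)$\times$(interval) varied from cell to cell, is where all the work lies, and you leave it open. Take a point on a slab face that also lies on lateral faces of neighbouring columns. Those columns contribute boundary vectors that your $\gamma+\gamma$ count ignores, and nothing in your scheme limits them. A further problem is that the inductive hypothesis concerns infinite $\Z^{n-1}$-orbits, while a column cross section is a finite box whose own faces add to the count.

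The paper's proof supplies exactly this local control, and it is not a doubling induction.
\begin{enumerate}
\item A two-scale marker construction ($S_2\subseteq S_1$, split into sparse clopen pieces $T_0,\dots,T_k$) produces rectangles $R_z$. Their faces are pushed so that parallel faces of nearby $R_u$'s are at least $\ell$ apart.
\item Each $H_z=R_z\setminus\bigcup_{j<i}\{R_u\colon u\in T_j\}$ is subdivided by extending faces with normal $e_n$, then $e_{n-1}$, and so on.
\item At any $x$, the count is at most $(\alpha(x)+1)2^{\pi(x)}\le 3\cdot 2^{n-2}$ in terms of active and passive boundary vectors.
\item The exception is the case $\alpha(x)=1$, $\pi(x)=n-1$. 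There Lemma~\ref{lem:crossing} produces a non-crossing pair, giving the bound $2^{n-1}+2^{n-2}$.
\end{enumerate}
So the constant $3\cdot 2^{n-2}$ comes out of this local count, not from doubling $3$ repeatedly $n-2$ times.
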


We note that when $n=1$ there is obviously a clopen $2$-regulated partition of $F(2^{\mathbb{Z}})$; such a regulated partition is minimal. Among all the dimensions considered in this theorem, only in the case $n=2$ is the regulated partition obtained minimal. In the rest of this paper we will show that there do not exist Borel minimal regulated partitions of $F(2^{\Z^n})$ for any $n\geq 3$. The rest of this section is devoted to a proof of Theorem~\ref{thm:gamma}. 

Throughout the rest of this section fix $n\geq 2$. Let $\rho$ be the $\ell_\infty$-metric on $\R^n$. Then for any $x\in \R^n$ and any real number $r>0$, the $\rho$-closed ball of radius $r$ centered at $x$, $B_{\rho}(x,r)=\{y\in \R^n\colon \rho(x,y)\leq r\}$, is an $n$-dimensional rectangle in $\R^n$ with side length $2r$.  The distance function $\rho$ can be extended to $F(2^{\Z^n})$:  for $u, v\in F(2^{\Z^n})$, define
$$\rho(u,v)=\left\{\begin{array}{ll} \rho(g,\vec{0}), & \mbox{ if $g\cdot u=v$,} \\ \infty, & \mbox{ if $(u,v)\not\in E_{\Z^n}$.}\end{array}\right. $$

We will use the following lemmas proved in \cite{GJ2015}.

\begin{lem}[Basic clopen marker lemma, {\cite[Lemma 2.1]{GJ2015}}]\label{lem:basicclopen} Let $r>0$ be any real number. Then there is a relatively clopen set $S\subseteq F(2^{\Z^n})$ such that
\begin{enumerate}
\item[(i)] if $u,v\in S$ are distinct, then $\rho(u,v)>r$;
\item[(ii)] for any $u\in F(2^{\Z^n})$, there is $v\in S$ such that $\rho(u,v)\leq r$.
\end{enumerate}
\end{lem}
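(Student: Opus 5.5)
The plan is to build $S$ greedily and use compactness of $2^{\Z^n}$ to make it clopen. Put
$$ D=\{g\in\Z^n\colon 0<\rho(g,\vec 0)\le r\}, $$
a finite set. For each $u\in F(2^{\Z^n})$ and $g\in D$ we have $g\cdot u\neq u$, so there is a coordinate $h$ with $u(h)\neq u(h+g)$. The first step is to produce, for each $g\in D$, a clopen partition of $F(2^{\Z^n})$ into finitely many pieces $A_1,\dots,A_N$ such that no $u\in A_i$ has $g\cdot u\in A_i$ whenever $g\in D$.

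To obtain this coloring, use compactness on the free part, which is only locally compact in the relevant sense. Cover $F(2^{\Z^n})$ by basic clopen sets $U_p$, where $p$ is a finite pattern on a box $[-m,m]^n$ that witnesses $g\cdot u\neq u$ for all $g\in D$. Each such $U_p$ is disjoint from its own $D$-translates. Only finitely many patterns occur for a fixed $m$, so one is tempted to take a single $m$. That cannot work globally, because the free part is not compact and points can be nearly periodic. The fix is to choose $m$ as a function of the point, taking the least $m$ that works. The set of points whose least $m$ equals $m_0$ is clopen in $F(2^{\Z^n})$, since it is determined by finitely many coordinates for each fixed $m_0$. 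The resulting partition into the sets $U_p$ is then countable rather than finite, though still locally finite along orbits. The refinement below handles this.

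The second step is the greedy construction. Enumerate the pieces $A_1,A_2,\dots$ and let $S_0=\varnothing$. Set
$$ S_{k+1}=S_k\cup\{u\in A_{k+1}\colon \rho(u,v)>r\ \text{for all } v\in S_k\}. $$
Each stage is relatively clopen, because membership of $u$ depends on finitely many sets $g\cdot A_j$ with $g\in D$ and $j\le k$. Clause (i) holds by construction, since within one $A_i$ the points are already $r$-separated. Clause (ii) holds because every $u$ lies in some $A_k$, so at that stage either $u$ was added or some $v\in S$ with $\rho(u,v)\le r$ already existed.

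The main obstacle is that with countably many pieces the union $S=\bigcup_k S_k$ is open but need not be closed. I would handle this by checking that a neighborhood of $u$ decides membership in $S$. Around $u$ only finitely many pieces meet the $D$-ball of $u$. Whether $u\in S$ depends only on stages up to $\max$ of those indices, and that maximum is determined by a clopen neighborhood of $u$, by the local choice of $m$. This makes the complement open as well.

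As an alternative, one can simply cite the standard Gao–Jackson argument, which is exactly this lemma. The paper quotes it as \cite[Lemma 2.1]{GJ2015}.
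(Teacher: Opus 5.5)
Your proposal is correct and is essentially the standard argument behind \cite[Lemma 2.1]{GJ2015}, which the paper cites rather than reproves. That argument covers $F(2^{\Z^n})$ by countably many relatively clopen sets $A_k$ disjoint from their own $D$-translates, adds points greedily, and observes that a point can only enter $S$ at the stage of its own piece, so $S\cap A_k=S_k\cap A_k$ and $S$ is clopen. The slips are cosmetic and never used: the opening talk of finitely many pieces, which you drop, and the remark that the free part is locally compact. That remark is false, since $F(2^{\Z^n})$ is a dense $G_\delta$ with dense complement, but your argument needs no compactness.
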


\begin{lem}[{\cite[Lemma 2.2]{GJ2015}}]\label{lem:basicclopen2} Let $r_2>2r_1>0$ be real numbers, and let $S_1$ be a relatively clopen subset of $F(2^{\Z^n})$ as in Lemma~\ref{lem:basicclopen} for $r_1$. Then there is a clopen set $S_2\subseteq S_1$ such that
\begin{enumerate}
\item[(i)] if $u,v\in S_2$ are distinct, then $\rho(u,v)>r_2-2r_1$;
\item[(ii)] for any $u\in F(2^{\Z^n})$, there is $v\in S_2$ such that $\rho(u,v)\leq r_1+r_2$.
\end{enumerate}
\end{lem}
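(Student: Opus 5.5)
The plan is to build $S_2$ as the image of a coarser marker set under a clopen ``nearest point of $S_1$'' map. First apply Lemma~\ref{lem:basicclopen} with $r=r_2$ to get a relatively clopen set $S\subseteq F(2^{\Z^n})$ such that distinct points of $S$ are at $\rho$-distance $>r_2$, and every $u\in F(2^{\Z^n})$ is within $\rho$-distance $r_2$ of some point of $S$. Then move each $v\in S$ to a point of $S_1$ near it in a clopen way.

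Fix a linear order $\prec$ of the finite set $B=\{g\in\Z^n\colon \rho(g,\vec 0)\leq r_1\}$. For $v\in S$, property (ii) of $S_1$ gives some $g\in B$ with $g\cdot v\in S_1$. Let $g_v$ be the $\prec$-least such $g$, and put $f(v)=g_v\cdot v$. Define
$$ S_2=\{f(v)\colon v\in S\}\subseteq S_1. $$
To see that $S_2$ is relatively clopen, note that $u\in S_2$ if and only if there is $g\in B$ such that three conditions hold:
\begin{itemize}
\item[(a)] $(-g)\cdot u\in S$;
\item[(b)] $u\in S_1$;
\item[(c)] for every $h\prec g$ in $B$, $(h-g)\cdot u\notin S_1$.
\end{itemize}
This is a finite Boolean combination of translates of the relatively clopen sets $S$ and $S_1$. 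Since the shift action is by homeomorphisms, $S_2$ is relatively clopen.

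For (i), take distinct $u,u'\in S_2$ and write $u=f(v)$, $u'=f(v')$ with $v,v'\in S$. Then $v\neq v'$, so $\rho(v,v')>r_2$. All four points lie in one orbit, and $\rho$ satisfies the triangle inequality there, so
$$\rho(u,u')\geq \rho(v,v')-\rho(u,v)-\rho(u',v')>r_2-2r_1.$$
For (ii), given $w$, pick $v\in S$ with $\rho(w,v)\leq r_2$. Then $\rho(w,f(v))\leq r_2+r_1$.

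The only point needing care is the clopenness of $S_2$. The lexicographic choice via $\prec$ makes the map $f$ determined by a finite local rule, so I expect no real obstacle there. The hypothesis $r_2>2r_1$ only serves to make the separation bound in (i) positive.
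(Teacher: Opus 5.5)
Your proof is correct and is essentially the standard argument from \cite{GJ2015} (which this paper cites without proof): take a marker set for $r_2$ via Lemma~\ref{lem:basicclopen}, push each of its points to the $\prec$-least nearby point of $S_1$, and read off clopenness, separation, and covering exactly as you do. The one small slip is the claim that all four points $u,u',v,v'$ lie in one orbit; if $v,v'$ lie in different orbits then so do $u,u'$, so $\rho(u,u')=\infty$ and (i) holds trivially.
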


Toward proving Theorem~\ref{thm:gamma}, let $r_2\gg r_1\gg \ell\gg 0$ and let $S_2\subseteq S_1$  be clopen subsets of $F(2^{\Z^n})$ given by Lemmas~\ref{lem:basicclopen} and \ref{lem:basicclopen2} for $r_1$ and $r_2$. Let $g_1,\dots, g_k$ enumerate all nonidentity elements of the set
$$ D=\{ g\in \Z^n\colon \rho(g,\vec{0})\leq r_1+r_2\}. $$
Inductively define a sequence $T_0, T_1,\dots, T_k$ of subsets of $S_1$ as follows:
$$ \begin{array}{l} T_0=S_2, \\ T_i=S_1\cap (g_i\cdot S_2)\setminus \bigcup_{j<i} T_j, \mbox{ for $1\leq i\leq k$.} 
\end{array}$$
Then $T_0, T_1,\dots, T_k$ form a partition of $S_1$ by clopen sets. By Lemma~\ref{lem:basicclopen2} (i), for any $0\leq i\leq k$ and any distinct $u, v\in T_i$, $\rho(u,v)>r_2-2r_1$.

The rest of our construction will be invariant. This means that, one can view the construction as if we have chosen a base point $z$ in each $E_{\Z^n}$-class, but the result of the construction will not depend on the choice of $z$. Moreover, we will describe a construction of a regulated partition $\sP_z$ of $\R^n$ for each orbit $[z]$. The desired regulated partition of $F(2^{\Z^n})$ consists of $(P\cap\Z^n)\cdot z$ for all $P\in \sP_z$. 

To each $z\in T_i$ we associate an $n$-dimensional rectangle $R_z$ by induction on $0\leq i\leq k$. Without loss of generality assume $r_1\in \Z+\frac{1}{2}$ and $\ell>0$ is an integer. If $i=0$ we let $R_z$ be the $n$-dimensional rectangle $B_\rho(z, r_1)$. Note that if $u, v\in T_0$ are distinct then $R_u\cap R_v=\varnothing$ since $\rho(u,v)>r_2-2r_1\gg 2r_1$ if we ensure $r_2\gg r_1$ appropriately. Suppose $0<i\leq k$ and assume $R_u$ has been defined for all $u\in \bigcup_{j<i}T_j$. Let $z\in T_i$. We first let $R^*_z$ be the $n$-dimensional rectangle $B_\rho(z,r_1)$. Then for each face $F^*$ of $R^*_z$, if there is a $u\in \bigcup_{j<i}T_j$ such that $\rho(u,z)\leq 3r_1$ and there is a face $F$ of $R_u$ such that the perpendicular distance of $F^*$ and $F$ is less than $\ell$, then adjust $F^*$ by moving it outward by no more than $\frac{1}{10}r_1$ so that the resulting face has level in $\Z+\frac{1}{2}$ and has perpendicular distance at least $\ell$ from all parallel faces of $R_u$ for $u\in\bigcup_{j<i}T_j$ with $\rho(u,z)\leq 6r_1$. Note that there is a constant upper bound (depending on $n$ but nothing else) on the number of $u\in \bigcup_{j<i}T_j$ with $\rho(u,z)\leq 6r_1$. Hence if $r_1\gg\ell$ have been appropriately chosen, such an adjustment of $F^*$ can be found. The $n$-dimensional rectangle $R_z$ is the result after such adjustments for all faces of $R^*_z$. Note that if $u,v\in T_i$ are distinct then $\rho(u,v)>r_2-2r_1\gg 6r_1$, and thus the constructions of $R_u$ and $R_v$ described above do not interfere with each other; in particular, any parallel faces of $R_u$ and $R_v$ have perpendicular distance at least $\ell$. We refer to this property as {\em orthogonality}.

By Lemma~\ref{lem:basicclopen} (ii), on each orbit $\bigcup_{0\leq i\leq k}\{R_u\colon u\in T_i\}$ is a regulated cover of $\R^n$.

Next, for each $z\in T_i$ we define a locally regulated partition of 
$$ H_z=R_z\setminus\bigcup_{j<i}\{R_u\colon u\in T_j\} $$
using the following recursive division algorithm. Note that $H_z$ is an {\em $n$-dimensional rectangular polyhedron}, i.e., it is homeomorphic to an $n$-dimensional rectangle and is the union of finitely many $n$-dimensional rectangles. Moreover, any two distinct parallel faces of $H_z$ have perpendicular distance at least $\ell$. The algorithm is as follows. Take all faces of $H_z$ with normal vector $e_n$, extend them into hyperplanes, and use these hyperplanes to divide $H_z$ into finitely many sets of the form $L\times [a, b]$, where $L$ is an $(n-1)$-dimensional rectangular polyhedron and $a, b\in \Z+\frac{1}{2}$. Note here that $L$ satisfies the similar property that any two distinct parallel faces of $L$ have perpendicular distance at least $\ell$. Now apply this algorithm recursively to each $L$ to obtain a regulated partition $\sP_L$ of $L$. Then sets of the form $P\times [a,b]$ for $P\in \sP_L$ give a locally regulated partition $\sP_z$ of $H_z$. 

When we have completely defined the locally regulated partitions $\sP_z$ of $H_z$ for all $z\in T_i$ and $0\leq i\leq k$, we have obtained a regulated partition $\sP_{[z]}$ of $\R^n$ for each orbit. Since this construction is finitary, the resulting regulated partition on $F(2^{\Z^n})$ is clopen.

It remains to verify that on each orbit we have obtained a $3\cdot 2^{n-2}$-regulated partition of $\R^n$. For this we consider a regulated partition $\sP_{[z]}$ of $\R^n$ for an arbitrary orbit $[z]$. Let $x\in \R^n$ and let $\gamma_n=3\cdot 2^{n-2}$. Let $R$ be an $n$-dimensional rectangle in $\R^n$ with $x\in \Int(R)$ such that $\sP=\{P\cap R\colon P\in\sP_{[z]}\}$ is a locally regulated partition of $R$ about $x$.  
We note that if $x$ has no boundary vectors with respect to $\mathcal{P}$, then $|\sP|=\nu_{\sP}(x)=1\leq \gamma_n$ and there is nothing more to prove. Thus we may assume without loss of generality that $x$ has at least one boundary vector with respect to $\sP$. Also note that if there is $u\in S_1$ such that $x\in \Int(H_u)$, then by our construction, $e_1$ is not a boundary vector of $x$; in this case $|\sP|\leq 2^{n-1}\leq \gamma_n$, and we are also done. In the rest of the proof we assume that $x\not\in \Int(H_u)$ for any $u\in S_1$. Let $i_x$ be the largest $i$ with $0\leq i\leq k$ and $x\in\partial H_z$ for some $z\in T_i$. The $z\in T_{i_x}$ witnessing $x\in \partial H_z$ is unique; we fix it in the rest of the proof. 

If $e_j$ is a boundary vector of $x$ with respect to $\mathcal{P}$, we call $e_j$ {\em active} for $x$ if $e_j$ is the normal vector of a face $F$ of $\partial H_z$ with $x\in F$, and call $e_j$ {\em passive} for $x$ otherwise. In particular, if $x$ is on a hyperplane extending a face of $H_z$,  then the normal vector of the face is active for $x$ iff $x$ is on that face of $H_z$. We claim that this statement still holds for all $u\in S_1$ with $x\in \partial H_u$. To see this, suppose $u\in S_1$ is such that $x\in \partial H_u$. Then $u\in T_i$ for some $0\leq i\leq i_x$. Now suppose $x$ is on a hyperplane extending a face $F$ of $H_u$. Suppose first $x\in F$. Then there is a face $F'$ of $H_z$ such that $F'\subseteq F$ and $x\in F'$. Hence the normal vector of $F$ is active for $x$. Suppose next $x\not\in F$. Then by the orthogonality, there is no face $F'$ of $H_z$ with $x\in F'$ and $F'$ lying on the same hyperplane as the one extending $F$. Hence the normal vector of $F$ is passive for $x$. 


We note that, by our construction, $e_1$ is not passive for $x$, since no faces of $R_u$ with normal vector $e_1$ are ever extended in the construction. Since $x\in \partial H_z$, there exists at least one active boundary vector for $x$.  Let $\alpha(x)$ denote the number of active boundary vectors of $x$ and $\pi(x)$ denote the number of passive boundary vectors of $x$. We have $\alpha(x)\geq 1$ and $\alpha(x)+\pi(x)\leq n$. Note that the number of regions in $\sP$ that are defined by the active boundary vectors (namely the hyperplanes with the active boundary vectors as normal vectors) is at most $\alpha(x)+1$. This is because, if $e_j$ and $e_l$ are both active vectors and $1\leq j<l\leq n$, the hyperplane with $e_j$ as the normal vector does not cross the hyperplane with $e_l$ as the normal vector, according to our construction. Thus the number of elements in $\sP$ is at most $(\alpha(x)+1)\cdot 2^{\pi(x)}$. If $\alpha(x)\geq 2$ or $\alpha(x)+\pi(x)<n$, then we have that
$$ |\sP|\leq (\alpha(x)+1)\cdot 2^{\pi(x)}\leq (\alpha(x)+1)\cdot 2^{n-\alpha(x)}\leq 3\cdot 2^{n-2}=\gamma_n. $$

It remains to consider the case $\alpha(x)=1$ and $\pi(x)=n-1$. Since $e_1$ is not passive for $x$, in this case the only active boundary vector of $x$ is $e_1$ and every other $e_j$, $1<j\leq n$, is a  passive boundary vector of $x$. Under our assumption, for each $1\leq j\leq n$, there exists a face $F_j$ of some $P_j\in \sP$ such that $x\in F_j$ and $e_j$ is a normal vector of $F_j$. Let $N_j$ be the hyperplane containing $F_j$. 

For distinct $1\leq j,l\leq n$, we say that $e_j$ {\em crosses} $e_l$ if on each of the two sides of $N_l$ there are faces $F_j^*$ of some $P_j^*\in \sP$ with normal vector $e_j$. Note that, under our assumption, $e_1$ crosses $e_l$ for all $1<l\leq n$.

\begin{lem} \label{lem:crossing} Under our assumption that $\alpha(x)=1$ and $\pi(x)=n-1$, there exist distinct $1\leq j,l\leq n$ such that $e_j$ does not cross $e_l$.
\end{lem}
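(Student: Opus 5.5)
The plan is to exploit the recursive slicing order of the division algorithm together with the fact that every passive boundary vector comes from extending a face of some $H_u$ into a hyperplane. In the algorithm applied to $H_u$, faces with normal $e_n$ are extended first. This cuts $H_u$ into slabs $L\times[a,b]$. Inside each slab, faces with normal $e_{n-1}$ of the $(n-1)$-dimensional polyhedron $L$ are then extended, and so on down to $e_1$. The consequence is that a hyperplane piece with normal $e_l$ produced at stage $l$ always ends on pieces with normals $e_{l'}$ for $l'>l$; it never passes through a cut with a larger index. Conversely, cuts with smaller index live inside the slabs cut out by larger indices. I therefore aim to show that $e_n$ does not cross $e_l$ for a suitable $l<n$, or more generally that the largest-index vector fails to cross something.

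\emph{Step 1: locate the piece with normal $e_n$ through $x$.} Because $e_n$ is passive for $x$, the face $F_n$ through $x$ with normal $e_n$ lies on a hyperplane piece created in the first stage of the algorithm for some $H_u$ with $x\in H_u$. So $F_n$ is part of the extension of a face of $H_u$ with normal $e_n$, and that extension runs across all of $H_u$ at that level. In particular, near $x$ the hyperplane $N_n$ is a genuine full cut of $H_u$: every element of $\sP$ near $x$ inside $H_u$ lies on one side of $N_n$.

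\emph{Step 2: find an index $l$ whose cut lies on only one side of $N_n$.} Consider the pieces with normals $e_l$, $1<l<n$, through $x$. By the recursion, each such piece is a face of an $(n-1)$-dimensional slab $L$ extended only inside its slab $L\times[a,b]$. Since $x$ is on the boundary $\pi_n=a$ or $b$ of that slab, the cut with normal $e_l$ appears on the side of $N_n$ containing the slab. I want to show that for at least one such $l$ it does \emph{not} appear on the other side, so that $e_l$ does not cross $e_n$. Suppose instead that every $e_l$, $1<l<n$, appears on both sides of $N_n$. Then on each side the corresponding slab boundary must contain a face of $H_u$, or of an earlier $H_{u'}$, with normal $e_l$, lying on the same hyperplane through $x$. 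Here I would invoke the orthogonality property: parallel faces of distinct $R_u$, $R_{u'}$ with $\rho\le 6r_1$ are at perpendicular distance at least $\ell$, and the faces of a single $H_u$ likewise. This forces both faces to belong to the same $H_u$ and to the same hyperplane. Within a single $H_u$, a face with normal $e_l$ crossing the level $N_n$ would be extended by the recursion as one piece. That piece would then be an active vector or would reduce $\pi(x)$, against the assumption $\alpha(x)=1$, $\pi(x)=n-1$.

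\emph{Step 3: the degenerate case $n=2$, or $l$ ranging over $e_1$ only.} Here I check separately that $e_2$ does not cross $e_1$. The $e_1$-face through $x$ is the active face of $H_z$, and $N_2$ is an extension of an $e_2$-face inside some $H_u$. The extension stops at $\partial H_u$, so it lies on one side of $N_1$ unless an extension from a second polyhedron continues it. Orthogonality again rules that out.

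The main obstacle is Step 2: making rigorous that the slab structure coming from possibly several different $H_u$ near $x$ (with $x$ on the active boundary of $H_z$) cannot conspire to have every cut crossing every other. The first thing I would try is to restrict to a tiny neighborhood of $x$. There I would observe that, by orthogonality, all passive hyperplanes through $x$ come from a single $H_u$ on each side of the active face $F_1$. I would then apply the slab-nesting argument to that $H_u$ alone.
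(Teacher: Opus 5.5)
There is a genuine gap, and it is in Step~2: the pair that fails to cross need not involve $e_n$ at all, so your target is false.

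Take $n=3$ and the troublesome points of Section~\ref{sec:7} with $i=2$, $j=1$, $k=3$ in the notation there, which is one of the two cases of Figure~\ref{fig:6}. On one side of the active face $F_1$, the region is cut only by $N_3$. On the $H_z$ side, $N_3$ is a full cut and \emph{both} resulting slabs are cut by $N_2$. This happens, e.g., when the $e_2$-face of an earlier $R_{u_2}$ overlapping $R_z$ spans the level $\pi_3(x)$ but misses $x$. Then $|\sP|=6$, $e_3$ crosses $e_1$ and $e_2$, both $e_1$ and $e_2$ cross $e_3$, and the only failing pair is that $e_2$ does not cross $e_1$.

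So the contradiction you derive from ``every $e_l$ appears on both sides of $N_n$'' cannot be correct, and its source is visible. An $e_l$-face of a single $H_u$ that spans the level of $N_n$ is not ``extended as one piece.'' The recursion cuts along $N_n$ first, the face is split, and each half is extended inside its own slab through $x$. In that situation $e_l$ stays passive and $\pi(x)$ is unchanged.

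Your inference that the $e_l$-cut lies on the side of $N_n$ containing its slab has a second problem. It presupposes that the $e_l$-cut and the $e_n$-cut come from the same region, which fails when the $e_n$-cut is present on only one side of $N_1$.

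More fundamentally, consider the fully crossing pattern: $N_3$ on both sides of $N_1$, with $N_2$ only in the upper slab on one side and only in the lower slab on the other. This is consistent with slab nesting in each of the two regions separately. So no argument using only the local slab structure at $x$, as suggested in your last paragraph, can exclude it.

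What is missing is global information about where $x$ sits relative to the rectangles that generate the passive cuts. The paper's argument runs as follows.
\begin{enumerate}
\item Assume every pair crosses. For each passive $e_j$, take the $u_j$ (unique by orthogonality) such that $N_j$ extends a face of $R_{u_j}$ and $x\notin R_{u_j}$. This is where passivity is used. Set $u_1=z$.
\item Under the all-crossing hypothesis the $u_j$ are pairwise distinct. If $u_j=u_l=u$ with $j<l$, the $e_l$-faces of $R_u$ are extended before its $e_j$-faces, and orthogonality then prevents $e_j$ from crossing $e_l$.
\item Write $R_{u_j}=[a^j_1,b^j_1]\times\cdots\times[a^j_n,b^j_n]$ and $\pi_j(x)=a^j_j$. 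Since $e_2$ crosses every other $e_j$, each $a^j_j$ lies in $[a^2_j,b^2_j]$. Hence $x\in R_{u_2}$, contradicting the choice of $u_2$.
\end{enumerate}
This argument never decides in advance which pair fails to cross. By the example above, that is unavoidable.
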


\begin{proof} Let $u_1=z, u_2, \dots, u_n$ be elements of $\bigcup_{0\leq i\leq i_x}T_{i}$ such that
\begin{itemize}
\item[(*)] for each $1<j\leq n$, $N_j$ extends a face $F_j$ of $R_{u_j}$, $x\in N_j$ and $x\not\in R_{u_j}$.
\end{itemize}
Intuitively, $x$ lies in the interior of a face of $R_{u_1}$ with normal vector $e_1$, and for each $1<j\leq n$, $x$ is also on the extension of a face of $R_{u_j}$ since $e_j$ is a passive boundary vector of $x$. By the orthogonality of the construction, $u_j$ is unique for each $1\leq j\leq n$.

Assume toward a contradiction that each $e_j$ crosses $e_l$ for all distinct $1\leq j,l\leq n$. Then we claim that for distinct $1\leq j,l\leq n$, $u_j\neq u_l$. First it is clear that $u_1\neq u_j$ for any $1<j\leq n$. Now assume $1<j<l$ and $u_j=u_l=u$, then in our construction the faces of $R_u$ with normal vector $e_l$ were extended first and the faces of $R_u$ with normal vector $e_j$ were extended later. By the orthogonality of our construction, $e_j$ does not cross $e_l$, a contradiction.

Before continuing the proof we adopt a coordinate system to address the elements of $\sP_{[z]}=\sP_{[x]}$. Suppose for each $1\leq j\leq n$,
$$ R_{u_j}=[a^j_1, b^j_1]\times \cdots \times [a^j_n, b^j_n]. $$
Suppose $x$ has coordinates $(x_1,\dots, x_n)$. Then each $x_j$ is either $a^j_j$ or $b^j_j$. Without loss of generality, assume $x_j=a^j_j$. This is also the equation for the hyperplane $N_j$. 

Since for each $2<j\leq n$, $e_2$ crosses $e_j$, we have $a^2_j\leq a^j_j\leq b^2_j$. Since $e_2$ crosses $e_1$, we have $a^2_1\leq a^1_1\leq b^2_1$. It follows that 
$$x=(a^1_1,a^2_2,\dots, a^n_n)\in R_{u_2}. $$
This is a contradiction to (*).
\end{proof}

Suppose $e_j$ does not cross $e_l$. Then on one side of $N_l$ we have at most $2^{n-1}$ many elements of $\sP$ and on the other side there are at most $2^{n-2}$ many elements of $\sP$. This gives 
$$ |\sP|\leq 2^{n-1}+2^{n-2}=3\cdot 2^{n-2}=\gamma_n. $$
The proof of Theorem~\ref{thm:gamma} is complete.

\section{A Clopen $5$-Regulated Partition of $F(2^{\mathbb{Z}^3})$\label{sec:7}}
In this section we give a clopen $5$-regulated partition of $F(2^{\mathbb{Z}^3})$. This improves the $n=3$ case of Theorem~\ref{thm:gamma} and is optimal in view of the results to be proved in the next section.

\begin{thm}\label{thm:3dim5reg} There is a clopen $5$-regulated partition of $F(2^{\mathbb{Z}^3})$. 
\end{thm}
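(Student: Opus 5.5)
The plan is to run the marker construction from the proof of Theorem~\ref{thm:gamma} with $n=3$, but to replace the recursive division of each $H_z$ by one tailored to three dimensions. The aim is that no point lies in more than $5$ regions. We keep the same ingredients: clopen marker sets $S_2\subseteq S_1$ from Lemmas~\ref{lem:basicclopen} and \ref{lem:basicclopen2}; the clopen partition $T_0,\dots,T_k$ of $S_1$; the rectangles $R_z$ with faces pushed so that parallel faces of nearby rectangles are at distance at least $\ell$ (the orthogonality property); and the polyhedra $H_z=R_z\setminus\bigcup_{j<i}\bigcup\{R_u\colon u\in T_j\}$. All of these are finitary, so any division rule for $H_z$ that depends only on the local picture yields a clopen partition.

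For $n=3$ the earlier count already gives $|\sP|\le (\alpha+1)2^{\pi}\le 6$. The bound $6$ is attained only in two situations:
\begin{itemize}
\item $\alpha(x)=1$, $\pi(x)=2$, where some $e_j$ fails to cross some $e_l$ and the two sides of $N_l$ carry $4+2$ regions;
\item $\alpha(x)=2$, $\pi(x)=1$, where we get $3\cdot 2$.
\end{itemize}
The key step is to redesign how each $H_z$ is cut so that passive planes do not propagate through both sides of an active face. I would divide $H_z$ as follows.
\begin{enumerate}
\item Cut by the horizontal planes through its $e_3$-faces, giving slabs $L\times[a,b]$ with $L$ a rectilinear polygon with holes-free connected interior (by orthogonality).
\item Instead of a recursive cut in each slab, partition $L$ by the two-dimensional minimal procedure from the first lemma of Section~\ref{sec:4}: extend only the $e_1$-normal sides maximally until they hit an $e_2$-side.
\end{enumerate}
Then within a slab every point has at most $3$ regions, by the argument of that lemma. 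Passive planes now come only from the $e_1$-extensions inside slabs and from the horizontal slab cuts.

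Next I would redo the local count at a point $x$. If $x$ lies in the interior of a slab, the picture is (planar partition)$\times$interval, so $\nu\le 3$. If $x$ lies on a slab boundary plane $\pi_3=c$, the regions above and below come from two planar minimal partitions $\sP^+,\sP^-$ of the adjacent slabs. Then $\nu(x)=\nu_{\sP^+}+\nu_{\sP^-}$, which could be up to $6$. The point to establish is that at most one of $\nu_{\sP^\pm}(\pi_{12}x)$ equals $3$ unless the other equals $1$. I expect this step to be the main obstacle. To handle it, I would break the symmetry by choosing, for each slab, which of $e_1$ or $e_2$ sides are extended, alternating according to parity of the slab within $H_z$. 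At a common horizontal plane, T-junctions of the upper partition then lie on lines of one normal and those of the lower on the other. Combined with the $\ell$-separation of parallel faces, a point that is a T-junction (count $3$) above should then be a count $\le 2$ point below. This gives at most $3+2=5$.

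Finally, I would handle points on boundaries between different $H_u$'s, i.e.\ on the active faces of rectangles $R_z$ from later stages. I would use the maximal index $i_x$ and the uniqueness of the witnessing $z$ as in Theorem~\ref{thm:gamma}. Orthogonality ensures that near $x$ at most one earlier rectangle contributes faces through $x$ besides $R_z$. This reduces to the same up/down (or left/right) two-sided count, with one side being a single region or a planar count $\le 2$ across the active face. A case check over which coordinate the active face is normal to then gives $\nu(x)\le 5$ in all cases, proving the theorem.
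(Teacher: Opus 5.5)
There is a genuine gap. The step you flag as the main obstacle (a count of $3$ on one side of a slab plane forces $\le 2$ on the other) and the closing ``case check'' are not proved, and both fail for your construction. The parity trick touches neither of two features of your scheme. First, step (1) still cuts each $H_u$ by the \emph{full} planes through all of its $e_3$-faces, so a single passive horizontal plane can pass through $x$ inside several different $H_u$'s. Second, the decomposition into the $H_u$ has reflex edges. At a reflex corner of a slab cross-section, every rectangular decomposition splits the region near that corner into two, whichever orientation you choose.

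Nothing in your construction rules out the following configuration. Work in units of $r_1$, let $x=\vec 0$, and suppose the order of processing is
$R_t=[-1.5,0.5]\times[-2.9,-0.9]\times[-2,0]$, then $R_a=[-2,0]^2\times[-0.7,1.3]$, then $R_c=[-0.95,1.05]^2\times[-0.6,1.4]$.
These are $\rho$-balls of radius $r_1$ about points at mutual distance $>r_1$, and parallel faces are at distance $\ge 0.05\,r_1\gg\ell$.

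The top face of $R_t$ is exposed in $H_a$, and also in $H_c$ over $[0,0.5]\times[-0.95,-0.9]$. So $\pi_3=0$ is a slab-cutting plane of both, although $x$ lies on no $e_3$-face.
\begin{itemize}
\item Near $x$, $H_a$ is the quarter-space $\{\pi_1\le 0,\pi_2\le 0\}$ and contributes two regions.
\item $H_c$ is the complementary L-shape. Its cross-sections in the slabs just above and just below $\pi_3=0$ both have a reflex corner at $(0,0)$, so it contributes $2+2$ regions with either orientation.
\end{itemize}
Thus $\nu(x)=6$, with $\alpha(x)=2$ and $\pi(x)=1$. The counts above and below the plane are $3$ and $3$, contradicting your key claim.

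That claim was formulated for the two slabs of a single $H_z$, but the count on each side of the plane combines pieces of several $H_u$'s. Similarly, your assertion that near $x$ at most one earlier rectangle contributes faces besides $R_z$ is not what orthogonality gives; it gives at most one per normal direction. It also ignores passive planes coming from rectangles, such as $R_t$, that do not contain $x$.

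For comparison, the paper does not redesign the division. It keeps the algorithm of Theorem~\ref{thm:gamma} and isolates the clopen set of ``troublesome'' points where $6$ regions meet (those with $\alpha=1$, $\pi=2$ and a specific crossing pattern). It then repairs each one locally: it inserts a small box $L_x$ kept at distance $\ge\epsilon\ell$ from nearby faces, and re-divides the two affected pieces with different extension orders. This leaves $5$ regions at $x$ and, the paper argues, creates no new bad points. Some local repair of this kind is what your approach lacks, since no uniform choice of cut orientations inside slabs removes configurations like the one above.

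Your caution about $\alpha=2,\pi=1$ is justified. The same configuration gives $6$ regions for the unmodified algorithm of Theorem~\ref{thm:gamma}, whereas the paper asserts that this case yields only $4$ or $5$. A complete argument along either route has to treat it explicitly.
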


The rest of this section is devoted to a proof of Theorem~\ref{thm:3dim5reg}. We modify the construction in the proof of Theorem~\ref{thm:gamma} for the $n=3$ case. We use the notation in that proof as much as possible.

First let us quickly review the algorithm used in the previous construction in the context of $n=3$. Let $r_2\gg r_1\gg \ell\gg 0$. Let $S_1\supseteq S_2$ be clopen subsets of $F(2^{\mathbb{Z}^3})$ given by Lemmas~\ref{lem:basicclopen} and \ref{lem:basicclopen2} for $r_1$ and $r_2$, and let $T_0=S_2, T_1, \dots, T_k$ be a partition of $S_1$ so that for any $0\leq i\leq k$ and distinct $u, v\in T_i$, $\rho(u,v)$ is large (roughly $r_2$). Then to each $z\in T_i$ we associate a $3$-dimensional rectangle $R_z$ so that the set of all $R_z$ satisfies the orthogonality property. This means that for any distinct $u,v\in S_1$ which are close to each other, the perpendicular distance of the parallel faces of $R_u$ and $R_v$ are relatively large (larger than $\ell$). For each $z\in T_i$ let 
$$ H_z=R_z\setminus \bigcup_{j<i}\{R_u\colon u\in T_j\}. $$
Then $H_z$ is a $3$-dimensional rectangular polygon. Our algorithm to divide $H_z$ goes as follows. Let $e_1, e_2, e_3$ be the standard basis vectors of $\mathbb{R}^3$. We first extend all faces of $H_z$ with normal vector $e_3$. This divides $H_z$ up into a number of $3$-dimensional rectangular polygons each of which is a cylinder in the direction of $e_3$. To be precise, arbitrarily fixing a coordinate system for $[z]$, each of the resulting $3$-dimensional rectangular polygons is of the form $K\times I$, where $K$ is a $2$-dimensional rectangular polygon and $I$ is an interval. Then the next step of the algorithm is to divide $K\times I$ by extending all faces of $K\times I$ with normal vector $e_2$. This results in a number of $3$-dimensional rectangles. These $3$-dimensional rectangles give rise to a clopen regulated partition of $F(2^{\mathbb{Z}^3})$. 

We analyze this regulated partition in more detail. Let $\mathcal{P}_{[z]}$ denote the regulated partition restricted to $[z]$ (which we identify as a regulated partition of $\mathbb{R}^3$). Arbitrarily fix $x\in [z]$. Let $\mathcal{P}$ be a locally regulated partition about $x$ induced by $\mathcal{P}_{[z]}$.  We again let $\alpha(x)$ be the number of active boundary vectors of $x$ and $\pi(x)$ be the number of passive boundary vectors of $x$. If $\alpha(x)+\pi(x)\leq 2$, then $|\mathcal{P}|\leq 4$. Suppose $\alpha(x)+\pi(x)=3$. If $\alpha(x)=3$ then the algorithm gives that $|\mathcal{P}|=4$. If $\alpha(x)=2$ and $\pi(x)=1$, then the algorithm gives that $|\mathcal{P}|=4$ or $|\mathcal{P}|=5$. Thus the only case that could give $|\mathcal{P}|=6$ is when $\alpha(x)=1$ and $\pi(x)=2$. In this case Lemma~\ref{lem:crossing} guarantees that there are distinct $1\leq i,j\leq 3$ such that $e_i$ does not cross $e_j$. Let $k\in \{1,2,3\}\setminus\{i,j\}$. We see that the only case in which $|\mathcal{P}|=6$ is when $e_k$ crosses both $e_i$ and $e_j$, $e_j$ crosses both $e_i$ and $e_k$, $e_i$ crosses $e_k$ but does not cross $e_j$. We claim that $i=2$ and $\{j,k\}=\{1,3\}$. To see this, let $F_1$ be the union of all faces $F$ of some elements of $\sP$ so that $x\in F$ and $F$ has $e_1$ as the normal vector; similarly define $F_2$ and $F_3$. Since $e_1$ is the only active boundary vector for $x$, $x$ must lie in the interior of $F_1$ and $F_1$ is a part of some face of some $H_z$.  This implies that $e_1$ crosses both $e_2$ and $e_3$. We next argue that $e_3$ crosses $e_1$. Assume not. Then $F_3$ only occurs on only one side of $F_1$. By our assumption, $e_2$ crosses $e_1$.  Now if $e_3$ does not cross $e_2$, that is, $F_3$ does not occur on both sides of $F_2$, then one can see that $|\sP|\leq 5$. Otherwise, if $e_3$ crosses $e_2$, that is, $F_3$ occurs on both sides of $F_2$,  then on the side of $F_1$ in which $F_3$ occurs, $F_2$ does not occur on both sides of $F_3$. This is because, in our construction, $F_3$ was introduced first by the algorithm, and by orthogonality, $F_2$ cannot occur on both sides of $F_3$ while $F_2$ occurs on both sides of $F_1$. We have thus argued that $e_3$ crosses $e_1$. By a similar argument $e_3$ also crosses $e_2$. The proof of the claim is complete.

In case $|\sP|=6$ we call $x$ {\em troublesome}. 
 Figure~\ref{fig:6} illustrates the only two cases of troublesome points.

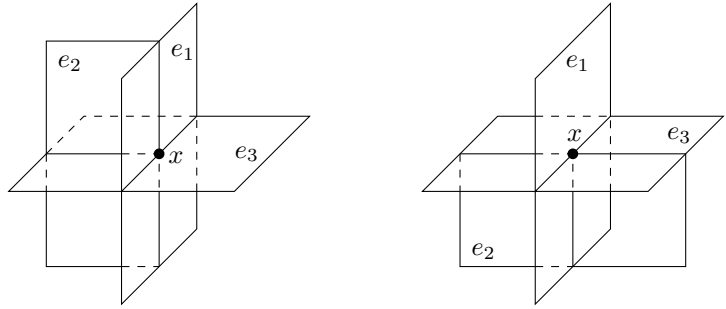
\begin{figure}[h]
\begin{tikzpicture}[scale=0.5]

\draw[] (2,2) --(5,2) -- (7,4) -- (4,4)--(2,2);
\draw[] (4,2)--(4,1)--(2,-1)--(2,2);
\draw[] (2,2)--(2,5)--(4,7)--(4,4);
\draw[] (3,2)--(3,0);
\draw[](3,3)--(3,6)--(0,6)--(0,3)--(2,3);
\draw[] (0,3)--(-1,2)--(2,2);
\draw[] (0,2)--(0,0)--(2,0);
\draw[dashed] (0,3)--(1,4)--(4,4);
\draw[dashed] (2,3)--(3,3)--(3,2);
\draw[dashed] (2,0)--(3,0);
\draw[dashed] (4,2)--(4,4);
\draw[dashed] (0,3)--(0,2);
\node[left] at (1.2,5.4) {$e_2$};
\node[left] at (4.2,5.7) {$e_1$};
\node[left] at (5.9,3) {$e_3$};
\node[left] at (3.9, 2.9) {$x$};
\filldraw[fill=black, very thick] (3,3) circle (0.1);

\draw[] (13,2)--(16,2)--(18,4)--(15,4)--(13,2) --(13,5)--(15,7)--(15,4);
\draw[] (13,2)--(13,-1)--(15,1)--(15,2);
\draw[] (13,2)--(10,2)--(12,4)--(13,4);
\draw[] (13,3)--(11,3);
\draw[] (14,3)--(17,3)--(17,0)--(14,0)--(14,2);
\draw[] (11,2)--(11,0)--(13,0);
\draw[dashed] (13,4)--(15,4)--(15,2);
\draw[dashed] (13,3)--(14,3) --(14,2);
\draw[dashed] (13,0)--(14,0);
\draw[dashed] (11,3)--(11,2);
\node[left] at (12.2,0.4) {$e_2$};
\node[left] at (14.7,5.4) {$e_1$};
\node[left] at (17.4,3.5) {$e_3$};
\node[left] at (14.5, 3.5) {$x$};
\filldraw[fill=black, very thick] (14,3) circle (0.1);

\end{tikzpicture}
\caption{Two cases of troublesome points. \label{fig:6}}
\end{figure}

We then modify $\mathcal{P}_{[z]}$ and define a new regulated partition $\mathcal{Q}_{[z]}$. First note that the set of all troublesome points for $\mathcal{P}_{[z]}$, for all $z\in F(2^{\mathbb{Z}^3})$, is clopen. Now, we modify $\mathcal{P}_{[z]}$ in two steps. In the first step, we find all troublesome points $x$, and around each of them, introduce a $3$-dimensional rectangle $L_x$ into $\mathcal{Q}_{[z]}$. One side face of $L_x$ is a part of the plane with normal vector $e_j$ where $e_2$ does not cross $e_j$. We choose the other side faces of $L_x$ so that their perpendicular distances to all existing parallel faces in the neighborhood are at least $\epsilon\ell$ where $\epsilon>0$ is a constant, and at the same time the side lengths of $L_x$ are between $\epsilon\ell$ (inclusive) and $\ell$. This is feasible since there is a constant $C$ so that there are at most $C$ many elements of $\mathcal{P}_{[z]}$ within a distance of $6r_1$ of $x$, and we may choose $\epsilon=1/(2C)$. Figure~\ref{fig:7} illustrates the introduction of $L_x$ for the two cases of troublesome points.

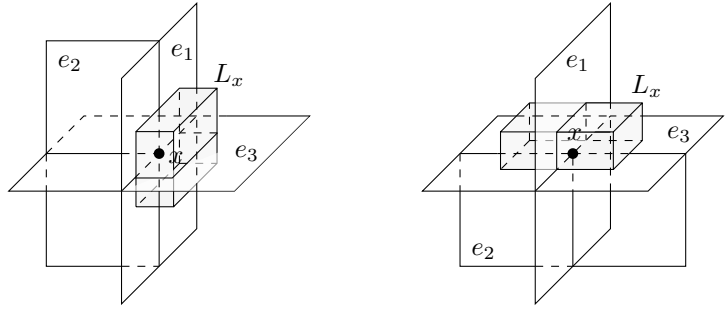
\begin{figure}[h]
\begin{tikzpicture}[scale=0.5]


\movapp{1}{3}{2}{3}{2.2}{-1.4};
\draw[] (2,2) --(5,2) -- (7,4) -- (4.8,4);
\draw[dashed] (4.8,4)--(4,4)--(2.35,2.35);
\draw[] (2,2)--(2.35,2.35);
\fill[white, opacity=0.6] (2,2)--(5,2)--(7,4)--(2,2);
\draw[] (2.35,2.35)--(3.35,2.35)--(4.55,3.55);
\draw[dashed] (4.55,3.55)--(3.55,3.55);
\draw[] (4,2)--(4,1)--(2,-1)--(2,2);
\draw[] (2,2)--(2,5)--(4,7)--(4,4.8);
\draw[dashed] (4,4.8)--(4,4);
\draw[] (3,2)--(3,0);
\draw[dashed] (3,3)--(3,4);
\draw[](3,4)--(3,6)--(0,6)--(0,3)--(2,3);
\draw[] (0,3)--(-1,2)--(2,2);
\draw[] (0,2)--(0,0)--(2,0);
\draw[dashed] (0,3)--(1,4)--(4,4);
\draw[dashed] (2,3)--(3,3)--(3,2);
\draw[dashed] (2,0)--(3,0);
\draw[dashed] (4,2)--(4,4);
\draw[dashed] (0,3)--(0,2);
\node[left] at (1.2,5.4) {$e_2$};
\node[left] at (4.2,5.7) {$e_1$};
\node[left] at (5.9,3) {$e_3$};
\node[left] at (3.9, 2.9) {$x$};
\filldraw[fill=black, very thick] (3,3) circle (0.1);
\node[right] at (4.2,5.1) {$L_x$};

\movapp{3}{2}{1}{12.5}{3}{-0.9};
\fill[white, opacity=0.6] (13,2)--(13,5)--(15,7)--(15,4)--(13,2);
\draw[] (13.57,2.57)--(13.57,3.57)--(15,3.57);
\draw[] (13.57,3.57)--(14.35,4.35)--(15,4.35);
\draw[dashed] (14.35, 4.35)--(14.35,3.35);
\draw[] (13,2)--(16,2)--(18,4)--(16,4);
\draw[dashed] (16,4)--(15,4)--(13.57,2.57);
\draw[] (13.57,2.57)--(13,2);
\draw[] (13,2) --(13,5)--(15,7)--(15,4.4);
\draw[dashed] (15,4.4)--(15,4);
\draw[] (13,2)--(13,-1)--(15,1)--(15,2);
\draw[] (13,2)--(10,2)--(12,4)--(12.5,4);
\draw[dashed] (12.5,4)--(13,4);
\draw[] (12,3)--(11,3);
\draw[dashed] (14,3)--(15.5,3);
\draw[] (15.5,3)--(17,3)--(17,0)--(14,0)--(14,2);
\draw[] (11,2)--(11,0)--(13,0);
\draw[dashed] (13,4)--(15,4)--(15,2);
\draw[dashed] (12,3)--(14,3) --(14,2);
\draw[dashed] (13,0)--(14,0);
\draw[dashed] (11,3)--(11,2);
\node[left] at (12.2,0.4) {$e_2$};
\node[left] at (14.7,5.4) {$e_1$};
\node[left] at (17.4,3.5) {$e_3$};
\node[left] at (14.5, 3.5) {$x$};
\filldraw[fill=black, very thick] (14,3) circle (0.1);
\node[right] at (15.3,4.8) {$L_x$};

\end{tikzpicture}
\caption{Introducing $L_x$ around each troublesome point $x$. \label{fig:7}}
\end{figure}

Note that $L_x$ intersects exactly two of the six $3$-dimensional rectangles in $\mathcal{P}_{[z]}$ which contain $x$. Denote these two elements of $\mathcal{P}_{[z]}$ as $P_{x,1}$ and $P_{x,2}$. Let $Q_{x,1}$ be the closure of $P_{x,1}\setminus L_x$ and let $Q_{x,2}$ be the closure of $P_{x,2}\setminus L_x$. Then $Q_{x,1}\subseteq P_{x,1}$ and $Q_{x,2}\subseteq P_{x,2}$ are $3$-dimensional rectangular polygons. Now the second step of the construction of $\mathcal{Q}_{[z]}$ is to perform a division algorithm in $Q_{x,1}$ and $Q_{x,2}$ which is similar to the division algorithm we have used for $\mathcal{P}_{[z]}$ before, except that we choose the normal vectors of the face extensions differently in $Q_{x,1}$ and $Q_{x,2}$. To be precise, the algorithm goes as follows. In $Q_{x,1}$ first extend the face of $L_x$ with normal vector $e_1$ across $Q_{x,1}$, and then in one of the resulting regions which is not yet a $3$-dimensional rectangle, extend the face of $L_x$ with normal vector $e_2$. In $Q_{x, 2}$ first extend the face of $L_x$ with normal vector $e_3$, and then in one of the resulting regions which is not yet a $3$-dimentional rectangle, extend the face of $L_x$ with normal vector $e_2$. The end result will be our regulated partition $\mathcal{Q}_{[z]}$. 

To see that $\mathcal{Q}_{[z]}$ has the required properties, we only need to analyze what happens to the troublesome point $x$ and the two regions $Q_{x,1}$ and $Q_{x,2}$ in the construction of $\mathcal{Q}_{[z]}$ from $\mathcal{P}_{[z]}$. For this let $\mathcal{Q}$ be the locally regulated partition about $x$ induced by $\mathcal{Q}_{[z]}$. Then it is clear that $|\mathcal{Q}|=5$. An exhaustive check of all the newly introduced boundary points of $\mathcal{Q}_{[z]}$ within $Q_{x,1}$ and $Q_{x,2}$ shows that the algorithm does not introduce new troublesome points for $\mathcal{Q}_{[z]}$. 

Thus we have obtained a clopen $5$-regulated partition of $F(2^{\mathbb{Z}^3})$. The proof of Theorem~\ref{thm:3dim5reg} is complete.

\section{Borel Regulated Partitions of $F(2^{\Z^n})$\label{sec:8}}

Recall from Definition~\ref{gammareg} the notion of a $\gamma$-regulated 
partition of $\R^n$, and its extension in Definition~\ref{gammaregfzn} to $\Z^n$ and in Definition~\ref{def:fzn} to 
the equivalence relation $F(2^{\Z^n})$.  Recall also Definitions~\ref{znrn1} and \ref{znrn2}
which allow us to identify rectangular partitions of $\Z^n$ with regulated partitions of  $\R^n$. 

The notion of a $\gamma$-regulated partition of $\fzn$ in Definition~\ref{def:fzn} further
generalizes in the obvious way to free actions of $\Z^n$ on $0$-dimensional Polish spaces. 
In the following we introduce a notion of nondegeneration for regulated partitions of such actions.

\begin{defn} Let $E$ be the orbit equivalence relation of a free action of $\Z^n$ on a $0$-dimensional Polish space $X$. 
\begin{enumerate}
\item[(i)] A {\em regulated partition} of $E$ is a partition $\sP$ of $X$ such that each element of $\sP$ is of the form $R\cdot x$, where $R$ is an $n$-dimensional rectangle in $\Z^n$. 
\item[(ii)] A regulated partition $\sP$ is {\em nondegenerate} if for every $x\in X$ and every $1\leq i\leq n$, there is $y\in [x]$ such that $e_i\cdot y$ and $y$ belong to the same element of $\sP$.
\end{enumerate}
\end{defn}

In practice, we always work with regulated partitions where the rectangles in the partition have positive side lengths. Such regulated partitions are all nondegenerate. 






For nondegenerate regulated partitions we can define the continuous and the Borel regulation numbers
of the action. 
\begin{defn}
Let $E$ be the orbit equivalence relation of a continuous, free action of $\Z^n$ on a $0$-dimensional
Polish space. The {\em continuous regulation number} $\gamma_c(E)$ of $E$ is the least $\gamma$
such that there is a clopen nondegenerate $\gamma$-regulated partition of $E$. Similarly, the {\em Borel 
regulation number} $\gamma_B(E)$ of $E$ is the least $\gamma$ such that there is a Borel 
nondegenerate $\gamma$-regulated partition of $E$. 
\end{defn}

Our main interest is in the spaces $\fzn$ themselves, but the arguments apply 
to more general case of  $0$-dimensional Polish spaces on which there is a free continuous action of $\Z^n$. The main question 
concerning regulated partitions is the following.

\begin{ques}
What are the values of $\gamma_c(\fzn)$ and $\gamma_B(\fzn)$? 
\end{ques}

For notational convenience we let $\gamma_c(n)$ abbreviate $\gamma_c(\fzn)$
and likewise for $\gamma_B(n)$. 
In view of our results proved in the earlier part of the paper, in particular Lemma~\ref{lem:minimalregulated} and Theorems~\ref{thm:gamma} and \ref{thm:3dim5reg}, 
we have 
$$ \gamma_c(1)=\gamma_B(1)=2, $$
$$ \gamma_c(2)=\gamma_B(2)=3, $$
$$ 4\leq \gamma_B(3)\leq \gamma_c(3)\leq 5, $$
and for all $n> 3$,
$$
n+1 \leq \gamma_B(n) \leq \gamma_c(n) \leq 3\cdot 2^{n-2}.
$$
Because of this last result, the $n+1$-regulated partitions of $\Z^n$-actions are called {\em minimal} regulated partitions.

Our goal in this section  is to improve these estimates about $\gamma_c(n)$ and $\gamma_B(n)$ for $n\geq 3$. 
The next theorem is our main theorem of this section.






\begin{thm} \label{thm:hrp}
Let $n\geq 3$ be an integer. Let $E$ be the orbit equivalence relation induced by a free Borel action of $\Z^n$ 
on a $0$-dimensional Polish space. Then there does not exist a Borel, nondegenerate, 
minimal regulated partition of $E$. 
\end{thm}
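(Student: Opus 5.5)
The plan is to argue by forcing, as the introduction indicates, and to handle $n=3$ first, lifting to $n\geq 3$ by a slicing argument like the one in the Corollary following the three-dimensional obstruction theorem. Suppose toward a contradiction that $\sP$ is a Borel, nondegenerate, minimal regulated partition of $E$. First reduce to $X=F(2^{\Z^n})$: a free Borel action on a $0$-dimensional Polish space admits a Borel equivariant injection of a comeager/generic set of points, and conversely the forcing argument below only needs generic points of $2^{\Z^n}$ to lie in $X$. So we work with a Borel minimal $\sP$ on $\fzn$ and take a code for it in the ground model $V$.

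The key object is a forcing $\bP$ whose conditions are finite partial configurations. A condition is a finite rectangle $R\subseteq\Z^n$ together with a finite family of pairwise disjoint subrectangles of $R$, each decorated by a pattern in $2^{R}$. The ordering is extension, with generic object a point $x_G\in F(2^{\Z^n})$ whose orbit carries a ``forced'' rectangular structure. The density argument I would aim for is the following. For any condition $p$ and any large rectangle $R$ extending its domain, we can extend $p$ to force that three prescribed disjoint rectangles $P_0,P_1,P_2$, placed as in the proof of the three-dimensional obstruction theorem (suitably scaled), are elements of $\sP_{x_G}$. To do this I would use that $\sP$ is Borel, hence determined by a Borel function of $x_G$, and absoluteness plus a genericity/homogeneity argument as in \cite{GJKS2022}. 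Concretely, one builds conditions so that the pattern inside each $P_i$ is ``locally identical'' to a region of another generic in which $\sP$ already places a rectangle with the needed side lengths and position. Nondegeneracy is used here to guarantee that $\sP$ contains rectangles that are genuinely $3$-dimensional and arbitrarily large portions in each direction can be copied, giving freedom to realize rectangles of all needed shapes.

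Once $P_0,P_1,P_2$ are forced to lie in $\sP_{x_G}$ inside a bounded box $R$, the restriction of $\sP_{x_G}$ to $R$ (Lemma~\ref{lem:regulatedvslocal}) is a finite minimal locally regulated partition of $R$ containing $\sP_0=\{P_0,P_1,P_2\}$, because minimality of $\sP$ is pointwise by Theorem~\ref{thm:regpart}. This contradicts the three-dimensional obstruction theorem. For $n>3$ I would force the product configuration $P_i\times[0,1]^{n-3}$ and slice at a nonlevel as in the Corollary. The subtlety is that the restriction to $R$ must have $R$'s boundary aligned with rectangles of $\sP$. I would avoid this by using the virtual-maximal-surface argument of Lemma~\ref{lem:maxsurface1} directly in $\R^3$: it only uses minimality near points interior to the region spanned by $P_0,P_1,P_2$, and the chain $M_1\subsetneq M_2\subsetneq\cdots$ stays confined below level $2$ because $P_1,P_2$ block it.

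The main obstacle is the density step: showing that a Borel $\sP$ can be forced to contain rectangles in a prescribed relative position. Pure genericity only controls patterns, not which rectangles the Borel map outputs. I would try first an amalgamation argument. Take three generics $y_0,y_1,y_2$, mutually generic over $V$, in each of which $\sP$ contains a large rectangle, which exists by nondegeneracy and finiteness of rectangles. Then form $x$ by pasting large neighborhoods of these rectangles into the prescribed positions, filling the rest generically. Borel continuity on a comeager set then forces the pasted rectangles to persist in $\sP_x$, provided the pasted neighborhoods are large enough. Adjusting side lengths to match the obstruction's proportions, by scaling the whole configuration, is where care will be needed.
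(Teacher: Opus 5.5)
Everything rests on your density step, and that step fails. The second half of your plan is sound: restricting $\sP_{x_G}$ to a box keeps minimality at interior points, which is all the obstruction theorem uses.

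\textbf{Why the density step fails.} You cannot force a Borel partition to contain rectangles of your choosing. Since $\sP$ has a ground-model code, $\sP_{x_G}$ is computed from $x_G$ alone. The ``decorations'' in your conditions therefore carry no information, and you are effectively using Cohen forcing.

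What genericity actually gives is limited. If a condition $p$ forces a particular rectangle $P$ into $\sP_{\dot x_G}$, then every translate $\pi(p)$ forces $\pi(P)\in\sP_{\dot x_G}$, and translates with disjoint domains can be combined. So you can only replicate rectangles that $\sP$ already produces at generic points, and only when the conditions deciding them fit together. Both limits are fatal here.

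\emph{The shapes are not yours to choose.} A priori $\sP$ might use only a few shapes, for instance near-cubes of one size. Nondegeneracy only says that for each $i$ some piece contains a pair $y,e_i\cdot y$. It gives neither large nor ``genuinely three-dimensional'' pieces, let alone pieces in the proportions of $P_0,P_1,P_2$ at some scale.

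\emph{The relative positions are not yours to choose either.} A condition deciding a rectangle may have a domain far larger than the rectangle, since the Borel map is continuous on a comeager set but with no control on how much input it needs. The obstruction, however, needs the three pieces at mutual distances comparable to their sizes. The translated conditions then overlap and need not be compatible. Your proviso ``provided the pasted neighborhoods are large enough'' is exactly what destroys the prescribed geometry.

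Separately, your reduction from an arbitrary free action to $F(2^{\Z^n})$ cannot work as sketched. For the translation action of $\Z^n$ on itself, a nondegenerate minimal partition exists and is trivially Borel. The paper's argument is carried out for $F(2^{\Z^n})$.

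\textbf{What the paper does instead.} It never plants rectangles. It uses genericity to control the global shape of the boundary, and plants only tiny pointwise witnesses.
\begin{enumerate}
\item Translating a condition that witnesses nondegeneracy onto a putative infinite boundary line shows two things: maximal surfaces of $\sP_{x_G}$ are finite rectangles, and virtual maximal surfaces have a largest finite top level.
\item Lemma~\ref{lem:maxsurface1} then yields an infinite chain $S_0\subsetneq S_1\subsetneq\cdots$ of virtual maximal surfaces. A further density argument makes their sides arbitrarily long.
\item Independently, Lemma~\ref{lem:nondeg} gives a small condition $p_0$ forcing ($\filledstar$): for all admissible $u,v$, within distance $s$ of the center along each axis $w$, there is a point lying in $\partial_u\sP$ but not in $\partial_v\sP$.
\item Because $p_0$ decides only pointwise facts, disjoint translates can be laid out at will. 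The partial tiling $\sT_3$ spreads them over the faces of a large cube around a long virtual maximal surface.
\item Following the chain upward, one of two things happens. Either it spans the top level inside the cube, and a side of the current surface gives a long segment of $\partial_v\sP$ on the top face. Or some surface exits through a side face, giving a long segment of $\partial_{-e_3}\sP$ there. Either segment runs across a copy of $p_0$, contradicting ($\filledstar$).
\end{enumerate}
For $n>3$ the paper does not plant product rectangles and slice, which would need the same unavailable planting step. It restricts $\sP$ to orbits of the $\Z^3$-subaction on the first three coordinates. This restriction is still Borel and minimal, and a density argument shows it is nondegenerate on the generic orbit.
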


This theorem shows a stark contrast between the two-dimensional case and the cases of higher dimensions. 
Recall that for dimensions $n\leq 2$, there exist clopen nondegenerate minimal regulated 
partitions of $F(2^{\Z^n})$. However, for dimensions $n\geq 3$, not only are there no 
clopen nondengerate minimal regulated partitions for $F(2^{\Z^n})$, but there are not even any Borel ones. 
That is, for $n>2$ we have $\gamma_c(n)\geq \gamma_B(n)> n+1$. 
Combining this with Theorem~\ref{thm:gamma}, we have the following result.

\begin{thm} \label{mainineq}
For any $n\geq 3$ we have 
\[
n+2 \leq \gamma_B(n)\leq \gamma_c(n)\leq 3\cdot 2^{n-2}.
\]
\end{thm}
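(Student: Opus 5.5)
Theorem~\ref{mainineq} is a combination of results already stated, so the plan is simply to assemble three inequalities for a fixed $n\geq 3$.

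\emph{The middle inequality $\gamma_B(n)\leq \gamma_c(n)$.} A clopen regulated partition is Borel, so every clopen nondegenerate $\gamma$-regulated partition of $\fzn$ witnessing $\gamma_c(n)$ also witnesses $\gamma_B(n)\leq\gamma$.

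\emph{The upper bound $\gamma_c(n)\leq 3\cdot 2^{n-2}$.} This follows from Theorem~\ref{thm:gamma}, which produces a clopen $3\cdot 2^{n-2}$-regulated partition of $\fzn$. To count it in $\gamma_c(n)$ I must check that it is nondegenerate. In that construction every element is an $n$-dimensional rectangle of $\R^n$ built from the rectangles $R_z$ and their divisions. Its side lengths are positive integers, and by orthogonality parallel faces are at distance at least $\ell\geq 1$. Hence each element contains two points differing by $e_i$ for every $i$, which is exactly the remark that partitions with positive side lengths are nondegenerate.

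\emph{The lower bound $n+2\leq\gamma_B(n)$.} Let $\sP$ be a Borel nondegenerate $\gamma$-regulated partition of $\fzn$. Restricted to any orbit, it gives a regulated partition of $\R^n$ via $\sC_{\sP}$. By Lemma~\ref{lem:minimalregulated} this forces $\gamma\geq n+1$. If $\gamma=n+1$, then $\sP$ would be a Borel, nondegenerate, minimal regulated partition of the orbit equivalence relation of the shift on $\fzn$. The space $\fzn$ is a $0$-dimensional Polish space (a $G_\delta$ subset of $2^{\Z^n}$) with a free continuous action, so Theorem~\ref{thm:hrp} rules this out. Therefore $\gamma\geq n+2$, and hence $\gamma_B(n)\geq n+2$.

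The only real content is Theorem~\ref{thm:hrp}, which I am assuming. The one step needing care is confirming that the Theorem~\ref{thm:gamma} partition is nondegenerate, which should be immediate from positive side lengths.
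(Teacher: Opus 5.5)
Your proposal is correct and follows the paper, which gets the theorem by combining Theorem~\ref{thm:hrp} (no Borel nondegenerate minimal regulated partition, so $\gamma_B(n)>n+1$) with the clopen construction of Theorem~\ref{thm:gamma}; the middle inequality is trivial. Your explicit nondegeneracy check for the Theorem~\ref{thm:gamma} partition is a useful step the paper leaves implicit, but note that it needs the parallel-face separation $\ell\geq 2$, not merely $\ell\geq 1$: this makes each $C_P$ have real side lengths at least $2$, i.e.\ each $P$ has positive side lengths in $\Z^n$, and it holds because $\ell\gg 0$ in that construction.
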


For $n=3$, 
we obtain the exact values of $\gamma_c(3)$ and $\gamma_B(3)$ as follows.

\begin{cor}
$\gamma_B(3)=\gamma_c(3)=5$.
\end{cor}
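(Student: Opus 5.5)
The corollary $\gamma_B(3)=\gamma_c(3)=5$ follows by sandwiching the two numbers between bounds already established in the paper. We combine three facts: the trivial inequality $\gamma_B(3)\leq\gamma_c(3)$, the upper bound $\gamma_c(3)\leq 5$, and the lower bound $\gamma_B(3)\geq 5$.

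The trivial inequality holds because every clopen regulated partition is Borel, so the minimum defining $\gamma_B(3)$ is taken over a larger class than the one defining $\gamma_c(3)$.

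For the upper bound we use Theorem~\ref{thm:3dim5reg}, which gives a clopen $5$-regulated partition of $F(2^{\Z^3})$. We must also check that this partition is nondegenerate. In that construction every rectangle has side lengths bounded below: the $R_z$ have sides of order $r_1$, parallel faces are kept at distance at least $\ell$, and the inserted boxes $L_x$ have sides at least $\epsilon\ell>0$. So after passing to $\Z^3$ each rectangle has positive side length in every coordinate direction. As remarked after the definition of nondegeneracy, such partitions are nondegenerate. Hence $\gamma_c(3)\leq 5$.

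For the lower bound, Lemma~\ref{lem:minimalregulated} shows that every regulated partition of $\R^3$, and hence of each orbit, has a point lying in at least $4$ rectangles. Theorem~\ref{thm:hrp} with $n=3$ rules out a Borel nondegenerate $4$-regulated, i.e.\ minimal, partition of $E_{\Z^3}$ on $F(2^{\Z^3})$. Here $F(2^{\Z^3})$ is a $0$-dimensional Polish space carrying a free continuous action of $\Z^3$, so the theorem applies. Together these give $\gamma_B(3)\geq 5$, which is the case $n=3$ of Theorem~\ref{mainineq}, since $n+2=5$.

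Chaining the three facts gives $5\leq\gamma_B(3)\leq\gamma_c(3)\leq 5$, so all three quantities equal $5$. The only substantive input is Theorem~\ref{thm:hrp}, which is proved separately. Within the corollary itself, the one point needing care is confirming that the clopen partition of Theorem~\ref{thm:3dim5reg} is nondegenerate, and the lower bounds on side lengths handle that.
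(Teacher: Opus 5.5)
Your proposal is correct and matches the paper's argument. The $n=3$ case of Theorem~\ref{mainineq} (via Theorem~\ref{thm:hrp}) gives $5\leq\gamma_B(3)\leq\gamma_c(3)$, and the clopen $5$-regulated partition of Theorem~\ref{thm:3dim5reg} gives $\gamma_c(3)\leq 5$. Your explicit nondegeneracy check fills in a detail the paper leaves implicit, and your appeal to Lemma~\ref{lem:minimalregulated} is harmless but unnecessary, since any $\gamma$-regulated partition with $\gamma\leq 4$ is already $4$-regulated, i.e.\ minimal.
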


In the rest of this section we prove Theorem~\ref{thm:hrp}.
In the argument to follow, we will consider the case $X=F(2^{\Z^n})$ and $E$ the equivalence relation
generated by the shift action of $\Z^n$ on $F(2^{\Z^n})$. The more general case stated in Theorem~\ref{thm:hrp}
does not differ in any significant way. So, our goal is to show $\gamma_B(n)\geq n+2$. 

We will give the proof of Theorem~\ref{thm:hrp} for the case $n=3$ in Subsection~\ref{sec:7.2}, 
and then for the general case in Subsection~\ref{sec:7.3}. 
Before giving these proofs, we need to clarify the notation we use.

\subsection{Boundaries of rectangles in $\Z^n$} 
When we discuss a regulated partition $\sP$ of $F(2^{\Z^n})$, we have been focusing on each orbit $[x]$, regarding the restriction of $\sP$ on $[x]$ as a regulated partition of $\Z^n$, and furthermore identifying this regulated partition of $\Z^n$ with a regulated partition of $\R^n$ via Definition~\ref{znrn1}. In the proofs to follow, we sometimes consider the restriction of $\sP$ on $[x]$ only and do not further identify it with a regulated partition of $\R^n$. 

To avoid confusion and for complete clarity in our further discussions, we introduce the following notation. For any subset $A\subseteq F(2^{\Z^n})$, we define
$$ \partial_{\Z}A=\left\{y\in F(2^{\Z^n})\,\colon\, y\in A\mbox{ and there is $1\leq i\leq n$ such that $e_i\cdot y\not\in A$}\right\} $$
and
$$ \Int_{\Z}(A)=A\setminus \partial_{\Z}A $$
and call them the {\em $\Z$-boundary} and the {\em $\Z$-interior} of $A$, respectively. 
For a regulated partition $\sP$ of $F(2^{\Z^n})$, define
$$ \partial_{\Z}\sP=\bigcup\{\partial_{\Z}P\,\colon\, P\in\sP\} $$
and
$$ \Int_{\Z}(\sP)=F(2^{\Z^n})\setminus \partial_{\Z}\sP=\bigcup\{\Int_{\Z}(P)\,\colon\, P\in\sP\} $$
and call them the {\em $\Z$-boundary} and the {\em $\Z$-interior} of $\sP$, respectively. 
The concepts and notation apply to subsets of $\Z^n$ and regulated partitions of $\Z^n$ as well. 

We state the following easy lemma without proof.

\begin{lem}\label{lem:zr2}  Let $P, Q$ be $n$-dimensional rectangles in $\Z^n$ and let $x\in \Z^n$. Let $C_x$, $C_P$ and $C_Q$ be the
$n$-dimensional rectangles in $\R^n$ given by Definition~\ref{znrn1}. Then the following hold:
\begin{enumerate}
\item[(1)] $x\in P$ iff $C_x\subseteq C_P$;
\item[(2)] $x\in \Int_{\Z}(P)$ iff $C_x\subseteq\Int(C_P)$;  
\item[(3)] $P\subseteq \Int_{\Z}(Q)$ iff $C_P\subseteq \Int(C_Q)$.
\end{enumerate}
\end{lem}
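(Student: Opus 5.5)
The plan is to reduce everything to coordinatewise statements about intervals, since all sets involved are products. I would write $P=[a_1,b_1]\times\cdots\times[a_n,b_n]$ with integer endpoints $a_i\le b_i$ (intervals in $\Z$), and $x=(x_1,\dots,x_n)$. By Definition~\ref{znrn1}, $C_x=\prod_i[x_i-\frac12,x_i+\frac12]$ and $C_P=\prod_i[a_i-\frac12,b_i+\frac12]$, so $\Int(C_P)=\prod_i(a_i-\frac12,b_i+\frac12)$. Each clause then becomes a conjunction over $i$ of a one-dimensional fact about integers.

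For (1), $C_x\subseteq C_P$ holds iff for every $i$ we have $a_i-\frac12\le x_i-\frac12$ and $x_i+\frac12\le b_i+\frac12$. This is exactly $a_i\le x_i\le b_i$ for all $i$, i.e., $x\in P$. For (2), the $\Z$-boundary uses only the positive unit vectors $e_i$. So $x\in \Int_{\Z}(P)$ iff $x\in P$ and $x+e_i\in P$ for all $i$ (here $e_i\cdot y$ in the $\Z^n$ setting is translation). Coordinatewise this says $a_i\le x_i\le b_i-1$ for all $i$. On the other side, $C_x\subseteq \Int(C_P)$ iff $a_i-\frac12< x_i-\frac12$ and $x_i+\frac12<b_i+\frac12$ for all $i$, i.e., $a_i<x_i<b_i$.

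These two conditions do not literally agree, because of the asymmetry of $\partial_{\Z}$. So the main obstacle will be to pin down the intended reading of clause (2). My first attempt will be to check whether the convention in the paper effectively treats both $e_i\cdot y$ and $(-e_i)\cdot y$, which makes (2) the symmetric condition $a_i<x_i<b_i$, matching the computation. If instead the one-sided definition is meant literally, I would state (2) in the corrected form ``$C_x\subseteq \Int(C_P)$ implies $x\in\Int_{\Z}(P)$,'' and note that the converse holds after also requiring $x-e_i\in P$. I would then record that only this direction, or the symmetric version, is used later.

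For (3), I would apply (2) pointwise. First, $P\subseteq \Int_{\Z}(Q)$ iff every $x\in P$ satisfies $C_x\subseteq\Int(C_Q)$. Second, $C_P=\bigcup_{x\in P}C_x$. Taking the union over $x\in P$ gives the equivalence with $C_P\subseteq\Int(C_Q)$. In the reverse direction, $C_P\subseteq\Int(C_Q)$ gives $C_x\subseteq \Int(C_Q)$ for each $x\in P$, and then (2) applies. Alternatively, (3) can be verified directly on endpoints: $C_P\subseteq \Int(C_Q)$ iff $a^Q_i<a^P_i$ and $b^P_i<b^Q_i$ for all $i$.
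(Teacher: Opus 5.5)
Your proposal is correct. The paper states this lemma without proof, so your coordinatewise computation using $C_P=\prod_i[a_i-\frac12,b_i+\frac12]$ is the routine verification it leaves to the reader.

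Your worry about (2) is not a convention you need to check: it is a genuine defect in the statement as written. The paper defines $\partial_{\Z}A$ using only $e_i\cdot y$ for $1\le i\le n$. With that literal definition, $\Int_{\Z}(P)=\prod_i[a_i,b_i-1]$, and (2) fails in the ``only if'' direction. For example, take $n=1$, $P=\{0,1,2\}$ and $x=0$: then $x\in\Int_{\Z}(P)$, but $C_x=[-\frac12,\frac12]\not\subseteq(-\frac12,\frac52)=\Int(C_P)$.

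The same failure occurs in (3), e.g.\ $n=1$, $P=\{0,1\}$, $Q=\{0,1,2\}$. Under the literal reading, your pointwise derivation of (3) from (2) gives only the ``if'' direction. So (3) needs the same caveat you attached to (2).

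The lemma holds exactly when $\partial_{\Z}A$ is read two-sidedly, as $\bigcup_{v\in\mathsf{S}}\partial_vA$ with $\mathsf{S}=\{\pm e_i\}$. That is clearly the intended meaning: the paper introduces $\mathsf{S}$ right afterwards, and later uses $\partial_{\Z}S$ for all four sides of a surface and writes $S_k\subseteq\Int_{\Z}(S_K)$. State the two-sided definition outright rather than hedging. With that reading, your arguments for all three clauses are complete.
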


We will also use the following notation. Let $\mathsf{S}=\{\pm e_i\,\colon\, 1\leq i\leq n\}$ be the set of standard generators of $\Z^n$. For any subset $A\subseteq F(2^{\Z^n})$ and $v\in \mathsf{S}$, define
$$ \partial_vA=\{x\in A\,\colon\, v\cdot x\not\in A\} $$
and
$$ \Int_v(A)=\{x\in A\,\colon\, v\cdot x\in A\}. $$
Thus $A=\partial_vA\cup \Int_v(A)$ for all $v\in \mathsf{S}$. For $A\subseteq F(2^{\Z^n})$ and $u,v\in\mathsf{S}$, let
$$ \partial_{u\setminus v}A=\partial_uA\setminus\partial_vA=\partial_uA\cap\Int_v(A). $$
If $\sP$ is a regulated partition of $F(2^{\Z^n})$ and $u,v\in\mathsf{S}$, then let
$$ \partial_v\sP=\bigcup\{\partial_vP\,\colon\, P\in\sP\}, $$
$$ \Int_v\sP=\bigcup\{\Int_vP\,\colon\, P\in\sP\}=F(2^{\Z^n})\setminus \partial_v\sP, $$
and
$$ \partial_{u\setminus v}\sP=\bigcup\{\partial_{u\setminus v}P\,\colon\, P\in\sP\}. $$
We state without proof the following easy property of any nondegenerate regulated partition.

\begin{lem}\label{lem:nondeg} Let $\sP$ be a nondegenerate regulated partition of $F(2^{\Z^n})$. Then for any $u, v\in\mathsf{S}$ such that $u\neq \pm v$, $\partial_{u\setminus v}\sP\neq\varnothing$.
\end{lem}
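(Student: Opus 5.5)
We need to prove: for a nondegenerate regulated partition $\sP$ and $u,v\in\mathsf{S}$ with $u\neq\pm v$, the set $\partial_{u\setminus v}\sP$ is nonempty. The plan is to work on a single orbit $[x]$, view $\sP$ restricted to it as a rectangular partition of $\Z^n$, and locate some $y\in[x]$ and some $P\in\sP$ with $y\in P$, $u\cdot y\notin P$ and $v\cdot y\in P$.

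First, reduce to the $v$-direction. Write $v=\pm e_i$. Nondegeneracy gives $y_0\in[x]$ such that $e_i\cdot y_0$ and $y_0$ lie in the same element $P$ of $\sP$. Thus $P=S\cdot z$ with $S=[a_1,b_1]\times\cdots\times[a_n,b_n]$ a rectangle in $\Z^n$ satisfying $a_i<b_i$. Consequently, $P$ contains a point $y'$ with $v\cdot y'\in P$: take $y'=y_0$ if $v=e_i$, and $y'=e_i\cdot y_0$ if $v=-e_i$.

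Next, move in the $u$-direction. Write $u=\pm e_j$ with $j\neq i$. Since $P$ is finite, starting from $y'$ we repeatedly apply $u$ while staying in $P$. The coordinates of the moving point in the direction $e_i$ stay fixed, because $u$ moves only in coordinate $j$ and rectangles are products of intervals. So every point $y$ on this path still has $v\cdot y\in P$. Let $y$ be the last point on the path that lies in $P$. Then $u\cdot y\notin P$, so $y\in\partial_uP$. Also $v\cdot y\in P$, so $y\in\Int_v(P)$. Hence $y\in\partial_{u\setminus v}P\subseteq\partial_{u\setminus v}\sP$.

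The step needing the most care is the second: that applying $u$ does not affect membership of $v\cdot y$ in $P$. This is exactly the product structure of $S$. In coordinates, $y=g\cdot z$ with $g\in S$. Then $v\cdot y\in P$ holds iff $g+v\in S$, which depends only on $g_i\in[a_i,b_i]$ together with $g_i\pm1\in[a_i,b_i]$, while the other coordinates stay in their intervals. Moving by $u$ changes only $g_j$.

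Freeness of the action is used implicitly, since it makes $g\mapsto g\cdot z$ injective and so the identification of $P$ with $S$ well defined. Finiteness of $S$, which holds because rectangles have finite side lengths, guarantees that the walk terminates.
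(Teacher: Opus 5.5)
Your proof is correct and complete. Nondegeneracy gives an element $P=S\cdot z$ of positive $e_i$-extent, and sliding in the $u$-direction to the last point of $P$ puts you in $\partial_uP$; the product structure of $S$, together with freeness, keeps you in $\Int_v(P)$. The paper states this lemma without proof, and your argument supplies the routine verification. Since nondegeneracy is a per-orbit condition, your argument actually produces a point of $\partial_{u\setminus v}\sP$ in every orbit. That per-orbit form is what Section~\ref{sec:8} uses when it produces $y_{u,v}\in[x_G]$.
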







\subsection{Proof of Theorem~\ref{thm:hrp} for the $n=3$ case\label{sec:7.2}} 

In our proof we will use the notation introduced in Definition~\ref{znrn1}. That is, for a dimension $k\leq 3$ and $x\in \Z^k$, we will speak of a $k$-dimensional rectangle $C_x$ in $\R^k$. Similarly, if $P$ is a $k$-dimensional rectangle in $\Z^k$, then we will speak of a $k$-dimensional rectangle $C_P$ in $\R^k$.

We will also use the notions of a maximal surface for a regulated partition from 
Definition~\ref{def:maximalsurface}, the notion of a virtual maximal surface from Definition~\ref{def:vms}, and the properties of these notions given by Lemmas~\ref{lem:maxsurface0}, \ref{lem:vms0} and \ref{lem:maxsurface1}. Of course, these are notions in $\R^3$ as they were defined in Section~\ref{sec:4}, but we reinterpret them in the context of $\Z^3$ and $F(2^{\Z^3})$ via Definition~\ref{znrn1}. More explicitly, if $\sP$ is a regulated partition of $F(2^{\Z^3})$ and $[x]$ is a particular orbit, then $\sP$ restricted to $[x]$ gives rise to $\sQ=\{C_P\,\colon\, P\in\sP, P\subseteq [x]\}$, a regulated partition of $\R^3$ (for the coordinate system, chose $x$ as orgin). Note that all 
boundary points of $\sQ$ have coordinates in $\Z+\frac{1}{2}$. If $M$ is a maximal surface of $\sQ$ on some level $d+\frac{1}{2}$ for an integer $d$, then 
$$ (M\cap \Z^2)\times \{d\} \mbox{ and } (M\cap \Z^2)\times \{d+1\} $$
are the actual subsets of $\partial_{\Z}\sP$ restricted to $[x]$ each of which is a surface in $\Z^3$ given by a union of boundaries of $3$-dimensional rectangles in $\sP$. We thus consider $S=M\cap \Z^2$ to be the reinterpretation of the maximal surface of $M$. Now, these maximal surfaces occurs in pairs which are of perpendicular distance $1$ to each other. Namely, $S$ appears at both level $d$ and $d+1$, where $S\times\{d\}$ is the union of the upper faces of some $P\in\sP$ and $S\times\{d+1\}$ is the union of the lower faces of some $P\in\sP$. In this case, we still call $S$ a {\em maximal surface} but call $d$ the {\em lower level} and $d+1$ the {\em upper level} (of the same maximal surface $S$). By Lemma~\ref{lem:maxsurface0}, every maximal surface for a regulated partition of $\R^3$ is a (2-dimensional) rectangle. It follows that every maximal surface for a regulated partition of $\Z^3$ or $F(2^{\Z^3})$ is also a rectangle. 

Similarly, if $\sP$ is a regulated partition of $\Z^3$ or $F(2^{\Z^3})$, $[x]$ is a particular orbit, and $M$ is a virtual maximal surface from $d_1+\frac{1}{2}$ to $d_2+\frac{1}{2}$ for the regulated partition $\sQ=\{C_P\colon P\in \sP, P\subseteq[x]\}$ of $\R^3$, then $S=M\cap \Z^2$ is a {\em virtual maximal surface} from $d_1+1$ to $d_2$ for $\sP$. Here the defining property correspondent to Definition~\ref{def:vms} is 
$$ \partial_{\Z}S\times ([d_1+1, d_2]\cap \Z)\subseteq \partial_{\Z}\sP. $$
This property is nontrivial even if $d_1+1=d_2$.

We are now ready to prove Theorem~\ref{thm:hrp} for the case $n=3$. 

Assume that $\sP$ is a Borel nondegenerate $4$-regulated partition for $F(2^{\Z^3})$. 
Consider the Cohen forcing $\mathbb{P}$ for adding a generic element $x_G \in F(2^{\Z^3})$, where $G$ is a $\mathbb{P}$-generic filter. So, conditions 
$p \in \mathbb{P}$ are finite partial functions $p\,\colon R\to \{ 0,1\}$ where 
$R$ is a $3$-dimensional rectangle in $\Z^3$. Also, $q \leq_{\mathbb{P}} p$ iff $q$ extends $p$. We will get a contradiction
by considering the partition $\sP$ restricted to a generic class $[x_G]$, which is denoted $\sP_{[x_G]}$. By arbitrarily fixing an origin, we regard $\sP_{[x_G]}$ as a regulated partition of $\Z^3$.

Consider a maximal surface $S$ of $\sP_{[x_G]}$ on some lower level $d\in\Z$. 
We claim that $S$ is a {\em finite} rectangle in $\Z^2$. 
In fact, by Lemma~\ref{lem:maxsurface0}, $C_S$ is convex, and hence $S$ is a (possibly infinite) rectangle in $\Z^2$. Toward a contradiction, assume $S$ contains an infinite line. Without loss of generality, and for definiteness, 
assume $a, b\in \Z$ are such that
\begin{itemize}
\item[($\star$)] $[a, \infty)\times\{b\}\subseteq S$ and $[a, \infty)\times\{b\}\times \{d\}\subseteq \partial_{e_3}\sP$.
\end{itemize}
Since $\sP$ is nondegenerate, there is some $(i,j,k)\not\in \partial_{e_3}\sP_{[x_G]}$, that is, there is $P\in\sP_{[x_G]}$ such that $(i,j,k), (i,j,k+1)\in P$. Let $p\in G$ so that $p$ forces ($\star$) and $(i,j,k)\not\in \partial_{e_3} \sP_{[\dot{x}_G]}$. We note that the set
$$\begin{array}{rl} E=\{q\leq_{\mathbb{P}} p\,\colon & \!\!\!\!\mbox{there is a translation $\pi\,\colon\, \Z^3\to \Z^3$ such that } \\
&\!\!\!\!\mbox{$\dom(p)\cap \dom(\pi(p))=\varnothing$, $\pi(i,j,k)\in [a, \infty)\times\{b\}\times\{d\}$, } \\
& \!\!\!\!\mbox{and $q\leq_{\mathbb{P}} \pi(p)$}\} 
\end{array}$$
is dense below $p$. To see this, suppose $r\leq_{\mathbb{P}} p$. Since $\dom(r)$ is finite, there is a translation $\pi\,\colon\, \Z^3\to \Z^3$ so that $\dom(r)\cap \pi(\dom(r))=\varnothing$ and $\pi(i,j,k)\in[a, \infty)\times\{b\}\times\{d\}$. Thus $r\cup \pi(r)\in\mathbb{P}$. Let $q=r\cup\pi(r)$. Then $q\leq_{\mathbb{P}}r$ and $q\in E$. This shows that $E$ is dense below $p$. Now since $G$ is $\mathbb{P}$-generic and $p\in G$, there is $q\in G\cap E$. 
But then $q$ forces $[a, \infty)\times\{b\}\times\{d\}\subseteq\partial_{e_3}\sP_{[\dot{x}_G]}$ since $q\leq_{\mathbb{P}} p$, and $q$ forces $\pi(i,j,k)\not\in \partial_{e_3}\sP_{[\dot{x}_G]}$ since $q\leq_{\mathbb{P}} \pi(p)$. Thus
$\pi(i,j,k)\in  [a,\infty)\times\{b\}\times\{d\}\subseteq\partial_{e_3}\sP_{[x_G]}$ and $\pi(i,j,k)\not\in \partial_{e_3}\sP_{[x_G]}$ at the same time, a contradiction. This shows that $S$ is a finite rectangle in $\Z^2$.



Next we analyze the virtual maximal surfaces in $\sP_{[x_G]}$. A density argument as in the previous paragraph shows that for any virtual maximal surface $S$ for $\sP_{[x_G]}$ from some $d_1+1$ to $d_2$ with $d_1<d_2$, there is a largest finite such $d_2$. Now suppose $S_0$ is a maximal surface of $\sP_{[x_G]}$ on (upper or lower) level $\delta_0$.  Then by Lemma~\ref{lem:vms0} and the above observation, there is a finite $\delta_1>\delta_0$ such that $S_0$ is a virtual maximal surface from $\delta_0+1$ to $\delta_1$, where $\delta_1$ is the largest such. By Lemma~\ref{lem:maxsurface1}, there is $\delta_2>\delta_1$ and a virtual maximal surface $S_1$ from $\delta_1+1$ to $\delta_2$ such that $S_0\subsetneq S_1$. Moreover, there is a largest finite such $\delta_2$. By repeating this construction and argument, we obtain an infinite increasing sequence
$$ \delta_0<\delta_1<\delta_2<\cdots <\delta_k<\cdots $$
and virtual maximal surfaces $S_k$ from $\delta_k+1$ to $\delta_{k+1}$, where each $\delta_{k+1}$ is the largest such, with $S_k\subsetneq S_{k+1}$ for all $k\geq 0$. A density argument as in the previous paragraph shows that for any $k\geq 0$ and any side $L$ of $S_k$, there is $K>k$ such that $L\subseteq \Int_{\Z}(S_K)$. It thus follows that for any $k\geq 0$, there is $K>k$ such that $S_k\subseteq \Int_{\Z}(S_K)$. Therefore, we get that for any $D>0$, there is a virtual maximal surface $S$ of $\sP_{[x_G]}$ all of whose side lengths are at least $D$.

Let $\mathsf{S}=\{\pm e_1, \pm e_2, \pm e_3\}$ be the set of standard generators of $\Z^3$. We define a condition $p_0 \in\mathbb{P}$ so that $\dom(p_0)=[-s,s]^3\cap \Z^3$ for some positive integer $s$, and $p_0$ forces that
\begin{itemize}
\item[($\filledstar$)] for any $u\in \mathsf{S}$, $v\in\mathsf{S}\setminus\{\pm u\}$, and $w\in\mathsf{S}\setminus\{\pm u, \pm v\}$, there is $a\in [-s,s]\cap \Z$ such that $aw\in\partial_{u\setminus v}\sP_{[\dot{x}_G]}$.
\end{itemize}
To build this condition $p_0$, consider 
$$ \mathsf{T}=\{(u,v,w)\in\mathsf{S}^3\,\colon\, v\neq \pm u \mbox{ and } w\neq \pm u, \pm v\}. $$
Fix a triple $(u,v,w)\in\mathsf{T}$. By Lemma~\ref{lem:nondeg}, there is $y_{u,v}\in [x_G]$ such that $y_{u,v}\in \partial_{u\setminus v}\sP_{[x_G]}$, and therefore there is $q_{u,v}\in \mathbb{P}$ which forces this. Since $\dom(q_{u,v})$ is finite, there is $a_{u, v, w}\in\Z$ such that a translate of $q_{u,v}$, $\pi_{u,v,w}(q_{u,v})$, forces that $a_{u,v,w}w\in\partial_{u\setminus v}\sP_{[\dot{x}_G]}$. In fact, we may find these $a_{u,v,w}, q_{u,v}, \pi_{u,v,w}$ such that for all distinct triples $(u,v,w), (u',v',w')\in \mathsf{T}$, 
$$\dom(\pi_{u,v,w}(q_{u,v}))\cap \dom(\pi_{u',v',w'}(q_{u',v'}))=\varnothing.$$ Now let $s$ be large enough such that $a_{u,v,w}\in [-s, s]$ and $\dom(\pi_{u,v,w}(q_{u,v}))\subseteq [-s,s]^3$ for all triples $(u,v,w)\in \mathsf{T}$, and let $p_0$ have domain $[-s,s]^3\cap \Z^3$ and extend all $\pi_{u,v,w}(q_{u,v})$ for triples $(u,v,w)\in\mathsf{T}$. Such $p_0$ is as required. 

We give an informal description of some motivation behind the construction of this $p_0$. Consider the case where $u=+e_3$. Corresponding to the four cases of $v\in\{\pm e_1, \pm e_2\}$, $p_0$ forces the existence of four points which are all on the upper faces of some rectangles in $\sP_{[x_G]}$. For each of the four directions $v\in \{\pm e_1, \pm e_2\}$, $p_0$ forces the existence of a point in $\partial_{+e_3\setminus v}\sP_{[x_G]}$. For example, for $v=+e_1$, $p_0$ forces the existence of a point $(0,a,0)\in \partial_{+e_3\setminus +e_1}\sP_{[x_G]}$, i.e., there is some $P\in\sP_{[x_G]}$ such that $(0,a,0), (1,a,0)\in P$ and $(0,a,1)\not\in P$. Such a design is to forbid a virtual maximal surface to occur from $d_1$ to $d_2$ with $d_1\leq 0\leq d_2$ and with either $[-s,s]\times \{0\}$ or $\{0\}\times [-s,s]$ as a part of its boundary. 


Next we describe a partial tiling $\sT_2$ of the $2$-dimensional rectangle 
$$R=\left[-s-(2s+1)^3, s+(2s+1)^3\right]^2\cap \Z^2$$ by tiles of dimensions $(2s+1)\times (2s+1)$. Each tile is a translate of $[-s,s]^2\cap\Z^2$, and hence we describe its position by only specifying the position of its center point. Let $A=(2s+1)^2$ and $B=2s$. We place a copy of the tile at each of the following positions:
$$ \big(\alpha(2s+1), \alpha+\beta A\big),\ \big(-\alpha-\beta A, \alpha(2s+1)\big),\ 
\big(-\alpha(2s+1), -\alpha-\beta A\big),\ \big(\alpha+\beta A, -\alpha(2s+1)\big) $$
for 
$$ (\alpha, \beta)\in \big([0, A-1]\times [0, B]\big)\cup \big\{ (A, B)\big\}. $$
Figure~\ref{fig:nrb} illustrates the partial tiling $\sT_2$ for $s=1$. The dotted square on the outside is
$$ R_0=\left[-(2s+1)^3, (2s+1)^3\right]^2\cap \Z^2.$$ 
We use dotted lines to further divide $R_0$ into four quadrants I, II, III and IV, which are illustrated in the figure. The construction in each quadrant can be rotated about the origin by $90^\circ$ to obtain the construction for the next quadrant.

\begin{figure}[ht]
\centering
\begin{tikzpicture}[scale=0.03]
\pgfmathsetmacro{\s}{3};

\foreach \j in {0,...,2}
{
\foreach \i in {0,...,8}
{\draw (9*\i, 3*\i+27*\j) rectangle (9*\i+9, 3*\i+9+27*\j);
\draw (-3*\i-27*\j+9, 9*\i) rectangle (-3*\i-27*\j, 9*\i+9);
\draw (-9*\i+9, -3*\i-27*\j+9) rectangle (-9*\i, -3*\i-27*\j);
\draw (3*\i+27*\j, -9*\i+9) rectangle (3*\i+27*\j+9, -9*\i);}
}

\draw (81,81) rectangle (90,90);
\draw (-72,81) rectangle (-81,90);
\draw (-72,-72) rectangle (-81,-81);
\draw (81,-72) rectangle (90,-81);
\draw[dotted] (-77.5,-77.5) rectangle (84.5,84.5);
\draw[dotted] (-77.5,4.5) -- (84.5,4.5);
\draw[dotted] (4.5,-77.5) -- (4.5,84.5);

\node at (80, 68) {I};
\node at (-59, 70) {II};
\node at (-65, -59) {III};
\node at (67, -60) {IV};



\end{tikzpicture}
\caption{The partial tiling $\sT_2$; also a top view of $\sT_3$.} \label{fig:nrb}
\end{figure}
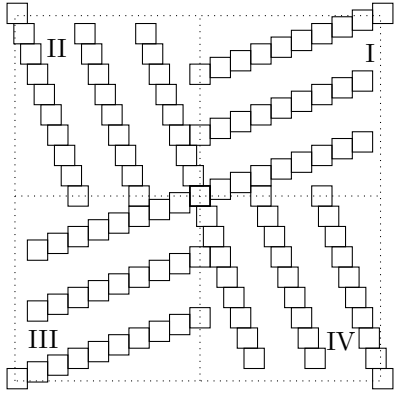

We note the following properties of the partial tiling $\sT_2$, all of which can be verified by a straightforward computation and observation. 
\begin{enumerate}
\item[(a)] Intersecting tiles coincide, so distinct tiles are disjoint from each other. 
\item[(b)] For any $t\in \left[-(2s+1)^3, (2s+1)^3\right]\cap \Z$, there is a tile whose center point has $e_1$-coordinate $t$ and there is a tile whose center point has $e_2$-coordinate $t$. 
\item[(c)] The partial tiling $\sT_2$ contains four corner tiles whose center points form a square of side length $2(2s+1)^3$.
\item[(d)] If two copies of the partial tiling $\sT_2$ are placed side by side (in either a left-right fashion or an top-down fashion), with the corner tiles coinciding with each other, then in the resulting partial tiling the distinct tiles are still disjoint from each other. Consequently, the partial tiling $\sT_2$ can be thus repeatedly applied to partially tile any two-dimensional rectangle in $\Z^2$ whose side lengths are of the form $2s+m\cdot 2(2s+1)^3$ for an integer $m$.
\end{enumerate}


Now we consider a $3$-dimensional partial tiling $\sT_3$ by $3$-dimensional tiles of dimensions $(2s+1)\times (2s+1)\times (2s+1)$. For a $2$-dimensional rectangle $R$ of side lengths $2(2s+1)^3+2s$ in $\Z^3$, say
$$ R=\left[-s-(2s+1)^3, s+(2s+1)^3\right]\times \left[-s-(2s+1)^3, s+(2s+1)^3\right]\times \{0\}, $$
we obtain $\sT_3$ by placing a $3$-dimensional tile at $(i,j,0)$ iff $(i,j)$ is the center point of a tile in $\sT_2$. Thus Figure~\ref{fig:nrb} can be regarded as the top view of $\sT_3$. Properties (a)--(d) for $\sT_2$ continue to hold for $\sT_3$. We observe the following additional property.
\begin{enumerate}
\item[(e)] There is a way to place a copy of $\sT_3$ on each face of a $3$-dimensional rectangle of side lengths $2(2s+1)^3+2s$, with the corner tiles coinciding with each other, so that in the resulting partial tiling the distinct tiles are still disjoint from each other. Consequently, the partial tiling $\sT_3$ can be thus repeatedly applied to partially tile any $3$-dimensional rectangle in $\Z^3$ whose side lengths are of the form $2s+m\cdot 2(2s+1)^3$.
\end{enumerate}
The algorithm to place the copies of $\sT_3$ on such a $3$-dimensional rectangle is to ``roll down" the tiling patterns from one face to an adjacent face. Figure~\ref{fig:rolldown} illustrates the result of this algorithm. Figure~\ref{fig:rolldown} only shows the front view; the back view and the side views are not shown.

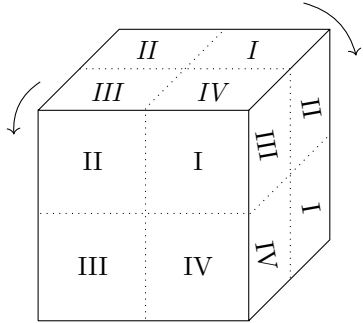
\begin{figure}[h]
\begin{tikzpicture}[scale=0.7]
\oppapp{4}{4}{4}{0}{0}{0};
\draw[dotted] (0.5,-1.5) -- (0.5,2.5) -- (2, 4);
\draw[dotted] (-1.5, 0.5) -- (2.5,0.5) -- (4,2);
\draw[dotted] (-0.75, 3.25) -- (3.25,3.25) -- (3.25,-0.75);
\node at (-0.5,-0.5) {III};
\node at (1.5,-0.5) {IV};
\node at (-0.5, 1.5) {II};
\node at (1.5, 1.5) {I};
\node at (-0.25, 2.8) {{\it III}};
\node at (0.5, 3.6) {{\it II}}; 
\node at (1.75, 2.8) {{\it IV}};
\node at (2.5, 3.6) {{\it I}};
\node at (2.8, -0.3) {\rotatebox{280}{IV}};
\node at (2.8,1.8) {\rotatebox{280}{III}};
\node at (3.65, 0.6) {\rotatebox{280}{I}};
\node at (3.65, 2.5) {\rotatebox{280}{II}};
\draw[->] (3.5,4.5) to[bend left] (4.5,3.5);
\draw[->] (-1.5,3) to[bend right] (-2,2);

\end{tikzpicture}
\caption{The ``roll down" algorithm to partially tile the faces of a $3$-dimensional rectangle. \label{fig:rolldown}}
\end{figure}

We are finally ready to construct a condition $q\in\mathbb{P}$ to derive a contradiction. 

Let $D=4(2s+1)^3$. Since there is a virtual maximal surface of $\sP_{[x_G]}$ all of whose side lengths are at least $D$, we obtain a condition $p\in\mathbb{P}$ with $\dom(p)=[-t, t]^3\cap \Z^3$ for some positive integer $t$ such that $p$ forces the existence of a virtual maximal surface $S$ of $\sP_{[\dot{x}_G]}$ from some $d_1<0$ to $d_2=0$,  where $d_2$ is the largest such, so that the side lengths of $S$ are at least $D$, and $S\times [d_1+1, d_2] \subseteq [-t,t]^3$. Without loss of generality, we may assume $t$ is a postive multiple of $D=4(2s+1)^3$. 

Let 
$$ E=[-2s-2t, 2t+2s]^3\cap \Z^3 $$
and
$$ E_0=[-2t, 2t]^3\cap\Z^3. $$
Partially tile the faces of $E$ by repeatedly applying $\sT_3$ as indicated above, and at the position of each tile place a copy (translate) of the condition $p_0$. Let $q\in\mathbb{P}$ be the extension of $p$ by this construction.


Let $x_G$ be a generic element extending $q$. Then $\sP_{[x_G]}$ contains a virtual maximal surface $S_0\subseteq [-t, t]^2$ from $d_1<0$ to $d_2=0$, where $d_2$ is the largest such, so that $S_0$ has side lengths at least $D$. By applying Lemma~\ref{lem:maxsurface1} repeatedly, we obtain an increasing sequence
$$ \delta_0=0<\delta_1<\cdots< \delta_k<\cdots $$
and virtual maximal surfaces $S_k$ from $\delta_k+1$ to $\delta_{k+1}$, where $\delta_{k+1}$ is the largest such, with $S_k\subsetneq S_{k+1}$ for all $k\geq 0$. Let $K$ be the unique number such that $\delta_K< 2t\leq \delta_{K+1}$.

We consider two cases. 

Case 1: $S_K\subseteq [-2t,2t]^2$. Denote a maximal line segment of $\partial_{\Z}S_K$ by $\ell$. Since $S_K$ is a virtual maximal surface from $\delta_K+1$ to $\delta_{K+1}$, there is $v\in\{\pm e_1, \pm e_2\}$ such that 
$$ \ell\times \{2t\}\subseteq \partial_v\sP_{[x_G]}. $$ 
On the other hand, since $S_0\subseteq S_K$, the length of $\ell$ is at least $D=4(2s+1)^3$. Consider $w\in \{\pm e_1, \pm e_2\}$ such that $w\neq \pm v$. Then $w$ is parallel to $\ell$. 
Since the length of $\ell$ is at least $4(2s+1)^3$, by our construction of $\sT_3$, $\ell\times\{2t\}$ contains a line segment between $C-sw$ and $C+sw$, where $C$ is on the top of $E_0$ and is the center position of a copy of $p_0$ in the construction of $\sT_3$. By the property of $p_0$, there must be some $a\in [-s, s]\cap \Z$ such that $C+aw\in \partial_{+e_3\setminus v}\sP_{[x_G]}$. However,  $C+aw\in \ell$, which is a contradiction.

Case 2: $S_K\not\subseteq [-2t, 2t]^2$.  In this case there is a unique $k<K$ such that $S_k\subseteq [-2t, 2t]^2$ but $S_{k+1}\not\subseteq [-2t, 2t]^2$. We have $\delta_{k+1}\leq \delta_K<2t$. Since $S_0\subseteq S_k\subsetneq S_{k+1}$, the side lengths of $S_k$ and $S_{k+1}$ are all at least $D$. It follows that
$$ (S_{k+1}\times \{\delta_{k+1}+1\})\cap \partial_{\Z}E_0 $$
contains a horizontal line segment of length at least $D$. We denote such a line segment by $\ell$ and let $w\in \{\pm e_1, \pm e_2\}$ be parallel to $\ell$. Let $u\in \{\pm e_1, \pm e_2\}$ be such that $u\neq \pm w$. By Lemma~\ref{lem:maxsurface1} (iic), $\ell\times\{\delta_{k+1}+1\}$ lies on the upper level of a maximal surface. Thus we have
$$ \ell\times\{\delta_{k+1}+1\}\subseteq \partial_{-e_3}\sP_{[x_G]}. $$
Since the length of $\ell$ is at least $D=4(2s+1)^3$, by our construction of $\sT_3$, $\ell\times\{\delta_{k+1}+1\}$ contains a line segment between $C-sw$ and $C+sw$, where $C$ is on a vertical side of $E_0$ and is the center position of a copy of $p_0$ in the construction of $\sT_3$. By the property of $p_0$, there must be some $a\in [-s, s]\cap \Z$ such that $C+aw\in \partial_{u\setminus -e_3}\sP_{[x_G]}$. Again, since $C+aw\in \ell$, this is a contradiction.


This completes the proof of Theorem~\ref{thm:hrp} in dimension $3$.

\subsection{Proof of Theorem~\ref{thm:hrp} for higher dimensions\label{sec:7.3}} This entire subsection is devoted to a proof of Theorem~\ref{thm:hrp} for the case $n>3$. Let $n>3$ be fixed and assume toward a contradiction that $\sP$ is a Borel, nondegenerate, minimal regulated partition of $F(2^{\Z^n})$. 

Let $\mathbb{P}$ be the Cohen forcing for adding a generic element $x_G\in F(2^{\Z^n})$, where $G$ is a $\mathbb{P}$-generic filter. So, conditions $p\in \mathbb{P}$ are finite partial functions $p\colon R\to \{0,1\}$ where $R$ is an $n$-dimensional rectangle in $\Z^n$; for $p, q\in \mathbb{P}$, $q\leq_{\mathbb{P}}p$ iff $q$ extends $p$. 

We still denote the generic class $\mathbb{Z}^n\cdot x_G$ by $[x_G]$. But we also consider an action $\alpha$ of $\mathbb{Z}^3$ on $F(2^{\Z^n})$ in the generic extension which is defined by
$$ (a_1, a_2, a_3)\cdot_\alpha y=(a_1, a_2, a_3, 0, \dots, 0)\cdot y. $$
It is clear that the action $\alpha$ is free and Borel. For any $y\in F(2^{\Z^n})$, let $[y]_\alpha=\mathbb{Z}^3\cdot_\alpha y$. 

Now we define a regulated partition $\sQ$ of $F(2^{\Z^n})$ for the action $\alpha$ by
$$ Q\in \sQ\iff \exists y\in Q\ \exists P\in \sP\ Q=P\cap [y]_\alpha. $$

It is clear that $\sQ$ is a Borel regulated partition of $F(2^{\Z^n})$ for the action $\alpha$. We verify that it is minimal. For this, consider $\sQ$ on an aribtrary $[y]_\alpha$. Then $\sQ$ restricted to $[y]_\alpha$ gives rise to a regulated partition $\mathcal{W}=\{C_Q\,\colon\, Q\in\sQ, Q\subseteq [y]_\alpha\}$ of $\R^3$. Meanwhile, note that $\sP$ restricted on $[y]$ also gives rise to a regulated partition $\mathcal{V}=\{C_P\,\colon\, P\in \sP, P\subseteq [y]\}$ of $\R^n$. In particular, note that $y$ is not on the boundary of any $V\in\mathcal{V}$. Let $\mbox{\rm proj}$ be the projection of $\R^n$ onto its first three coordinates. Then 
$$ \mathcal{W}=\left\{\mbox{\rm proj}(V)\,\colon\, V\in \mathcal{V}, V\cap [y]\neq\varnothing\right\}. $$
Now let $z$ be an arbitrary element of $\R^3$, and let $W_1, \dots, W_k$ enumerate all elements $W\in \mathcal{W}$ with $z\in W$. For each $W_j$, there is a unique $V_j\in \mathcal{V}$ with $V_j\cap [y]\neq\varnothing$ and $\mbox{\rm proj}(V_j)=W_j$. This implies that $\nu_{\mathcal{W}}(z)=\nu_{\mathcal{V}}(z)$. On the other hand, $z$ is on the boundary of $W_j$ iff $z$ is on the boundary of $V_j$, and if $z$ is on a face $F$ of $V_j$, then the normal vector of $F$ can only be $\pm e_i$ for $i=1,2,3$. This implies that $\beta_{\mathcal{W}}(z)=\beta_{\mathcal{V}}(z)$. Since $\sP$ is minimal, we have that $\mathcal{V}$ is minimal and $\nu_{\mathcal{V}}(z)=\beta_{\mathcal{V}}(z)+1$. It follows that $\nu_{\mathcal{W}}(z)=\beta_{\mathcal{W}}(z)+1$, and therefore $\mathcal{W}$ is minimal. Thus $\sQ$ is minimal.

Now we focus on the class $[x_G]_\alpha$ and derive a contradiction by repeating the proof of the $n=3$ case. By examining the proof one sees that it suffices to verify the nondegeneracy hypothesis for $\sQ$ restricted to $[x_G]_\alpha$, which we now do. Suppose $1\leq i\leq 3$. By the nondegeneracy of $\sP$, there is $z\in [x_G]$ such that $e_i\cdot z$ and $z$ belong to the same element of $\sP$. Let $p\in G$ so that $p$ forces this statement; in fact, we get $g\in \Z^n$ such that $p$ forces that $(g+e_i)\cdot \dot{x}_G$ and $g\cdot\dot{x}_G$ belong to the same element of $\sP$. Equivalently, $p$ forces that $g\cdot \dot{x}_G\not\in \partial_{e_i}\sP$. Let $\theta$ be any translation of $\Z^n$ so that for any $3<j\leq n$, $\pi_j(\pi(g))=0$. Then $\theta(p)$ forces that there is $z\in [\dot{x}_G]_\alpha$ such that $z\not\in\partial_{e_i}\sP$. It follows that $\theta(p)$ forces that there is $z\in [\dot{x}_G]_\alpha$ such that $z\not\in \partial_{e_i}\sQ$. This argument gives the density of the set of elements $r\in \mathbb{P}$ such that $r$ forces that there is $z\in [\dot{x}_G]_\alpha$ with $z\not\in \partial_{e_i}\sQ$. Hence we get that there is $z\in [x_G]_\alpha$ such that $z\not\in \partial_{e_i}\sQ$, as needed.

\section{More Remarks on Higher Dimensions\label{sec:9}}

In our proof of Theorem~\ref{thm:hrp} for dimensions $n>3$, we used a direct lift-up of the proof from
the $n=3$ case. In particular, we did not need to establish the two key lemmas, 
Lemmas~\ref{lem:maxsurface0} and \ref{lem:maxsurface1} for $n>3$. However,
it is of interest that these two combinatorial lemmas about minimal regulated partitions
do hold in all dimensions $n$. They also can be used to give an alternate proof of 
Theorem~\ref{thm:hrp} which we will briefly sketch later in this section. Since these lemmas are of possible independent interest, we 
give their proofs in general.

Definition~\ref{def:maximalsurface} of a maximal surface and Definition~\ref{def:vms} of a virtual maximal surface 
extend to locally regulated partitions of $\R^n$. 
As in Definition~\ref{def:maximalsurface} we define the notion of {\em{level}} as there using now 
$\pi_n$ instead of $\pi_3$. We then define $\sP_{d,-}$ and $\sP_{d,+}$ and then 
define the notion of a {\em maximal surface} as there. The notion of a rectangle $M$ in $\R^{n-1}$ 
being a {\em virtual maximal surface }from $d_1$ to $d_2$ is also defined as in Definition~\ref{def:vms}, namely
$\partial M\times [d_1,d_2]\subseteq \bigcup\partial\sP=\bigcup_{P\in\sP} \partial P$.
Similarly, the notion of $\sP$ respecting $M$ from $d_1$ to $d_2$ is 
defined as in Definition~\ref{def:vms}.

We first extend Lemma~\ref{lem:maxsurface0} concerning maximal surfaces. 
We use the following lemma.

\begin{lem}\label{lem:kdimbasic}
Let $k\geq 1$. Let $M$ be a rectangular polyhedron in $\R^k$, i.e., $M$ is homeomorphic to a $k$-dimensional rectangle and it is the union of finitely many $k$-dimensional rectangles. Suppose that for every $x \in \partial M$ and for all sufficiently small $k$-dimensional rectangles $R$ centered at $x$, 
$R\cap M$ is a $k$-dimensional rectangle. Then $M$ is a $k$-dimensional rectangle. 
\end{lem}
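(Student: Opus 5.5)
The plan is to show that $M$ equals its bounding box. Let
$$ B=[a_1,b_1]\times\cdots\times[a_k,b_k] $$
be the smallest $k$-dimensional rectangle containing $M$, so $a_i=\min\pi_i(M)$ and $b_i=\max\pi_i(M)$, and these are attained since $M$ is compact. Clearly $M\subseteq B$. For the reverse inclusion, we argue by contradiction, using a local-to-global convexity argument in the spirit of the proof of Lemma~\ref{lem:maxsurface0}.

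\textbf{Step 1: $M$ is convex.} The hypothesis says that near every boundary point $x$, $M$ looks locally like a rectangle: $R\cap M$ is a rectangle, hence convex, for all small rectangles $R$ about $x$. At interior points, $M$ is trivially locally convex. So $M$ is a closed, connected set that is locally convex at every point. By the Tietze--Nakajima theorem, a closed connected locally convex subset of $\R^k$ is convex. Connectedness holds because $M$ is homeomorphic to a rectangle.

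If we prefer to avoid citing that theorem, we can argue directly with axis-parallel segments. $M$ is a finite union of rectangles, so $\partial M$ is a finite union of pieces of hyperplanes $\{\pi_i=c\}$. Suppose $M\neq B$. Take a point $y\in B\setminus M$ and a point $m\in\Int(M)$, and move along a polygonal path of axis-parallel segments from $m$ toward $y$. The first exit point $x\in\partial M$ is one where $M$ is locally rectangular. Combining this with the finiteness of the facets, we get that the face of $M$ through $x$ extends across the whole of $M$.

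\textbf{Step 2: a convex rectangular polyhedron is a rectangle.} A convex set that is a finite union of axis-parallel rectangles is a convex polytope whose facets all lie in hyperplanes $\{\pi_i=c\}$. Each facet then has an outward normal $\pm e_i$. The supporting halfspace in direction $e_i$ must be $\{\pi_i\le b_i\}$, and the one in direction $-e_i$ must be $\{\pi_i\ge a_i\}$. Hence $M=\bigcap_i\{a_i\le\pi_i\le b_i\}=B$.

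\textbf{Step 3: where the hypothesis is really needed.} Step 1 is the main obstacle. Local convexity at boundary points must genuinely follow from the local-rectangle hypothesis, and it does: a rectangle intersected with a small box is convex. The real danger is that the boundary point might be one where $M$ is not locally a manifold with convex corners, for example a reentrant (concave) corner like the point $x$ in Figure~\ref{fig:3}. The hypothesis explicitly excludes such points.

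If one wants a self-contained argument instead of Tietze's theorem, I would proceed by induction on $k$, slicing by a hyperplane $\{\pi_k=c\}$ at a generic $c$. Each slice satisfies the same local hypothesis in dimension $k-1$, so by induction it is a rectangle. The hypothesis at points on the top and bottom faces then forces the slices to be constant in $c$.
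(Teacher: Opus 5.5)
Your main line is correct, but it takes a different route from the paper. You obtain global convexity of $M$ in one step from the Tietze--Nakajima theorem (closed, connected and locally convex implies convex). For this you use the hypothesis only as local convexity at boundary points, and you use only the connectedness of $M$. You then finish with the facet-normal argument for polytopes.

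The paper instead reduces to coordinate $2$-planes $Q$. It asserts that each section $Q\cap M$ is a $2$-dimensional rectangular polyhedron with the local-rectangle property, hence convex, hence a rectangle. It then inducts on $k$, comparing the vertical fibres of $M$ over a hyperplane section along a path of one-coordinate moves. The paper's route keeps everything at the level of planar reasoning; its $2$-dimensional step is essentially the planar case of Tietze's theorem. However, it tacitly needs the sections $Q\cap M$ to be connected (it calls them rectangular polyhedra), and this is not immediate from the hypotheses. Your argument needs connectedness only of $M$ itself, so it avoids that issue, at the cost of citing an external classical theorem.

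Step~2 needs one more line per claim:
\begin{enumerate}
\item $M$ is a polytope because, being convex, it equals the convex hull of the vertices of its constituent rectangles.
\item Each facet lies in a single hyperplane $\{\pi_i=c\}$. Indeed, $\partial M$ is contained in the finitely many hyperplanes carrying faces of those rectangles, and a $(k-1)$-dimensional convex set covered by finitely many hyperplanes lies in one of them.
\item A full-dimensional polytope has at most one facet per outward normal and is the intersection of its facet halfspaces.
\end{enumerate}

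Your fallback arguments are not proofs as written. The axis-parallel ``first exit point'' sketch never justifies that the face through $x$ ``extends across the whole of $M$.'' The slicing induction would first need a generic slice $M\cap\{\pi_k=c\}$ to be connected (indeed homeomorphic to a rectangle) before the inductive hypothesis applies, which is the same issue as in the paper. Its final claim that the slices are constant in $c$ is also unsupported. The proof should therefore rest on the Tietze--Nakajima citation.
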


\begin{proof}
The assumption implies that for every point $x \in M$ (not just those in $\partial M$) and every small enough rectangle $R$
centered about $x$, $R\cap M$ is a rectangle. It follows that for every 2-dimensional
plane $Q$ which is a translation of the span of $\{e_i,e_j\}$ for some distinct $e_i,e_j$,
we have that $Q\cap M$ also has the property that for all $x\in Q\cap \partial M$ and all 
small enough rectangles $R$ centered at $x$, $Q\cap M\cap R$ is a 2-dimensional rectangle. 
So, $Q\cap M$ is a 2-dimensional rectangular polyhedron with the property that for all 
$x \in Q\cap \partial M$ and all small enough $R$ about $x$, $Q\cap M\cap R$ is a 2-dimensional rectangle. 
This easily imples that $Q\cap M$ is convex and hence a 2-dimensional rectangle.

Now we show by induction on $k$ that if $M$ is a $k$-dimensional rectangular
polyhedron  with the property that  $Q\cap M$ is a 2-dimensional rectangle for any 2-dimensional plane $Q$ as above,
 then $M$ is a $k$-dimensional rectangle. By the induction hypothesis for $k-1$, we have that 
for every hyperplane $H$ perpendicular to some $e_i$, $H\cap M$ is a $(k-1)$-dimensional rectangle. Consider a hyperplane $H$ perpendicular to $e_k$ 
which intersects $M$. Let $R=H\cap M$. So $R$ is a $(k-1)$-dimensional rectangle. 
If $M$ is not a $k$-dimensional rectangle, then there must be two points $x_1, x_2\in R$ such that the line $L_1$ through $x_1$ parallel to $e_k$
and the line $L_2$ through $x_2$ parallel to $e_k$ have different intersections with $M$.  Let $y_0=x_1, y_1, \dots, y_\ell=x_2$ be a sequence of points in $R$ where for each $j\leq \ell-1$, $y_j$ and $y_{j+1}$ differ at exactly one coordinate. There must be some $j\leq \ell-1$ such that the line through $y_j$ parallel to $e_k$ and the line through $y_{j+1}$ parallel to $e_k$ have different intersections with $M$. In other words, there are $z_1, z_2\in R$ and $i_0\in \{1,\dots, k-1\}$ such that $\pi_{i_0}(z_1)\neq \pi_{i_0}(z_2)$, $\pi_i(z_1)=\pi_i(z_2)$ for all $i \in \{ 1,\dots,k-1\}\setminus\{i_0\}$, and the line through $z_1$ parallel to $e_k$
and the line through $z_2$ parallel to $e_k$ have different intersections with $M$.
But then the 2-dimensional plane $Q$ parallel to the span of $\{ e_{i_0},e_n\}$ 
through $z_1, z_2$ has an intersection with $M$ which is not a 2-dimensional rectangle, a contradiction.
\end{proof}

\begin{thm} \label{thm:maxsurface0}
Let $n\geq 3$. Let $\sP$ be a minimal regulated partition of $\R^n$. 
Then every maximal surface for $\sP$ is an $(n-1)$-dimensional rectangle. 
\end{thm}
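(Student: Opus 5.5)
The plan is to reduce Theorem~\ref{thm:maxsurface0} to Lemma~\ref{lem:kdimbasic} with $k=n-1$, generalizing the concave-corner argument of Lemma~\ref{lem:maxsurface0}. Fix a level $d$ and a maximal surface $M\subseteq\R^{n-1}$ on level $d$. By definition $M\times\{d\}$ is contained in the union of the upper faces of the finitely many (locally finitely many, by the discreteness constant) rectangles of $\sP_{d,-}$, so $M$ is a union of $(n-1)$-dimensional rectangles. The first issue is finiteness. If $M$ is unbounded, I would either run the argument on $M\cap B$ for large boxes $B$, or note that Lemma~\ref{lem:kdimbasic}'s proof is purely local, so convexity in every $2$-plane already gives that $M$ is a (possibly infinite) product of intervals. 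A cleaner route is to prove directly the local condition \textquotedblleft for every $y\in\partial M$ and small rectangles $R$ centered at $y$, $R\cap M$ is a rectangle,\textquotedblright{} and then repeat the argument of Lemma~\ref{lem:kdimbasic}, which only uses $2$-dimensional slices.

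The key step is the local condition. Take $y\in\partial M$ and put $x=(y,d)\in\R^n$. Choose $R_x$ as in Lemma~\ref{lem:localvsabout}, so that $\sQ=\{P\cap R_x\}$ is a locally regulated partition of $R_x$ about $x$. By Lemma~\ref{lem:localminimal}, $\sQ$ is minimal, so $|\sQ|=\beta_\sQ(x)+1$. Theorem~\ref{thm:minimalrep} gives $\sQ=\sP(i_1,\dots,i_k;T)$, and $e_n$ is a boundary vector since $x$ lies on an upper face of some element of $\sP_{d,-}$. The structure of $\sP(i_1,\dots,i_k;T)$ is a sequence of hyperplane cuts, each through $x$ and each splitting exactly one current piece. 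Near $x$, the set of points of the hyperplane $\{\pi_n=d\}$ lying on upper faces is therefore exactly the trace of the $e_n$-cut. If the $e_n$-cut is made at stage $\ell$, it splits one piece produced by the earlier cuts. That piece is a rectangle about $x$ of the form $\prod_j J_j$ with $J_j\in\{[-1,0],[0,1],[-1,1]\}$, and its projection to the first $n-1$ coordinates is a rectangle. So locally $M\cap\pi_{S}(R_x)$, with $S=\{1,\dots,n-1\}$, is this projected rectangle.

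One point needs care. The maximal surface near $y$ is the connected component with connected interior, and the $e_n$-cut trace near $x$ is a single rectangle, so the local picture of $M$ is one rectangle with corner at $y$ (after shrinking). A second possible interference also has to be excluded. Later cuts in other directions could subdivide the two halves, but they do not affect which points lie on the level-$d$ cut. I also have to check that upper faces at level $d$ from $\sP_{d,-}$ coincide locally with that cut trace. This holds because any $P\in\sQ$ with $\pi_n(B_P(x))=-1$ arises as a descendant of the $e_n$-split node.

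Finally I apply Lemma~\ref{lem:kdimbasic}, or its proof in the unbounded case, to conclude that $M$ is an $(n-1)$-dimensional rectangle. I need $M$ to be homeomorphic to a rectangle, or at least connected, which comes from $\Int(M)$ being connected. The main obstacle I expect is the middle step: verifying rigorously from the tree description $\sP(i_1,\dots,i_k;T)$ that the level-$d$ upper-face set is locally a single orthant-type rectangle. In particular this rules out an \textquotedblleft L-shaped\textquotedblright{} local configuration, which in Lemma~\ref{lem:maxsurface0} was excluded by counting $|\sQ|\ge5$. I would first try the counting argument: an L-shape forces $|\sQ_{d,+}|,|\sQ_{d,-}|\ge2$ plus an extra piece not meeting level $d$ on one side, giving $|\sQ|>\beta_\sQ(x)+1$ via Theorem~\ref{thm:separative} applied to the projections $\pi_S(\sQ_{n,\pm})$.
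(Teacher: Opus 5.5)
Your proposal is correct and is essentially the paper's argument. You reduce via Lemma~\ref{lem:kdimbasic} (with $k=n-1$) to the local rectangle condition at points of $\partial M$, and you get that condition from Theorem~\ref{thm:minimalrep}: the unique $e_n$-cut in $\sP(i_1,\dots,i_k;T)$ splits a single piece, so the level-$d$ upper-face set near $(y,d)$ is one orthant-type rectangle. Your remarks on unbounded $M$, and on needing only connectedness rather than a homeomorphism to a rectangle, address points the paper passes over. The counting fallback in your last paragraph is unnecessary.
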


\begin{proof}
Let $M$ be a maximal surface for $\sP$, say at level $d$. 
Clearly $M$ is an $(n-1)$-dimensional rectangular polyhedron. By Lemma~\ref{lem:kdimbasic}, it suffices to show 
that for every $x\in \partial M$ and every small enough $(n-1)$-dimensional rectangle $R$ about $x$, 
$R\cap M$ is an $(n-1)$-dimensional rectangle. For this, it suffices to show that every small enough $n$-dimensional rectangle
$R$ about $(x,d)$ has the property that $R\cap (M\times\{d\})$ is an $(n-1)$-dimensional rectangle. 

Consider a sufficiently
small $n$-dimensional rectangle $R$ about $(x,d)$. Then $ \sP_R=\{P\cap R\,\colon P\in \sP\}$
is a minimal locally regulated partition. Recall Theorem~\ref{thm:minimalrep} which 
gives an analysis of any minimal locally regulated partition.
Using the notation of that theorem, we obtain distinct $i_1,\dots,i_k\in\{1,\dots, n\}$ and a labelled full simple tree $T$ of $k+1$ levels 
such that $\sP_R$ is given by $\sP(i_1,\dots,i_k;T)$. Since $x \in \partial M$ and $M$ is a maximal surface, there is $1\leq m\leq k$ such that $n=i_m$. Then from the definition of  $\sP(i_1,\dots,i_k,T)$ we have that at 
step $m$ in the construction of  $\sP(i_1,\dots,i_k,T)$, letting $H_n$ be the hyperplane perpendicular
to $e_{i_m}=e_n$, $H_n \cap M$ does not cross any of the previous hyperplanes through
$(x,d)$, namely the hyperplanes perpendicular to $e_{i_1},\dots,e_{i_{m-1}}$. It follows that 
$H_n \cap M$ is an $(n-1)$-dimensional rectangle. 
\end{proof}

We now have the following generalization of Lemma~\ref{lem:maxsurface1}. 

\begin{thm}\label{thm:maxsurface1} 
Let $R=S\times[a_n,b_n]$ be an $n$-dimensional rectangle in $\R^n$ and 
let $\sP$ be a minimal locally regulated partition of $R$. Let $M_1$ be a virtual
maximal surface from $d_1$ to $d_2$ with $a_n\leq d_1<d_2<b_n$ and $d_2$ the largest such. 
Then at least one of the following holds:
\begin{enumerate}
\item[(i)] there is a unique maximal surface $M_2$ on level $d_2$ such that $M_1\subsetneq M_2$;
\item[(ii)] there is $d_3>d_2$ and a virtual maximal surface $M_2$ from $d_2$ to $d_3$ such that 
\begin{itemize}
\item[(iia)] $M_1\subsetneq M_2$, 
\item[(iib)] $\pi_i(M_1) \neq \pi_i(M_2)$ for exactly one $i\in \{1,\cdots,n-1\}$, 
\item[(iic)] for any $\vec a=(a_1,\dots a_{n-1})\in M_2\setminus M_1$, 
$\vec a$ lies on a maximal surface $N$ on the level $d_2$ so that $\Int(N)\cap \Int(M_1)\neq\varnothing$.
\end{itemize}
\end{enumerate} 
\end{thm}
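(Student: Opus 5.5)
We follow the structure of the proof of Lemma~\ref{lem:maxsurface1}, replacing the planar arguments by their $(n-1)$-dimensional analogues. We use Theorem~\ref{thm:maxsurface0} in place of Lemma~\ref{lem:maxsurface0}; its proof is local, so it applies to minimal locally regulated partitions. We also use the local structure Theorem~\ref{thm:minimalrep} in place of the ``moment of reflection'' arguments.

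\emph{Setup.} Write $M_1=\prod_{i<n}[c_i,c_i']$ and $T=M_1\times[d_1,d_2]$. Define
$$\sQ=\{P\in\sP\colon P \mbox{ has lower level } d_2,\ \pi_{<n}(\Int P)\cap M_1\neq\varnothing\}$$
and $\sQ^+=\{P\in\sQ\colon \pi_{<n}(\Int P)\not\subseteq M_1\}$. By maximality of $d_2$, the set $\sQ^+$ is nonempty. Moreover, $\bigcup\pi_{<n}(\Int\sQ^+)$ meets some facet $L$ of $M_1$. Otherwise every facet $L$ would satisfy $L\times[d_2,d_L]\subseteq\bigcup\partial\sP$ for some $d_L>d_2$, and then $M_1$ would be virtual up to $\min_L d_L>d_2$.

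\emph{Propagation claim.} If $\bigcup\pi_{<n}(\Int\sQ^+)$ meets the relative interior of a facet $L$ of $M_1$, then all of $L$ lies on the upper boundary of the maximal surface $N_L$ at level $d_2$ through that point. To prove it, take a point $y$ of $L\times\{d_2\}$ that lies on a vertical face of some $P\in\sQ^+$ and on the vertical face of $T$. Apply Theorem~\ref{thm:minimalrep} to a small box about $y$: the partition there is $\sP(i_1,\dots,i_k;T')$. Minimality forbids the configuration in which the $e_n$-hyperplane and the $L$-normal hyperplane cross on both sides. Hence the $e_n$-split must occur before the split by the normal of $L$ on the upper side. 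This means the level-$d_2$ surface continues across $L$ near $y$. A connectedness argument along $L$, whose interior is connected for $n-1\geq2$, then gives $L\subseteq N_L$.

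\emph{Conclusion.} By Theorem~\ref{thm:maxsurface0}, each $N_L$ is a rectangle.

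If two non-opposite facets of $M_1$ (with different normals $e_i,e_j$) are reached, then the corresponding $N_L$'s coincide. They meet near the common $(n-3)$-face, and we argue this locally as in the claim. This gives one rectangle $M_2\supsetneq M_1$, and (i) holds.

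Otherwise, only facets with a single normal $e_i$ are reached, namely $\{x_i=c_i\}$ and/or $\{x_i=c_i'\}$. For each reached facet, $N_L$ is a rectangle containing $L$. Suppose $N_L$ extended beyond $M_1$ in some direction $e_j$ with $j\neq i$. Then at a point of the $e_j$-facet of $M_1$ adjacent to $N_L$ we would repeat the claim and conclude that an $e_j$-facet is reached, a contradiction. So $\pi_j(N_L)=\pi_j(M_1)$ for all $j\neq i$. If some $N_L$ contains $M_1$ we are in case (i). Otherwise we set
$$M_2=\bigcup_L N_L\cup M_1,$$
which is a rectangle differing from $M_1$ only in coordinate $i$, giving (iia) and (iib).

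Its boundary is covered as follows. The facets coming from the $N_L$ are covered using Lemma~\ref{lem:vms0}. The facets of $M_1$ that are not reached are covered up to $d_L$. Taking $d_3$ to be the minimum of these heights makes $M_2$ a virtual maximal surface from $d_2$ to $d_3$. Finally, (iic) holds because $M_2\setminus M_1\subseteq\bigcup_L N_L$, and each $N_L$ meets $\Int M_1$.

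\emph{Main obstacle.} The hardest step is the local analysis in the propagation claim and in the ``no extension in direction $j$'' step for general $n$. In $\R^3$ this reduces to counting rectangles at a point. Here I would first try to use the tree description of Theorem~\ref{thm:minimalrep} as in the proof of Theorem~\ref{thm:maxsurface0}. The key fact is that a hyperplane introduced at a later step cannot cross one introduced earlier. Combined with the identities $|C_0|=|A_3|$ from Theorem~\ref{thm:minimal}, this should pin down the possible configurations.
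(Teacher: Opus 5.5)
Your proposal is correct, but it is organized differently from the paper's proof of this theorem.

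\textbf{How the two routes differ.} You lift the facet-by-facet argument of the three-dimensional Lemma~\ref{lem:maxsurface1}. You find a facet $L$ reached by $\sQ^+$, propagate along $L$ by an open--closed argument so that $L\subseteq N_L$, and then split into cases according to whether facets with two different normals are reached. The paper instead fixes a single maximal surface $N$ through the bottom face $F$ of one non-respecting $P$. It then uses the forced ordering $F<W<G$ of three hyperplanes at one well-chosen point (via Theorem~\ref{thm:minimalrep}). This shows that $N$ either contains $M_1$ or differs from it only in the coordinate in which it crosses $\partial M_1$. Finally it takes $M_2$ to be $M_1$ together with all such surfaces.

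\textbf{What each route buys.} The paper's route needs no continuation argument and is shorter. However, it simply asserts at the end that $\sP$ respects $M_2$ from $d_2$ to some $d_3$. Your bookkeeping actually supplies this step: unreached facets are respected up to $d_L$, the new faces come from Lemma~\ref{lem:vms0}, and $\partial M_2\subseteq\partial M_1\cup\bigcup_L\partial N_L$. It also makes (iic) immediate.

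\textbf{Sharpening the local step.} The precise fact you need is the following. The face $L\times[d_1,d_2]$ of $T$ contains the entire part of the $e_i$-hyperplane below level $d_2$ in a small box about $y$. Hence the cell split by $e_i$ in $\sP(i_1,\dots,i_k;T')$ contains the lower half-box. Because the element of $\sQ^+$ at $y$ crosses this hyperplane, that cell cannot be the whole box. So it is exactly the lower half, and the $e_n$-split is the root of the tree. Consequently the upper half is never split by $e_i$, and every upper cell has lower level $d_2$ and crosses $L$. This is what makes ``the surface continues across $L$ near $y$'' valid when more than three hyperplanes pass through $y$. No appeal to $|C_0|=|A_3|$ is needed.

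\textbf{No local analysis needed for the coincidence.} The equality of the two maximal surfaces for non-parallel reached facets follows directly. Each is a rectangle containing its facet and extending past it on both sides in that facet's normal direction. So their intersection contains a small box inside $M_1$ at the common $(n-3)$-face, and maximality forces equality.
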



\begin{proof}
Let $M_1=[a_1,b_1]\times \cdots \times [a_{n-1},b_{n-1}]$ and $d_2>d_1$ be as in the statement 
and $P\in \sP$ which does not respect 
$M_1 \times [d_1, b_n]$ and which has a face $F$ with $\pi_n(F)=\{ d_2\}$. 
Let $N$ be the maximal surface of $\sP$ such that $N\times \{ d_2\}$ contains $F$. 
Say $N=[a'_1,b'_1]\times \cdots \times [a'_{n-1},b'_{n-1}]$. 

Let $A=\{1,\dots, n-1\}$ and $F'=\pi_{A}(F)$, i.e., $F=F'\times\{d_2\}$. Then $F'$ is an $(n-1)$-dimensional rectangle and 
$\Int(F') \cap \partial M_1 \neq \emptyset$ by assumption. Since $N\times \{ d_2\}$ extends $F$ which does not respect $M_1 \times [d_1,b_n]$, there is a coordinate $1\leq i\leq n-1$ such that either $a'_i<a_i<b'_i$, or $a'_i<b_i<b'_i$, or both. For definiteness, and without loss of generality, we assume  $a'_{n-1}< a_{n-1}< b'_{n-1}$.
We may also assume that there is a coordinate, say $1\leq m\leq n-1$, such that $[a'_m,b'_m]$ does not contain 
$[a_m,b_m]$, as otherwise $N$ properly contains $M_1$ which implies the result. 
We have either $a_m<a'_m<b_m$, or $a_m<b'_m<b_m$, or both. Without loss of generality assume $a_{m}< b'_{m}<b_{m}$.



First assume $m \neq n-1$. 
Let $W$ be the part of the hyperplane $\pi_{n-1}(x)=a_{n-1}$ which is below 
the $\pi_n(x)=d_2$ hyperplane, i.e., $W=\{x\in R\,\colon \pi_{n-1}(x)=a_{n-1}, \pi_n(x)\leq d_2\}$. Let $F$ be as above, and $G$ the face of $P$ 
on the hyperplane $\pi_{m}(x)=b'_{m}$. Let $z$ be a point on the 
intersection of $W$, $F$, and $G$, which easily exists. In three dimensions,
the point $z$ is illustrated as the point $x$ in Figure~\ref{fig:4}. 
Let $R$ be a small $n$-dimensional rectangle in $\R^n$ about the point $z$. Because $\sP$ is 
an $(n+1)$-regulated partition of $\R^n$, the analysis of Theorem~\ref{thm:minimalrep}
applies. Consider the ordering $<$ of the three surfaces $W$, $F$, $G$ in 
Theorem~\ref{thm:minimalrep}. Since $F$ divides $W$ we must have $F<W$. 
Since $W$ divides $G$, we have $W<G$. 
So, $F<W<G$ and thus $F<G$. But then $F$ must divide $G$ which however it does not 
as the maximal surface $M_2$ has only points $x$ with $\pi_{m}(x)\leq \pi_{m}(z)$. 
So, this case cannot occur.

Assume now that $m=n-1$. Let $G$ be the hyperplane $\pi_{n-1}(x)=b'_{n-1}$
and let $F$ again be the hyperplane $\pi_n(x)=d_2$ (there is a slight abuse of the notation here, as $F$ was a face of $P$, but here we let it denote the hyperplane on which the face lies).
Suppose first that there is an $\ell\in \{ 1,\dots,n\}\sm \{ n-1,n\}$
such that $\pi_\ell(N) \neq \pi_\ell(M_1)$. Since $\ell \neq m= n-1$, we are 
in the case $\pi_\ell(N) \supseteq \pi_\ell(M_1)$ and thus $\pi_\ell(N)\supsetneq
\pi_\ell(M_1)$. Without loss of generality, we may assume $a'_\ell<b_\ell<b'_\ell$. 
Now let $W$ again be the part of the hyperplane $\pi_\ell(x)=b_\ell$ which is below the hyperplane $F$.
Fix a point 
$z\in F\cap G\cap W$. Again consider a sufficiently small $n$-dimensional rectangle $R$ in $\mathbb{R}^n$ about $z$, and apply Theorem~\ref{thm:minimalrep}.
It now follows that $F$ divides $W$, so $F<W$ as in Theorem~\ref{thm:minimalrep}.
We also have $W$ divides $G$ as $a_{n-1}<b'_{n-1}<b_{n-1}$ and $m=n-1$. So, $W<G$ and consequently
$F<G$, that is $F$ divides $G$,  which is a contradiction as before.

The remaining case is when $\pi_i(N)=\pi_i(M_1)$ for all $i \neq n-1$.
Suppose $N'$ is another maximal surface at level $d_2$ with $\Int(N') \cap \partial M_1\neq \emptyset$.
The previous proof shows that there is at most one $j \in \{ 1,\dots,n-1\}$ such
that $\pi_j(N')\neq \pi_j(M_1)$. If such a $j$ exists and $j \neq n-1$, then 
both $\Int(N \cap N')\neq \emptyset$ and $\Int(N\setminus N')\neq\emptyset$, and we have a contradiction to the maximality of $N'$. So, for any such $N'$ we have 
$\pi_j(N')=\pi_j(M_1)$ for all $j \neq n-1$. Let $N''$ be the union 
of all the maximal surfaces $N'$ at level $d_2$ such that $\Int(N')\cap \partial M_1 \neq \emptyset$. 
We have shown that $\pi_j(N'')=\pi_j(M_1)$ for all $j \neq n-1$. Let $M_2=M_1\cup N''$. Then $M_2$ is an $(n-1)$-dimensional rectangle. By the definition of $M_2$, $\sP$ respects $M_2$ from $d_2$ to some $d_3>d_2$. Thus $M_2$ is a virtual maximal surface satisfying (iia)--(iic). 
\end{proof}

Using Theorems~\ref{thm:maxsurface0} and \ref{thm:maxsurface1} we now prove the following result 
about minimal regulated partitions of $\R^n$, which may be of independent interest. 
Afterwards, we use this result to briefly sketch 
another proof of Theorem~\ref{thm:hrp}. 


\begin{lem} \label{lem:segrespects}
Let $n\geq 3$ and $b>d>0$. Let $R$ be an $n$-dimensional rectangle in $\R^n$
with side lengths at least $b$.  Let $\sP$ be a minimal locally regulated partition of $R$ 
with discreteness constant $d$. Suppose $\Int(R)\cap \partial \sP\neq\emptyset$. 
Then there is a line segment $s \subseteq R$ parallel to some $e_j$ 
such that $s$ is of length $\sqrt{bd}/(n-1)$ and $s \subseteq \partial \sP$. 
In fact, for some $e_i\neq e_j$ we have that $s \subseteq \partial_{e_i} \sP$. 
\end{lem}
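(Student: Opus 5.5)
Fix a face $F_0$ of some $P\in\sP$ meeting $\Int(R)$; it exists since $\Int(R)\cap\partial\sP\neq\varnothing$. After relabeling, take its normal vector to be $e_n$ and its level to be $d_0$. Let $M_0$ be the maximal surface on level $d_0$ containing $\pi_A(F_0)$, with $A=\{1,\dots,n-1\}$. By Theorem~\ref{thm:maxsurface0}, and its evident local version for locally regulated partitions, $M_0$ is an $(n-1)$-dimensional rectangle. The plan is a dichotomy. Either some maximal surface met along the way is long in some coordinate direction, and an edge of it gives the segment. Or all of them are short, and then the growth process of Theorem~\ref{thm:maxsurface1} must run for a long vertical distance, which produces a long vertical boundary segment.

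\textbf{Case A.} Some maximal surface $N$ reached in the process below has a side of length $\ge \sqrt{bd}/(n-1)$ in some direction $e_j$, $j\le n-1$. An edge of $N\times\{d'\}$ parallel to $e_j$ lies in the hyperplane $\pi_n=d'$. Along it, the elements of $\sP$ below or above $N$ have faces with normal $e_n$, so the edge lies in $\partial_{e_n}\sP$. Truncating the edge to the required length gives $s$, with $e_i=e_n\neq e_j$. Some care is needed at the portion of the edge lying on $\partial R$; I would pick the segment in the interior, or else accept the closure.

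\textbf{Case B.} Every surface encountered has all side lengths $<L:=\sqrt{bd}/(n-1)$. Apply Lemma~\ref{lem:vms0} to make $M_0$ a virtual maximal surface from some $d_1$ to a maximal $d_2$. Then iterate Theorem~\ref{thm:maxsurface1} to obtain
$$ M_0\subsetneq M_1\subsetneq\cdots\subsetneq M_k $$
with levels $\delta_0<\delta_1<\cdots$. Each step strictly enlarges the surface, and by (iib) it enlarges exactly one coordinate interval. Since endpoints lie in the discrete set $D$, each enlargement is by at least $d$.

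\emph{Counting.} The side lengths stay below $L$, so each of the $n-1$ coordinates can be enlarged at most $L/d$ times. Hence the number of steps satisfies $k\le (n-1)L/d$.

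\emph{Height.} The iteration can only stop by leaving $R$. Because the side lengths of $R$ are at least $b$, the surfaces stay well inside $R$ horizontally, so the total height $\sum(\delta_{i+1}-\delta_i)$ must be of order $b$. Pigeonholing over the $k$ steps then gives some virtual maximal surface $M_i$ with
$$\delta_{i+1}-\delta_i\ \ge\ b/k\ \ge\ bd/((n-1)L)=\sqrt{bd}.$$
For a virtual maximal surface, $\partial M_i\times[\delta_i,\delta_{i+1}]\subseteq\partial\sP$. So a vertical line through a point of $\partial M_i$ on a face with normal $e_j$, $j\le n-1$, gives a segment parallel to $e_n$ inside $\partial_{e_j}\sP$, of length at least $\sqrt{bd}\ge L$.

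The constant $n-1$ is designed to absorb the per-coordinate counting. If the count comes out as $(n-1)L/d+1$, I would adjust by starting the count after $M_0$.

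The main obstacle is justifying the height claim in Case B: the vertical extent of the chain must reach the top of $R$, and no wasted vertical range may escape the pigeonhole. Two points need checking. First, case (i) of Theorem~\ref{thm:maxsurface1} (a genuine maximal surface $M_2$ at level $d_2$) should be absorbed by again applying Lemma~\ref{lem:vms0} to $M_2$. Second, the process must terminate only at the top of $R$, that is, when some $d_2=b_n$, or else when a surface becomes large. If the surfaces are small and centered near the middle, termination by reaching the horizontal walls is impossible. So I would first choose $F_0$, or a suitable starting point, near the center of $R$, losing a factor of $2$ in $b$. If needed, I would compensate by running the process both upward and downward, using the symmetric version of Theorem~\ref{thm:maxsurface1}.
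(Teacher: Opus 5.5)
Your proposal is correct and is essentially the paper's proof. You iterate Theorem~\ref{thm:maxsurface1}, with Lemma~\ref{lem:vms0} absorbing case (i), note that each step enlarges some side by at least $d$, and double-count the steps against the $n-1$ side lengths and the total height; your bound $k\le (n-1)L/d$ forcing a tall gap is just the contrapositive of the paper's bound $k\ge b/c-1$ forcing a long side. The paper avoids your height worry by assuming without loss of generality that every element of $\sP$ has side lengths $<c$ and starting from a surface within $c$ of the bottom of $R$ (horizontal position plays no role, since Theorem~\ref{thm:maxsurface1} applies at every level below the top); also, your long-side case should cover the virtual surfaces $M_i$, not only maximal surfaces, and their vertical walls supply the segment.
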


\begin{proof} Let $c=\sqrt{bd}/(n-1)$. Without loss of generality 
we may assume that all $P\in \sP$ have side lengths less than $c$, since otherwise the lemma holds trivially.  
Let $M_1$ be a maximal surface at level $d_1$, where $\min \pi_n(R)<d_1<\min \pi_n(R)+c$.
Then $\sP$ respects $M_1$ from $d_1$ to some $d_2>d_1$, where $d_2$ is the largest such. If $d_2-d_1\geq c$ or if $M_1$ has a side length at least $c$
then we are done, so we assume otherwise. 
By Theorem~\ref{thm:maxsurface1}
there is a virtual maximal surface $M_2$, which is an $(n-1)$-dimensional rectangle  properly containing $M_1$, such that $\sP$ respects $M_2\times
[d_2,d_3]$ for some $d_3 > d_2$. Again, if $d_3-d_2\geq c$ then we are done. 
Note that at least one of the $n-1$ side lengths in the $e_1,\dots,e_{n-1}$ directions
has gotten strictly larger in going from $M_1$ to $M_2$. By the definition of the discreteness constant of $\sP$, the increase of the side length is at least $d$. If any of the side lengths has increased to be at least
$c$ then we are done, so we assume not. We repeat this argument until we find a virtual maximal surface $M_k$ at level $d_k$ where $d_k<\max\pi_n(R)$ is the largest such. We may assume that for all $1\leq j\leq k-1$, $d_{j+1}-d_j<c$, since otherwise we are done. This gives that 
$$ k\geq \left\lfloor\displaystyle\frac{b}{c}\right\rfloor\geq \frac{b}{c}-1. $$
Since there are $n-1$ side lengths of each $M_j$,
after $k$ many steps some $e_i$ side length of $M_k$ must have length at least 
$$ \left\lceil\displaystyle\frac{k}{n-1}\right\rceil\cdot d\geq \frac{k}{n-1}d\geq \left(\frac{b}{c}-1\right)\frac{d}{n-1}=\sqrt{bd}-\frac{d}{n-1}\geq \frac{\sqrt{bd}}{n-1}=c.$$
The proof of the lemma is complete.
\end{proof}

We next sketch an alternate proof of Theorem~\ref{thm:hrp} using Lemma~\ref{lem:segrespects}.
We show that the assumed minimal regulated partition $\sP$ of $F(2^{\Z^n})$ when 
restricted to the equivalence class of a Cohen generic real $x \in 2^{\Z^n}$ violates Lemma~\ref{lem:segrespects}
(considering the corresponding regulated partition $\sP'=\sC_\sP$ of $\R^n$ as in Definition~\ref{znrn2}).  
This will prove Theorem~\ref{thm:hrp}.

Suppose $\sP$ is a Borel, nondegenerate, minimal regulated partition of $\fzn$. 
Let $\bP$ be the Cohen forcing for adding an element of $2^{\Z^n}$, so conditions $p$ are 
elements of $2^R$ where $R=\dom(p)$ is a finite rectangle in $\Z^n$.
Let $x$ be a generic for $\bP$ which we identify with an element of $2^{\Z^n}$. 
Let $\sP_x$ be the restriction of the partition $\sP$ to 
the equivalence class of $x$, and let $\sP'_x=\mathcal{C}_{\sP_x}$ be the corresponding regulated partition of $\R^n$.

We begin by constructing an initial condition $p_0$. By the absoluteness of the nondegeneracy condition, we have that $\sP_x$ is nondegenerate, that is, for each direction $e_i$ there is a shift $s_i\cdot x$ such that 
$s_i\cdot x$ and $(s_i+e_i)\cdot x$ lie in the same  element of $\sP_x$.  
We may then easily get a $p_0$ with domain a rectangle in $\Z^n$ centered around $\vec 0$ of side lengths $d_0$ say, 
such that for all  directions $e_j$ and $e_i\neq e_j$ we have that $p_0$ forces 
``for some $a_j \in \Z$, $a_j e_j$ and $a_je_j+e_i$ are in $\dom(p_0)$ 
and belong to the same element of $\sP_{\dot{x}}$.''

Using $p_0$ we will construct a condition $q\in \bP$ which we will then extend to a generic $y\in F(2^{\Z^n})$.
We will have that $\dom(q)$ is a large $n$-dimensional rectangle $R$ with side lengths $b$ such that $q$ forces 
that there are no line segments $s \subseteq R$  of length $\sqrt{b}/(n-1)$ 
and directions $e_i$ not parallel to $s$ such that $s\subseteq \partial_{e_i} \sP'_{\dot{x}}$. 
Noting that the discreteness constant for $\sP'_x$ is $1$, this will
contradict Lemma~\ref{lem:segrespects} and complete the proof of Theorem~\ref{thm:hrp}.

Let $b$ be a large multiple of $d_0$ with $$b> \frac{\sqrt{b}}{n-1} \gg 10 n C^2 \cdot d_0^2.$$ Here $C$ is a constant depending
only on $d_0$ and $n$. We take $C$ to satisfy $C>2 d_0^n$.


To define the condition $q \in \bP$, we begin with an $n$-dimensional rectangle $Q=[0,b]^n$ in $\Z^n$ with side lengths $b$. 
For each integer $m$ with $1 \leq m \leq b$ which is a multiple of $2Cd_0$, consider
the hyperplane $H_m$ in $\Z^n$ given by the equation 
$$x_1+x_2+\cdots+x_n=m. $$
For each of the hyperplanes $H_m$ we do the following construction. Let 
$$m'=1+\left(\displaystyle\frac{m}{2Cd_0}\!\!\! \mod n\right). $$ 
For each $k \in \{1,\dots,n\}$ we define an auxiliary $T_k \in 2^{\Z^n}$ by specifying $T_k(c_1,\dots, c_n)$ for any given $\vec{c}=(c_1,\dots, c_n)\in \mathbb{Z}^n$. 
Let $(v_{\ell})_{\ell < d_0^{n-1}}$ enumerate the vectors $v$ in 
$$\Z e_1\oplus \cdots \oplus \Z e_{k-1}\oplus \Z e_{k+1} \oplus \cdots \oplus \Z e_n$$ 
where all the coefficients of $v$ are in $[0, d_0)$. 
Let $j= \lfloor c_k/d_0\rfloor \mod d_0^{n-1}$ and set $$T_k(c_1,\dots, c_n)= p_0( \vec c+v_{j}\!\! \mod (d_0,\dots,d_0) ).$$
For the given $m$, let $Q_m$ be the intersection of $T_{m'}$ with the 
$C d_0$ neighborhood of $H_m \cap Q$. Finally, let $q$ be be any extension to $\dom(Q)$ of 
$\bigcup_m Q_m$. Because the values of $m$ we are considering are spaced by at least 
$2C d_0$, we have that $Q_{m_1}\cap Q_{m_2}=\emptyset$ if $m_1\neq m_2$. Thus, $q$ is well defined. 
See Figure~\ref{fig:qcond} for an illustration of the construction of $q$.

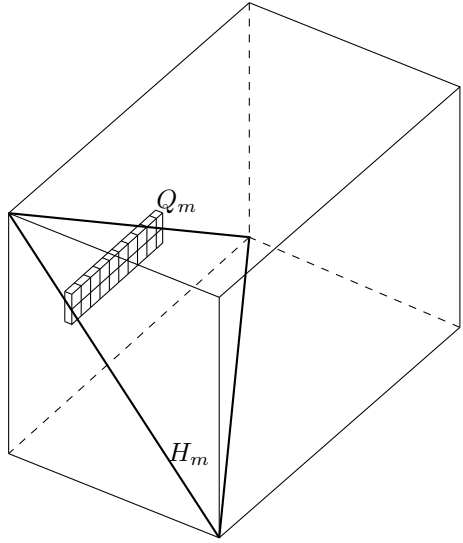
\begin{figure}[htbp]
\begin{tikzpicture}[scale=0.04]
\draw (0,0) to (70,-28) to (150,42);
\draw[dashed] (150,42) to (80,72);
\draw[dashed] (80,72) to (0,0);
\draw (0,0) to (0,80) to (70,52) to (70,-28);
\draw (70,52) to (150,122) to (150,42);
\draw (0, 80) to (80,150);
\draw[dashed] (80,150)  to (80,72);
\draw (150,122) to (80,150);

\draw[thick] (0,80) to (70,-28);
\draw[thick] (70,-28) to (80,72) to (0,80);

\draw (21,48) to (20+0.39*80, 47+0.39*72);
\draw (21,58-5) to (20+0.39*80, 47+0.39*72+10-5);

\draw (21,58-5) to (21, 38+5) to (20+0.39*80, 47+0.39*72-10+5) to (20+0.39*80, 47+0.39*72+10-5);

\foreach \i in {1,...,9}
{\draw (21+\i* 80*0.39/10, 38+\i* 72*0.39/10+5) to (21+\i* 80*0.39/10, 38+\i* 72*0.39/10+20-5);}

\draw (21,58-5) to (21- 70/30, 58+28/30-5);
\draw (21,38+5) to (21- 70/30, 38+28/30+5);
\draw (20+0.39*80, 47+0.39*72+10-5) to (20+0.39*80-70/30, 47+0.39*72+10+28/30-5);

\draw (21- 70/30, 58+28/30-5) to (21- 70/30, 38+28/30+5);

\draw (21- 70/30, 58+28/30-5) to (20+0.39*80-70/30, 47+0.39*72+10+28/30-5);

\foreach \i in {1,...,9}
{\draw (21+\i* 80*0.39/10, 38+\i* 72*0.39/10+20-5) to (21+\i* 80*0.39/10- 70/30,38+\i* 72*0.39/10+20+28/30-5);}

\node at (60,0) {$H_m$};
\node at (56,84) {$Q_m$};

\end{tikzpicture}
\caption{Construction of the condition $q$. One of the hyperplanes $H_m$ is shown 
along with a part of $Q_m$, which consists of copies of $p_0$.} \label{fig:qcond}
\end{figure}


Let $y \in 2^{\Z^n}$ be a generic element extending $q$. Then $y \in F(2^{\Z^n})$. Let $\sP$ be the 
Borel  minimal regulated partition of $F(2^{\Z^n})$ restricted to the equivalence class of $y$, which we identify with an 
 minimal regulated partition of $\Z^n$.  Let $\sP'$ be the corresponding
 minimal regulated partition of $\R^n$. Applying Lemma~\ref{lem:segrespects} to $\sP'$  and $\dom(q)$ (which
we can view as an $n$-dimensional rectangle in $\R^n$) we get a line segment $s'$ in $\dom(q)$ parallel to one of the
coordinate axes, say $e_j$, and an $e_i\neq e_j$ with $s'\subseteq\partial_{e_i} \sP'$ 
and with the length of $s'$ at least 
$\sqrt{b}/(n-1)$.

Let $s \subseteq \bigcup_{P\in \sP} \partial P$ be a segment in $\Z^n$ 
parallel to $s'$ and of length at least $|s'|-1$ and such that $|\pi_i(s)-\pi_i(s')|\leq 1/2$. 
So, for any point $u$ on $s$, the points $u$ and $u+e_i$ lie in different 
elements of $\sP$. 
The segment $s$ intersects at least 
$$\left(\frac{\sqrt{b}}{n-1}-1\right) \frac{1}{2Cd_0}>5nCd_0$$ many of the hyperplanes $H_m$. 
Thus there is an $m$ with 
$$ \frac{m}{2Cd_0} \equiv j-1 \!\! \mod n \mbox{ and } m \notin [0,n] \cup [b-n,n] $$
such that 
$$s \cap H_m \neq \emptyset \mbox{ and } [\pi_j(z)-Cd_0, \pi_j(z)+Cd_0] \subseteq \pi_j(s)$$ 
where $z$ is the point of intersection of $s$ with $H_m$. There is an $a < d_0^n\leq \frac{1}{2} Cd_0$
such that $z+a e_j$, which lies on $s$, is at the center of a copy of $p_0$ in 
$Q_m$ (note that $z+a e_j$ is within distance $Cd_0$ of $H_m$). 
This implies that there is a $c <d_0$ so that $z+(a+c)e_j$ and $z+(a+c)e_j+e_i$ 
lie in the same rectangle of $\sP$. Since $a+c<C d_0$, $z+(a+c)e_j$ lies on $s$ which contradicts 
the above property of $s$.

\thebibliography{999}

\bibitem{Epp}
D. Eppstein,
Graph-theoretic solutions to computational geometry problems. In \textit{International
Workshop on Graph-Theoretic Concepts in Computer Science}, pages 1--16. Lecture Notes in Computer Sci. 5911. Springer, Berlin, 2010.


\bibitem{GJ2015}
S. Gao and S. Jackson,
Countable abelian group actions and hyperfinite equivalence relations. \textit{Invent. Math.} 201 (2015), no. 1, 309--383.

\bibitem{GJKS2022}
S. Gao, S. Jackson, E. Krohne, and B. Seward,
Forcing constructions and countable Borel equivalence relations. \textit{J. Symb. Log.} 87 (2022), no. 3, 873--893.


\bibitem{GJKSBorel}
S. Gao, S. Jackson, E. Krohne, and B. Seward,
Borel combinatorics of abelian group actions. Preprint, 2024. Available as {\tt arXiv:2401.13866}


\bibitem{JKL}
S. Jackson, A. S. Kechris, and A. Louveau, Countable Borel equivalence relations. \textit{J. Math. Logic} 2 (2002), no. 1, pp. 1--80.

\bibitem{KST}
A. S. Kechris, S. Solecki, and S. Todorcevic, Borel chromatic numbers. \textit{Adv. Math.} 141
(1999), no. 1, 1--44.

\bibitem{Kl}
M. R. Klug,
Finding minimal rectangulations. Unpublished manuscript, available as
{\tt https://math.uchicago.edu/~michaelklug/rectangles.pdf}

\bibitem{Re}
N. Reading, 
Generic rectangulations. \textit{European J. Combin.} 33 (2012), no. 4, 610--623.

\bibitem{Ri}
D. Richter,
Some notes on generic rectangulations. \textit{Contrib. Discrete Math.} 17 (2022), no. 2, 41--66. 

\bibitem{LLLMP}
W. Lipski Jr., E. Lodi, F. Luccio, C. Mugnai, L. Pagli, 
On two-dimensional data organization. II.
\textit{Fund. Inform.} (4) 2 (1978/79), no. 3, 245--260.

\end{document}